\newif\iflongversion
\definecolor{amaranth}{rgb}{0.9, 0.17, 0.31}
\else\declaretheorem[parent=section]{theorem}\fi
\else\declaretheorem[sibling=theorem]{corollary}\fi
\else\declaretheorem[sibling=theorem]{lemma}\fi
\else\declaretheorem[sibling=theorem]{proposition}\fi
\else\declaretheorem[sibling=theorem, style=definition]{definition}\fi
\else\declaretheorem[sibling=theorem, style=remark]{remark}\fi
\providecommand {\Z}{{\mathbb Z}}
\providecommand {\Q}{{\mathbb Q}}
\providecommand {\R}{{\mathbb R}}
\providecommand {\C}{{\mathbb C}}
\renewcommand {\P}{{\mathbb P}}
\providecommand{\GL}{\operatorname{GL}}
\providecommand {\from}{{\colon}}
\providecommand{\spec}{\operatorname{Spec}}
\providecommand{\Hom}{\operatorname{Hom}}
\providecommand{\Aut}{\operatorname{Aut}}
\providecommand{\Pic}{\operatorname{Pic}}
\providecommand{\rk}{\operatorname{rk}}
\providecommand{\im}{\operatorname{im}}
\newcommand{\D}{\mathbb D}
\providecommand{\ord}{\operatorname{ord}}
\newcommand{\E}{\mathcal E}
\newcommand{\Y}{\mathscr Y}
\newcommand{\delpezzo}{\rm dP}
\newcommand{\F}{\mathbb F}
\DeclareMathOperator{\Bl}{Bl}
\DeclareMathOperator{\br}{br}
\DeclareMathOperator{\Jac}{Jac}
\renewcommand{\O}{\mathcal O}
\date{\today}
\title[Compact moduli of K3 surfaces with a nonsymplectic automorphism]{Compactifications of moduli spaces of K3 surfaces with a higher-order nonsymplectic automorphism}
\author{Valery Alexeev}
\email{valery@math.uga.edu}
\address{Department of Mathematics, University of Georgia, Athens, GA, USA}
\author{Anand Deopurkar}
\email{anand.deopurkar@anu.edu.au}
\address{Mathematical Sciences Institute, The Australian National University, Canberra, ACT, Australia}
\author{Changho Han}
\email{changho\_han@korea.ac.kr}
\address{Department of Mathematics, Korea University, Seoul, Republic of Korea}
\begin{document}
\begin{abstract}
   We describe Baily-Borel, toroidal, and geometric --- using the KSBA stable pairs --- compactifications of some moduli spaces of K3 surfaces with a nonsymplectic automorphism of order $3$ and $4$ for which the fixed locus of the automorphism contains a curve of genus $\ge2$. For order $3$, we treat all the maximal-dimensional such families.  We show that the toroidal and the KSBA compactifications in these cases admit simple descriptions in terms of certain $ADE$ root lattices.
\end{abstract}
\maketitle

\setcounter{tocdepth}{1}
\tableofcontents

\section{Introduction} \label{sec:intro}

\subsection*{Background}
There has been significant recent progress on modular interpretations of Hodge-theoretic compactifications of moduli spaces of K3 surfaces \cite{ale.eng:23,ale.eng.han:24}.
Building on this, we completely describe the Baily-Borel, toroidal, and modular (= semi-toroidal) compactifications for some moduli spaces of K3 surfaces with a non-symplectic automorphism of degree 3 or 4.
The degree $N=2$ case was treated in \cite{ale.eng:22}.
If $N\not\in\{2,3,4,6\}$ then the moduli spaces of $ADE$ K3 surfaces are already compact by \cite{ale.eng.han:24}.

Let \(X\) be a smooth K3 surface and \(\sigma \colon X \to X\) an automorphism of degree \(N > 1\).
We say that \(\sigma\) is \emph{purely non-symplectic} if \(\sigma^{*}\) acts on \(H^{2,0}(X)\) by multiplication by \(\zeta_N=\exp(2\pi i/N)\).
Let \(L\) be the K3 lattice.
The automorphism \(\sigma\) induces an order \(N\) automorphism \(\rho\) of \(L\).
Let \(T_{\rho} \subset L\) be the primitive sub-lattice---the maximal sublattice such that the eigenvalues of \(\sigma\) on \(T_{\rho} \otimes \C\) are primitive \(N\)-th roots of unity.
We can associate to \((X, \sigma)\) a period in a Hermitian symmetric domain \(\D_{\rho} \subset \P(T_{\rho,\C}^{\zeta_N})\), where \(T_{\rho,\C}^{\zeta_N}\) is the \(\zeta_N\)-eigenspace of \(\rho\) on \(T_{\rho} \otimes \C\).
The Torelli theorem identifies the moduli space of \((X,\sigma)\) with a dense subset \(F_{\rho}^{\rm sep}\)  of the quotient of \(\D_{\rho}\) by an arithmetic group \(\Gamma_{\rho}\) (see \Cref{sec:moduli_K3s}).

Assume that \(N > 2\).
Then \(\D_{\rho}\) is of type I (i.e. a complex hyperbolic ball).
It has several standard compactifications: the Baily-Borel, the toroidal (unique in this case), and semi-toroidal that interporate between the two.
The boundary of the Baily-Borel compactification consists of finitely many points (``cusps''), each corresponding to a \(\Gamma_{\rho}\)-orbit of a primitive \(\rho\)-invariant isotropic plane \(J \subset T_{\rho}\).
The semi-toroidal compactifications are determined by a semifan \(\mathfrak F\), which in this case is simply a primitive \(\rho\)-invariant sublattice \(\mathfrak F_J \subset J_{T_\rho}^{\perp}/J \colonequals (J^\perp \cap T_\rho)/J\) for every cusp \(J\). Here, $J^\perp$ is taken in $L$.

Assume the following condition.
\begin{equation}
  \label{eq:g2}
  \tag{$\exists g\ge2$}
  \text{The fixed locus }
  {\rm Fix}(\sigma)
  \text{ contains a smooth curve $C_g$ of genus } g\ge2.
\end{equation}
Then \(C_{g} \subset X\) is a semi-ample divisor.
Let \(X \to \overline X\) be the contraction defined by \(C_g\) and \(\overline C_{g} \subset \overline X\) the image of \(C_g \subset X\).
Then \(\overline X\) is a K3 surface with ADE singularities and for a small enough positive \(\epsilon\), the pair \((\overline X, \epsilon \overline C_g)\) is KSBA stable.
Let \(\overline F_{\rho}^{\rm KSBA}\) be (the normalization of) the KSBA compactification of such pairs.
By \cite[Theorem~3.26]{ale.eng.han:24}, this compactification is isomorphic to a semi-toroidal compactification of \(\D_{\rho}/\Gamma_{\rho}\).
When \(N=3\), this quotient is in fact rational by \cite[Theorem 1.1]{ma.oha.tak:15}. 

\subsection*{Results}
We explicitly identify the semi-fan describing the semi-toroidal compactification \(\overline F_{\rho}^{\rm KSBA}\) for certain moduli spaces.
To be more precise, recall that Artebani and Sarti classify all possible automorphisms \(\rho\) of the K3 lattice of order 3 that arise from a non-symplectic automorphism \(\sigma\) \cite{art.sar:08}.
The \(\rho\) is determined (up to conjugation) by the number \(n\) of isolated fixed points of \(\sigma\) and the number \(k\) of fixed curves of \(\sigma\).
Of all the possibilities, 11 satisfy \eqref{eq:g2}.
Of these, the maximal four (per genus of \(C_g\)) correspond to \((n,k) = (0,2), (0,1), (1,1),\) and \((2,1)\).
(The others arise from one of these four by specialization.)

\begin{table}
\begin{tabular}[ht]{llllll}
  \toprule
  $g$ &
  \((n,k)\) & \multicolumn{4}{l}{
              \parbox{24em}{Root lattice of \(J^{\perp}_{T_\rho}/J\) at the cusps.
              The saturation of parenthesized sublattice is the KSBA semi-fan \(\mathfrak F_J\) (if non-zero).}
              }\\
  \midrule
  $5$ & \((0,0)\) & \(E_8^{\oplus 2}\) \\
  \addlinespace
  $4$ & \((0,1)\) & \(E_6^{\oplus 2} \oplus \left(A_2^{\oplus 2}\right)\)
            & \(E_8 \oplus E_6 \oplus \left(A_2\right)\)
            & \(E_8^{\oplus 2}\) \\
\addlinespace
  $3$ & \((1,1)\) & \(E_6 \oplus \left(A_2^{\oplus 4}\right)\)
            & \(E_8\oplus \left(A_2^{\oplus 3}\right)\)
            &\(E_6^{\oplus 2} \oplus \left(A_2\right)\)
            & \(E_8 \oplus E_6\)\\
  \addlinespace
  $2$ & \((2,1)\) & \(\left(A_2^{\oplus 6}\right)\) 
            & \(E_6\oplus \left( A_2^{\oplus 3} \right)\)
            &\(E_6\oplus E_6\) 
            &\(E_8 \oplus \left(A_2^{\oplus 2}\right)\)\\
  \bottomrule\\
\end{tabular}
\caption{The semi-fans corresponding to the KSBA compactification of the moduli space of K3 surfaces with a non-symplectic automorphism of order 3 with \(n\) fixed points and \(k\) fixed curves.}
\label{tab:3}
\end{table}

\begin{theorem} \label{thm:mainthm}
  For each of the four maximal families of K3 surfaces with a non-symplectic automorphism of degree 3 satisfying \eqref{eq:g2}, the compactification \(\overline F_{\rho}^{\rm KSBA}\) is a semi-toroidal compactification of \(\D_{\rho}/\Gamma_{\rho}\) given by the explicit semifan \(\mathfrak{F}\) called the KSBA semi-fan described in \Cref{tab:3}.
\end{theorem}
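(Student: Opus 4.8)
The starting point is the comparison theorem \cite[Theorem~3.26]{ale.eng.han:24}: it already guarantees that $\overline F_\rho^{\rm KSBA}$ is \emph{some} semi-toroidal compactification of $\D_\rho/\Gamma_\rho$, so the entire content of the statement is the identification of the semifan $\mathfrak F$. Since the toroidal compactification is unique in the ball case and a semifan here is nothing but a choice, for each cusp $J$, of a primitive $\rho$-invariant sublattice $\mathfrak F_J \subset J^\perp_{T_\rho}/J$, I would organize the proof in three steps: (1) for each of the four maximal lattices $T_\rho$, list the $\Gamma_\rho$-orbits of primitive $\rho$-invariant isotropic planes $J\subset T_\rho$ and compute the cusp lattice $J^\perp_{T_\rho}/J$ together with its root sublattice; (2) for each cusp, construct the KSBA-stable limit $(\overline X_0,\epsilon\overline C_{g,0})$ of a general one-parameter degeneration whose periods approach $J$; (3) extract $\mathfrak F_J$ from that limit and check that it agrees with \Cref{tab:3}.

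For step (1) I would exploit that $\rho$ has order $3$ and acts on $T_\rho$ without the eigenvalue $1$, so $T_\rho$ is naturally a Hermitian $\Z[\zeta_3]$-lattice and a primitive $\rho$-invariant isotropic plane is the same as a primitive isotropic $\Z[\zeta_3]$-line. The four lattices $T_\rho$ are read off from the classification of order-$3$ non-symplectic automorphisms in \cite{art.sar:08}. Enumerating their isotropic $\Z[\zeta_3]$-lines up to $\Gamma_\rho$, then computing $J^\perp$ in $L$, intersecting with $T_\rho$, and passing to the quotient by $J$, is a finite computation with Nikulin's discriminant-form machinery adapted to Eisenstein lattices; the outputs are exactly the root lattices $E_8^{\oplus a}\oplus E_6^{\oplus b}\oplus A_2^{\oplus c}$ recorded in \Cref{tab:3}. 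I would cross-check the resulting number of cusps against the known list of $0$-cusps of these ball quotients.

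For steps (2)--(3) I would realize each family by an explicit geometric model --- via the rational quotient surface $X/\sigma$, or as a cyclic triple cover of a (possibly singular) rational surface with $C_g$ as ramification curve --- and run the relative log MMP for a degenerating family of pairs to obtain the stable limit $(\overline X_0,\epsilon\overline C_{g,0})$. Because $\epsilon$ is small, the surface $\overline X_0$ is the canonical ($ADE$) limit of the family of K3's; the toroidal compactification corresponds to its semistable (Kulikov-type) model, and the semifan $\mathfrak F_J$ is the sublattice of $J^\perp_{T_\rho}/J$ generated by the classes of those curves on that semistable model that are retained --- rather than contracted --- in the canonical model of the \emph{pair}. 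The outcome, matching the table, should be a clean dichotomy: the $A_2$-configurations are precisely those swept into, or created along, the degenerating branch curve $C_g$, so they meet $\overline C_{g,0}$ and must be kept, whereas the $E_6$- and $E_8$-configurations lie in components of $\overline X_0$ disjoint from $\overline C_{g,0}$, are already contracted to rational double points on $\overline X_0$, and contribute $0$ to $\mathfrak F_J$. Concretely I would aim for the criterion that a root $r \in J^\perp_{T_\rho}/J$ lies in $\mathfrak F_J$ iff the corresponding $(-2)$-curve on the semistable model meets the limiting fixed curve, verify it against \Cref{tab:3} cusp by cusp, and confirm $\mathfrak F$ is a genuine semifan in the sense of \cite{ale.eng.han:24}; the order-$2$ analysis in \cite{ale.eng:22} provides the template.

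The hard part is step (2): pinning down the KSBA-stable limit at each cusp. Running the MMP for a degenerating family of pairs is delicate --- one must rule out spurious components, verify that the contracted curves form exactly the advertised $ADE$ configurations, and keep all identifications compatible with the $\Z[\zeta_3]$-structure from step (1). There is also real bookkeeping, since there are up to four cusps for each of the four families (twelve cusps in all), together with the need to pass to the normalization in the definition of $\overline F_\rho^{\rm KSBA}$ and to check that the limit is independent of the sufficiently small $\epsilon$ and of the chosen degeneration. A secondary difficulty is establishing the ``$A_2$ versus $E_6,E_8$'' dichotomy uniformly, rather than rediscovering it by hand in each case.
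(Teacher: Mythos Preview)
Your three-step outline matches the paper's strategy (Eisenstein-lattice enumeration of cusps; explicit Kulikov surfaces; extract $\mathfrak F_J$ from the stable model), but your step (3) has the geometric mechanism exactly backwards, and this would make the actual computation collapse.

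Recall that over a cusp $J$ the toroidal boundary is (a finite quotient of) $\mathcal A_J = (J^\perp_{T_\rho}/J)\otimes_{\Z[\zeta_3]}E$, and the map $\overline{\D_\rho/\Gamma_\rho}^{\rm tor}\to\overline F_\rho^{\rm KSBA}$ is the quotient by $\mathfrak F_J\otimes_{\Z[\zeta_3]}E$. So $\mathfrak F_J$ records the directions along which the period of the Kulikov surface can move \emph{without changing the stable pair}. Geometrically this means $\mathfrak F_J$ is spanned by the classes supported on curves that are \emph{contracted} in the stable model, not by those that are retained. Your proposed criterion ``$r\in\mathfrak F_J$ iff the corresponding $(-2)$-curve meets the limiting fixed curve'' is the opposite of what happens: in every case the $A_2$-summands come from $(-1)$-curves (differences $E_i-E_j$ of exceptional divisors over a $\Z/3\Z$-orbit of blown-up points) that are \emph{disjoint} from $R$ and get blown down in the stable model; varying the orbit changes the period in the $A_2$-direction but leaves the stable pair fixed, so $A_2\subset\mathfrak F_J$. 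Conversely, the $E_6$- and $E_8$-summands are not configurations of contractible curves at all; they are $K^\perp$ of the del~Pezzo surfaces ($\delpezzo_3$ and $\delpezzo_1$) that form the irreducible components of the stable limit, and their periods determine those components via the Torelli theorem for anti-canonical pairs---so these directions are \emph{not} contracted and do not lie in $\mathfrak F_J$. Test this against the $(n,k)=(0,2)$ cusp: there $\mathfrak F_J=0$ and $\overline F_\rho^{\rm KSBA}=\overline{\D_\rho/\Gamma_\rho}^{\rm tor}$; under your reading $\mathfrak F_J=0$ would mean \emph{no} curves are retained, which is absurd.

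A second point: your step (2) is too vague to execute. The paper's engine is a specific construction (the ``triple Tschirnhausen'' construction) that turns a marked admissible triple cover of a nodal $\P^1\cup\P^1$ into a type~II Kulikov surface with its $\Z/3\Z$-action, together with a dominance statement showing these fill the toroidal boundary. Without something of this shape you have no concrete Kulikov surfaces on which to read off $\Lambda^{\rm prim}$ and compare periods to stable models, and ``run the relative log MMP'' does not by itself produce the lattice decomposition you need.
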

In the main text, \Cref{thm:mainthm} is split into \Cref{thm:ksba02}, \Cref{thm:ksba01}, \Cref{thm:ksba11}, and \Cref{thm:ksba21} by the four cases.
The corresponding sections also describe the KSBA stable surfaces that appear on the boundary.

For \(N = 4\), we describe the moduli space of cyclic quadruple covers of \(\P^2\) branched at a smooth quartic (studied in \cite{kon:00, hac:04, art:09} from other points of view).
In this case, the Baily-Borel compactification has a unique cusp.
\begin{theorem}\label{thm:4}
  The compactification \(\overline F_{\rho}^{\rm KSBA}\) of the space of cyclic quadruple covers of \(\P^2\) branched at a quartic is the semi-toroidal compactification where the semi-fan \(\mathfrak F_J\) is the \(A_1^{\oplus 2}\) summand of \(J_{T_\rho}^{\perp}/J = D_4^{\oplus 2} \oplus A_1^{\oplus 2}\).
\end{theorem}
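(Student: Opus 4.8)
The plan is to run the argument of \Cref{thm:mainthm} in this order-$4$ setting. The input is Kond\=o's period theory \cite{kon:00}: a smooth quartic $C\subset\P^2$ together with its cyclic quadruple cover $X\to\P^2$ and order-$4$ deck transformation $\sigma$ is a purely non-symplectic K3 surface whose fixed locus is the (genus $3$) ramification curve, and the period map identifies the moduli space with a dense open subset of a $6$-dimensional ball quotient $\D_\rho/\Gamma_\rho$ birational to $M_3$; here $\rho$ satisfies $\rho^2=-\mathrm{id}$ on the rank-$14$ lattice $T_\rho$ of signature $(2,12)$, so $T_\rho$ is a Hermitian $\Z[i]$-lattice of signature $(1,6)$. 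Since \cite[Theorem~3.26]{ale.eng.han:24} already presents $\overline F_\rho^{\rm KSBA}$ as the semi-toroidal compactification attached to the recognizable divisor $C_g$, all that remains is to determine the primitive $\rho$-invariant sublattice $\mathfrak F_J\subseteq J^\perp_{T_\rho}/J$ at the unique cusp $J$. A lattice calculation, of the kind already carried out for the order-$3$ cusps, gives
\[
  J^\perp_{T_\rho}/J\;\cong\;D_4^{\oplus 2}\oplus A_1^{\oplus 2},
\]
on which $\rho$ (still with $\rho^2=-\mathrm{id}$) acts through multiplication by $i$ on $A_1^{\oplus 2}\cong\Z[i]$ and through an order-$4$ isometry on each $D_4$. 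Because every root of this lattice lies in one of the three summands, the theorem reduces to: (a) $\mathfrak F_J$ contains the $A_1^{\oplus 2}$ summand, and (b) $\mathfrak F_J$ contains no root of $D_4^{\oplus 2}$.

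To compute $\mathfrak F_J$ I would analyze the Kulikov Type III degenerations of these K3 surfaces, which arise by degenerating the branch quartic until the quadruple cover becomes non-normal or reducible (a non-reduced component of the quartic). After semistable reduction and Kulikov's MMP one reads off, from the central fiber, the KSBA stable pairs $(\overline X_0,\epsilon\overline C_{g,0})$ appearing over the cusp together with their partial-smoothing deformations; matching the monodromy of a one-parameter family --- a nilpotent endomorphism with invariant in $J^\perp_{T_\rho}/J\otimes\Q$ --- against the combinatorial type of the stable pair identifies the corresponding ray of $\mathfrak F_J$. For (a), the rays obtained this way span the $A_1^{\oplus2}$ summand: the $A_1$-configurations are forced because the local form of a purely non-symplectic order-$4$ automorphism at a point of its fixed curve lying on a double curve of the degenerate surface produces a $\sigma$-invariant pair of $(-2)$-curves that the ramification quartic meets --- this is the $N=4$ analogue of the $A_2$-configurations behind the parenthesized entries of \Cref{tab:3}.

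Part (b) --- no root of $D_4^{\oplus 2}$ in $\mathfrak F_J$ --- is the step I expect to be the main obstacle. It amounts to showing that a one-parameter degeneration whose monodromy has a nonzero component in a $D_4$ summand has the same KSBA stable pair after that component is discarded; equivalently, the $\sigma$-invariant $(-2)$-configurations corresponding to $D_4^{\oplus 2}$ are absorbed uniformly into the ADE singularities of $\overline X_0$ --- they are disjoint from the limit of the ramification quartic and do not make the stable pair jump. The natural route is a case-by-case local analysis of the cyclic quadruple cover over each ADE degeneration of a plane quartic: for every singularity type of the (possibly non-reduced) branch curve one records the singularity acquired by the cover, the exceptional curves of its minimal resolution, the induced $\sigma$-action, and whether the ramification divisor meets them. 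This is the same bookkeeping as in the order-$3$ analysis for the genus-$3$ fixed-curve family, with triple covers replaced by quadruple covers; its output is that $D_4^{\oplus 2}$ is exactly the rigid part of the degeneration.

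Together, (a) and (b) give $\mathfrak F_J=A_1^{\oplus 2}$, the stated direct summand (hence automatically primitive and $\rho$-invariant). As consistency checks I would match the boundary K3 surfaces produced above against Kond\=o's degenerate surfaces and against the degenerate plane quartics appearing in \cite{hac:04,art:09}.
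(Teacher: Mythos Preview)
There is a concrete error and a missed simplification.

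First, the error: for $N\ge 3$ there are no Type III Kulikov degenerations of $\rho$-markable K3 surfaces (see \Cref{subsec:Kulikov_models}, specifically \cite[Corollary 3.17]{ale.eng.han:24}). The degenerations over the cusp are all of Type II, so your plan to ``analyze the Kulikov Type III degenerations'' cannot get off the ground. The paper instead builds an explicit one-parameter Type II family: degenerate $\P^2$ to $\Bl_p\P(1,1,2)\cup\P(1,1,2)$ glued along a ruling (\Cref{prop:Kulikov_quadruple_cover}), then take the cyclic degree-$4$ cover branched on the limit quartic. The resulting Kulikov surface is $\widetilde X=\widetilde X_0\cup X_1$ with each component a (blown-up) $\delpezzo_2$; the stable model is $X=X_0\cup X_1$ obtained by blowing down a $\Z/4\Z$-orbit of four $(-1)$-curves $E_1,\dots,E_4$ on one side. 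The $A_1^{\oplus 2}$ summand is then identified concretely as $\langle E_1-E_3,\,E_2-E_4\rangle$ (\Cref{prop:A1^2_summand}), and varying the blow-up point $p$ shows that exactly the translates of $A_1^{\oplus 2}\otimes_{\Z[i]}E$ are contracted by $\overline{\D_\rho/\Gamma_\rho}^{\rm tor}\to\overline F_\rho^{\rm KSBA}$. This gives (a) directly, without the local picture you sketch.

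Second, for (b) the paper avoids any case-by-case ADE analysis. Once $A_1^{\oplus 2}\subset\mathfrak F_J$ is known, a dimension count finishes: by Hacking's description of $\Y_\rho$ (\Cref{prop:z4}) the KSBA boundary over the unique cusp is irreducible of dimension $4$, while the toroidal boundary $\mathcal A_J$ has dimension $5$, so the contracted abelian subvariety has dimension $1$ and $\mathfrak F_J$ has $\Z$-rank $2$. Hence $\mathfrak F_J=A_1^{\oplus 2}$. Your proposed route---tracking $\sigma$-invariant $(-2)$-configurations through every ADE degeneration of the plane quartic---would presumably also work, but it is substantially more laborious and unnecessary here.
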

In the main text, \Cref{thm:4} is \Cref{thm:ksba4}. 

\subsection*{Strategy of the proof}
We know that there is a regular map
\[ \overline{\D_{\rho}/\Gamma_{\rho}}^{\rm tor}   \to \overline F_{\rho}^{\rm KSBA}\]
from the toroidal to the KSBA compactification \cite[Theorem~3.26]{ale.eng.han:24}.
For \(N=3\) or \(4\), let \(E\) be the unique elliptic curve with an action of \(\Z/N\Z\).
Over a cusp corresponding to \(J \subset T_{\rho}\), the toroidal compactification is (a finite quotient of) an abelian variety \(\mathcal{A}_J\), which is isogeneous to \(J_{T_\rho}^{\perp}/J \otimes_{\Z[\zeta_N]} E\).
The map to the KSBA compactification contracts precisely the translates of the abelian subvariety of \(\mathcal{A}_J\) isogeneous to \(\mathfrak F_J \otimes_{\Z[\zeta_N]} E \subset J_{T_\rho}^{\perp}/J \otimes_{\Z[\zeta_N]} E\).

Recall that \(\Hom(J^{\perp}/J, E) = J^{\perp}/J \otimes E\) is the period domain of Kulikov degenerations over the cusp \(J\) \cite{kon:85}.
In the \(\rho\)-equivariant setting, the period domain turns out to be \(J^{\perp}_{T_{\rho}}/J \otimes_{\Z[\zeta_N]} E\) (see \Cref{subsec:limiting_periods}).
On the other hand, the points of the KSBA compactification correspond to stable pairs.
So it suffices to identify which Kulikov degenerations yield the same stable pair; this leads to the computation of \(\mathfrak F_{J}\).
We do so in all the cases treated in \Cref{thm:mainthm} and \Cref{thm:4}.
The \(N = 4\) case is easy to do by ad-hoc methods.
For the \(N = 3\) cases, we devise a construction called the \emph{triple Tschirnhausen construction}, that takes an admissible triple cover as input and constructs a Kulikov model as output.
It turns out to give essentially all Kulikov degenerations.
We then run an MMP to construct the stable models.

\subsection*{About the lattices and the semi-fans}
The lattice \(J^{\perp}/J\) arises from the middle cohomology lattice of the Kulikov degenerations.
Paired with the action of \(\rho \in O(L)\), the lattice \(J^{\perp}_{T_\rho}/J\) is the intersection of \(J^{\perp}/J\) and \(T_\rho/J\) in \(L/J\).
The components of the Kulikov degenerations turn out to be blow-ups of del Pezzo surfaces.
The \(E_6, E_7, E_8\) lattices come from these del Pezzos; for example, if \(N=3\) then \(E_8\), resp. \(E_6\), comes from a degree \(1\), resp. degree \(3\), del Pezzo surface with a \(\Z/3\)-action.
The \(A_n\)-lattices, on the other hand, come from the exceptional loci of the blow-ups of aforementioned surfaces along some $\Z_N$-orbits of points. 
The passage from the Kulikov model to the stable model (roughly speaking) undoes the blow-ups, and therefore, the translates of the \(A_n\)-lattices are identified in the KSBA compactification.

\subsection*{Organization}
In \Cref{sec:moduli_K3s} we recall the Hodge-theoretic preliminaries for K3 surfaces with a purely non-symplectic automorphism, including the Baily-Borel, toroidal, and semi-toroidal compactifications.
In \Cref{sec:Kulikov_surfaces}, we recall Kulikov models, their modifications, and the associated periods.
In \Cref{sec:moduli_quartic_P2} we finish the (easier) \(N = 4\) case mentioned in \Cref{thm:4}.
From then on, we fix \(N = 3\).
In \Cref{sec:K3_aut3_Eisenstein}, we recall Eisenstein lattices, and using them we describe the Baily-Borel cusps of \(\D_{\rho}/\Gamma_{\rho}\).
In \Cref{sec:Kulikov_trigonal}, we describe the triple Tschirnhausen construction, and compute the cohomology lattices of the degenerate surfaces produced by this construction.
In the four subsequent sections, we carry out the proof strategy for the 4 moduli spaces mentioned in \Cref{thm:mainthm}.

\subsection*{Conventions}
We work over the complex numbers \(\C\).
We set \(\zeta_N = \exp(2\pi i/N)\) and let \(\mathcal E = \Z[\zeta_3]\) be the ring of Eisenstein integers.
For a Cohen--Macaulay variety \(X\), we use \(K_{X}\) to denote the canonical divisor or the dualizing sheaf, depending on the context.
\(\delpezzo_m\) 
refers to a degree \(m\) del Pezzo surface.
If \(J\) and \(P\) are sub-lattices of an ambient lattice \(L\), with \(J \subset P\), the notation \(J^{\perp}_P\) denotes the orthogonal complement of \(J\) in \(P\), that is \(J^{\perp}_P = J^{\perp} \cap P\).

\subsection*{Version on arXiv}
The version on arXiv gives additional details of computations, which are skipped in the journal version to save space.

\subsection*{Acknowledgements}
The first author was partially supported by the National Science Foundation under the award DMS-2201222.
The second author was partially supported by the Australian Research Council under the award DE180101360.
The third author was partially supported by Korea University Grants K2422881 and K2424631. This project had started during a visit of the first author to the Sydney Mathematical Research Institute and he is  grateful to the SMRI for the support and hospitality.

\section{Moduli of K3 surfaces with a non-symplectic automorphism} \label{sec:moduli_K3s}
\subsection{The period map} \label{subsec:period_map}
Let \(U={\rm II}_{1,1}\) be the lattice \(\Z^2\) with the bilinear form \(b(x,y) = xy\) and $E_8={\rm II}_{0,8}$ be the standard negative definite lattice.
Then the K3 lattice \(L\) is 
\[ L\simeq {\rm II}_{3,19} \simeq U^{\oplus 3} \oplus E_8^{\oplus 2}.\]
Let \(\D \subset \P (L \otimes \C)\) be the subset 
\[ \D = \left\{[x] \mid  (x \cdot  x ) = 0 \text{ and } ( x \cdot  \overline x ) > 0 \right\}.\]
Then \(\D\) is a complex analytic manifold of dimension \(20\).

Let \(X\) be a smooth K3 surface.
Choose a generator \(\omega_{X}\) of \(H^0(X, K_X) = H^{2,0}(X)\) and an isometry \(H^2(X, \Z) \to L\).
Then the image of \(\omega_{X}\) lies in \(\D\) and is called the period of \(X\).
It is well-defined up to the action of \(O(L)\).

Fix an isometry \(\rho \colon L \to L\) of order \(N > 1\).
Let \(\sigma\) be a purely non-symplectic aucotomorphism of \(X\) of order \(N\), i.e. $\sigma^*(\omega_X) = \zeta_N\omega_X$.
A \emph{\(\rho\)-marking} of \((X,\sigma)\) is an isometry \(\phi \colon H^2(X,\Z) \to L\) such that  \( \sigma^{*} = \phi^{-1}\circ \rho \circ \phi\).
The period  a \(\rho\)-marked \((X,\sigma)\) is the image of \(\omega_X\) in \(\D\) under the \(\rho\)-marking.

We say that \((X,\sigma)\) is \emph{\(\rho\)-markable} if it has a \(\rho\)-marking.
In this case, we say that \(\sigma\) has \emph{(Hodge) type \(\rho\)}.
The period point of a \(\rho\)-markable \((X,\sigma)\) is well-defined up to the action of the subgroup \(\Gamma_{\rho} \subset O(L)\) defined by
\[ \Gamma_{\rho} \colonequals \{\gamma \in O(L) \mid \gamma \circ \rho = \rho \circ \gamma\}.\]
Let \(L_\C^{\zeta_N} \subset L_\C \colonequals L \otimes \C\) be the eigenspace of \(\rho\) corresponding to the eigenvalue \(\zeta_N\).
Then, the period point of \(X\) lies in
\begin{equation}\label{eqn:drho}
  \D_{\rho} \colonequals \D \cap \P(L_\C^{\zeta_N}).
\end{equation}
Note that \(\Gamma_{\rho}\) preserves \(D_{\rho}\).

Let \(L_\C^{\rm prim} \subset L_\C\) be the subspace spanned by the \(\rho\)-eigenspaces whose eigenvalues are primitive \(n\)-th roots of unity.
We call
\( T_{\rho} \colonequals L \bigcap L_\C^{\rm prim}\)
the \emph{generic transcendental lattice} associated to \(\rho\), and \(S_{\rho} \colonequals T_\rho^{\perp}\) the \emph{generic Picard lattice} associated with \(\rho\).
The signature of \(T_\rho\) is \((2,\
rk (T_\rho)-2)\).
Let \(T_{\rho, \C}^{\zeta_N} = L_\C^{\zeta_N}\) be the \(\rho\)-eigenspace of \(T_{\rho, \C} \colonequals T_{\rho} \otimes \C\) with eigenvalue \(\zeta_N\).
We can then rewrite \(\D_{\rho}\) from \eqref{eqn:drho} as
\begin{equation}\label{eqn:drho2}
  \D_{\rho} = \{x \in \P(T^{\zeta_N}_{\rho, \C}) \mid ( x \cdot x ) = 0 \text{ and } ( x \cdot  \overline x ) > 0\}.
\end{equation}
Every root \(\delta \in L\), defines a hyperplane \(\delta^{\perp} \subset \P(L_{\C})\).
Define the \emph{discriminant} \(\Delta_{\rho} \subset \D_{\rho}\) as the union
\[ \Delta_{\rho} = \bigcup_{\delta} \delta^{\perp} \cap \D_{\rho},\]
where \(\delta\) ranges over all roots in \(\left(L^{\rho}\right)^{\perp}\).
Note that \(\Gamma_{\rho}\) preserves \(\Delta_{\rho}\).

We have the following \cite[Theorem~2.10]{ale.eng.han:24}.
\begin{theorem} \label{thm:moduli_rho_markable_K3}
  Let \(F_{\rho}\) be the moduli space of \(\rho\)-markable K3 surfaces.
  The period map
  \[F_{\rho} \to \left(\D_{\rho} \setminus \Delta_{\rho}\right)/\Gamma_{\rho}.\]
  induces an isomorphism on the separated quotient \(F^{\rm sep}_{\rho}\) of \(F_{\rho}\).
\end{theorem}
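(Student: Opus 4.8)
The plan is to deduce the statement from the two classical pillars of K3 surface theory --- surjectivity of the period map and the strong global Torelli theorem --- once the role of the discriminant $\Delta_\rho$ is understood. Two structural facts organize the argument. First, for $N>1$ the group $\Gamma_\rho$ acts properly discontinuously on $\D_\rho$, so $(\D_\rho\setminus\Delta_\rho)/\Gamma_\rho$ is a normal separated complex space. Second, $F_\rho$ parametrizes K3 surfaces with no chosen polarization, so it is an algebraic space, non-separated in general, and the period map automatically factors through the maximal separated quotient $F_\rho^{\mathrm{sep}}$. I also record two lattice facts used repeatedly: since $T_\rho$ has signature $(2,\cdot)$ and is orthogonal to $L^\rho$, the lattice $L^\rho$ is hyperbolic of signature $(1,\cdot)$; and for any $\rho$-marked $(X,\sigma,\phi)$ the transcendental lattice satisfies $T(X)\subseteq\phi^{-1}(T_\rho)$, so that $\phi^{-1}(L^\rho)\subseteq\Pic(X)$ and $\phi$ carries $H^2(X,\Z)^{\sigma^{*}}$ onto $L^\rho$. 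The easy direction, that the period of a $\rho$-markable $(X,\sigma)$ lies in $\D_\rho\setminus\Delta_\rho$, then runs as follows: pure non-symplecticity places $\phi(\omega_X)$ in $L_\C^{\zeta_N}$, hence in $\D_\rho$; and if a root $\delta\in(L^\rho)^\perp$ were orthogonal to $\phi(\omega_X)$, then $\phi^{-1}(\delta)$ would be an algebraic $(-2)$-class orthogonal to $H^2(X,\Z)^{\sigma^{*}}$, hence to the $\sigma^{*}$-invariant ample class obtained by averaging any ample class over $\langle\sigma\rangle$ --- impossible, since an ample class meets every $(-2)$-class nontrivially.

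\emph{Surjectivity.} Given $x\in\D_\rho\setminus\Delta_\rho$, surjectivity of the K3 period map provides a marked K3 surface $(X,\phi)$ with period $x$; because $x$ lies in the $\zeta_N$-eigenspace of $\rho$, the isometry $g\colonequals\phi^{-1}\rho\phi$ fixes the line $H^{2,0}(X)$ and so is a Hodge isometry of order $N$ (and $X$ is projective, since $\phi^{-1}(L^\rho)\subseteq\Pic(X)$ is hyperbolic). The crux is that $x\notin\Delta_\rho$ forces a $g$-invariant Weyl chamber in the positive cone of $\Pic(X)_\R$. Indeed $\phi^{-1}(L^\rho)$ is hyperbolic and $g$-fixed, so $g$ preserves each component of the positive cone and its fixed locus meets the ample component $\mathcal C^{+}_X$; and if every $g$-fixed class of $\mathcal C^{+}_X$ lay on one of the locally finite hyperplanes $\delta^{\perp}$ with $\delta$ a $(-2)$-class of $\Pic(X)$, then one of those hyperplanes would contain the whole $g$-fixed subspace of $\Pic(X)_\R$, producing a $(-2)$-class $\delta\in\Pic(X)$ orthogonal to $\phi^{-1}(L^\rho)$; but then $\phi(\delta)$ is a root of $(L^\rho)^{\perp}$ orthogonal to $x$, so $x\in\Delta_\rho$, a contradiction. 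Fix such a $g$-invariant chamber $\mathcal C$ and $w$ in the Weyl group $W_X$ with $w(\mathcal C)$ equal to the ample cone; then $wgw^{-1}$ is a Hodge isometry preserving the ample cone, hence equals $\tau^{*}$ for a unique $\tau\in\Aut(X)$ by strong Torelli. Replacing $\phi$ by $\phi\circ w^{-1}$ --- which leaves the period fixed, as $W_X$ acts trivially on the transcendental lattice and hence on $\omega_X$ --- turns $\phi$ into a $\rho$-marking of $(X,\tau)$, so $(X,\tau)\in F_\rho$ has period $x$.

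\emph{Injectivity on the separated quotient.} Suppose $(X_1,\sigma_1)$ and $(X_2,\sigma_2)$ are $\rho$-markable with periods equal modulo $\Gamma_\rho$. Replacing one marking by its translate under the relevant element of $\Gamma_\rho$ --- legitimate since that element commutes with $\rho$ --- the periods agree on the nose, and $\psi\colonequals\phi_1^{-1}\circ\phi_2$ is a Hodge isometry $H^2(X_2,\Z)\to H^2(X_1,\Z)$. Strong Torelli yields $w\in W_{X_2}$ with $\psi\circ w=f^{*}$ for an isomorphism $f\colon X_1\to X_2$. If $w$ can be taken to be the identity, then $f$ conjugates $\sigma_1$ to $\sigma_2$ and the two pairs are isomorphic. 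In general $\psi$ and $f^{*}$ differ by the reflections composing $w$, so the two pairs are related by a chain of flops of $(-2)$-curves; such pairs are non-separated points of $F_\rho$ and therefore have the same image in $F_\rho^{\mathrm{sep}}$.

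\emph{Assembling, and the main obstacle.} The two preceding steps show the period map descends to a bijective morphism $F_\rho^{\mathrm{sep}}\to(\D_\rho\setminus\Delta_\rho)/\Gamma_\rho$. Locally on $F_\rho$ it is the period map of the pair $(X,\sigma)$, whose source is the smooth space of $\sigma$-invariant deformations of $X$; the infinitesimal Torelli theorem for K3 surfaces, applied $\langle\sigma\rangle$-equivariantly since the functor of invariants is exact over $\C$, shows this local period map is an isomorphism onto an open subset of $\D_\rho$. As the descended map is bijective and, locally, the quotient of such an isomorphism by the matching finite groups --- the automorphisms of the pair $(X,\sigma)$ on one side and the $\Gamma_\rho$-stabilizers on the other --- it is an isomorphism of normal complex spaces, which proves the theorem. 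The step I expect to carry the real weight is the equivariant chamber analysis in the surjectivity argument, and its mirror in the injectivity argument: the classical ambiguity of strong Torelli up to reflections in $(-2)$-curves must be reconciled with the prescribed $\rho$-action, which is exactly what is governed by deleting $\Delta_\rho$ on the period side and by passing to the separated quotient on the moduli side.
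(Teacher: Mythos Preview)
The paper does not supply its own proof of this statement; it simply quotes \cite[Theorem~2.10]{ale.eng.han:24}. Your argument follows the standard route via global Torelli and surjectivity of the period map and is largely correct, but there is a gap in the injectivity step.

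You write that strong Torelli gives $w\in W_{X_2}$ with $\psi\circ w=f^{*}$, and that when $w\neq\mathrm{id}$ ``the two pairs are related by a chain of flops of $(-2)$-curves; such pairs are non-separated points of $F_\rho$.'' The last clause is not justified: non-separation in $F_\rho$, as opposed to the moduli of plain K3 surfaces, requires the flops to be $\sigma$-equivariant, and you have not argued that the $(-2)$-curves appearing in $w$ are organised into $\sigma_2$-orbits. The repair is in fact cleaner than the flop argument. The element $w$ is the unique Weyl group element taking the ample chamber of $X_2$ to $\psi^{-1}(\text{ample chamber of }X_1)$, and both chambers are $\sigma_2^{*}$-invariant, the second because $\psi$ intertwines $\sigma_2^{*}$ with $\sigma_1^{*}$. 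Since $\sigma_2^{*}$ normalises $W_{X_2}$ and $W_{X_2}$ acts simply transitively on chambers, the conjugate $\sigma_2^{*}w(\sigma_2^{*})^{-1}$ also sends the first chamber to the second, hence equals $w$. Then $f^{*}=\psi\circ w$ satisfies $f^{*}\sigma_2^{*}=\sigma_1^{*}f^{*}$, so $f$ is already an isomorphism of pairs $(X_1,\sigma_1)\cong(X_2,\sigma_2)$, and the period map is injective on isomorphism classes outright.
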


\begin{remark} \label{rmk:dolgachev_kondo_difference}
    Note that \(F_\rho\) and \(F_\rho^{\rm sep}\) from \Cref{thm:moduli_rho_markable_K3} are constructed as moduli spaces of \(\rho\)-markable K3 surfaces as in \cite{ale.eng.han:24}.
    This approach differs from Dolgachev and Kond\=o's approach in \cite{dol.kon:07} via \cite{dol:96}, which is based on the moduli of $S$-polarized surfaces. Our construction, based on \cite[\S 2C]{ale.eng.han:24}, is a more direct approach.
\end{remark}

\subsection{The Baily--Borel compactification} \label{subsec:BB}
From now on, assume \(N \geq 3\); this simplifies the period domain.
For \(x \in T_{\rho, \C}^{\zeta_N}\), we have
\[ ( x \cdot  x ) = (  \rho x \cdot  \rho x ) = \zeta_N^2 (  x \cdot x ),\]
so \(( x \cdot  x ) = 0\).
Then, in \eqref{eqn:drho2}, the condition \(( x \cdot  x ) = 0\) is vacuous, and \(\D_{\rho}\) is a Hermitian symmetric domain of type I.

The compact dual \(\D_{\rho}^c\) of \(\D_{\rho}\) is simply the projective space \(\P\left(T_{\rho,\C}^{\zeta_N}\right)\).
A \emph{rational boundary component} of \(\D_{\rho}\) is a subset of \(\D_{\rho}^c\) of the form \(\P(J_{\C}) \cap \D_{\rho}^c\), where \(J \subset T_{\rho}\) is a rank 2 primitive isotropic sub-lattice of \(T_{\rho}\).
The \emph{rational closure} \(\D_{\rho}^{\rm BB}\) of \(\D_{\rho}\) is the union of \(\D_{\rho}\) with all of its rational boundary components, topologised by the horoball topology along the boundary.
The quotient \(\overline{\D_{\rho}/\Gamma_{\rho}}^{\rm BB} \colonequals \D_{\rho}^{\rm BB}/\Gamma_{\rho}\) is the \emph{Baily--Borel compactification} of \(\D_{\rho}/\Gamma_{\rho}\).
It has the structure of a projective variety \cite{bai.bor:66} and it contains \(\D_{\rho}/\Gamma_{\rho}\) as dense open subvariety.

Let \(J \subset T_{\rho}\) be a rank 2 primitive isotropic sub-lattice such that \(\P (J_{\C}) \cap \P (T_{\rho, \C}^{\zeta})\) is non-empty.
We observe that \(J\) is automatically \(\rho\)-invariant and \(\P (J_{\C}) \cap \P (T_{\rho, \C}^{\zeta})\) is a point.
Indeed, let \(v \in (J_{\C}) \cap T_{\rho, \C}^{\zeta_N}\) be non-zero.
Then the conjugate \(\overline v\) lies in \((J_{\C}) \cap T_{\rho, \C}^{\zeta_N^{-1}}\) and \(J_{\C}\) is spanned by \(v\) and \(\overline v\).
As a result, \(J_{\C}\), and hence \(J\), is \(\rho\)-invariant.
Furthermore, \(\P (J_{\C}) \cap P(T_{\rho}^{\zeta_N})\) is the point \([v]\).

The boundary \((\overline{\D_{\rho}/\Gamma_{\rho}}^{\rm BB}) \setminus (\D_{\rho}/\Gamma_{\rho})\) is a collection of (finitely many) points, called \emph{cusps}, which are in bijection with the \(\Gamma_{\rho}\)-orbits of \(\rho\)-invariant isotropic primitive rank 2 sub-lattices \(J \subset T_{\rho}\).

\subsection{Semitoroidal compactifications} \label{subsec:semitoroidal}
Recall that we specialise to \(N \geq 3\), so that we have a type I domain where the constructions are easier.
Here, we recall the construction of (semi)toroidal compactifications of \(\D_{\rho}/\Gamma_{\rho}\) from \cite[\S 3E]{ale.eng.han:24}.
See also \cite{kon:93,loo:03,ash.mum.rap.tai:10,mok:12} for more details.

\medskip

We first summarize preliminary definitions on lattices and subgroups of orthogonal groups.
Let \(O_\rho(T_\rho)\) be the subgroup of \(O(T_\rho)\) consisting of isometries commuting with~\(\rho\).
Let \(\overline{\Gamma}_\rho \subset O_\rho(T_\rho)\) be the image of \(\Gamma_\rho \subset {\rm Stab}_{T_\rho}(O(L))\) under the restriction map \({\rm Stab}_{T_\rho}(O(L)) \to O(T_\rho)\). 
If \(N=3\), then \(\overline{\Gamma}_\rho = O_\rho(T_\rho)\) by the proof of \cite[Theorem 3.6]{ma.oha.tak:15}.
Consider a primitive \(\rho\)-invariant isotropic rank \(2\) sublattice \(J \subset T_\rho\) such that \(J_\C^{\zeta_N} \neq \emptyset\); in this case,  \(J_{\C}^{\zeta_N} \simeq J \otimes_{\Z[\zeta_N]} \C\) where \(\zeta_N\) acts on \(J\) via \(\rho\). For every such \(J\), there is an exact sequence of groups
\[
    0 \to U_J \to {\rm Stab}_J(\overline{\Gamma}_\rho) \to \Gamma_J \to 0,
\]
where \(U_J\) is the unipotent radical of \({\rm Stab}_J(\overline{\Gamma}_\rho)\) (i.e., \(U_J\) acts trivially on \(J\), \(J^\perp_{T_\rho}/J\), and \(T_\rho/J^\perp_{T_\rho}\)), and \(\Gamma_J \subset \GL(J) \times O(J^\perp_{T_\rho}/J)\).
In fact, \(\Gamma_J\) is finite because \(\rho\) acts on \(J\) as a rotation and elements of \(O(J)\) commuting with a fixed rotation forms a finite group.  

\medskip

The construction of semitoroidal compactifications \(\overline{\D_\rho/\Gamma_\rho}^{\mathfrak F}\) of \(\D_\rho/\Gamma_\rho\) from \cite[\S 3E]{ale.eng.han:24} (which is the \(\rho\)-equivariant analog of construction in \cite[\S 6]{loo:03}) depends on the data of a \emph{\(\Gamma_{\rho}\)-admissible semi-fan} \(\mathfrak F\), which is a \(\Gamma_\rho\)-invariant collection \(\{\mathfrak F_J\}\) of a primitive \(\rho\)-invariant sub-lattice
\[ \mathfrak F_J \subset J^\perp_{T_\rho} / J\]
for every \(\rho\)-invariant isotropic primitive rank 2 sub-lattice \(J \subset T_{\rho}\) such that \(J_{\C}^{\zeta_N} \neq 0\).
Then the {\it semitoroidal compactification} \(\overline{\D_\rho/\Gamma_\rho}^{\mathfrak F}\) is a normal projective variety such that for every such \(J\), the associated cusp of \(\overline{\D_\rho/\Gamma_\rho}^{\rm BB}\) is replaced by a finite quotient \(\widehat{\mathcal{A}}_J^{\mathfrak{F}_J}/\Gamma_J\) of an abelian variety \(\widehat{\mathcal{A}}_J^{\mathfrak{F}_J}\) isogeneous to a product of elliptic curves of the same \(j\)-invariant.
In fact, there is an isogeny \(\mathcal{A}_J^{\mathfrak{F}_J} \to \widehat{\mathcal{A}}_J^{\mathfrak{F}_J}\), where \(\mathcal{A}_J^{\mathfrak F_J} \colonequals ((J^\perp_{T_\rho}/J)/\mathfrak{F}_J) \otimes_{\Z[\zeta_N]} E\) is defined by \(\zeta_N\)-action on \(J^\perp_{T_\rho}/J\) via \(\rho\) and an elliptic curve \(E = \C/\Z[\zeta_N]\) with \({\rm Aut}(E,0) \supset \Z/N\Z\).

The semi-toroidal compactifications interpolate between the Baily--Borel and the toroidal compactification (unique in this case, which is a ball quotient).
The Baily--Borel compactification \(\overline{\D_\rho/\Gamma_\rho}^{\rm BB}\) corresponds to the maximal semifan, namely \(\mathfrak F_J = J^{\perp}_{T_{\rho}}/J\).
The toroidal compactification \(\overline{\D_\rho/\Gamma_\rho}^{\rm tor}\) corresponds to the minimal semifan, namely \(\mathfrak F_J = 0\).
In this case, the boundary components are of codimension 1 and finite quotients of \(\mathcal{A}_J \colonequals \mathcal{A}_J^0 = (J^\perp_{T_\rho}/J) \otimes_{\Z[\zeta_N]} E\).
For a general semifan \(\mathfrak F\), we have birational morphisms
\begin{equation} \label{eq:mor_tor->KSBA}
    \overline{\D_\rho/\Gamma_\rho}^{\rm tor} \to \overline{\D_\rho/\Gamma_\rho}^{\mathfrak F} \to \overline{\D_\rho/\Gamma_\rho}^{\rm BB}.
\end{equation}
These maps are the identity on \(\D_\rho/\Gamma_\rho\).
Above the Baily--Borel cusp corresponding to \(J \subset T_{\rho}\), the first map is induced by the quotient \(\mathcal{A}_J \to \mathcal{A}_J^{\mathfrak{F}_J}\) of the translates of \(\mathfrak{F}_J \otimes_{\Z[\zeta_N]} E\) in \(\mathcal{A}_J\).

\subsection{KSBA stable pair compactifications} \label{subsec:KSBA_compactification}
We refer the reader to \cite{kol:23} for the definition
of semi-log-canonical (slc for short) singularities and the existence of the KSBA compactifications of moduli
spaces via (KSBA-)stable pairs.

In our context, a \emph{(KSBA-)stable pair} \((X,(w+\epsilon)D)\) consists of a rational number \(w \in \Q \cap [0,1)\), a seminormal surface \(X\), and an effective \(\Q\)-divisor \(D\) on \(X\) such that the pair \((X,D)\) has slc singularities, \(K_X+wD \sim_\Q 0\), and \(K_X+(w + \epsilon)D \sim_\Q \epsilon D\) is \(\Q\)-Cartier and ample for every \(0 < \epsilon \ll 1\) bounded above in terms of \(D^2\). 
When \(D^2\) is fixed, then there is a projective coarse moduli space for such pairs. For full details, see \cite{ale.eng.tho:23} and \cite{kol.xu:20}.

Assuming that every pair \((X,\sigma) \in F_\rho\) satisfies \eqref{eq:g2}, there is an associated stable pair \((\overline{X},\epsilon\overline{C}_g)\).
Here \(\overline X\) is obtained from \(X\) by contracting the \(C_g\)-trivial curves and \(\overline C_g \subset \overline X\) is the image of \(C_g \subset X\).
As a result, \(F_\rho^{\rm sep}\) is a moduli space of ADE K3 surfaces \((\overline{X},\epsilon\overline{C}_g)\) equipped with an induced automorphism \(\overline{\sigma}\) of order \(N\), where the divisorial part of the fixed locus of \(\overline{\sigma}\) is \(\overline{C}_g\).

Define \(\overline{F}_\rho^{\rm KSBA}\) as the normalization of the closure of \(F_\rho^{\rm sep}\) in the space of KSBA stable pairs; it parameterizes stable pairs \((\overline{X},\epsilon\overline{R})\) with trivial dualizing sheaves. 
By \cite[Proposition 3.27]{ale.eng.han:24}, any \((\overline{X},\epsilon\overline{R}) \in \overline{F}_\rho^{\rm KSBA}\) comes equipped with an automorphism \(\overline{\sigma} \from \overline{X} \to \overline{X}\) with \(\overline{R} = {\rm Fix}(\overline{\sigma})\).
Moreover, by \cite[Theorem 3.26]{ale.eng.han:24}, \(\overline{F}_\rho^{\rm KSBA} = \overline{\D_\rho/\Gamma_\rho}^{\mathfrak{F}}\) for a particular semitoroidal compactification for a semifan \(\mathfrak{F} = \{\mathfrak{F}_J\}\); call \(\mathfrak{F}\) the \emph{KSBA semifan}. 
One of the main goals of this paper is to compute \(\mathfrak{F}\) for various examples of \(\rho\).

In some cases, an explicit classifications of members of \(\overline{F}_\rho^{\rm KSBA}\) can be obtained by classifying KSBA degenerations of the \(\overline{\sigma}\)-quotients \((Y,\frac{N-1+\epsilon}{N}B)\) of \((\overline{X},\epsilon\overline{C}_g) \in F_\rho^{\rm sep}\), where \(B\) is the branch divisor of the quotient \(\overline{X} \to Y \colonequals \overline{X}/\overline{\sigma}\).
Specifically, \(Y\) is a (possibly singular) rational surface with \((N-1)B \sim -NK_Y\), i.e., \((Y,(\frac{N-1+\epsilon}{N})B)\) is a stable pair.
Let \(\Y_\rho\) be the normalization of the KSBA compactification of the moduli space of such quotient pairs \((Y,\frac{N-1+\epsilon}{N}B)\). Then, \cite[Theorem 4.2, Corollary 4.3]{ale.eng.han:24} says the following.
\begin{theorem} \label{thm:stable_quotient_pair}
    There is an isomorphism of coarse moduli spaces
    \begin{align*}
        \overline{F}_\rho^{\rm KSBA} &\to \Y_\rho\\
        (\overline{X},\epsilon \overline{R}) &\mapsto \left(Y, \frac{N-1+\epsilon}{N}B\right),
    \end{align*}
    where \(Y = \overline{X}/\overline{\sigma}\) and \(B\) is the branch divisor of \(\overline{X} \to Y\).
\end{theorem}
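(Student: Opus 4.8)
To establish \Cref{thm:stable_quotient_pair} I would build a dictionary between the two moduli problems out of two mutually inverse operations --- passing to the quotient by the automorphism $\overline\sigma$, and taking a suitable cyclic cover --- and then observe that each operation is canonical, hence functorial in families, so that they descend to an isomorphism of coarse moduli spaces. The numerical backbone of the dictionary is the identity $(N-1)B\sim_\Q -NK_Y$: on the quotient side it is the relation defining $\Y_\rho$, and on the cover side it is equivalent, via pullback along the cyclic cover, to the triviality of the dualizing sheaf.

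\emph{The quotient direction.} Given $(\overline X,\epsilon\overline R)\in\overline F_\rho^{\rm KSBA}$, I would invoke \cite[Proposition~3.27]{ale.eng.han:24} to produce the order-$N$ automorphism $\overline\sigma$ with $\overline R={\rm Fix}(\overline\sigma)$, form $\pi\colon\overline X\to Y\colonequals\overline X/\overline\sigma$ (a projective scheme, since $\langle\overline\sigma\rangle$ is finite), and set $B\colonequals\pi(\overline R)$. Quotients of demi-normal pairs, and of slc pairs, by finite groups are again of the same type \cite{kol:23}, so $(Y,\tfrac{N-1+\epsilon}{N}B)$ is slc. Since $\pi$ is cyclic of degree $N$ along $\overline R$, one has $\pi^*B=N\overline R$ as $\Q$-divisors, and hence, using $K_{\overline X}\sim_\Q 0$,
\[
  K_{\overline X}+\epsilon\overline R\ \sim_\Q\ \epsilon\overline R\ =\ \pi^{*}\!\left(K_Y+\tfrac{N-1+\epsilon}{N}B\right).
\]
As $\pi$ is finite, it follows that $K_Y+\tfrac{N-1+\epsilon}{N}B$ is $\Q$-Cartier and ample, that $(Y,\tfrac{N-1+\epsilon}{N}B)$ is a stable pair, and that $(N-1)B\sim_\Q -NK_Y$; over the dense open $F_\rho^{\rm sep}$ this is the quotient of an ADE K3 by $\overline\sigma$, so the image lies in $\Y_\rho$. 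Formation of finite-group quotients commutes with base change on the parameter scheme, so this assignment is a morphism of moduli functors $q\colon\overline F_\rho^{\rm KSBA}\to\Y_\rho$.

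\emph{The cyclic cover direction.} Conversely, given $(Y,\tfrac{N-1+\epsilon}{N}B)\in\Y_\rho$, I would set $M\colonequals\O_Y(K_Y+B)$, viewed as a rank-one reflexive sheaf; the relation $(N-1)B\sim -NK_Y$ is precisely the statement $M^{[N]}\cong\O_Y(B)$, and a section cutting out $B$ makes $\mathcal A\colonequals\bigoplus_{i=0}^{N-1}M^{[-i]}$ a sheaf of $\O_Y$-algebras. Let $\overline X\colonequals\spec_Y\mathcal A$, Kollár's cyclic cover, with its tautological order-$N$ automorphism $\overline\sigma$ and reduced ramification $\overline R\colonequals\pi^{-1}(B)_{\rm red}$. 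The point of taking $M=\O_Y(K_Y+B)$ is that it is the unique choice for which $\omega_{\overline X}$ is trivial: the formula $\omega_{\overline X}\cong\pi^{*}(\omega_Y\otimes M^{[N-1]})$ reduces triviality back to $(N-1)B\sim -NK_Y$. What remains is to check that $\overline X$ is demi-normal and slc --- i.e. that a cyclic cover of this shape of an slc pair does not acquire worse singularities --- and that $K_{\overline X}+\epsilon\overline R$ is ample; both are obtained by descending along the finite morphism $\pi$, using Kollár's criteria for cyclic covers of slc pairs \cite{kol:23}. Over $F_\rho^{\rm sep}$ one recovers the original ADE K3's, so $\overline X\in\overline F_\rho^{\rm KSBA}$, and as the construction is canonical it is functorial, yielding a morphism $c\colon\Y_\rho\to\overline F_\rho^{\rm KSBA}$.

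\emph{Putting it together, and the main obstacle.} The composites $q\circ c$ and $c\circ q$ restrict to the identity on the dense opens $F_\rho^{\rm sep}$: the quotient of a cyclic cover is its base, and $\overline X/\overline\sigma$ reconstructs its own cyclic cover because the covering sheaf $M$ is recovered from the $\overline\sigma$-eigensheaf decomposition of $\pi_*\O_{\overline X}$. Since the moduli spaces are separated and reduced, the composites are therefore the identity, and $q$ and $c$ are inverse isomorphisms of coarse moduli spaces. The hard part is concentrated entirely on the boundary, where $Y$ and $\overline X$ are non-normal and reducible: there one must run the cyclic-cover formalism with reflexive powers, verify that it commutes with base change so that $c$ is genuinely a morphism of moduli functors, and --- the technical heart --- prove that slc-ness transfers in both directions across $\pi$, in particular that the cover stays demi-normal and that no new non-slc singularities appear along the locus where $B$ meets the non-normal locus of $Y$. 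This is exactly where Kollár's theory of finite quotients and of cyclic covers of slc pairs \cite{kol:23} does the real work.
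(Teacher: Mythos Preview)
The paper does not prove this theorem: it is stated as a direct consequence of \cite[Theorem~4.2, Corollary~4.3]{ale.eng.han:24}, with no argument given in the paper itself. Your sketch is a reasonable outline of the quotient/cyclic-cover dictionary that underlies such a result, and it correctly identifies the key numerical identity $N(K_Y+B)\sim_\Q B$ and the fact that the technical weight lies in transferring slc-ness across the finite cyclic map on the boundary.

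One point to watch if you flesh this out: in the inverse direction you need to know not just that the cyclic cover $\overline X$ of a boundary point $(Y,B)\in\Y_\rho$ is an abstract stable pair with trivial $\omega_{\overline X}$, but that it actually lies in $\overline F_\rho^{\rm KSBA}$, i.e.\ is a limit of $\rho$-markable K3 pairs. Your argument ``over $F_\rho^{\rm sep}$ one recovers the original ADE K3's'' only establishes this on the interior; on the boundary you need the functoriality in families you assert, together with separatedness of the KSBA moduli, to conclude that the cyclic cover of a one-parameter degeneration in $\Y_\rho$ is the unique stable limit and hence lands in the closure. Similarly, the identification $B=\pi(\overline R)$ as the branch divisor is clean when $\overline\sigma$ does not permute irreducible components of $\overline X$; on the non-normal boundary one should check that no branching along the double locus contributes. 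These are exactly the sorts of details handled in the cited reference.
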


\section{Degenerations and Kulikov surfaces}
\label{sec:Kulikov_surfaces}
\subsection{Kulikov models}
\label{subsec:Kulikov_models}
We summarize properties of one-parameter degenerations of K3 surfaces; see \cite{fri.sca:86} for more details.

\begin{definition} \label{def:Kulikov_model}
    Let \(X^* \to C^*\) be a family of smooth complex K3 surfaces over a punctured curve or disk \(C^* = C \setminus 0\). 
    A \emph{Kulikov}, or \emph{Kulikov--Persson--Pinkham}, \emph{model} is a proper extension \(X \to C\) such that \(K_X \sim_C 0\) and \(X \to C\) is semistable, i.e., \(X\) is nonsingular and the central fiber \(X_0\) is reduced with normal crossing singularities. 
    The central fiber \(X_0\) is called a \emph{Kulikov surface}.
\end{definition}

By \cite{kul:77, per.pin:81}, A Kulikov model of \(X^* \to C^*\) exists after a finite base-change \(C' \to C\).

There are three types of Kulikov models depending on the dual complex of the Kulikov surface \(X_0\).
Let \(V_i\) be the irreducible components of \(X_0\) and set \(D_{i,j} = V_i \cap V_{j}\).
\begin{enumerate}[label={(\Roman*)}]
    \item \(X_0\) is smooth, i.e., it is a K3 surface.
    \item The dual complex of \(X_0\) is an interval of length \(m\) ordered by vertices \(\{0,\dotsc,m\}\), \(D_{0,1} \simeq \dotsb \simeq D_{m-1,m} \simeq E\) are elliptic curves, \(V_0\) and \(V_m\) are rational, and \(V_i \to E\) is generically ruled for every \(0<i<m\). 
    The double locus \(D_i \colonequals \sum_j D_{i,j}\) is an anticanonical divisor on each component \(V_i\) of \(X_0\).
    \item The dual complex of \(X_0\) is a triangulation of the sphere \(S^2\). 
    The double locus \(D_i\) is an anticanonical divisor on each component \(V_i\) of \(X_0\) and is a wheel of rational curves.
\end{enumerate}

If \((X^* \to C^*, \sigma \in {\rm Aut}_{C^*}(X^*))\) is a family of \(\rho\)-markable K3 surfaces and \(N = \ord(\rho) \ge 3\), then the associated Kulikov surface \(X_0\) is of Type I or II by \cite[Corollary 3.17]{ale.eng.han:24}.
If \(N \neq 3,4,6\), then in fact \(X_0\) is always of Type I.

A snc surface of the form \(X_0 = V_0 \cup V_1\) with trivial \(K_{X_0}\) is a Kulikov surface if and only if \(N_{D_0/V_0} \otimes N_{D_1/V_1} \simeq \mathcal{O}_E\) by \cite[Proposition 3.7]{fri.mir:83}; this condition is called \emph{\(d\)-semistability}.
So a Type II Kulikov surface of the form \(X_0 = V_0 \cup V_1\) satisfies \(D_0^2+D_1^2=0\).

Given a Kulikov model \(X \to C\), we have an  \emph{extended period map}
\[
    f \colon C \to \overline{\D_{\rho}/\Gamma_{\rho}}^{\rm BB}.
\]
If \(f(0) \in \D_\rho/\Gamma_\rho\), then \(X \to C\) is of Type I; otherwise, \(X \to C\) is of Type II or III.

Given a Kulikov model \(X \to C\), the \emph{Picard--Lefschetz transform} of \(X^* \to C^*\) is the unipotent monodromy \(T \colon H^2(X_t,\Z) \to H^2(X_t,\Z)\). 
It is of the form \(T = \exp(\mathcal N)\) with
\[
    \mathcal N(x) = (x \cdot \lambda)\delta - (x \cdot \delta)\lambda,
\]
where \(\delta \in H^2(X_t,\Z)\) is primitive isotropic and \(\lambda \in \delta^\perp/\delta\) is a vector called the \emph{monodromy invariant}, which satisfies
\[
    \lambda^2 = \#\{\text{triple points of } X_0\}.
\]
In the \(\rho\)-markable case, there are no Type III Kulikov surfaces so \(\lambda^2=0\). 
When \(\lambda = 0\), then \(X_0\) is of Type I. 
Otherwise, \(X_0\) is of Type II and the length \(m\) of the dual complex of \(X_0\) coincides with the imprimitivity of \(\lambda \in \delta^\perp/\delta\).
Denote \(J \colonequals (\Z\delta \oplus \Z\lambda)^{\rm sat}\) to be the primitive isotropic plane corresponding to \(X_0\). 
In the \(\rho\)-markable case, \((\sigma_t^*)^{-1}\mathcal N\sigma_t^* = \mathcal N\) as \(T\) commutes with the automorphism \(\sigma_t \in {\rm Aut}(X_t)\). 
By expanding the previous equality, it is easy to see that \(\Z\delta \oplus \Z\lambda\) is \(\sigma_t^*\)-invariant, hence \(J\) is also \(\sigma_t^*\)-invariant.
If \(\phi_t \colon H^2(X_t,\Z) \to L\) is a \(\rho\)-marking, then the cusp \(f(0) \in \overline{\D_{\rho}/\Gamma_{\rho}}^{\rm BB}\)  corresponds to \(\phi_t(J) \subset T_\rho\).

Two Kulikov models \(X \to C\) of the same family \(X^* \to C^*\) differ by a sequence of flops in curves 
\(F \subset X_0\). 
If the central fiber \(X_0\) is of Type I or II, then the flops are of the following types:
\begin{enumerate}[start=0, label={(\text{M}\arabic*)}]
    \item \(F \subset V_i\) and \(F \cap D_i = \emptyset\); so \(F\) is an interior \((-2)\)-curve.  This flop is a nontrivial birational map \(X \dashrightarrow X'\) over \(C\) but \(X_0'=X_0\) are canonically identified.
    \item \(F \subset V_i\) with \(F^2=-1\) and \(F \cap D_i = p \in D_{i,j}\).  This flop contracts \(F\) on \(V_i\) to \(p\) and blows up \(p \in V_j\) to create a \((-1)\)-curve \(F' \subset V_j\).
\end{enumerate}

\subsection{Nef, divisor, and stable models}
\label{subsec:nef_div_stable}

We define various types of  models of \(X^* \to C^*\) based on \cite[\S 3B]{ale.eng:23}:

\begin{definition} \label{def:nef_model}
    Let \(L^*\) be a line bundle on \(X^*\), relatively nef and big over \(C^*\). 
    A relatively nef extension \(L\) to a Kulikov model \(X \to C\) is called a \emph{nef model}.
\end{definition}

\begin{definition} \label{def:divisor_model}
    Let \(R^* \subset X^*\) be the vanishing locus of a section of \(L^*\) as above, containing no vertical components. 
    A \emph{divisor model} is an extension \(R \subset X\) to a relatively nef divisor \(R \subset |L|\) for which \(R_0\) contains no triple point, double curve, or component of \(X_0\). We call the central fiber \((X_0,R_0)\) a \emph{divisor limit}.
\end{definition}

A divisor model is not necessarily unique.
Two divisor models are related by a sequence of M0 and M1 modifications along curves disjoint from \(R\).

\begin{definition} \label{def:stable_model}
      The (KSBA-)stable model \((\overline{X},\epsilon\overline{R})\) is \(\operatorname{Proj}_C \bigoplus_{n \ge 0} \pi_*\mathcal{O} (nR)\) for some divisor model \(\pi \colon (X,R) \to C\). 
    It is unique, depending only on the family \((\overline{X}^*,\overline{R}^*) \to C^*\), and stable under base change. 
    We call \((\overline{X}_0,\epsilon \overline{R}_0)\) the \emph{stable limit}.
\end{definition}

Given a \(\rho\)-markable K3 family \((X^* \to C^*,\sigma)\) where every fiber satisfies \eqref{eq:g2}, define \(R^* \subset X^*\) to be the irreducible component of \((X^*)^\sigma\) which is a family of smooth curves of genus \(g \ge 2\) over \(C^*\).
Then, up to a finite base change on \(C\), there is a Kulikov model of \(X^* \to C^*\).
Then \(R^*\) extends to \(R \subset X\) by \cite[Theorem 3.26]{ale.eng.han:24}, but \(R_0\) (the flat limit of \(R_t\)) may not be nef.
In that case, a divisor model can be achieved by taking a sequence of \text{M}0 and \text{M}1 modifications on \(X \to C\). 

\subsection{Periods of Type II Kulikov surfaces}
\label{subsec:limiting_periods}
We define periods of type II Kulikov surfaces equipped with an automorphism following \cite{ale.eng:23, kon:85}.

Let us first recall the theory without the automorphism.
Consider a type II Kulikov surface \(X_0 = V_0 \cup \cdots \cup V_m\) with \(D_{ij} = V_i \cap V_{j}\) in \(V_{i}\); note that \(D_{ij} \neq \emptyset\) whenever \(|i-j|=1\), and \(D_{ij} \simeq D_{ji}\).
Define the \emph{lattice of numerically Cartier divisors}
\[ \widetilde \Lambda =  \ker \left(\bigoplus_i H^2(V_i,\Z) \to \bigoplus_i H^2(D_{i},\Z)\right).\]
In the map above, the summand \(H^2(V_i,\Z) = \operatorname{Pic}V_i\) maps to \(H^2(D_{i,i+1},\Z)\) and \(H^2(D_{i,i-1},\Z)\) by restriction.
Let \(\xi_i \colonequals \sum_j (D_{ij} - D_{ji}) \in \oplus_\ell \operatorname{Pic} V_\ell\) for every \(0 \le i \le n\).
Set \(\Xi = \sum \xi_i \in \widetilde \Lambda\).
Define the \emph{reduced lattice of numerically Cartier divisors}
\[ \Lambda = \widetilde \Lambda / \Xi.\]
We have \(\rk \widetilde \Lambda = 19\)  and \(\rk \Lambda = 18\).

Let \(E\) be the elliptic curve isomorphic to any of the double curves \(D_{i,i+1}\).
The \emph{period} of \(X_0\) is the homomorphism \(\psi \colon \Lambda \to E = \C/J\) is defined in \cite[Construction 4.3]{ale.eng:23} and \cite[\S 2]{kon:85}.
Specifically, when \(X_0 = V_0 \cup V_1\) consists of two irreducible components, then \(\psi(\alpha) = \alpha_{0}|_{D_{01}} - \alpha_{1}|_{D_{01}}\in {\rm Pic}(D_{01}) \simeq E\) for every \(\alpha \in \Lambda\).

\medskip

Suppose \(X_0\) is the central fiber of a Kulikov model \(X \to C\) corresponding to a primitive isotropic 2-plane \(J \subset H^2(X_t,\Z)\).
Then by \cite[Proposition 3.20]{ale.eng:23} we have an isometry of unimodular lattices
\begin{equation}\label{eqn:JperpJ}
  \Lambda \simeq J^{\perp}/J.
\end{equation}
By \cite[Theorem~4.16]{ale.eng:23} (see also \cite{kon:85}), the period \(\psi\) of \(X_0\) lies in the period domain \(J^\perp/J \otimes E = \Hom(J^\perp/J,E)\). 
Observe that \text{M}0 and \text{M}1 modifications do not change \(\psi\).

Fix an isometry \(L \simeq H^2(X_t,\Z)\), a primitive sublattice \(S \subset L\), and set \(T = S^{\perp}\).
Suppose that \({\rm Pic}(X^*/\Delta^*)\) contains a primitive subgroup whose restriction to every fiber \(X_t\) is \(S\).
Let \(X \to C\) be a Kulikov model with central fiber \(X_{0}\) corresponding to a primitive isotropic plane \(J \subset T\).
Let \(S^{\rm sat} \subset J^{\perp}/J\) be the saturation of the image of \(S\).
Then \(J^\perp_{T}/J\) and \(S^{\rm sat}\) are two saturated mutually orthogonal sublattices of \(J^\perp/J\).
Using \((J^\perp/J)/S^{\rm sat} = (J^\perp_{T}/J)^*\), we have the following (split) short exact sequence
\[
	0 \to \Hom((J^\perp_{T}/J)^*,E) \to \Hom((J^\perp/J)/S,E) \to \Hom(S^{\rm sat}/S,E) \to 0.
\]
The period \(\psi \in \Hom(J^{\perp}/J, E)\) of \(X_0\) vanishes on \(S\), and hence lies in \(\Hom((J^{\perp}/J)/S, E)\).
By \cite[\S 4C]{ale.eng:23}, its image \(\chi\) in \(\Hom(S^{\rm sat}/S, E)\) depends only on \(S\) and \(J \subset T\).
In other words, the set of all periods of \(S\)-quasipolarized Type II surfaces over the cusp \(J\) is the translate of \(\Hom((J^\perp_{T}/J)^*,E) = (J^\perp_{T}/J)\otimes E\) in \(\Hom((J^\perp/J)/S,E)\) over \(\chi\).

Fix an automorphism \(\rho\) of \(L\) of finite order.
Suppose that the family \(X^{*} \to C^{*}\) is equipped with an automorphism \(\sigma\) and is \(\rho\)-markable.
Let \(T = T_{\rho}\) and \(S = S_{\rho}\) as defined in \Cref{subsec:period_map}.
By \(\sigma^{*}\)-equivariance of the monodromy, \(J\) lies in \(T\) and is \(\rho\)-invariant.
From \eqref{eqn:JperpJ}, we get the isometry
\begin{equation}\label{eqn:JperpJrho}
  \Lambda^{\rm prim} \cong J^{\perp}_{T}/J.
\end{equation}
The double curve \(E = \C/J\) of \(X_0\) also admits a \(\rho\)-action, generating \(\Z/N\Z \subset {\rm Aut}(E,0)\).
The period \(\psi\) of \(X_0\) is \(\rho\)-invariant in \(\Hom((J^\perp/J)/S,E)\), and so is the associated character~\(\chi\).
So the period domain of Kulikov limits of \(\rho\)-markable K3 surfaces associated with fixed \(J \subset T\) is a \(\chi\)-translate of 
\[
    \Hom_{\Z[\zeta_N]}((J^\perp_{T_\rho}/J)^*,E) = J^\perp_{T_\rho}/J \otimes_{\Z[\zeta_N]} E \simeq \mathcal{A}_J.
\]
From now on, we identify this period domain with \(\mathcal{A}_J\).

Using the \(\rho\)-equivariant analog of \cite[Theorem 4.16]{ale.eng:23} and the construction of the toroidal compactification \(\overline{\D_\rho/\Gamma_\rho}^{\rm tor}\) (see \cite[\S 5B]{ale.eng:23} and \cite[\S 3E]{ale.eng.han:24}), the period map \(f^* \from C^* \to \D_\rho/\Gamma_\rho\) of a \(\rho\)-markable family \(X^* \to C^*\) as above extends to an extended period map
\begin{equation}\label{eqn:extendedperiod}
  f \from C \to \overline{\D_\rho/\Gamma_\rho}^{\rm tor}.
\end{equation}
Recall from \Cref{subsec:semitoroidal} that the boundary component of \(\overline{\D_\rho/\Gamma_\rho}^{\rm tor}\) above the cusp \(J\) is a finite quotient of \(\mathcal{A}_J\).
Then, \(\rho\)-equivariant version of \cite[Theorem 4.16]{ale.eng:23} implies that \(f(0)\) in the boundary component is the image of \(\psi \in \mathcal{A}_J\).

\section{Moduli space of quadruple covers of \texorpdfstring{\(\P^2\)}{P2}} \label{sec:moduli_quartic_P2}
In this section, we identify the KSBA compactification of a moduli space of K3 surfaces with a non-symplectic automorphism of order $4$.

Let \(X \to \P^2\) be a cyclic cover of degree \(4\) branched along a general quartic \(B \subset \P^2\).
Then \(X\) is a smooth K3 surface.
The cyclic cover construction equips it with a non-symplectic automorphism $\sigma \colon X \to X$ of order $4$.
We choose it so that $\sigma^{*} \omega_X = \zeta_4 \omega_{X}$.

Recall that $L$ is the K3 lattice.
Let \(\rho \in O(L)\) be the automorphism of order 4 arising from a marking of \((X,\sigma)\).
Let \(\tau = \sigma^2\), so that \(\tau^{*}\) is identified with \(\rho^2 \in O(L)\) under this marking.
Recall from \Cref{subsec:period_map} that \(T_{\rho} = L \cap L_{\C}^{\rm prim}\) and \(S_{\rho} = T_{\rho}^{\perp}\).
In this case, we have
\[T_{\rho} = \{x \in L \mid \tau^{*} x = -x\}. \]
Kond\=o describes \(S_\rho\) and \(T_{\rho}\) in \cite[\S~2]{kon:00} (the notation there is \(L_{+}\) and \(L_{-}\)) as
\[
    T_{\rho} \simeq U(2) \oplus U(2) \oplus D_8 \oplus A_1^{\oplus 2}, \quad S_\rho\simeq \left<2\right> \oplus A_1^{\oplus 7}.
\]

We now describe the automorphism $\rho \colon T_{\rho} \to T_{\rho}$ induced by $\sigma$.
To do so, it is helpful to write a different direct sum decomposition for \(T_{\rho}\).
\begin{lemma}
  We have \(T_{\rho} \simeq U \oplus U(2) \oplus D_4^{\oplus 2} \oplus A_1^{\oplus 2}\).
\end{lemma}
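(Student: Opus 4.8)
The plan is to show that the two rank-$12$ lattices $U(2)^{\oplus 2} \oplus D_8 \oplus A_1^{\oplus 2}$ and $U \oplus U(2) \oplus D_4^{\oplus 2} \oplus A_1^{\oplus 2}$ are isometric. Both are even of signature $(2,10)$, so by the Nikulin classification it suffices to check that they have isometric discriminant forms (and then invoke uniqueness of the genus, since the signature condition for the uniqueness theorem, $\ell(A_q) + 2 \le \operatorname{rk}$, is comfortably met here: the discriminant group has length at most $6$ while the rank is $12$). First I would compute the discriminant form of the left-hand side: $U(2)$ contributes $(\Z/2)^2$ with the hyperbolic form $u_1$, $D_8$ contributes $\Z/2$ with a form of discriminant $\pm1/4 \cdot(\text{something})$ — more precisely $q_{D_8}$ is the form $\langle 1 \rangle$ or $\langle -1 \rangle$ on $\Z/2$ (for $D_{8k}$ it is $v$ versus $u$ issues; I would fix the precise value by the standard table, e.g. $q_{D_8} = w_{2,1}^{1}$, the form with $q(x)=1$), and each $A_1$ contributes $\Z/2$ with $q(x) = -1/2 \bmod 2\Z$. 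The right-hand side has the same $U$ (unimodular, no contribution) plus the same $U(2) \oplus A_1^{\oplus 2}$ plus $D_4^{\oplus 2}$; since $q_{D_4} \oplus q_{D_4}$ is the hyperbolic plane $u_1$ over $\Z/2 \times \Z/2$ (this is the well-known fact that $D_4 \oplus D_4$ has the same discriminant form as $U(2)$), the two discriminant forms agree. Hence the lattices are isometric.

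The key steps, in order: (1) verify both lattices are even of signature $(2,10)$ — immediate from the signatures $U \sim (1,1)$, $U(2) \sim (1,1)$, $D_4 \sim (0,4)$, $D_8 \sim (0,8)$, $A_1 \sim (0,1)$; (2) recall/derive the discriminant form computation $q_{D_4 \oplus D_4} \cong q_{U(2)}$ (equivalently $\cong q_{D_8} \oplus q_{A_1}^{\oplus ?}$ — here I must be careful, since $D_8$ has discriminant group $\Z/2$, not $(\Z/2)^2$; so the matching is really $q_{U(2)} \oplus q_{D_8} \cong q_{D_4 \oplus D_4} \oplus q_{D_8}$ trivially on the nose once step (2) is in hand, i.e.\ the $A_1^{\oplus 2}$ and $D_8$ summands cancel from both sides and we reduce to $q_{U(2)} \cong q_{D_4 \oplus D_4}$); (3) invoke Nikulin's theorem \cite{nik:79} that an even lattice of signature $(p,q)$ with $p,q \ge 1$ and $\ell(A_L) \le \operatorname{rk}(L) - 2$ is determined up to isometry by its signature and discriminant form. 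Alternatively — and this may be cleaner to write — I would give an explicit geometric identification: $D_8 \oplus A_1^{\oplus 2}$ and $U \oplus D_4^{\oplus 2} / (\text{glue})$... but the discriminant-form route is the most economical.

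The main obstacle I anticipate is bookkeeping the precise discriminant \emph{forms} (not just groups) of $D_8$ and of $D_4$: the $D_{4k}$ lattices have discriminant group $(\Z/2)^2$ while $D_{4k+2}$ and $D_{\text{odd}}$ differ, and $D_8$ specifically has discriminant group $\Z/4$ or $(\Z/2)^2$ depending on convention — in fact $D_8$ has discriminant group $(\Z/2)^{2}$ with discriminant form $u_1$ or $v_1$. I should double-check: $\det D_n = 4$ always, and for $n \equiv 0 \pmod 4$ the discriminant group is $(\Z/2)^2$, for $n$ odd it is $\Z/4$. So $D_8$ gives $(\Z/2)^2$, and the correct statement is that $q_{D_8} \cong q_{D_4} \cong$ (one of $u_1, v_1$), while $q_{D_4 \oplus D_4} \cong q_{D_8 \oplus D_8}\cong u_1 \oplus u_1$ or similar. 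The cleanest path: both $q_{D_8}$ and $q_{D_4 \oplus U(2) / \cdots}$ — rather, note $U \oplus D_8 \cong U \oplus D_4 \oplus D_4$? No. Let me instead just commit to: compute $q$ of both full lattices directly using the standard tables (e.g.\ \cite[\S1.5]{nik:79} or Conway–Sloane), confirm the forms $u_1^{\oplus 2} \oplus (q_{A_1})^{\oplus 2}$ on $(\Z/2)^4 \times \cdots$ match on both sides, and conclude. Once the forms are pinned down the rest is automatic; the risk is purely a sign/convention slip in the $D$-lattice discriminant data, which I would resolve by cross-checking against the known fact that $U(2)^{\oplus 2} \oplus D_8 \oplus A_1^{\oplus 2}$ embeds primitively in the K3 lattice with the stated orthogonal complement $\langle 2\rangle \oplus A_1^{\oplus 7}$, forcing its discriminant form to be the negative of that of $\langle 2 \rangle \oplus A_1^{\oplus 7}$.
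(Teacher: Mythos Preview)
Your overall strategy---Nikulin uniqueness via matching signature and discriminant form---is correct and close in spirit to the paper's argument. But your bookkeeping contains a genuine error. You claim ``the well-known fact that $D_4 \oplus D_4$ has the same discriminant form as $U(2)$'' and then reduce to $q_{U(2)} \cong q_{D_4 \oplus D_4}$. This cannot hold: $A_{U(2)} \cong (\Z/2)^2$ while $A_{D_4 \oplus D_4} \cong (\Z/2)^4$. After cancelling the common $U(2) \oplus A_1^{\oplus 2}$ from both sides (and noting $U$ is unimodular), the correct residual identity is $q_{U(2)} \oplus q_{D_8} \cong q_{D_4} \oplus q_{D_4}$, i.e.\ $u_1 \oplus u_1 \cong v_1 \oplus v_1$ on $(\Z/2)^4$. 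This is true, but it is a nontrivial relation among $2$-adic finite quadratic forms that you would need to verify separately; it is not the trivial cancellation you describe. Your proposal also flip-flops on whether $A_{D_8}$ is $\Z/2$ or $(\Z/2)^2$ (it is the latter).

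The paper avoids all of this by working with the rank-$10$ pieces $U(2) \oplus D_8$ and $U \oplus D_4^{\oplus 2}$ directly and invoking the classification of even indefinite \emph{$2$-elementary} lattices (Nikulin, \cite[Theorem~3.6.2]{nik:79}), which says such a lattice is determined by $(t_+, t_-, a, \delta)$. Here both have $(t_+, t_-, a, \delta) = (1, 9, 4, 0)$: the $\delta = 0$ condition (that $q$ is $\Z$-valued) is immediate to check for $u_1$ and $v_1$ separately, and no relation like $u_1^{\oplus 2} \cong v_1^{\oplus 2}$ needs to be invoked explicitly. This is the cleaner route; your approach can be repaired, but only by proving that relation, which is more work than the $\delta$-invariant check.
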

\begin{proof}
  The lattices \(U(2) \oplus D_8\) and \(U \oplus D_4^{\oplus 2}\) are even, 2-elementary, indefinite, and have the same basic invariants 
  $(t_+,t_-,a,\delta)=(1,9,4,0)$.
  So by \cite[Theorem 3.6.2]{nik:79}, 
  they are isomorphic.
\end{proof}

Choose a basis \(\langle e, f\rangle\) of \(U\) so that \(e^2 = f^2 = 0\) and \(ef = 1\).
Likewise, choose a basis \(\langle e', f' \rangle\) of \(U(2)\) such that \({e'}^2 = {f'}^2=0\) and \(e' f' = 2\).
Consider \(D_4\) as an index 2 sublattice of \(\Z^4\) in the standard way, namely consisting of vectors whose coordinates add up to an even number.
Let \(e_1, \dots, e_4\) be the standard basis of \(\Z^4\).
\begin{proposition}
  Set \(T = T_{\rho} = U \oplus U(2) \oplus D_4^{\oplus 2} \oplus A_1^{\oplus 2}\).
  Up to conjugation by an element of $O(T)$, the order 4 automorphism $\rho = \sigma^*$ acts on $T$ as follows.
  \begin{enumerate}
  \item On $U\oplus U(2)=\langle e, f\rangle \oplus \langle e', f'\rangle$ by
    \begin{displaymath}
      e\mapsto -e+e',\ e'\mapsto -2e+e', \quad f\mapsto f+f',\ f'\mapsto -2f-f'.
    \end{displaymath}
  \item On each $D_4 \subset \Z^4=\langle e_1,\dotsc, e_4\rangle$, by $e_{2i-1}\mapsto e_{2i} \mapsto -e_{2i-1}$ for $i=1,2$.
   \item On $A_1\oplus A_1 = \langle h_1, h_2\rangle$ by $h_1\mapsto h_2\mapsto -h_1$.
  \end{enumerate}
\end{proposition}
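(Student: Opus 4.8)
The plan is to split the claim into two independent pieces: (i) the three displayed formulas really do define an isometry $\rho_0\in O(T)$ of order $4$ with $\rho_0^2=-\mathrm{id}_T$ and with the expected $\zeta_4$-eigenspace; and (ii) the geometric isometry $\rho=\sigma^*$ is conjugate in $O(T)$ to this $\rho_0$.

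For (i) I would verify the three blocks separately. On $U\oplus U(2)=\langle e,f\rangle\oplus\langle e',f'\rangle$ it is a $4\times 4$ integral check: using $e^2=f^2={e'}^2={f'}^2=0$, $ef=1$, $e'f'=2$ and $ee'=ef'=fe'=ff'=0$, one confirms that $\rho_0$ preserves all pairwise products and that $\rho_0^2$ negates each of $e,f,e',f'$. On each copy of $D_4\subset\Z^4$ and on $A_1^{\oplus 2}=\langle h_1,h_2\rangle$ the map is a signed permutation of an orthogonal basis, hence manifestly an isometry with square $-\mathrm{id}$, and it preserves the sublattice $D_4\subset\Z^4$ because a signed permutation preserves the parity of the coordinate sum. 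So $\rho_0\in O(T)$, $\rho_0^4=\mathrm{id}$, $\rho_0^2=-\mathrm{id}$. Since $\rho_0^2=-\mathrm{id}$, the pair $(T,\rho_0)$ is a free Hermitian $\Z[i]$-lattice of rank $7$, the eigenspace $T^{\zeta_4}_\C$ has complex dimension $7$, and reading the Hermitian signature block by block gives $(1,1)$ from $U\oplus U(2)$, $(0,2)$ from each $D_4$, and $(0,1)$ from $A_1^{\oplus 2}$, i.e.\ $(1,6)$ in total; this matches $\D_\rho$ being a $6$-dimensional ball and the $6$-dimensional family of plane quartics.

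For (ii) the idea is that $(T_\rho,\rho)$, regarded as a Hermitian $\Z[i]$-lattice, is unique in its genus, so that any two order-$4$ isometries of $T$ with square $-\mathrm{id}$, with the same action on the discriminant form, and with the same eigenspace signature are conjugate in $O(T)$. The invariants to pin down are: the underlying $\Z$-lattice (namely $T$, by the preceding Lemma and Kond\=o's computation of $T_\rho$), the Hermitian signature $(1,6)$, and the action of $\rho$ on $A_{T}\cong(\Z/2)^{8}$ --- on which $\rho^2=-\mathrm{id}$ acts trivially, so $\rho$ acts through an isometry of order $\le 2$ of the finite quadratic form $q_T$, a datum that is quickly enumerated. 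Conjugacy then follows from the $\Z[i]$-analogue of Nikulin's uniqueness theorem for indefinite lattices (the equivariant version used for finite-order isometries; note $\operatorname{rk}T=14\ge\ell(A_T)+2$), provided one checks its rank and $p$-adic hypotheses in our situation. A more hands-on alternative is to use the model $X=\{w^4=f_4(x,y,z)\}\subset\P^3$ with $\sigma(x:y:z:w)=(x:y:z:iw)$: compute $\sigma^*$ on $H^2_{\mathrm{prim}}(X)$ via the Jacobian ring $R=\C[x,y,z,w]/(w^3,\partial_x f_4,\partial_y f_4,\partial_z f_4)$, extract an integral basis of $T_\rho=H^2(X,\Z)^{\tau=-1}$ adapted to the tower $X\to X/\tau\to\P^2$ (with $X/\tau$ a degree-$2$ del Pezzo), and transform the resulting matrix into the stated normal form by an element of $O(T)$.

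The step I expect to be the main obstacle is (ii): showing that the conjugacy class of $\rho$ is forced by the invariants one can compute. The passage from genus to isomorphism class requires careful bookkeeping of the discriminant form together with the $\rho$-action and an appeal to an equivariant Nikulin-type uniqueness statement in the $\Z[i]$, order-$4$ setting, which lies slightly outside the most classical formulations; the alternative route through the explicit quartic model sidesteps this but demands an honest, if routine, computation with the Jacobian ring and with the integral cohomology of the tower of double covers.
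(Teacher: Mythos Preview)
Your part (i) is correct and routine. For (ii), however, your route diverges from the paper's. The paper does not attempt a uniqueness argument inside $T$; instead it lifts the candidate $\phi=\rho_0$ on $T$ to an isometry $g$ of the full K3 lattice $L$. The lift exists because $\phi$ and $\sigma^*$ induce the same map on the discriminant group $A_T=A_S$ (checked via Kond\=o's description of $\sigma^*$ on $A_S$), so the pair $(\sigma^*|_S,\phi)$ glues to $g\in O(L)$. One then observes that $g$ is a purely non-symplectic order-$4$ isometry of $L$ with invariant sublattice $L^g\simeq\langle 4\rangle$, the same invariants as $\sigma^*$. The Brandhorst--Hofmann classification (with its accompanying database) shows there is a unique $O(L)$-conjugacy class of such isometries, so $g$ and $\sigma^*$ are conjugate in $O(L)$. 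The conjugating element automatically preserves $T$ --- it is the $(-1)$-eigenspace of the square of both $g$ and $\sigma^*$ --- and hence restricts to an element of $O(T)$ conjugating $\phi$ to $\sigma^*|_T$.

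Your two alternatives for (ii) --- an equivariant Nikulin-type uniqueness for Hermitian $\Z[i]$-lattices, or a direct Jacobian-ring computation on $\{w^4=f_4\}\subset\P^3$ --- are not wrong in principle, but, as you yourself acknowledge, the first rests on a uniqueness statement you neither state precisely nor verify the hypotheses of, and the second is a substantial computation you do not actually carry out. So as written, (ii) is a sketch with a real gap. The paper sidesteps both difficulties by passing to the ambient unimodular lattice $L$, where the conjugacy question reduces to a lookup in an existing classification of non-symplectic K3 isometries. That is the missing idea: rather than proving a $\Z[i]$-lattice genus theorem for $T$, embed the problem in $L$ and quote the known classification there.
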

\begin{proof}
  For a 2-elementary lattice \(M\), let \(A_M = M^{*}/M\) be the discriminant group, equipped with the discriminant form \(q_M(x) = x^2 \mod 2\Z \in \frac{1}{2}\Z/2\Z\).
  Set 
  \[N=\{x\in A_T\mid q(x)\in \Z/2\Z\}.\]
  Let $\phi \colon T \to T$ be the action described in the statement.
  We see that $\phi|_N$ is the identity.
  Let $S = S_{\rho}$ be the generic Neron-Severi lattice.
  By \cite[Lemma 2.2(ii)]{kon:00}, we see that $\phi = \sigma^{*}$ on $A_T = A_S$.
  Therefore, the pair $(\sigma^{*}|_S, \phi)$ lifts to an isometry of $L$; call it $g$.

  The isometry $g \colon L \to L$ is a purely non-symplectic automorphism with invariant subspace $L^g$ of rank 1, isomorphic to the lattice $\langle 4\rangle$.
  By \cite{bra.hof:23} and the accompanying database, up to conjugation there exists a unique isometry of $L$ with these properties.
  So $g$ and $\sigma^*$ are conjugate.
\end{proof}
\begin{corollary}\label{cor:4j}
  Let $J=\langle e, e'\rangle \subset U\oplus U(2) \subset T$.
  Then $J$ is $\rho$-invariant, isotropic, and satisfies $J_{T}^\perp/J \simeq D_4^2\oplus A_1^{\oplus 2}$.
\end{corollary}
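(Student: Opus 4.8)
The plan is to verify directly the three claimed properties of $J = \langle e, e'\rangle$ using the explicit formulas for $\rho$ in the preceding proposition. First I would check $\rho$-invariance: from part (1) of the proposition, $\rho(e) = -e + e'$ and $\rho(e') = -2e + e'$, so $\rho$ maps the span of $e, e'$ to itself. Hence $J$ is $\rho$-invariant. Next, isotropy: in $U \oplus U(2) = \langle e, f\rangle \oplus \langle e', f'\rangle$ we have $e^2 = 0$, ${e'}^2 = 0$, and $e \cdot e' = 0$ since $e, e'$ lie in orthogonal summands. So the Gram matrix of $J$ is zero, i.e. $J$ is isotropic (and it is clearly primitive, being spanned by part of a basis adapted to the direct sum decomposition).

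The substantive point is the identification $J_T^\perp/J \simeq D_4^{\oplus 2} \oplus A_1^{\oplus 2}$. Here one computes $J^\perp$ inside $T = U \oplus U(2) \oplus D_4^{\oplus 2} \oplus A_1^{\oplus 2}$. Since $e, e'$ are orthogonal to the $D_4^{\oplus 2} \oplus A_1^{\oplus 2}$ part, those summands survive entirely into $J^\perp_T$. Within $U \oplus U(2)$, the orthogonal complement of $\langle e, e'\rangle$ is spanned by $e, e'$ themselves together with nothing else (since $f \cdot e = 1 \ne 0$ and $f' \cdot e' = 2 \ne 0$): explicitly, $\langle e, e'\rangle^\perp \cap (U \oplus U(2)) = \langle e, e'\rangle$. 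Therefore $J^\perp_T = \langle e, e'\rangle \oplus D_4^{\oplus 2} \oplus A_1^{\oplus 2}$, and quotienting by $J = \langle e, e'\rangle$ leaves $J_T^\perp/J \simeq D_4^{\oplus 2} \oplus A_1^{\oplus 2}$, with the induced pairing being the original one on those summands (the quotient is nondegenerate because $\langle e,e' \rangle$ is exactly the radical of the restricted form on $\langle e, e' \rangle^\perp$).

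I expect no serious obstacle here: the work is entirely bookkeeping with the explicit direct sum decomposition, and the only mild subtlety is checking that $\langle e, e'\rangle^\perp \cap (U\oplus U(2))$ is no larger than $\langle e, e'\rangle$, which follows immediately from unimodularity of $U$ and the observation that $e \cdot f = 1$, $e' \cdot f' = 2$ are nonzero. One should also note that $J \subset T$ being $\rho$-invariant, isotropic, rank $2$, and primitive makes it a genuine cusp in the sense of \Cref{subsec:BB}, provided $J_\C^{\zeta_4} \ne 0$; this last nonvanishing is automatic once one exhibits, as in \Cref{subsec:BB}, an eigenvector — e.g. the $\rho$-action on $J_\C = \langle e, e'\rangle_\C$ has characteristic polynomial $t^2 + 1$ (trace $0$, determinant $1$ from the matrix $\left(\begin{smallmatrix} -1 & -2 \\ 1 & 1\end{smallmatrix}\right)$), so its eigenvalues are $\pm i = \pm\zeta_4$, giving a one-dimensional $\zeta_4$-eigenspace as required.
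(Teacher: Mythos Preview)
Your proposal is correct and follows exactly the same approach as the paper: the paper's proof is the one-liner ``$J_{T}^\perp = J \oplus D_4^{\oplus 2}\oplus A_1^{\oplus 2}$, so $J_{T}^\perp/J \simeq D_4^2\oplus A_1^{\oplus 2}$,'' and you have simply spelled out the bookkeeping behind it. Your additional check that $\rho|_J$ has eigenvalues $\pm i$ (so $J_\C^{\zeta_4}\neq 0$) is a nice extra, not in the paper's proof of this corollary but relevant for its use as a cusp.
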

\begin{proof}
  Indeed, $J_{T}^\perp = J \oplus D_4^{\oplus 2}\oplus A_1^{\oplus 2}$, so $J_{T}^\perp/J \simeq
  D_4^2\oplus A_1^{\oplus 2}$. 
\end{proof}

It turns out that this $J$ defines the unique cusp of the Baily--Borel compactification.
\begin{theorem}
  \label{thm:bbcusp4}
  In the setup above, the Baily--Borel compactification \(\overline{\D_{\rho}/\Gamma_{\rho}}^{\rm BB}\) has a unique cusp, corresponding to the isotropic plane \(J \subset U(2) \oplus U(2) \subset T_{\rho}\).
  For this \(J\), we have \(J^\perp_{T_\rho}/J = D_4^{\oplus 2} \oplus A_1^2.\)
\end{theorem}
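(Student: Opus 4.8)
The plan is to show that $\rho$-invariant isotropic rank-$2$ sublattices $J\subset T_\rho$ with $J_\C^{\zeta_4}\ne 0$ exist and are all $\Gamma_\rho$-conjugate, by reducing the question to a genus computation over the Eisenstein-type ring $\Z[\zeta_4]=\Z[i]$. The starting observation is that, as shown in \Cref{subsec:BB}, any such $J$ is automatically $\rho$-invariant and carries the structure of a rank-$1$ free module over $\Z[i]$ (via the $\rho$-action, which acts as a rotation by $i$ on $J$). So the set of relevant cusps is in bijection with $O_\rho(T_\rho)$-orbits (equivalently, by the analog of the $N=3$ remark in \Cref{subsec:semitoroidal}, with $\Gamma_\rho$-orbits, once one checks $\overline\Gamma_\rho = O_\rho(T_\rho)$ here) of primitive isotropic $\Z[i]$-lines in the Hermitian $\Z[i]$-lattice $(T_\rho,\rho)$.

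First I would record that $(T_\rho,\rho)$, viewed as a Hermitian $\Z[i]$-lattice, has the explicit hyperbolic summand $U\oplus U(2)$ with its $\rho$-action as in the Proposition, inside which $J=\langle e,e'\rangle$ sits as an isotropic $\Z[i]$-line; by \Cref{cor:4j} one computes directly $J^\perp_{T_\rho} = J\oplus D_4^{\oplus 2}\oplus A_1^{\oplus 2}$ and hence $J^\perp_{T_\rho}/J\cong D_4^{\oplus 2}\oplus A_1^{\oplus 2}$, which gives the second assertion of the theorem regardless of uniqueness. Next I would argue existence/uniqueness of the cusp. The cleanest route is to invoke the general principle (as in Scattone, or Nikulin-style arguments over $\Z[i]$) that the $O_\rho(T_\rho)$-orbits of primitive isotropic $\Z[i]$-lines $J$ are controlled by the isometry class of the Hermitian lattice $J^\perp_{T_\rho}/J$ together with a gluing/discriminant datum, and that $U\oplus U(2)$ contains a unique orbit of isotropic $\Z[i]$-lines. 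Concretely: given any such $J$, the quotient $J^\perp_{T_\rho}/J$ is a negative-definite Hermitian $\Z[i]$-lattice of the appropriate rank and discriminant, glued to the standard hyperbolic plane; one then checks — using the uniqueness of $2$-elementary lattices via \cite[Theorem 3.6.2]{nik:79} applied to the underlying $\Z$-lattices, plus the rigidity of the $\rho$-action coming from \cite{bra.hof:23} as already used in the Proposition — that $(T_\rho,\rho)$ has (Hermitian) Witt rank exactly $1$ and that the anisotropic kernel is rigid, forcing all $J$ into a single orbit.

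The step I expect to be the main obstacle is the uniqueness of the orbit: one must rule out a second $\Gamma_\rho$-orbit of isotropic $\Z[i]$-lines whose perp-quotient lattice is abstractly isomorphic to $D_4^{\oplus 2}\oplus A_1^{\oplus 2}$ but glued differently, or whose underlying $\Z$-lattice $J^\perp/J$ (an even unimodular lattice of signature $(1,17)$, hence $\cong U\oplus E_8^{\oplus 2}$ by \cite[Proposition 3.20]{ale.eng:23} / \eqref{eqn:JperpJ}) is partitioned differently by the $\rho$-action. I would handle this by a direct analysis over $\Z[i]$: parametrize primitive isotropic vectors $\delta\in T_\rho$ with $\Z[i]\delta$ a line, use the fact that $T_\rho$ is $2$-elementary with known discriminant form and that the $\rho$-action on $A_{T_\rho}$ is pinned down (as in the proof of the Proposition via \cite[Lemma 2.2(ii)]{kon:00}), and show that the stabilizer $O_\rho(T_\rho)$ acts transitively — appealing if necessary to Eichler's criterion / strong approximation for the spinor genus of the indefinite form $J^\perp_{T_\rho}/J$, which has rank $\ge 3$ over $\Z[i]$ so that the relevant class number is $1$. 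This gives both that the cusp exists and that it is unique, completing the proof.
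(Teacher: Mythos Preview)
Your computation of $J^\perp_{T_\rho}/J$ is the same as the paper's: both rely on \Cref{cor:4j}. For the uniqueness of the cusp, however, the paper does not carry out the Hermitian $\Z[i]$-lattice analysis you sketch; it simply cites \cite[Corollary~4.6]{art:09}, where Artebani already proved that $\overline{\D_{\rho}/\Gamma_{\rho}}^{\rm BB}$ has a unique cusp for this family. So the paper's proof is two lines.

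Your route is genuinely different and more ambitious: you are in effect trying to redo Artebani's count from scratch via the Hermitian genus over $\Z[i]$ and an Eichler-type transitivity argument. That would buy self-containment and would parallel what the paper does for $N=3$ in \Cref{sec:K3_aut3_Eisenstein} (splitting off a hyperbolic summand and classifying cusps by the isometry class of $J^\perp_{T_\rho}/J$). The outline is plausible, but a couple of steps you flag as routine are not quite: you need to verify $\overline{\Gamma}_\rho = O_\rho(T_\rho)$ in the $N=4$ case (the paper only asserts this for $N=3$, citing \cite{ma.oha.tak:15}), and the Eichler/strong-approximation step over $\Z[i]$ requires the Hermitian form to split off a hyperbolic plane over $\Z[i]$, which you should exhibit explicitly rather than infer from the $\Z$-rank. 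None of this is wrong, but it is substantially more work than invoking \cite{art:09}.
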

\begin{proof}
  By \cite[Corollary~4.6]{art:09}, \(\overline{\D_{\rho}/\Gamma_{\rho}}^{\rm BB}\) has a unique cusp.
  Now use \Cref{cor:4j}.
\end{proof}

\subsection{Stable and Kulikov degenerations}
Following the notation in \Cref{subsec:KSBA_compactification}, let \(\Y_{\rho}\) be the normalization of the KSBA compactification of weighted pairs \((Y,(\frac{3}{4}+\epsilon)B)\), where \(Y \simeq \P^2\) and \(B \subset Y\) is a quartic curve.
This is a normal projective variety of dimension \(6\).
Recall, again from \Cref{subsec:KSBA_compactification}, that \(\overline F_{\rho}^{\rm KSBA}\) is the normalization of the KSBA compactification of weighted triples \((X, \epsilon R, \sigma)\), where \(X \to Y\) is a cyclic degree 4 cover branched along \(B\), \(R \subset X\) is the ramification divisor, and \(\sigma\) is the automorphism of type \(\rho\) induced by the covering construction.
\Cref{thm:stable_quotient_pair} gives an isomorphism \(\overline F_{\rho}^{\rm KSBA} \xrightarrow{\simeq} \Y_{\rho}\).

The geometry of \(\Y_{\rho}\) is described in \cite[\S~11.1]{hac:04}.
The pairs that give a type II surface correspond to those with reducible \(Y\).
\begin{proposition}\label{prop:z4}
  The locus \(Z \subset \Y_{\rho}\) that parametrises \((Y,B)\) with reducible \(Y\) is irreducible of dimension \(4\).
  Its points parametrise \((Y, B)\) where \(Y = Y_0 \cup Y_1\) is the union of two quadric cones \(Y_{i} = \P(1,1,2)\) glued along a line (with the singularities glued together) and \(B\) is the union of \(B_i \subset Y_i\) of class \(-K_{Y_i}\).
\end{proposition}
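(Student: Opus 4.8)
The plan is to take the shape of a reducible pair as essentially part of Hacking's description of the boundary of $\Y_\rho$ in \cite[\S~11.1]{hac:04} (it is also what the Kulikov/MMP machinery outputs), and then to settle irreducibility and the dimension count by a direct parameter count.

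\emph{Step 1 (shape of a reducible pair).} A pair $(Y,(\tfrac34+\epsilon)B)\in\Y_\rho$ has reducible $Y$ precisely when the associated family of quadruple covers of $\P^2$ degenerates to a Type II Kulikov surface (there is no Type III contribution, as $N=4$, by \cite[Corollary~3.17]{ale.eng.han:24}), so all such pairs lie over the unique Baily--Borel cusp $J$ of \Cref{thm:bbcusp4}. By \cite[\S~11.1]{hac:04}, a reducible member of $Z$ has the claimed form: $Y=Y_0\cup_D Y_1$ with each $Y_i\cong\P(1,1,2)$ a quadric cone, $D$ a line that is a ruling through the vertex of each $Y_i$ (so that the two singular points are glued), and $B=B_0+B_1$ with $B_i\subset Y_i$ of class $-K_{Y_i}$. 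A self-contained proof would run the MMP on a $\sigma$-equivariant Type II Kulikov model of the cover and descend to $Y$: as $K_Y^2=9$ and $K_Y+(\tfrac34+\epsilon)B$ is ample, longer chains and components other than $\P(1,1,2)$ are excluded, while the adjunction relation $K_{Y_i}+D_i+\tfrac34 B_i\sim_\Q 0$ forces $D_i\in|{-}\tfrac14 K_{Y_i}|$ (a ruling line) and $B_i\in|-K_{Y_i}|$.

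\emph{Step 2 (irreducibility and dimension).} The glued surface $Y=Y_0\cup_D Y_1$ is rigid: any two such are isomorphic, because each $\Aut(Y_i,D_i)$ surjects onto the $2$-dimensional group of automorphisms of $D\cong\P^1$ fixing the vertex. Hence $Z$ is dominated by the space of the remaining data $(B_0,B_1)$. For each $i$, $|-K_{Y_i}|\cong\P^8$, since $H^0(\P(1,1,2),\O(4))$ has dimension $9$, and restriction to $D$ is a linear surjection $|-K_{Y_i}|\dashrightarrow\P^2$ onto the projectivized space of sections of the degree-$2$ line bundle $(-K_{Y_i})|_D$. Because the pair is $\Q$-Gorenstein smoothable, $B_0$ and $B_1$ cut out the same divisor on $D$, i.e.\ $B_0|_D=B_1|_D$ in $\P^2$; so the admissible pairs $(B_0,B_1)$ form an irreducible family of dimension $8+8-2=14$ (the preimage of the diagonal of $\P^2\times\P^2$, fibered over $\P^2$ with general fiber $\P^6\times\P^6$). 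Finally $\Aut(Y)$ is the fiber product of the two $6$-dimensional groups $\Aut(Y_i,D_i)$ over that same $2$-dimensional group, so $\dim\Aut(Y)=10$, and $\Aut(Y)$ acts with finite stabilizer on a general $(B_0,B_1)$ (an automorphism of $\P(1,1,2)$ fixing a general anticanonical elliptic curve has finite order). Therefore $Z$ is irreducible of dimension $14-10=4$.

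\emph{Main obstacle.} The delicate step is Step 1: certifying that no reducible configuration other than the one above occurs. I would invoke \cite[\S~11.1]{hac:04} for this, with the equivariant-MMP computation as a backup. Everything in Step 2 --- the two cohomology computations, the matching condition along $D$, and $\dim\Aut(Y)=10$ --- is then routine linear algebra and deformation theory.
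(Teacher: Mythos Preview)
Your proposal is correct and aligns with the paper's own proof, which is simply the one-line citation ``See \cite[\S~11.1]{hac:04}.'' Your Step~1 invokes the same reference for the shape of the reducible pairs. Where you go beyond the paper is Step~2: rather than leaving irreducibility and the dimension implicit in the citation, you supply a direct and correct parameter count ($\dim|{-}K_{Y_i}|=8$, restriction to $D$ surjects onto a $\P^2$, the matching condition along $D$ cuts the pairs $(B_0,B_1)$ down to dimension $14$, and $\dim\Aut(Y)=6+6-2=10$, giving $14-10=4$). This makes your argument more self-contained than the paper's, at no real cost.

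One small remark on your backup MMP sketch in Step~1: the quantity ``$K_Y^2=9$'' is not the invariant preserved under this degeneration (indeed $K_{\P(1,1,2)}^2=8$ on each component); the relevant constant is $B^2=16$, equivalently the volume of the polarizing divisor $K_Y+(\tfrac34+\epsilon)B$. This is harmless, since you ultimately rely on the Hacking citation rather than on the sketch.
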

\begin{proof}
  See \cite[\S~11.1]{hac:04}.
\end{proof}

Let \((Y,B)\) correspond to a generic point of \(Z\).
Let $X \to Y$ be a cyclic degree 4 cover branched along $B$.
We describe the two components of $X$.
Consider \(\P(1,1,2) \subset \P^3\) as a singular quadric.
The class of a line through the singular point generates the Weil divisor class group; we call it $\mathcal{O}_{\P(1,1,2)}(1)$.
The canonical class \(K_{\P(1,1,2)}\) is \(\mathcal{O}_{\P(1,1,2)}(-4)\) and the double locus \(D_i \subset Y_i \simeq \P(1,1,2)\) is of class \(\mathcal{O}_{\P(1,1,2)}(1)\).
Since \(K_{\P(1,1,2)}+(3/4)B_i+D_i \sim 0\), the class of \(B_i \subset Y_i\) is \(\mathcal{O}_{\P(1,1,2)}(4)\).
\begin{proposition}
  Let \(C \subset \P(1,1,2)\) be a smooth curve of class \(-K = \mathcal{O}_{\P(1,1,2)}(4)\).
  Let \(V \to \P(1,1,2)\) be a cyclic degree \(4\) cover branched along \(C\) and \(\sigma\) the order 4 involution on \(V\) induced by the cyclic covering.
  Then \(V\) is a smooth \(\delpezzo_2\).
  The action of \(\Z/4\Z = \langle \sigma \rangle\) on \(V\) is free except along the pre-image of \(C\), where the stabilizer group is \(\Z/4\Z\), and at two points, where the stabilizer group is \(\Z/2\Z\).
  Let \(H \subset \P(1,1,2)\) be generic of class \(\mathcal{O}_{\P(1,1,2)}(1)\).
  Then the pre-image of \(H\) in \(V\) is the elliptic curve of \(j\)-invariant \(1728\).
\end{proposition}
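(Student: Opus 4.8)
The plan is to present \(V\) explicitly as a hypersurface in a weighted projective space and then read off all three assertions from that model. Since \(\mathcal{O}_{\P(1,1,2)}(1)\) generates the divisor class group and \(-K_{\P(1,1,2)} = \mathcal{O}(4) = \mathcal{O}(1)^{\otimes 4}\) (as reflexive sheaves), the degree-\(4\) cyclic cover \(\pi \from V \to \P(1,1,2)\) branched along \(C = \{F = 0\}\), with \(F\) weighted homogeneous of degree \(4\), is
\[
  V = \bigl\{ y^4 = F(x_0,x_1,x_2) \bigr\} \subset \P(1,1,1,2),
\]
where \(y\) is a new coordinate of weight \(1\), the map \(\pi\) forgets \(y\), and \(\sigma\) acts by \(y \mapsto \zeta_4 y\). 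First I would record one preliminary fact: a smooth member of \(|\mathcal{O}(4)|\) cannot pass through the vertex \(p_0 = [0:0:1]\) of the cone, because any divisor through the \(A_1\)-point \(p_0\) is singular there (its tangent cone is cut out on the quadric cone by a plane, hence is a pair of lines or a double line). Consequently \(F(0,0,1) \neq 0\).

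For the first assertion, \(F(0,0,1) \neq 0\) shows \(V\) misses the unique singular point \([0:0:0:1]\) of \(\P(1,1,1,2)\), so \(V\) lies in the smooth locus; and the hypersurface \(\{y^4 = F\}\) can only be singular over \(\operatorname{Sing}(C) = \emptyset\), so \(V\) is smooth. Adjunction on \(\P(1,1,1,2)\) gives \(K_V = \mathcal{O}_V(-1)\), a genuine line bundle on \(V\) (since \(V\) avoids the bad point), anti-ample, with \((-K_V)^2 = 4 \cdot \mathcal{O}_{\P(1,1,1,2)}(1)^3 = 2\); a smooth projective surface with ample anticanonical of square \(2\) is a del Pezzo of degree \(2\). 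For the second assertion I would compute the fixed loci on this model: \(\operatorname{Fix}(\sigma) = \{y = 0\} \cap V \cong C\), with stabilizer \(\Z/4\Z\); and \(\operatorname{Fix}(\sigma^2) = \operatorname{Fix}(\sigma) \sqcup \pi^{-1}(p_0)\), where, owing to the weighted identification \([0:0:1:y] = [0:0:1:-y]\), the fibre \(\pi^{-1}(p_0) = \{[0:0:1:y] : y^4 = F(0,0,1)\}\) consists of exactly two points, which \(\sigma\) interchanges; hence each has stabilizer \(\langle \sigma^2 \rangle \cong \Z/2\Z\), and the action is free off \(\operatorname{Fix}(\sigma^2)\).

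For the third assertion, take \(H = \{a x_0 + b x_1 = 0\} \in |\mathcal{O}(1)|\) general. Eliminating a variable presents \(\pi^{-1}(H) = \{y^4 = G(x_0,x_2)\} \subset \P(1,1,2)\) for a weighted-degree-\(4\) form \(G = \alpha x_2^2 + \beta x_0^2 x_2 + \gamma x_0^4\) with \(\alpha = F(0,0,1) \neq 0\); completing the square in \(x_2\) and rescaling identifies \(\pi^{-1}(H)\) with \(\{ w^2 = y^4 - x_0^4 \}\) (the coefficient of \(x_0^4\) being nonzero for general \(H\)), a smooth genus-\(1\) curve carrying the order-\(4\) automorphism \(y \mapsto \zeta_4 y\), which fixes the two points with \(x_0 = 0\). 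An elliptic curve admitting an order-\(4\) automorphism with a fixed point has \(j\)-invariant \(1728\), since \(4\) divides \(|\Aut(E,0)| \in \{2,4,6\}\) only when this order is \(4\). (Alternatively, coordinate-free: \(\pi^{-1}(H) \to H \cong \P^1\) is a degree-\(4\) cyclic cover, totally ramified over the two points of \(C \cap H\) and of ramification type \((2,2)\) over \(p_0\), so Riemann--Hurwitz gives genus \(1\), and then the induced order-\(4\) automorphism with nonempty fixed locus forces \(j = 1728\).)

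I expect the main obstacle to be the bookkeeping near the vertex \(p_0\): confirming that the model in \(\P(1,1,1,2)\) is the correct cyclic cover, that \(V\) is genuinely smooth and \(-K_V\) genuinely Cartier there, and --- for the stabilizer count --- that \(\pi^{-1}(p_0)\) consists of two points rather than the naively expected four, which rests entirely on the weighted-projective sign identification. For the last assertion the only delicate point is taking \(H\) general enough that \(\pi^{-1}(H)\) is smooth, so the completing-the-square normalization is valid.
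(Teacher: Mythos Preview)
Your proof is correct and takes a genuinely different route from the paper. The paper factors the degree-\(4\) cover as two double covers \(V \to W \to \P(1,1,2)\): the first, branched along \(C\), uses the Cartier square root \(\mathcal{O}(2)\) and is therefore \'etale over the vertex, so \(W\) acquires two \(A_1\) points; the second, branched along the ramification curve in \(W\), uses the non-Cartier \(\phi^*\mathcal{O}(1)\), and the paper argues that the only such local double cover over an \(A_1\) point is \(\C^2 \to \C^2/\pm 1\), yielding smoothness of \(V\). The canonical class and stabilizers are then read off from fibre cardinalities, and the elliptic curve claim from the bare existence of a faithful \(\Z/4\Z\)-action.

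Your global model \(V = \{y^4 = F\} \subset \P(1,1,1,2)\) bypasses the factorisation entirely: smoothness and \(K_V = \mathcal{O}_V(-1)\) come straight from adjunction once you observe \(F(0,0,1)\neq 0\); the two-point fibre over the vertex is explained by the weighted identification \([0{:}0{:}y{:}1] = [0{:}0{:}{-y}{:}1]\); and you get an explicit equation for the elliptic curve. This is more concrete and arguably cleaner for the intersection-number and elliptic-curve computations, while the paper's approach makes the mechanism of singularity resolution more transparent. One small slip: on your model \(\{w^2 = y^4 - x_0^4\}\) the fixed points of \(y \mapsto \zeta_4 y\) lie at \(y = 0\), not \(x_0 = 0\); the conclusion is unaffected since the point is merely that the action has a fixed point.
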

\begin{proof}
  Setting \(W = V / \sigma^2\), we have
  \[ V \to W \to \P(1,1,2)\]
  where each map is of degree 2.
  The double cover \(\phi \colon W \to \P(1,1,2)\) is branched along \(C\).
  Since \(\mathcal{O}_{\P(1,1,2)}(C) = \mathcal{O}_{\P(1,1,2)}(2)^{\otimes 2}\) and \(\mathcal{O}_{\P(1,1,2)}(2)\) is a Cartier divisor, \(\phi\) is \'etale except over \(C\).
  In particular, the pre-image of the \(A_1\) singularity of \(\P(1,1,2)\) is the union of two points, both of which are \(A_1\)-singularities of \(W\).

  The double cover \(\psi \colon V \to W\) is branched along \(D = \frac{1}{2}\phi^{*}(C)\).
  We have \(\mathcal{O}_W(D) = \phi^{*}\mathcal{O}_{\P(1,1,2)}(2) = \phi^{*}\mathcal{O}_{\P(1,1,2)}(1)^{\otimes 2}\) and \(\mathcal{O}_{\P(1,1,2)}(1)\) is not Cartier at the two \(A_1\)-singularities of \(W\).
  Therefore, \(\phi\) is a non-trivial double cover in a neighborhood of the \(A_1\)-singularities but is \'etale on the punctured neighborhood.
  The only such cover, in local coordinates, is \(\C^2 \to \C^2/\pm 1\).
  Thus, we see that \(V\) is smooth.

  Let \(\pi = \psi \circ \phi\).
  Then
  \[ K_V = \phi^{*}(K_{\P(1,1,2)} + (3/4) C) = \phi^{*}\mathcal{O}_{\P(1,1,2)}(-1).\]
  So we see that \(K_V\) is anti-ample and \(K_V^2 = 4 \cdot (1/2) = 2\).
  That is, \(V\) is a smooth \(\delpezzo_2\).

  The cardinality of the fibers of \(V \to \P(1,1,2)\) is \(1\) over \(C\), is \(2\) over the \(A_1\)-singularity, and is \(4\) elsewhere.
  Therefore, \(\Z/4\Z = \langle \sigma \rangle\) has stabilizer \(\Z/4\Z\) at the points of the pre-image of \(C\), stabilizer \(\Z/2\Z\) at the points of the pre-image of the \(A_1\)-singularity, and trivial stabilizer elsewhere.

  Let \(E \subset V\) be the pre-image of \(H\).
  Then \(E\) is an elliptic curve with faithful \(\Z/4\Z\)-action.
  There is a unique such curve, namely the curve of \(j\)-invariant \(1728\).
\end{proof}
\begin{corollary}\label{cor:X4}
  Let \((Y,B)\) be the pair parametrised by a generic point of \(Z \subset \Y_{\rho}\) and let \(X \to Y\) be the degree 4 cover branched along \(B\).
  Then \(X = X_0 \cup X_1\) where each \(X_i\) is a smooth \(\delpezzo_2\) with a \(\Z/4\Z\)-action, and the two are glued along an elliptic curve which is anti-canonical in each component.
\end{corollary}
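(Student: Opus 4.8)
The plan is to reduce everything to the preceding proposition by restricting the cover $X\to Y$ over each component $Y_i$, and then to analyze the double locus directly; the only genuinely new input is the behavior over the glued singular point.

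\emph{First}, since $(Y,B)$ is a generic point of $Z$ (\Cref{prop:z4}), the restriction $B_i := B\cap Y_i$ is a smooth curve of class $-K_{Y_i} = \O_{\P(1,1,2)}(4)$ that misses the vertex of $Y_i\simeq\P(1,1,2)$ and meets the double curve $D_i\in|\O(1)|$ transversally in two smooth points. Since forming a cyclic cover commutes with restriction to a closed subscheme, $X$ decomposes as $X = X_0\cup X_1$ with $\pi_i\from X_i\to Y_i$ the cyclic degree $4$ cover branched along $B_i$, and the two components meet along the curve $E := \pi^{-1}(D)$, which on each side is $E_i := \pi_i^{-1}(D_i)\subset X_i$; here $D = D_0 = D_1$ is the glued line and $\pi\from X\to Y$ the covering map. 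By the preceding proposition, each $X_i$ is a smooth $\delpezzo_2$ carrying the stated $\Z/4\Z$-action, and $-K_{X_i} = \pi_i^{*}\O_{\P(1,1,2)}(1)$.

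\emph{Next}, I would pin down $E_i$. Since $D_i$ is generically disjoint from $B_i$, the pullback divisor $\pi_i^{*}D_i$ is reduced, so $E_i\in|\pi_i^{*}\O(1)| = |-K_{X_i}|$ is anticanonical. Away from the vertex $E_i$ is smooth, because $D_i$ is smooth and transverse to the branch divisor, and $\pi_i|_{E_i}$ is totally ramified over the two points of $D_i\cap B_i$ (locally $u\mapsto u^4$), which forces $E_i$ to be connected. Over the vertex $v$ I would use the local picture from the proof of the preceding proposition: the étale double cover $W\to\P(1,1,2)$ has two $A_1$ points over $v$, over each of which the double cover $V\to W$ looks like $\C^2\to\C^2/{\pm1}$, so a neighborhood of $v$ pulls back under $\pi_i$ to $\C^2\sqcup\C^2$ with each factor mapping to $\C^2/{\pm1}$ by the quotient map; the preimage of $D_i = \{b = 0\}/{\pm1}$ in each factor is the smooth line $\{b = 0\}$, which double-covers $D_i$. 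Hence $E_i$ is smooth and $\pi_i|_{E_i}\from E_i\to D_i\simeq\P^1$ has degree $4$, with ramification index $4$ over each point of $D_i\cap B_i$ and ramification index $2$ over $v$; Riemann--Hurwitz then gives $2g(E_i) - 2 = 4(-2) + 2\cdot 3 + 2 = 0$, so $E_i$ is a smooth genus one curve. Finally $\sigma^2$ acts nontrivially on $E_i$ (a generic fiber of $\pi_i$ over $D_i$ is a free $\Z/4\Z$-orbit), so $\sigma^2|_{E_i}$ is an involution with four fixed points, i.e.\ an elliptic involution, whence $\sigma|_{E_i}$ has order $4$ and $E_i$ has $j$-invariant $1728$, matching the identification in the preceding proposition. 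Assembling the two sides, $X = X_0\cup_E X_1$ with $E$ a common anticanonical elliptic curve, and the gluing is $\sigma$-equivariant because it is inherited from that of $Y$.

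The step I expect to be the main obstacle is the analysis over the vertex: because the double line $D_i$ is forced to pass through the singular point of $\P(1,1,2)$, it is \emph{not} a generic member of $|\O(1)|$, so the identification of $\pi_i^{-1}(D_i)$ with the $j = 1728$ curve cannot simply be quoted from the preceding proposition, and one has to pin down the local structure of $\pi_i$ near $v$ and account for the extra ($e = 2$) ramification there in the genus computation. The remaining points --- reducedness of $\pi_i^{*}D_i$, the Riemann--Hurwitz bookkeeping, and the fixed-point count for $\sigma^2$ --- are routine.
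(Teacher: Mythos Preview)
Your proof is correct and follows the same approach as the paper, which treats the corollary as an immediate consequence of the preceding proposition with no separate proof. Your extra analysis over the vertex, while sound, stems from a misapprehension: in $\P(1,1,2)$ the linear system $|\O(1)|$ is the pencil of lines $\{a x_0 + b x_1 = 0\}$, \emph{every} member of which passes through the cone point $[0:0:1]$. Thus the ``generic $H$'' in the preceding proposition already passes through the vertex, and the identification of $\pi_i^{-1}(D_i)$ with the $j=1728$ curve can be quoted directly; the obstacle you flag is illusory. That said, your Riemann--Hurwitz computation and the local description of $\pi_i$ over the vertex are exactly what the paper's terse proof of the proposition glosses over when it asserts ``then $E$ is an elliptic curve,'' so your write-up effectively supplies details the paper omits.
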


The surface \(X\) described in \Cref{cor:X4} is not a Type II Kulikov surface.
Indeed, letting \(E_i \subset X_i\) be the double curve, we have \(E_0^2 + E_1^2 = 2+2 \neq 0\).

We now  modify \(X\) to obtain a Kulikov surface (of Type II).
Let \(p \in D_0 \subset Y_0\) be a generic point.
Let \(\widetilde Y_0 \to Y_0\) be the blow-up at \(p\) and let
\begin{equation}\label{eqn:yt}
  \widetilde Y = \widetilde Y_0 \cup Y_1,
\end{equation}
glued along the proper transform of \(\widetilde D_0\) and \(D_1\).
Let
\begin{equation}\label{eqn:xt}
  \widetilde X = \widetilde X_{0} \cup X_1 \to \widetilde Y = \widetilde Y_0 \cup Y_1
\end{equation}
be the cyclic cover of degree 4 branched along \(B\).
Note that \(\widetilde X_1\) is simply the blow-up of \(X\) along the 4 points of the pre-image of \(p\).
\begin{proposition} \label{prop:Kulikov_quadruple_cover}
  There is a Kulikov model \(\mathcal X \to \Delta\) with a \(\Z/4\Z\)-action along the fibers such that the generic fiber is a cyclic degree 4 cover of \(\P^2\) branched along a quartic and the special fiber is the surface \(\widetilde X\).
\end{proposition}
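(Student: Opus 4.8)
The plan is to build the Kulikov model as a degree-$4$ cyclic cover, so that the $\Z/4\Z$-action comes for free. Since $(Y,B)$ is a KSBA stable pair it satisfies $K_Y+\tfrac34 B\sim_\Q 0$, and by \Cref{prop:z4} it is a limit of pairs $(\P^2,\text{smooth quartic})$; hence, after a finite base change, there is a one-parameter family $(\mathcal Y,\mathcal B)\to(\Delta,0)$ of such pairs with $K_{\mathcal Y/\Delta}+\tfrac34\mathcal B\sim_\Q 0$ and central fiber $(Y,B)$. Pick $\mathcal L$ with $\mathcal L^{\otimes4}\cong\O(\mathcal B)$ restricting to $\O_{\P^2}(1)$ on the general fiber, and let $\pi\from\mathcal X\to\mathcal Y$ be the cyclic degree-$4$ cover branched along $\mathcal B$. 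Then $K_{\mathcal X/\Delta}\sim 0$, the covering gives $\mathcal X\to\Delta$ a fiberwise $\Z/4\Z$-action, the general fiber is a quadruple cover of $\P^2$ branched along a smooth quartic, and by \Cref{cor:X4} the central fiber is $X=X_0\cup X_1$. However $X$ is not a Kulikov surface, since its double curve $E$ has $E_0^2+E_1^2=2+2=4\neq0$, so $\mathcal X\to\Delta$ is not semistable along $E$.

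I would fix this by a small resolution along $E$. On $Y=Y_0\cup Y_1$ one has $D_0^2+D_1^2=\tfrac12+\tfrac12=1$, so the degree-$1$ line bundle $N_{D_0/Y_0}\otimes N_{D_1/Y_1}$ on $D_0\cong\P^1$ is $\cong\O_{D_0}(p)$; correspondingly (after the base change, for a generic member of $Z$) the threefold $\mathcal Y$ acquires a single ordinary double point on the generic locus of $D_0$, at $p$. Since $p$ is generic it lies off $B_0$, so $\pi$ is étale near $p$ and $\mathcal X$ has four ordinary double points lying above the $\Z/4\Z$-orbit $\pi^{-1}(p)\subset E_0$. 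Let $\widetilde{\mathcal X}\to\mathcal X$ be the small resolution of these four nodes taken in the direction that blows up the $X_0$-component (equivalently: small-resolve the node of $\mathcal Y$ at $p$ towards $Y_0$, obtaining $\widetilde{\mathcal Y}\to\Delta$ with central fiber $\widetilde Y=\Bl_p Y_0\cup Y_1$, then take the cyclic cover $\widetilde{\mathcal X}\to\widetilde{\mathcal Y}$). This gives a smooth threefold $\widetilde{\mathcal X}\to\Delta$ with $K_{\widetilde{\mathcal X}/\Delta}\sim 0$, unchanged general fiber, the inherited $\Z/4\Z$-action, and central fiber $\widetilde X_0\cup X_1$ where $\widetilde X_0=\Bl_{\pi^{-1}(p)}X_0$ is $X_0$ blown up at the $\Z/4\Z$-orbit over $p$ --- that is, the surface $\widetilde X$.

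Finally I would check $\widetilde{\mathcal X}\to\Delta$ is a Kulikov model, i.e. $\widetilde X$ is reduced normal crossings with $K_{\widetilde X}\sim 0$. Away from $E$ this is clear; along $E$, the components $\widetilde X_0$ and $X_1$ are smooth (for $\widetilde X_0$, the cover resolves the $A_1$ singularity of $\Bl_p Y_0$ by the same local analysis that showed the degree-$4$ cover $V$ of $\P(1,1,2)$ is smooth) and meet transversally. Smoothness of $\widetilde{\mathcal X}$ forces $d$-semistability, which is also transparent: $N_{D_0/Y_0}\otimes N_{D_1/Y_1}\cong\O_{D_0}(p)$ pulls back to $\O_E(\pi^{-1}(p))$, while blowing up $X_0$ at $\pi^{-1}(p)$ twists the normal bundle of $E_0$ down by $\O_E(-\pi^{-1}(p))$, so $N_{\widetilde E_0/\widetilde X_0}\otimes N_{E_1/X_1}\cong\O_E$. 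Triviality of $K_{\widetilde X}$ follows from $K_{\widetilde Y_0}+\tfrac34\nu^{*}B_0+\widetilde D_0\sim0$, where $\nu\from\widetilde Y_0\to Y_0$ is the blow-up (this is immediate from $K_{Y_0}+\tfrac34 B_0+D_0\sim0$, the blow-up formula, and $p\notin B_0$), which pulls back to $K_{\widetilde X_0}+E_0\sim0$, together with the analogue on $X_1$ and $d$-semistability. The step needing the most care is the structural input above: verifying that, after a suitable base change, $\mathcal Y$ has exactly one node on the generic locus of $D_0$ --- so that the small resolution recovers precisely $\widetilde Y$, rather than a model with extra components or with both $Y_i$ blown up. This is where genericity of the member of $Z$ and of the point $p$ is used, resting on the explicit description of these degenerations of $\P^2$ in \cite{hac:04}.
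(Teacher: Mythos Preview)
Your route differs from the paper's: rather than starting from an abstract KSBA family and small-resolving, the paper constructs $\widetilde{\mathcal Y}\to\Delta$ by hand --- blow up $\P^2\times\Delta$ successively in two lines of the central fiber, then contract the proper transform of the original $\P^2$ (normal bundle $\O_{\P^2}(-2)$, so this introduces a $\tfrac12(1,1,1)$ point) --- and then takes the cyclic degree-$4$ cover branched along an extension of $B$.

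Your argument has a gap at the point where the two $A_1$ singularities of the $\P(1,1,2)$'s are glued. Any flat smoothing $\mathcal Y\to\Delta$ of $Y$ is necessarily singular there (the fiber components themselves are), yet you only locate and resolve the ODP at $p$; the degree count $D_0^2+D_1^2=\tfrac12+\tfrac12=1$ uses $\Q$-valued self-intersections precisely because $D_i$ is not Cartier at the cone point, so it does not say ``one ODP and nothing else''. And checking that the central fiber $\widetilde X$ is snc and $d$-semistable does not establish that the threefold $\widetilde{\mathcal X}$ is smooth (e.g.\ $\{xy=t^2\}\subset\C^4_{x,y,z,t}$ has snc $d$-semistable central fiber but is singular along the double curve). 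To complete your argument you would have to identify the cone-point singularity of your $\widetilde{\mathcal Y}$ and verify that the $\mu_4$-cover is smooth above it; the virtue of the paper's explicit construction is that it pins this singularity down as $\tfrac12(1,1,1)$, after which the local check (the cover has two smooth points there, just as $V\to\P(1,1,2)$ has two smooth points over the $A_1$) is routine.
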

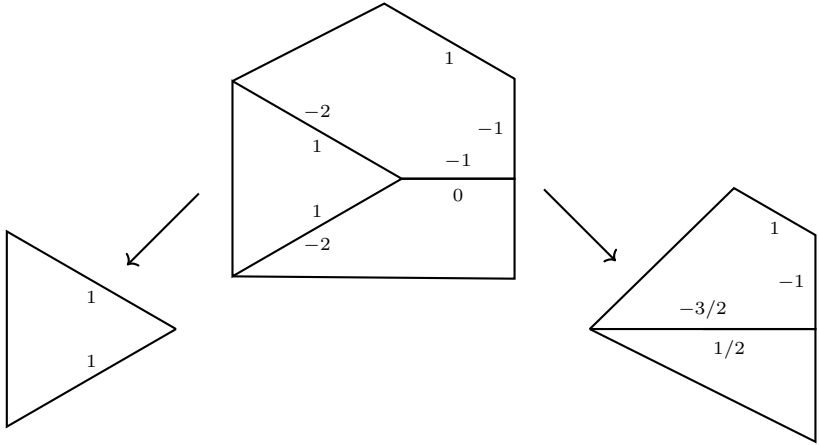
\begin{figure}[b]
  \centering
  \begin{tikzpicture}[thick, selfint/.style={font=\tiny}]
    \draw (0:1.5) -- node[below, selfint]{\(1\)} (120:1.5) -- (240:1.5) -- node[above, selfint]{\(1\)} (0:1.5);
    \draw(45:1) node (A){};
    \begin{scope}[xshift=3cm, yshift=2cm]
      \draw (0:1.5) -- node[below, selfint]{\(1\)} node[above, selfint]{\(-2\)} (120:1.5) -- (240:1.5) -- node[above, selfint]{\(1\)} node[below, selfint]{\(-2\)} (0:1.5);
      \draw (0:1.5) -- node[above, selfint]{\(-1\)} node[below, selfint]{\(0\)} (0:3) -- node[left, selfint] {\(-1\)} ++(90:1.33) -- node[below, selfint]{\(1\)} +(150:2) -- (120:1.5);
      \draw (0:1.5) -- (0:3) -- ++(270:1.33) -- (240:1.5);
      \draw (A) ++ (45:1.75) node (B) {};
      \draw (0:3.25) node (B2) {};
      \draw (B) edge [->] (A);     
    \end{scope}
        \begin{scope}[xshift=7cm]
      \draw (0:0) -- node[above, selfint]{\(-3/2\)} node[below right, selfint]{\(1/2\)} (0:3) -- node[left, selfint] {\(-1\)} ++(90:1.25) -- node[below, selfint]{\(1\)} +(150:1.25) -- (0:0);
      \draw (0:1.5) -- (0:3) -- ++(270:1.5) -- (0:0);
      \draw (B2) ++(-45:1.75) node (C) {};
      \draw (B2) edge [->] (C);
    \end{scope}
  \end{tikzpicture}
  \caption{We obtain a degeneration of \(\P^2\) to \(\Bl_p\P(1,1,2) \cup \P(1,1,2)\) by blowing up succesively in two lines and blowing down a \(\P^2\).
    The figure shows the central fibers in this process.
    The numbers next to the edges represent self-intersections.
    By taking a cyclic degree 4 cover, we obtain a type II Kulikov degeneration. }
  \label{fig:T}
\end{figure}
\begin{proof}
  Start with \(\P^2 \times \Delta \to \Delta\) and let \(L_0, L_1 \subset \P^2\) be two lines in the central fiber.
  Let \(\beta \colon \widehat{\mathcal Y} \to \P^2 \times \Delta\) be the blow-up along \(L_0\) followed by another blow-up along the proper transform of \(L_1\).
  Then the central fiber of \(\widehat{\mathcal Y} \to \Delta\) has three components (see \Cref{fig:T}).
  Let \(P \cong \P^2\) be the proper transform of the original central fiber.
  Blow it down to get \(\widetilde{\mathcal Y} \to \Delta\).
  Note that \(P\) has normal bundle \(\O_{\P^2}(-2)\), so the blow-down is singular.
  The central fiber of \(\widetilde{\mathcal Y} \to \Delta\) is \(\widetilde Y = \widetilde Y_0 \cup Y_1\) as in \eqref{eqn:yt}.
  It is easy to see that \(B \subset \widetilde Y\) can be smoothed to a divisor \(\mathcal B \subset \widetilde{\mathcal Y}\).
  We let \(\mathcal X \to \widetilde{\mathcal Y}\) be a cyclic cover of order 4 branched along \(\mathcal B\).
  This is the required Kulikov model.
  We leave it to the reader to verify that \(\mathcal X\) is indeed smooth and \(K\)-trivial.
\end{proof}

\subsection{The KSBA semifan}
Let \(\widetilde X = \widetilde X_0 \cup X_1\) be a very general choice of a Kulikov surface constructed in \eqref{eqn:xt} with double curve \(E\).
In this case, the lattice of numerically Cartier divisors \(\widetilde \Lambda\) is
\(\widetilde \Lambda = \ker (H^2(\widetilde X_0, \Z) \oplus H^2(X_1, \Z) \to H^2(E,\Z))\),
and the reduced lattice of numerically Cartier divisors \(\Lambda\)  is \( \Lambda = \widetilde \Lambda / (E, -E)\) (see \Cref{subsec:limiting_periods}).
Both are equipped with an order \(4\) automorphism \(\rho = \sigma^*\).
Let \(J \subset T_{\rho}\) be the isotropic plane corresponding to the unique cusp of \(\overline{\D_{\rho}/\Gamma_{\rho}}^{\rm BB}\).
By \eqref{eqn:JperpJrho} and \Cref{thm:bbcusp4}, we have isomorphisms
\[\Lambda^{\rm prim} = J^\perp_{T_\rho}/J = D_4^{\oplus 2} \oplus A_1^{\oplus 2}.\]
The next proposition identifies the $A_1^{\oplus 2}$ summand.
\begin{proposition} \label{prop:A1^2_summand}
  Let \(E_1,\dots, E_4 \subset \widetilde X_0\) be the exceptional divisors of \(\widetilde X_0 \to X_0\), labelled so that \(\rho\) acts on them by the 4-cycle \((1234)\).
  Then \(E_1-E_3\) and \(E_2-E_4\) span the \(A_1^{\oplus 2}\) summand of \(\Lambda^{\rm prim}\).
\end{proposition}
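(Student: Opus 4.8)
## Proof proposal for Proposition (the $A_1^{\oplus 2}$ summand)

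The plan is to identify the $A_1^{\oplus 2}$ summand of $\Lambda^{\rm prim}$ with the $\rho$-primitive part of the sublattice of $H^2(\widetilde X_0,\Z)$ generated by the four exceptional divisors $E_1,\dots,E_4$ of the blow-up $\widetilde X_0\to X_0$, and then to check that this $\rho$-primitive part has the right rank, intersection form, and orthogonality to the rest of $\Lambda^{\rm prim}$.

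First I would recall that $\widetilde X_0$ is the blow-up of the del Pezzo surface $X_0$ of degree $2$ at the four points of the pre-image of a generic point $p\in D_0$, and that $\rho=\sigma^*$ permutes $E_1,\dots,E_4$ cyclically by $(1234)$ (this is exactly the statement that the $\Z/4\Z$-orbit of $p$ upstairs consists of four free points, which follows from the description of the stabilizers in the preceding propositions, since $p$ is generic on the double curve and hence not in the branch locus nor over the $A_1$ point). The classes $E_1,\dots,E_4$ are disjoint $(-1)$-curves, so they span a rank-$4$ sublattice isometric to $\langle -1\rangle^{\oplus 4}$ inside $H^2(\widetilde X_0,\Z)$; the $\rho$-action on it is the regular representation of $\Z/4\Z$ twisted by a sign (because $\rho$ acts on $H^2$ of a blow-up of a free orbit by permuting the exceptional classes, possibly up to the global sign coming from $\sigma^*$ on $H^{2,0}$, but on the algebraic classes $E_i$ it is a genuine permutation). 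Decomposing this representation over $\Z$, the sublattice where $\rho$ acts with primitive fourth-root-of-unity eigenvalues is spanned by $E_1-E_3$ and $E_2-E_4$, on which $\rho$ acts by $E_1-E_3\mapsto E_2-E_4\mapsto -(E_1-E_3)$ — i.e.\ exactly the rotation by $\zeta_4$. This sublattice is $\langle(E_1-E_3)^2,(E_1-E_3)\cdot(E_2-E_4)\rangle=\langle -2,0\rangle$-type: $(E_1-E_3)^2=-2$, $(E_2-E_4)^2=-2$, and $(E_1-E_3)\cdot(E_2-E_4)=0$ by disjointness. Hence these two classes span a copy of $A_1^{\oplus 2}$ (up to sign of the form, matching the convention in the excerpt where $A_1$ is negative definite).

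Next I would check that $E_1-E_3$ and $E_2-E_4$ actually lie in $\Lambda^{\rm prim}$, not merely in $H^2(\widetilde X_0,\Z)$. For this I use that $\widetilde\Lambda=\ker(H^2(\widetilde X_0,\Z)\oplus H^2(X_1,\Z)\to H^2(E,\Z))$: since $E_i\cdot E=0$ on $\widetilde X_0$ (the $E_i$ are contracted away from the double curve, as $p$ was chosen generic on $D_0$ hence the exceptional curves are disjoint from the proper transform $\widetilde D_0$), the class $(E_i-E_j,\,0)\in H^2(\widetilde X_0,\Z)\oplus H^2(X_1,\Z)$ restricts to $0$ on $E$, so it lies in $\widetilde\Lambda$; and it is clearly not a multiple of $\Xi=(E,-E)$, so it descends to a nonzero class in $\Lambda=\widetilde\Lambda/\Xi$. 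Primitivity in $L_\C^{\rm prim}$ is exactly the computation above showing $\rho$ acts on $\langle E_1-E_3,E_2-E_4\rangle$ with eigenvalues $\pm i$. Finally, to conclude that this copy of $A_1^{\oplus 2}$ is the distinguished summand rather than sitting diagonally, I would observe that $J^\perp_{T_\rho}/J\cong D_4^{\oplus 2}\oplus A_1^{\oplus 2}$ has a \emph{unique} sublattice isometric to $A_1^{\oplus 2}$ that is a direct summand with orthogonal complement $D_4^{\oplus 2}$ — because $A_1^{\oplus 2}$ and $D_4^{\oplus 2}$ have coprime-ish discriminant behavior forcing the splitting, and more concretely because the $D_4^{\oplus 2}$ part is spanned by the $\rho$-primitive parts of the exceptional $(-2)$-configurations coming from the two del Pezzo components (as flagged in the introduction's discussion of where the $E_n$- and $A_n$-lattices come from). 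So I would pin down $\langle E_1-E_3,E_2-E_4\rangle$ as the $A_1^{\oplus 2}$ summand by exhibiting its orthogonal complement inside $\Lambda^{\rm prim}$ as the $D_4^{\oplus 2}$ coming from the del Pezzo surfaces $X_0$ and $X_1$, using that the $E_i$ are disjoint from those $(-2)$-curves.

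The main obstacle I expect is the bookkeeping needed to make the last step rigorous: namely verifying that the orthogonal complement of $\langle E_1-E_3,E_2-E_4\rangle$ in $\Lambda^{\rm prim}$ is exactly the $D_4^{\oplus 2}$ predicted by the general theory, and in particular that $\langle E_1-E_3,E_2-E_4\rangle$ is saturated (primitive) in $\Lambda^{\rm prim}$, not an index-$2$ sublattice of a larger $A_1^{\oplus 2}$ or of something like $D_4$. Checking saturation amounts to confirming that no class of the form $\tfrac12(E_1-E_3)+(\text{other divisor classes})$ is integral and $\rho$-primitive, which I would handle by a discriminant-group argument: $\langle E_1-E_3,E_2-E_4\rangle$ already has discriminant group $(\Z/2)^2$ matching the $A_1^{\oplus 2}$ factor of the known discriminant form of $J^\perp_{T_\rho}/J$, so any enlargement would shrink the discriminant and contradict \Cref{thm:bbcusp4}. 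Everything else is a direct computation with intersection numbers on blow-ups of del Pezzo surfaces and the explicit $\rho$-action, which I would only sketch rather than grind through.
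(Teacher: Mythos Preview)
Your overall plan is workable, but there is a factual slip and the paper's route is substantially shorter.

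The slip: you claim $E_i\cdot E=0$ because ``the exceptional curves are disjoint from the proper transform $\widetilde D_0$.'' That is false. The point $p$ lies \emph{on} $D_0$, so its four preimages lie on the anticanonical elliptic curve in $X_0$, and each exceptional divisor $E_i$ meets the proper transform (the double curve of $\widetilde X$) transversally once; thus $E_i\cdot E=1$. Your conclusion that $(E_i-E_j,0)\in\widetilde\Lambda$ survives, since $E_i-E_j$ has degree $0$ on $E$, but the stated reason is wrong. This matters later: it is precisely because the $E_i$ meet $E$ that the period of $\widetilde X$ depends on the choice of $p$, which is the whole point of the subsequent theorem.

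On strategy, the paper does not identify $M^\perp$ at all. Instead it directly exhibits $M=\langle E_1-E_3,\,E_2-E_4\rangle$ as a direct summand by writing down a projection $\Lambda^{\rm prim}\to M$. The key observation is that $\tau=\rho^2$ acts by $-1$ on $\Lambda^{\rm prim}$ and swaps $E_1\leftrightarrow E_3$, $E_2\leftrightarrow E_4$; from $\tau(\alpha_0)=-\alpha_0\pmod E$ one extracts the parity statement $\alpha_0\cdot E_1\equiv\alpha_0\cdot E_3\pmod 2$, so that
\[
(\alpha_0,\alpha_1)\ \longmapsto\ \tfrac12\bigl(\alpha_0\cdot(E_3-E_1),\ \alpha_0\cdot(E_4-E_2)\bigr)
\]
is integral and splits the inclusion. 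This bypasses both your orthogonal--complement step (where you would have to produce the $D_4^{\oplus 2}$ explicitly out of the $\rho$-primitive parts of the two $\delpezzo_2$'s) and your discriminant argument for saturation, which as written is not quite enough: matching discriminants shows $M\oplus M^\perp=\Lambda^{\rm prim}$ only after you already know $M$ is primitive and $M^\perp\cong D_4^{\oplus 2}$, so the argument is circular unless you independently compute $M^\perp$.
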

\begin{proof}
  It is clear that \(M = \langle E_1-E_3, E_2-E_4 \rangle\) lies in \(\Lambda^{\rm prim}\) and is isomorphic as a lattice to \(A_1^{\oplus 2}\).
  It remains to show that there is a projection \(\Lambda^{\rm prim} \to M\).
  Consider \(\alpha \in \Lambda^{\rm prim}\) represented by \((\alpha_0,\alpha_1) \in H^2(\widetilde X_0,\Z) \oplus H^2(X_1,\Z)\).
  Then \(\alpha_0 \in H^2(\widetilde X_0, \Z)\) is well-defined up to adding multiples of \(E\) and \(\tau(\alpha_0) = -\alpha_0 \pmod E\).
  From this, it is easy to check that \(\alpha_0 \cdot E_1 \equiv \alpha_0 \cdot E_3 \pmod 2\) and likewise for \(E_2\) and \(E_4\).
  The projection \(\Lambda^{\rm prim} \to M\) is 
  \[ (\alpha_0,\alpha_1) \mapsto \frac{1}{2}\left(\alpha_0 \cdot (E_3-E_1), \alpha_0 \cdot (E_4 - E_2)\right).\]
\end{proof}

\begin{theorem}\label{thm:ksba4}
  The space \(\overline{F}_{\rho}^{\rm KSBA}\) is isomorphic to the semi-toroidal compactification for the semifan \(\mathfrak F_J = A_1^{\oplus 2} \subset J_{T_\rho}^{\perp}/J = D_4^{\oplus 2} \oplus A_1^{\oplus 2}\).
\end{theorem}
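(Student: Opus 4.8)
The plan is to carry out the general strategy from the introduction in this concrete case. We already know from \cite[Theorem~3.26]{ale.eng.han:24} that $\overline{F}_\rho^{\rm KSBA} = \overline{\D_\rho/\Gamma_\rho}^{\mathfrak F}$ for some KSBA semifan $\mathfrak F = \{\mathfrak F_J\}$, and by \Cref{thm:bbcusp4} there is a single cusp $J$, with $J^\perp_{T_\rho}/J = D_4^{\oplus 2}\oplus A_1^{\oplus 2}$. Since $\rho^2 = \tau^* = -\mathrm{id}$ on $T_\rho$, this lattice is a free $\Z[\zeta_4]$-module of rank $5$, and $\mathfrak F_J$ --- being $\rho$-invariant and primitive --- is a saturated $\Z[\zeta_4]$-submodule of it. So the theorem reduces to showing $\mathfrak F_J = A_1^{\oplus 2}$.

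First I would fix the $\Z[\zeta_4]$-rank of $\mathfrak F_J$ by a dimension count. By the construction of semitoroidal compactifications (\Cref{subsec:semitoroidal}), the stratum of $\overline{F}_\rho^{\rm KSBA}$ over the cusp is a finite quotient of an abelian variety isogenous to $\mathcal{A}_J^{\mathfrak F_J} = \big((J^\perp_{T_\rho}/J)/\mathfrak F_J\big)\otimes_{\Z[\zeta_4]}E$, of dimension $5 - \rk_{\Z[\zeta_4]}\mathfrak F_J$. On the other hand, under the isomorphism $\overline{F}_\rho^{\rm KSBA}\cong\Y_\rho$ of \Cref{thm:stable_quotient_pair}, this stratum is exactly the locus of pairs with reducible $Y$, namely the irreducible $4$-dimensional locus $Z$ of \Cref{prop:z4}. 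Hence $\rk_{\Z[\zeta_4]}\mathfrak F_J = 1$.

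Next I would show $\mathfrak F_J\supseteq A_1^{\oplus 2}$ using the explicit Kulikov model $\widetilde X = \widetilde X_0\cup X_1$ of \eqref{eqn:xt}. Over the cusp, the map $\overline{\D_\rho/\Gamma_\rho}^{\rm tor}\to\overline{F}_\rho^{\rm KSBA}$ is the quotient $\mathcal{A}_J\to\mathcal{A}_J^{\mathfrak F_J}$ collapsing the translates of $\mathfrak F_J\otimes_{\Z[\zeta_4]}E$, and by the $\rho$-equivariant version of \cite[Theorem~4.16]{ale.eng:23} it sends the period $\psi(\widetilde X)$ to the stable model of $\widetilde X$. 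I would check two points. (a) The stable model is the pair $(X_0\cup X_1,\epsilon R)$ of \Cref{cor:X4}: the exceptional curves $E_1,\dots,E_4$ of $\widetilde X_0\to X_0$ are disjoint $(-1)$-curves disjoint from the ramification $R_0$ (the blown-up point $p\in D_0$ is generic, hence not on $B_0$), so they are $R$-trivial, and contracting them recovers $X_0\cup X_1$; then one verifies directly that $(X_0\cup X_1,\epsilon R)$ is a stable pair ($K_{X_0\cup X_1}\sim 0$; the gluing along the smooth elliptic curve $E$ is slc; $R$ restricts to $\pi^*\mathcal{O}_{\P(1,1,2)}(1)$, hence is ample, on each $\delpezzo_2$ component), so by uniqueness of stable limits it is the stable model --- in particular independent of $p$. (b) As $p$ varies, $\psi(\widetilde X)$ moves by exactly $A_1^{\oplus 2}\otimes_{\Z[\zeta_4]}E$: using \Cref{prop:A1^2_summand} and $\psi(\alpha) = \alpha_0|_E-\alpha_1|_E$, one has $\psi(E_1-E_3) = [p_1-p_3]$ and $\psi(E_2-E_4) = [p_2-p_4]$, where $p_1,\dots,p_4\in E$ are the preimages of $p$ under the degree-$4$ map $E\to D_0$ and are cyclically permuted by the $\rho$-action on $E$ through $\Z/4\Z\subset\Aut(E,0)$; since $\rho$ fixes $0\in E$ one gets $\rho^2 = -\mathrm{id}$, so $\psi(E_1-E_3) = 2p_1$ ranges over all of $E$ as $p$ varies and $\psi(E_2-E_4) = \rho\big(\psi(E_1-E_3)\big)$, while the $D_4^{\oplus 2}$-part of $\psi$ is unchanged. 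Then (a) and (b) show that $\mathcal{A}_J\to\mathcal{A}_J^{\mathfrak F_J}$ collapses the translates of $A_1^{\oplus 2}\otimes_{\Z[\zeta_4]}E$, so $\mathfrak F_J\supseteq A_1^{\oplus 2}$. Combined with $\rk_{\Z[\zeta_4]}\mathfrak F_J = 1$ and the saturation of $A_1^{\oplus 2}$ in $D_4^{\oplus 2}\oplus A_1^{\oplus 2}$, this gives $\mathfrak F_J = A_1^{\oplus 2}$.

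I expect the main obstacle to be making step (a) rigorous --- checking that contracting the $E_i$ in the total space of the degeneration really produces the claimed stable model (that no further $R$-trivial curves must be contracted, that the components stay glued along a single anticanonical elliptic curve after the contraction, and that the M0 and M1 modifications possibly needed to first reach a divisor model do not change the outcome). A secondary point is that the two-component models $\widetilde X_0\cup X_1$ should be generic among Type~II degenerations over $J$; but this is automatic from the dimension count, since their periods already fill a $5$-dimensional family in $\mathcal{A}_J$.
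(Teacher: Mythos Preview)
Your proposal is correct and follows essentially the same approach as the paper: identify the stable model of $\widetilde X$ as $X_0\cup X_1$ independently of the blown-up point $p$, observe that varying $p$ moves the period exactly along the $A_1^{\oplus 2}$-summand (and surjects onto it), and conclude by a dimension count. Your write-up is in fact more explicit than the paper's in step (b) --- computing $\psi(E_1-E_3)=2p_1$ --- and in spelling out the dimension count via $\dim Z=4$; the paper simply asserts both without detail.
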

\begin{proof}
  Let \(E\) be the elliptic curve with an order \(4\) automorphism \(\rho\).
  Observe that \(\rho^2\) acts as \(-1\), so \(\rho\) makes \(E\) a \(\Z[i]\)-module.

  Recall from \Cref{subsec:limiting_periods} that \(\mathcal{A}_J = J^\perp_{T_\rho}/J \otimes_{\Z[i]} E\) is the period domain for Kulikov limits of $\rho$-markable K3 surfaces.
  By \Cref{subsec:semitoroidal} and \Cref{subsec:KSBA_compactification}, the morphism
  \begin{equation}\label{eq:torksba}
  	\overline{\D_{\rho}/\Gamma_{\rho}}^{\rm tor} \to \overline{F}_\rho^{\rm KSBA}
  \end{equation}
  restricted to the boundary components over the unique cusp \(J\) of \(\overline{\D_{\rho}/\Gamma_\rho}^{\rm BB}\) is identified (up to a quotient by finite groups) with
  \[ \mathcal{A}_J \to \mathcal{A}_J /(\mathfrak F_J \otimes_{\Z[i]} E).\]
  That is, the fibers of \eqref{eq:torksba} are the translates of \(\mathfrak F_J \otimes_{\Z[i]} E\).

  We now show that \(\mathfrak{F}_J = A_1^{\oplus 2}\).
  Consider a Kulikov surface \(\widetilde X = \widetilde X_0 \cup X_1\) constructed in \eqref{eqn:xt}.
  Note that the log canonical contraction \(\widetilde{X} \to X\) contracts the exceptional divisors \(E_1, \dots, E_4\) of \(\widetilde{X_0} \to X_0\).
  Recall from \eqref{prop:A1^2_summand} that \(\langle E_{1}-E_3, E_2-E_4\rangle\) is the \(A_1^{\oplus 2}\) summand of \(J_{T_\rho}^{\perp}/J \subset J^{\perp}/J\).

  Let \(\psi \in \mathcal{A}_J \simeq \Hom_{\Z[i]}((J^{\perp}_{T_\rho}/J)^*, E)\) be the period of \(\widetilde X\).
  Consider the construction of \(\widetilde X\) as described in \eqref{eqn:yt} and \eqref{eqn:xt}.
  Let \(\widetilde X'\) be obtained from \(X\) by blowing up a point \(p' \in D_0 \subset Y_0\) different from \(p\).
  Let \(\psi'\) be the period of \(\widetilde X'\).
  Then \(\psi'\) and \(\psi\) differ only on the \(A_1^{\oplus 2}\)-summand.
  Conversely, any \(\psi'\) that differs from \(\psi\) only on the \(A_1^{\oplus 2}\)-summand arises from such a \(\widetilde X'\).
  Since \(\widetilde X\) and \(\widetilde X'\) have the same stable model, it follows that the translates of \(A_1^{\oplus 2} \otimes_{\Z[i]]} E\) are contracted by \eqref{eq:torksba}.
  That is, we have \(A_1^{\oplus 2} \subset \mathfrak F_{J}\).
  By a dimension count, we see that \(\mathfrak F_J\) must have rank 2.
  We conclude that \(A_1^{\oplus 2} = \mathfrak F_{J}\).
\end{proof}

\section{Eisenstein and 3-elementary lattices} \label{sec:K3_aut3_Eisenstein}

For this and the subsequent sections, assume that \(N=3\). Let \(\omega \colonequals \zeta_3 = e^{2\pi i/3}\).

\subsection{Generalities on Eisenstein lattices}
\label{subsec:eis}

An Eisenstein lattice \(M\) is a finite torsion-free module over the ring \(\E = \Z[\omega]\), together with an \(\E\)-valued Hermitian form.
Since \(E\) is a PID, \(M\) is a free \(\E\)-module of some rank \(n\).
It is thus a \(\Z\)-module of rank \(r=2n\) together with an automorphism \(\rho\colon x \to \omega\cdot x\) of order \(3\) that does not have a non-zero fixed vector.

If \(M\) is an ordinary lattice with a \(\Z\)-valued bilinear form \(\langle x, y \rangle\) and an automorphism \(\rho\) of order \(3\) without a non-zero fixed vector, then \(\rho\) defines a complex structure \(J\) on \(M \otimes \R\) by \(J = \frac1{\sqrt3}(\rho- \rho^2)\), and the associated Hermitian structure is \(H(x,y) = \langle x, y\rangle + i \langle x, Jy\rangle\). 
This Hermitian structure is not necessarily \(\E\)-valued but the rescaled form
\begin{equation*}
    h(x,y) = \frac32 H(x,y) = \frac12\big( 3\langle x,y\rangle + \theta \langle x, \rho y-\rho^2y\rangle \big)
\end{equation*}
is, where \(\theta = \omega-\omega^2 = \sqrt{-3}\).  
We call this associated Eisenstein lattice \(M^\E\). The effect of this rescaling is that \(h(x,y)\in \theta \E\) for all \(x,y\in~M^\E\). 
Equivalently,
\begin{equation} \label{eq:theta}
    M^\E \subset \theta (M^\E)', \quad\text{where}\quad
    (M^\E)' = \{x\in M^\E \mid h(x,y)\in \E\}.
\end{equation}

Vice versa, for any \(c\in \theta\E\) one has \(\frac23 \operatorname{Re} c\in\Z\). 
Thus for any Eisenstein lattice \(M^\E\) satisfying condition~\eqref{eq:theta}, the bilinear form \(\frac23 \operatorname{Re} h(x,y)\) on the underlying \(\Z\)-module \(M\) is \(\Z\)-valued. 
This gives a bijection between the \(\Z\)-lattices \((M,\rho)\) with an order \(3\) automorphism without a non-zero fixed vector and Eisenstein lattices \(M^\E\) satisfying condition~\eqref{eq:theta}.

Following \cite{art.sar:08}, we call the pair \((M,\rho)\) an \(\E\)-lattice; and if in addition \(\rho\) acts trivially on the discriminant group \(A_M = M^*/M\), we call it an \(\E^*\)-lattice. 
A lattice \(M\) is \(3\)-elementary if \(3M^*\subset M\), i.e. \(A_M= \Z_3^a\) for some integer \(a\), the \(\Z_3\)-rank of \(M\). 
One has \(M^* = \theta (M^\E)'\). 
This implies that \(M\) is \(3\)-elementary if and only if \(3\theta (M^\E)'\subset M^\E\). 

For \(x\in M^*\) one has \(\rho x - x = (1-\omega)x = \theta x/\omega\).
Thus, \(M\) is an \(\E^*\)-lattice if and only if \(\theta M^*\subset M\), i.e. \(M\) is ``\(\theta\)-elementary". 
In terms of the Hermitian form this is equivalent to the condition \(3 (M^\E)'\subset M^\E\), or that the matrix of \(3h^{-1}\) has entries in \(\E\). 
In particular, any \(\E^*\)-lattice is \(3\)-elementary (see \cite[Lemma 1.3]{art.sar:08} for another proof).

\subsection{Eisenstein 1-cusps} \label{subsec:eis_cusp}

The goal of this subsection is to prove \Cref{thm:Estar-1cusps}.

Recall the hyperbolic plane \(U = {\rm II}_{1,1} = \langle e, f\rangle\) with \(e^2 = f^2 = 0\) and \(ef = 1\).
It is well known that \(U\oplus U = \langle e_1,f_1\rangle \oplus \langle e_2, f_2 \rangle\) and \(U\oplus U(3) = \langle e_1,f_1 \rangle \oplus \langle e_1', f_1' \rangle\) are \(\E^*\)-lattices, with \(\rho\) acting as follows:

\begin{enumerate}
    \item \(e_1 \mapsto e_2\),\quad \(e_2\mapsto -e_1-e_2\),\quad\quad \(f_1\mapsto f_1+f_2\),\quad \(f_2\mapsto -f_1\)
    \item \(e_1\mapsto e_1-e'_2\),\quad \(e'_2\mapsto 3e_1-2e'_2\),\quad\quad \(f_1\mapsto -2f_1-f'_2\),\quad \(f'_2\mapsto 3f_1+f'_2\)
\end{enumerate}
and that this action is unique up to conjugation. 
The corresponding Eisenstein lattices have Hermitian forms
\begin{equation*}
    \begin{pmatrix}
    0&\theta\\ \bar\theta&0
  \end{pmatrix}
  \quad\text{and}\quad
  \begin{pmatrix}
    0&3\\ 3&0
  \end{pmatrix}.
\end{equation*}

\begin{lemma}\label{lem:3elem}
    The only even \(3\)-elementary lattices of signature \((1,1)\) are \(U\) and \(U(3)\), and of signature \((2,2)\) are \(U\oplus U\), \(U\oplus U(3)\) and \(U(3)\oplus U(3)\).
\end{lemma}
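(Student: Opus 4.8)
The plan is to enumerate by the $3$-rank $a$ of the discriminant group $A_M\cong(\Z/3\Z)^a$, so that $|\det M|=3^a$ and $0\le a\le \rk M\in\{2,4\}$, with $\operatorname{sign}(M)=0$. The first task is to constrain $a$ by parity. Over $\Z/3\Z$ there are exactly two nondegenerate finite quadratic forms, and every nondegenerate form on $(\Z/3\Z)^a$ is an orthogonal sum of $a_+$ copies of one and $a_-=a-a_+$ copies of the other; by Milgram's signature formula such a sum has signature $\equiv 2(a_+-a_-)\pmod 8$, while $\operatorname{sign}(M)\equiv\operatorname{sign}(q_M)\pmod 8$. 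Since $\operatorname{sign}(M)=0$ and $a_+-a_-\equiv a\pmod 2$, this forces $a$ to be even, and when $a=2$ it forces $a_+=a_-=1$, i.e.\ $q_M$ is the hyperbolic form on $(\Z/3\Z)^2$, which is the discriminant form of $U(3)$. (The parity of $a$ can also be extracted from the $2$-adic normal form of an even lattice of odd determinant.)

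Next I would dispose of the extreme values of $a$. For $a=0$, $M$ is even, indefinite and unimodular, hence $M\cong U$ (rank $2$) or $M\cong U^{\oplus 2}$ (rank $4$). For $a=\rk M$ --- which is the case for signature $(1,1)$ with $a=2$ and for signature $(2,2)$ with $a=4$ --- the chain $M\subseteq M^{*}\subseteq\tfrac13 M$ together with $[M^{*}:M]=3^{a}=3^{\rk M}=[\tfrac13 M:M]$ gives $M^{*}=\tfrac13 M$, so every pairing in $M$ is divisible by $3$; thus $M=N(3)$ where $N$ is $M$ with the form rescaled by $\tfrac13$, and $N$ is even (pairings in $6\Z$), unimodular, and of the same signature, so $N\cong U$ or $U^{\oplus 2}$ and hence $M\cong U(3)$ or $U(3)^{\oplus 2}$.

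The only case left is signature $(2,2)$ with $a=2$: here $q_M$ is the hyperbolic form on $(\Z/3\Z)^2$ by the first paragraph, which is exactly the discriminant form of $U\oplus U(3)$, so $M$ and $U\oplus U(3)$ lie in the same genus; since $M$ is indefinite of rank $4\ge 3$, this genus contains a single isometry class \cite{nik:79}, whence $M\cong U\oplus U(3)$. Combined with the exclusion of odd $a$ from the first step, this covers all possibilities.

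I expect the one genuinely delicate point to be the bookkeeping in the first paragraph --- making Milgram's formula simultaneously kill the odd values of $a$ and, in rank $4$ with $a=2$, isolate the hyperbolic discriminant form --- together with the index identity $M^{*}=\tfrac13 M$ in the second step; the remaining steps are routine lattice theory.
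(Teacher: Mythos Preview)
Your proof is correct. The overall structure---split by the value of \(a\), handle \(a=0\) and \(a=\rk M\) by reducing to even unimodular lattices, and invoke Nikulin's uniqueness for the middle case \(a=2\), \(\rk M=4\)---is exactly the paper's. Where you diverge is in the argument that \(a\) must be even.

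The paper obtains the parity constraint by citation: for signature \((1,1)\) it quotes the Rudakov--Shafarevich classification of even \(p\)-elementary hyperbolic lattices, and for signature \((2,2)\) it first primitively embeds \(M\) into the K3 lattice \(U^{\oplus 3}\oplus E_8^{\oplus 2}\), so that the orthogonal complement is even \(3\)-elementary of signature \((1,17)\) with the same \(a\), and then applies Rudakov--Shafarevich to that complement. You instead compute the signature of the discriminant form directly via Milgram's formula, using that the two rank-one forms on \(\Z/3\Z\) contribute \(\pm 2\bmod 8\), which forces \(a_+-a_-\equiv 0\bmod 4\) and hence \(a\) even. This is more self-contained---no external classification and no embedding trick---and it has the pleasant side effect of pinning down the discriminant form in the \(a=2\) case as the hyperbolic plane over \(\Z/3\Z\), so you know in advance that \(M\) and \(U\oplus U(3)\) lie in the same genus before appealing to \cite[1.13.2]{nik:79}. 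The paper's route, by contrast, just cites Nikulin for uniqueness without first identifying \(q_M\). Both arguments are short; yours trades one black-box (Rudakov--Shafarevich) for another (Milgram), but the latter is a single formula rather than a classification theorem.
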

\begin{proof}
    The \(\Z_3\)-rank \(a\) is even in all cases. 
    For a hyperbolic lattice (i.e., \((1,1)\)-lattice) it is true by Rudakov--Shafarevich \cite{rud.sha:81}, cf. \cite[Thm. 1.5]{art.sar:08}. 
    And for a \((2,2)\)-lattice it is true by the same theorem applied to the orthogonal complement in the K3 lattice \(U^{\oplus 3}\oplus E_8^2\). 
    In the boundary cases \(a=0\), resp. \(a=\rk H\) the lattice \(H\) is unimodular, resp. unimodular dilated by \(3\), so it it unique. 
    In the case \(r=4, a=2\), the lattice is unique by \cite[1.13.2]{nik:79}.
\end{proof}

\begin{lemma}\label{lem:Estar}
    The only \(\E^*\)-lattices of signature \((2,2)\) are \(U\oplus U\) and \(U\oplus U(3)\) with the above \(\rho\)-action.
\end{lemma}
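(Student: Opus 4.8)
The plan is to combine the classification of $3$-elementary lattices from \Cref{lem:3elem} with the extra constraint imposed by the $\E^*$-condition. By \Cref{lem:3elem}, an even $3$-elementary lattice of signature $(2,2)$ is one of $U\oplus U$, $U\oplus U(3)$, or $U(3)\oplus U(3)$. Each of these is automatically $3$-elementary, but being an $\E^*$-lattice requires more: from \Cref{subsec:eis} we have that $M$ is an $\E^*$-lattice if and only if $\theta M^* \subset M$, equivalently $3(M^\E)' \subset M^\E$; and moreover the underlying lattice must admit an order-$3$ fixed-point-free automorphism $\rho$ acting trivially on the discriminant group $A_M$. So the first step is to rule out $U(3)\oplus U(3)$, and the second step is to confirm that $U\oplus U$ and $U\oplus U(3)$ genuinely do carry such a $\rho$ (which is already recalled in the text, with explicit formulas and uniqueness up to conjugation).

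For the first step I would argue as follows. Suppose $M$ is an $\E^*$-lattice with underlying lattice $U(3)\oplus U(3)$. Then $A_M = \Z_3^{\oplus 4}$ has the full rank $r = 4$, so $M$ is ``unimodular dilated by $3$'': $M = N(3)$ with $N = U\oplus U$ unimodular. The $\E^*$-condition says $\theta M^* \subset M$, i.e. $\theta \cdot \tfrac13 N \subset 3N$ inside $N\otimes\Q$, which reads $\theta N \subset 9N$ — impossible since $\theta = \sqrt{-3}$ has norm $3$, not divisible by $9$. Equivalently, in Hermitian terms, the Eisenstein lattice attached to $U(3)\oplus U(3)$ has Hermitian Gram matrix $3\begin{pmatrix}0&\theta\\\bar\theta&0\end{pmatrix}$ (a further $\theta\bar\theta/\ldots$ scaling), and one checks $3(M^\E)'\subset M^\E$ fails because $3h^{-1}$ does not have entries in $\E$. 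Either way, $U(3)\oplus U(3)$ is excluded. A cleaner packaging: the discriminant form of an $\E^*$-lattice of signature $(2,2)$ and $\Z_3$-rank $a$ is constrained, and $a=4$ is incompatible with the existence of a fixed-point-free order-$3$ isometry acting trivially on $A_M$ — but I expect the direct divisibility computation above to be the most transparent.

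The remaining step is purely bookkeeping: for $U\oplus U$ and $U\oplus U(3)$, the explicit $\rho$-actions are already written down before the lemma, together with the assertion (from \cite{art.sar:08}) that each action is unique up to conjugation, and one verifies directly that $\rho$ has no nonzero fixed vector and acts trivially on $A_M$ (trivial for $U\oplus U$ since $A_M=0$; a short check for $U\oplus U(3)$ where $A_M=\Z_3^{\oplus2}$). So these two do occur, and the lemma follows.

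The main obstacle I anticipate is making the exclusion of $U(3)\oplus U(3)$ airtight: one has to be careful about which Hermitian rescaling convention is in force (the paper uses $h = \tfrac32 H$ with values in $\theta\E$, and the condition $M^\E\subset\theta(M^\E)'$), so the cleanest route is to phrase everything on the $\Z$-lattice side as $\theta M^*\subset M$ and observe that for $M = N(3)$ with $N$ unimodular this forces $\theta N \subset 9N$, which is absurd. Everything else is either quoted from \Cref{lem:3elem} or from the explicitly recalled $\rho$-actions.
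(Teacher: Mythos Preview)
Your approach matches the paper's: reduce to the three candidates via \Cref{lem:3elem}, then exclude $U(3)\oplus U(3)$. The paper's primary exclusion argument is slightly different from yours---it notes that any $\rho$ on $U(3)\oplus U(3)$ rescales to the unique $\rho$ on $U\oplus U$ and then checks directly that the induced action on $A_{U(3)^2}=\Z_3^4$ is nontrivial---but it also records your divisibility argument as an alternative (``$3(M^\E)'\not\subset M^\E$'').

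One computational slip to fix: writing $M=N(3)$ with $N=U\oplus U$ means $M$ and $N$ coincide as subsets of $N\otimes\Q$, with only the form rescaled. So $M^*=\tfrac13 N$ is correct, but $M=N$, not $M=3N$; hence $\theta M^*\subset M$ reads $\theta N\subset 3N$, not $\theta N\subset 9N$. This does not affect the conclusion, since $\theta\notin 3\E$ (it has norm $3$ but $\theta/3=1/\bar\theta\notin\E$), so $\theta N\subset 3N$ still fails for any free $\E$-module $N$.
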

\begin{proof}
    \(U(3)\oplus U(3)\) is not an \(\E^*\)-lattice. 
    Indeed, the \(\rho\)-action on \(H\) also defines a \(\rho\)-action on \(H(\frac13) = U\oplus U\), which is unique by the above. 
    It is easy to see that the induced action on \(A_{U(3)\oplus U(3)}\) is nontrivial. 
    Another way to see it is: in this case \(3 (M^\E)' = \frac1{\theta}(M^\E)'\not\subset M^\E\).
\end{proof}

For a lattice \(T\), we call an isotropic line \(I=\Z e\subset T\) a \(0\)-cusp and an isotropic plane, i.e. a saturated isotropic sublattice \(J\simeq \Z^2 \subset T\), a \(1\)-cusp. 
The justification for this is the fact that when \(T\) has signature \((2,n)\), \(I\) (resp., \(J\)) define a \(0\)-cusp (resp., \(1\)-cusp) of the corresponding Type IV Siegel domain \(\mathcal D(T)\). 

\begin{lemma}\label{lem:0cusp}
    Let \(T\) be a \(3\)-elementary lattice and \(e\in T\) be an isotropic vector. 
    Then there exists an orthogonal decomposition \(T = H \oplus \overline{T}\) with \(H\simeq U\) or \(U(3)\), \(e\in H\) and \(e^\perp/e \simeq \overline{T}\).
\end{lemma}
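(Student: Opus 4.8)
The plan is to mimic the well-known argument for splitting off a hyperbolic plane from a unimodular lattice containing an isotropic vector, but to keep careful track of the $3$-elementary condition so that the complementary $U$ or $U(3)$ summand comes out integral. First I would reduce to the case where $e$ is primitive: if $e = k e_0$ with $e_0$ primitive, replacing $e$ by $e_0$ does not change $e^\perp$ (only the isotropic \emph{line} matters for $e^\perp/e$), so assume $e$ is primitive. Because $T$ is nondegenerate, the functional $x \mapsto (x\cdot e)$ on $T$ is nonzero, and its image is an ideal $d\Z \subset \Z$; primitivity of $e$ together with $3$-elementarity forces $d \in \{1,3\}$ --- indeed $e \in T \subset T^*$ and the $3$-elementary condition $3T^* \subset T$ means $(x\cdot e) \in \frac13\Z$ has denominators controlled, so $d \mid 3$, and $d$ cannot be a proper divisor other than these. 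Pick $f_0 \in T$ with $(f_0 \cdot e) = d$.

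Next I would adjust $f_0$ to an isotropic (or suitably normalized) vector spanning the hyperbolic summand. Set $f = f_0 - \tfrac{(f_0\cdot f_0)}{2d} e$; one must check $\tfrac{(f_0\cdot f_0)}{2d} \in \Z$, which holds when $d=1$ since $T$ is even, and when $d=3$ one instead allows $f^2 = 0$ after possibly also subtracting a multiple of $e$ making $(f_0\cdot f_0)$ divisible by $2d$ --- this divisibility is exactly where the $3$-elementarity of $T$ (hence of the rank-$2$ sublattice $\langle e,f_0\rangle$) is used, via \Cref{lem:3elem} classifying the possible rank-$2$ even $3$-elementary lattices of signature $(1,1)$ as $U$ or $U(3)$. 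So $H := \langle e, f\rangle \cong U$ (if $d=1$) or $U(3)$ (if $d=3$). Since $H$ contains a vector $f$ with $(f \cdot e)$ equal to the generator $d$ of $(T \cdot e)$, the sublattice $H$ is \emph{unimodular in the relevant direction}: one checks $T = H \oplus H^\perp$ by writing, for any $x \in T$, the vector $x - \tfrac{(x\cdot e)}{d} f - \tfrac{(x\cdot f)}{?} e$ and verifying it lands in $H^\perp \cap T$ with integer coefficients --- again the coefficient integrality reduces to $d \mid (x\cdot e)$, which is the definition of $d$, plus an evenness/$3$-divisibility check handled by the structure of $H$.

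Finally, setting $\overline T := H^\perp$, the orthogonal decomposition $T = H \oplus \overline T$ gives $e^\perp = \Z e \oplus \overline T$ (here $e^\perp$ is taken inside $T$, and $\langle e,f\rangle^\perp \cap H = 0$ because $H$ is nondegenerate), hence $e^\perp/e \cong \overline T$, as claimed.

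The main obstacle I anticipate is the case $d = 3$: there one cannot naively clear the square $(f_0\cdot f_0)$ to zero by subtracting an integer multiple of $e$ unless $(f_0\cdot f_0)$ is already even and divisible by $6$, and showing that a suitable representative exists requires invoking $3$-elementarity of the ambient $T$ to control the discriminant form of $\langle e, f_0\rangle^{\mathrm{sat}}$ and then appealing to \Cref{lem:3elem}. Equivalently, one argues that the saturation of $\langle e, f_0\rangle$ in $T$ is an even $3$-elementary hyperbolic plane, hence $U$ or $U(3)$ by \Cref{lem:3elem}, and that this saturated plane splits off as an orthogonal direct summand because its discriminant group injects into $A_T$ (a standard fact for primitive sublattices whose discriminant form is a direct summand). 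Everything else is the routine Gram-matrix bookkeeping of the classical "unimodular summand splits off" lemma, which I would not spell out in full.
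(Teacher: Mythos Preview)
Your $d=1$ case is fine and matches the paper. In the $d=3$ case you correctly identify the obstacle, but neither of your proposed fixes works. The assertion that $3$-elementarity of $T$ passes to $\langle e,f_0\rangle$ or to its saturation is false for an arbitrary $f_0$: take $T=U(3)\oplus A_2$, let $e,f$ be the standard isotropic generators of $U(3)$ and $\alpha\in A_2$ a root, and set $f_0=f+\alpha$. Then ${\rm div}(e)=3$, $e\cdot f_0=3$, $f_0^2=-2$, the sublattice $\langle e,f_0\rangle$ is already saturated in $T$ (pair with $e$ to see $n\mid b$, then primitivity of $e$ gives $n\mid a$), and its Gram matrix $\left(\begin{smallmatrix}0&3\\3&-2\end{smallmatrix}\right)$ has discriminant group $\Z/9$, which is not $3$-elementary. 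Subtracting integer multiples of $e$ changes $f_0^2$ only by multiples of $6$, so to force $6\mid f_0^2$ you must move within $e^\perp$ outside $\Z e$, and you give no argument that a suitable choice exists. Even granting $H\cong U(3)$ primitive in $T$, the splitting $T=H\oplus H^\perp$ is not automatic for a non-unimodular $H$; ``its discriminant group injects into $A_T$'' is not a recognised splitting criterion.

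The paper avoids both problems by constructing $H$ from the other side. It first shows $\overline T=e^\perp/e$ is $3$-elementary with $a(\overline T)=a(T)-2$, since $A_{\overline T}\cong(e^*)^\perp/e^*$ is a subquotient of $A_T$. Choosing any lift $i\colon\overline T\hookrightarrow e^\perp$ (primitive, automatically isometric), it sets $H=i(\overline T)^\perp$. Then \cite[1.15.1]{nik:79} gives that $H$ is $3$-elementary, so $H\cong U$ or $U(3)$ by \Cref{lem:3elem}; since $e\in H$ has divisibility $3$, the case $U$ is excluded. Finally $H\oplus\overline T\subset T$ are $3$-elementary of the same rank and the same $\Z_3$-rank $a(T)$, hence the same determinant, so the inclusion is an equality. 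The two ideas you are missing are: build $H$ as an orthogonal complement of a $3$-elementary sublattice so that Nikulin's theorem applies, and certify the direct-sum splitting by a determinant count rather than by trying to exhibit a second generator.
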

\begin{proof}
    An isotropic vector \(e\in T\) defines an isotropic vector \(e^* = e/{\rm div} (e)\in A_T\) in the discriminant group.
    Here the divisibility \({\rm div}(e)\) is defined by \(e\cdot T = {\rm div}(e)\Z\). 
    Let \(\overline{T} \colonequals e^\perp/e\). 
    One has \({\rm div}(e) = 1\) or \(3\). If \({\rm div}(e)=1\) then it is contained in a unimodular sublattice \(H\subset T\), which is isomorphic to \(U\) by \Cref{lem:3elem}. 
    It splits off as a direct summand, and the statement follows.

    If ${\rm div}(e)=3$ then $e^*\ne 0$ and the lattice \(\overline{T}\) has the discriminant group \((e^*)^\perp/e^*\). 
    Thus, \(a(\overline{T}) = a(T)-2\).
    Consider the projection \(e^\perp \to \overline{T}\). 
    Choose a lift \(i\from \overline{T} \to e^\perp \subset T\), which is automatically an isometry. 
    Let \(H = i(\overline{T})^\perp\) in \(T\). By \cite[1.15.1]{nik:79} it is \(3\)-elementary, so it is isomorphic to \(U\) or \(U(3)\) by \Cref{lem:3elem}. 
    The first case is impossible, so \(H\simeq U(3)\). 
    We have an inclusion of lattices \(H \oplus \overline{T} \subset T\), and the lattices on the left and on the right are \(3\)-elementary with the same \(\Z_3\)-rank \(a\). 
    Thus this inclusion is an identity and the statement follows. 
\end{proof}

Now let \(T\) be an \(\E\)-lattice.  
If \(e\in T\) is an isotropic vector then so are \(\rho e\) and \(\rho^2 e\), and the identity
\(e+\rho e+ \rho^2 e=0\) implies that \(\langle e, \rho e\rangle = 0\). 
Thus, the saturation of the span of \(e\) and \(\rho e\) is an isotropic plane \(J\simeq\Z^2\subset T\). 

\begin{theorem}\label{thm:Estar-1cusps}
    Let \(T\) be an \(\E^*\)-lattice and \(J\subset T\) be an isotropic plane. 
    Then there exists an orthogonal decomposition 
    of \(\E^*\)-lattices
    \(T = H \oplus \overline{\overline{T}}\) with \(H\simeq U\oplus U\) or \(H=U\oplus U(3)\) an \(\E^*\)-lattice as above, \(J\) contained in \(H\), and \(J^\perp/J = \overline{\overline{T}}\) as an \(\E^*\)-lattice. 
    The isotropic planes in \(T\) up to the group \(O_\rho(T)\) of isometries commuting with \(\rho\) are in a bijection with the isomorphism classes of the \(\E^*\)-lattices \(J^\perp/J\) appearing this way.
\end{theorem}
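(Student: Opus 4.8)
The plan is to reduce the statement about $1$-cusps of $\E^*$-lattices to the already-established statement about $0$-cusps (\Cref{lem:0cusp}) by peeling off one hyperbolic summand at a time in a $\rho$-equivariant way. Given the isotropic plane $J \subset T$, I would first observe that $J$ is automatically $\rho$-invariant: pick any isotropic $e \in J$; then $\rho e$ lies in the isotropic cone and, since $e + \rho e + \rho^2 e = 0$ forces $\langle e, \rho e\rangle = 0$ (the computation just above the theorem), the saturation of $\langle e, \rho e\rangle$ is an isotropic plane contained in $J$, hence equal to $J$ by rank and saturation. So $J$ carries a faithful order-$3$ isometry with no nonzero fixed vector, i.e.\ $J$ is itself an $\E$-lattice of rank $2$; being isotropic it is (as an $\E$-module) free of rank $1$ with zero Hermitian form.

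The core step is the equivariant splitting. I would apply \Cref{lem:0cusp} to the isotropic vector $e \in J \subset T$ to get an orthogonal decomposition $T = H_1 \oplus \overline T$ with $H_1 \cong U$ or $U(3)$, $e \in H_1$, and $\overline T \cong e^\perp/e$. The issue is that this splitting is not a priori $\rho$-equivariant. To fix this, note $\rho$ preserves $e^\perp$ and descends to an isometry $\overline\rho$ of $\overline T \cong e^\perp/e$ of order dividing $3$; since $T$ is an $\E$-lattice and $J$ (hence the $\rho$-orbit structure near $e$) is nondegenerate in the relevant sense, $\overline\rho$ again has no nonzero fixed vector, so $\overline T$ is an $\E$-lattice and it is $\E^*$ by \cite[1.15.1]{nik:79} plus the triviality of the $\rho$-action on $A_T = A_{H_1}\oplus A_{\overline T}$. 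Moreover $J/\langle e\rangle$ maps to an isotropic line $\bar e = \rho e \bmod e$ in $\overline T$, which is again $\overline\rho$-stable. Now repeat \Cref{lem:0cusp} inside $\overline T$ with the isotropic vector $\bar e$: this produces $\overline T = H_2 \oplus \overline{\overline T}$ with $H_2 \cong U$ or $U(3)$ and $\overline{\overline T} \cong \bar e^\perp/\bar e \cong J^\perp/J$. Setting $H = H_1 \oplus H_2$ (after checking $J \subset H$, which holds because $e \in H_1$ and a lift of $\bar e$ can be chosen in $H_1 \oplus H_2$), one gets $T = H \oplus \overline{\overline T}$; by \Cref{lem:Estar}, $H$ is one of $U\oplus U$ or $U\oplus U(3)$ as an $\E^*$-lattice (the option $U(3)\oplus U(3)$ is excluded since it is not an $\E^*$-lattice), and the $\rho$-action on $H$ is the unique one from the list preceding \Cref{lem:3elem}.

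For the final bijection statement, one direction is clear: isometric $\E^*$-lattices $J^\perp/J$ obviously come from $O_\rho(T)$-equivalent planes when $T$ is split as above, since an isometry $\overline{\overline T} \xrightarrow{\sim} \overline{\overline T}{}'$ extends by the identity on the (unique, up to $O_\rho$) summand $H$ to an element of $O_\rho(T)$ carrying one $J$ to the other. Conversely, $O_\rho(T)$-equivalent planes manifestly give isometric quotients $J^\perp/J$ functorially. The only genuine content is that every isometry class of $J^\perp/J$ that "appears this way" is realized, and that two planes with isometric quotients are actually in the same orbit — this follows from the splitting $T = H \oplus \overline{\overline T}$ together with Eichler/Nikulin-type uniqueness: any two embeddings of $H \cong U\oplus U$ or $U \oplus U(3)$ (with its fixed $\rho$-action) into $T$ as an orthogonal $\E^*$-summand are conjugate under $O_\rho(T)$, because the orthogonal complement is determined up to isometry and the extension data is trivial for these $3$-elementary lattices.

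\textbf{Main obstacle.} The delicate point is the $\rho$-equivariance of the two successive $0$-cusp splittings: \Cref{lem:0cusp} as stated produces a decomposition but says nothing about compatibility with $\rho$. I expect the real work to be in showing that the summand $H_1$ (resp.\ $H_2$) can be chosen $\rho$-invariant — equivalently, that the rank-$1$ isotropic $\E$-sublattice $\langle e, \rho e\rangle^{\rm sat} = J$ sits inside a $\rho$-invariant copy of $U\oplus U$ or $U \oplus U(3)$. One clean way is to work directly on the Eisenstein side: $J$ being an isotropic $\E$-line in the Hermitian $\E$-lattice $T^\E$, choose an isotropic complement $\E f$ over $\E$ pairing with $J$, use $3$-elementariness to control the resulting $2\times 2$ Hermitian block (it must be $\bigl(\begin{smallmatrix}0&\theta\\ \bar\theta&0\end{smallmatrix}\bigr)$ or $\bigl(\begin{smallmatrix}0&3\\3&0\end{smallmatrix}\bigr)$ by \Cref{lem:3elem} and \Cref{lem:Estar}), and split it off as an $\E$-Hermitian orthogonal summand — avoiding the non-equivariant \Cref{lem:0cusp} altogether. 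The uniqueness-of-embedding claim in the bijection will then rest on a standard Eichler-type transitivity argument for $O_\rho(T)$ acting on $\E$-isotropic lines, which should go through smoothly for these small-discriminant $3$-elementary lattices.
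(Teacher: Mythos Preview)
Your proposal is correct and matches the paper's approach closely. The paper does exactly what you suggest in your ``Main obstacle'' paragraph: it works on the Eisenstein side with the isotropic $\E$-line $J^\E = \E e \subset T^\E$ and repeats the argument of \Cref{lem:0cusp} there to produce a $\rho$-invariant $H^\E$ together with an inclusion $H^\E \oplus \overline{\overline T}^\E \subset T^\E$. The one trick the paper adds, which you nearly have but do not quite assemble, is that it does \emph{not} discard your first approach: since orthogonal complements in $T$ and $T^\E$ coincide, the $\overline{\overline T}$ obtained on the Eisenstein side is the same $\Z$-lattice one gets by applying \Cref{lem:0cusp} twice over $\Z$ to $e_1 \in T$ and then $e_2 \in \overline T$, and that second description already shows the inclusion is an \emph{equality} $T = H \oplus \overline{\overline T}$. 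So the Eisenstein viewpoint supplies the $\rho$-equivariance, the $\Z$-viewpoint supplies the direct-sum splitting, and the two agree. The exclusion of $U(3)\oplus U(3)$ via \Cref{lem:Estar} and the bijection argument (noting $a(H)+a(\overline{\overline T})=a(T)$ determines $H$ from $\overline{\overline T}$, so two splittings with isometric summands differ by an element of $O_\rho(T)$) are exactly as you describe.
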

\begin{proof}
    Let us work with the corresponding Eisenstein lattice \(J^\E\subset T^\E\). 
    Then \(J^\E = \E e\) is a free rank-\(1\) \(\E\)-module. 
    Repeating the argument of \Cref{lem:0cusp}, we get a sublattice \(H^\E = (\E e)^\perp\) and an inclusion \(H^\E\oplus \overline{\overline{T}}^\E \subset T^\E\).  
    (Note here that the orthogonals in \(L\) and \(L^\E\) are the same.)  
    On the other hand, we can obtain the same lattice \(\overline{\overline{T}}\) in two steps by working with \(\Z\)-lattices, by adding two ordinary isotropic vectors \(e_1\in T\) and then \(e_2\in\overline{T}\). 
    Thus, by \Cref{{lem:0cusp}} applied twice there is an \emph{equality} \(T = H\oplus \overline{\overline{T}}\) and \(H\) is one of the lattices of \Cref{lem:3elem}.  \
    Since \(T\) is an \(\E^*\)-lattice then so is its direct summand \(H\). 
    Thus by \Cref{lem:Estar} \(H=U\oplus U\) or \(U\oplus U(3)\).

    One has \(a(H) + a(\overline{\overline{T}}) = a(T)\). 
    Thus the isomorphism class of \(\overline{\overline{T}} = J^\perp/J\) defines \(H\) uniquely.  
    Two decompositions with the isomorphic \(H\) and \(\overline{\overline{T}}\) differ by an isometry, mapping the first (resp. second) summand to the first (resp. second) summand.
\end{proof}

Note that the action of \(\Gamma_\rho \subset O(L)\) on \(T_\rho\) is induced by the restriction \(\Gamma_\rho \to O_\rho(T_\rho)\), which is surjective by the proof of \cite[Theorem 3.6]{ma.oha.tak:15}. 
Thus, $\D_\rho/\Gamma_\rho = \D_\rho/O_\rho(T_\rho)$ and
cusps of \(\overline{\D_\rho/\Gamma_\rho}^{\rm BB}\), which are \(\Gamma_\rho\)-orbits of \(\rho\)-invariant isotropic planes \(J \subset T_\rho\), are in a bijection with the isomorphism classes of \(J^{\perp}_T/J\) by \Cref{thm:Estar-1cusps}.

\subsection{$1$-cusps of K3 $\E^*$-lattices}
\label{subsec:1-cusps-Estar}

Artebani and Sarti \cite{art.sar:08} classified the period lattices \(T\) (corresponding to \(T_\rho\) from \Cref{subsec:period_map}) of K3 surfaces with a nonsymplectic automorphism of order \(3\). 
The main result is given by \cite[Table~2]{art.sar:08} which lists the possible lattices \(T=T(n,k)\) and their orthogonals \(S=T^\perp_L\) in the K3 lattice $L$ (they are denoted \(N\) in that paper). 
All of them turn out to be $\E^*$-lattices. 
Our main interest are the maximal lattices with \(g\ge2\) which are as follows:

\begin{table}[h]
  \centering
  \begin{tabular}[h]{c c l l l}
    \toprule
    \(g\)& \((n, k)\) & \(S = T^\perp_L\) & \(T\) & \(P = T^\perp_\Lambda \)\\
    \midrule
  5&(0,2) &\(U\) &\(U^2\oplus E_8^2\) &\(E_8\)\\
  4&(0,1) &\(U(3)\) &\(U\oplus U(3)\oplus E_8^2\) &\(E_6\oplus A_2\)\\
  3&(1,1) &\(U(3)\oplus A_2\) &\(U\oplus U(3)\oplus E_6\oplus E_8\)
                             &\(E_6\oplus A_2^2\)\\      
  2&(2,1) &\(U(3)\oplus A_2^2\) &\(U\oplus U(3)\oplus E_6^2\)
                             &\(E_6\oplus A_2^3\)\\
    \bottomrule\\
\end{tabular}
  \caption[Maximal \(\E^*\)-lattices]{Maximal K3 \(\E^*\)-lattices with \(g\ge2\)}
  \label{tab:ASlattices}
\end{table}

The negative definite lattice \(P\) of the last column is the orthogonal of \(T\) in the even unimodular lattice \(\Lambda = {\rm II}_{2,26}\). 
We will need it in the proof of \Cref{thm:t11-t21cusps}.

\begin{theorem}\label{thm:t11-t21cusps}
    The cusps $J$ of the lattices $T$ in \Cref{tab:ASlattices} modulo \(O_\rho(T)\) are uniquely determined by the root sublattices of $J^\perp/J$, which are given in \Cref{tab:AScusps}. Here, $R^*$ denotes an index-$3$ overlattice of a root lattice $R$.
\end{theorem}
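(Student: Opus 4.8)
The plan is to leverage \Cref{thm:Estar-1cusps}, which reduces the classification of cusps of $T$ modulo $O_\rho(T)$ to the classification of the $\E^*$-lattices that arise as $J^\perp_T/J$. For each of the four lattices $T$ in \Cref{tab:ASlattices}, I would first compute the basic invariants of the putative lattice $\overline{\overline{T}} = J^\perp_T/J$: it is an $\E^*$-lattice of rank $\rk T - 4$, negative definite of rank $\rk T - 4$ (since $T$ has signature $(2,\rk T - 2)$ and we quotient out a hyperbolic $\E^*$-plane $H$ which is $U^2$ or $U\oplus U(3)$), and its $\Z_3$-rank satisfies $a(\overline{\overline{T}}) = a(T) - a(H)$, where $a(U^2) = 0$ and $a(U\oplus U(3)) = 2$. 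Since $a(T) $ is known (e.g.\ from \Cref{tab:ASlattices} — $S = T^\perp_L$ determines $a(T) = a(S)$), this pins down whether $H = U^2$ or $H = U\oplus U(3)$ once $\overline{\overline{T}}$ is named, consistent with the last sentence of \Cref{thm:Estar-1cusps}.

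Next, for each $T$ I would exhibit the candidate decompositions $T = H \oplus \overline{\overline{T}}$ explicitly: using the known direct-sum forms $T = U\oplus U(3)\oplus (\text{stuff})^2$ or $U^2\oplus E_8^2$, one peels off $H$ and is left with an explicit negative definite $\E^*$-lattice built from $E_6$, $E_8$, and $A_2$ blocks. The possibilities to enumerate are then: which $\E^*$-sublattice decompositions of the ``stuff'' are realizable as $J^\perp_T/J$. Concretely, for $g=5$, $T = U^2\oplus E_8^2$ and the only cusp has $\overline{\overline{T}} = E_8^{\oplus 2}$ (an $\E^*$-lattice, with $\rho$ acting on each $E_8$ with no fixed vector — the Eisenstein structure on $E_8$ is $\mathcal{O}_{\overline{K}}^{\oplus 4}$ type). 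For the others, one needs to see that $E_6\oplus A_2$, resp.\ $E_8$, can degenerate to various root (or index-$3$ overlattice of root) configurations. The index-$3$ overlattices $R^*$ arise precisely because after removing $H = U\oplus U(3)$ (which has nontrivial discriminant) the complement $\overline{\overline{T}}$ may be an overlattice of a root lattice rather than the root lattice on the nose; this is governed by the gluing between $A_S = A_T$ and the $\rho$-action, and here I would invoke that $\rho$ acts trivially on $A_T$ ($\E^*$-condition) to constrain the glue.

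The heart of the argument — and the step I expect to be the main obstacle — is the finite but delicate case analysis establishing that the list in \Cref{tab:AScusps} is \emph{complete and irredundant}: that every isotropic $\rho$-plane $J \subset T$ gives one of the listed $J^\perp_T/J$, and that no two listed lattices are isomorphic as $\E^*$-lattices (so the cusps are genuinely distinct). Completeness should follow by running the two-step peeling of \Cref{lem:0cusp} (first an ordinary isotropic vector $e_1$, then $e_2 \in e_1^\perp/e_1$) and tracking at each stage whether divisibility $1$ (splitting off $U$) or $3$ (splitting off $U(3)$) occurs, combined with Nikulin's uniqueness results \cite[1.13.2, 1.15.1]{nik:79} and the Rudakov--Shafarevich / Artebani--Sarti classification of $3$-elementary lattices of the relevant small ranks to bound the possibilities. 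Irredundancy is checked by comparing discriminant forms together with the $\rho$-action, or equivalently the Hermitian genus of the associated Eisenstein lattices — here the $R$ versus $R^*$ distinction is exactly the discriminant-group bookkeeping. I would organize the proof $T$-by-$T$, treating $g = 5,4$ quickly (few cusps, no $\E^*$-overlattice subtleties beyond $E_6\oplus A_2$) and spending most of the effort on $g = 3$ and $g = 2$, where the root lattices $E_6\oplus A_2^{\oplus 4}$, $E_8\oplus A_2^{\oplus 3}$, etc.\ and their index-$3$ overlattices must all be separated; the negative definite lattice $P = T^\perp_{\Lambda}$ in the last column of \Cref{tab:ASlattices} is introduced precisely to make this separation tractable via the embedding into $\mathrm{II}_{2,26}$.
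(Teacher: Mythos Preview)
Your reduction via \Cref{thm:Estar-1cusps} is correct, and you rightly identify that the problem becomes: classify the negative definite $\E^*$-lattices $\overline{\overline{T}}=J^\perp_T/J$ of the given rank and $\Z_3$-rank. The gap is in how you propose to carry out that classification. The Nikulin uniqueness results and the Rudakov--Shafarevich/Artebani--Sarti tables you invoke are theorems about \emph{indefinite} (or hyperbolic) $p$-elementary lattices; they say nothing about negative definite lattices of rank $12$--$16$, where a single genus can and does contain several isometry classes. Concretely, for $(n,k)=(2,1)$ both $E_8\oplus A_2^{\oplus 2}$ and $E_6^{\oplus 2}$ are even negative definite $3$-elementary $\E^*$-lattices of rank $12$ with $a=2$, so your numerical invariants cannot separate them; and ``peel off $H$ from the given direct-sum form of $T$'' only finds the obvious splitting (e.g.\ $E_6^{\oplus 2}$ from $T(2,1)=U\oplus U(3)\oplus E_6^{\oplus 2}$), not the others. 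Your proposal therefore has no mechanism for \emph{completeness}.

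The paper's method is the one you mention only in your last sentence, but it is the engine of the proof rather than a bookkeeping afterthought. Following Scattone and \cite{cas.jen.laz:12}, one embeds $T^\E\hookrightarrow\Lambda^\E$ with $\Lambda=\mathrm{II}_{2,26}$ and sets $P=T^\perp_\Lambda$. For any isotropic plane $J\subset T$ the lattice $N=J^\perp_\Lambda/J$ is even unimodular negative definite of rank $24$, hence a Niemeier lattice; one has $P\subset R$ (the root sublattice of $N$) and $J^\perp_T/J=P^\perp_N$. The $\E$-structure together with $P\subset R$ forces $N\in\{N(E_8^3),\,N(E_6^4)\}$. The classification then becomes a short enumeration of embeddings of the small explicit $P$ (last column of \Cref{tab:ASlattices}) into $E_8^{\oplus 3}$ or $E_6^{\oplus 4}$, using the identities $(A_2)^\perp_{E_8}=E_6$, $(E_6)^\perp_{E_8}=A_2$, $(A_2)^\perp_{E_6}=A_2^{\oplus 2}$, etc., followed by saturation in $N$ via the known glue code of $N(E_6^4)$---which is exactly where the starred index-$3$ overlattices arise. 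Uniqueness of $J$ for a given $J^\perp_T/J$ then comes from \Cref{thm:Estar-1cusps}. Your outline needs to promote this Niemeier step from a footnote to the main argument.
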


\begin{table}[h]
  \centering
  \begin{tabular}[h]{c c l}
    \toprule
    \(g\)& \((n,k)\) & \( J^\perp/J \) \\
    \midrule
  5&(0,2) & $ E_8^2$ \\
  4&(0,1) & $ E_8^2$, $ E_8\oplus E_6\oplus A_2$, $ (E_6^2\oplus A_2^2)*$ \\
  3&(1,1) & \( E_8\oplus E_6\), \( E_6^2\oplus A_2\), \( E_8\oplus A_2^3\), \( (E_6\oplus A_2^4)*\)  \\    
  2&(2,1) & \( E_8\oplus A_2^2\), \( E_6^2\), \( E_6\oplus A_2^3\), \( A_2^6* \)   \\
    \bottomrule\\
\end{tabular}
  \caption[The $1$-cusps of \(\E^*\)-lattices]{The $1$-cusps of \(\E^*\)-lattices with \(g\ge2\)}
  \label{tab:AScusps}
\end{table}


\begin{proof}
The Eisenstein $1$-cusps of $T(0,1)$ were found in \cite[Theorem 5.9]{cas.jen.laz:12}. The case \(T(0,2)\) is very easy. The proof in \cite{cas.jen.laz:12} proceeds by using an extension of Scattone's method \cite{sca:87} to the Eisenstein case. 
For the lattices \(T(2,1)\) and \(T(1,1)\) in addition to this method we also need \Cref{thm:Estar-1cusps} to show uniqueness of $J$ for a given $J^\perp/J$.

    The lattice \(\Lambda = {\rm II}_{2,26} \simeq U^2\oplus E_8^3\) has a unique \(\rho\)-action making it into an Eisenstein lattice \(\Lambda^\E\). 
    For each \(T\)-lattice of \Cref{tab:ASlattices}, \(T^\E\) can be embedded into \(\Lambda^\E\). 
    Let \(P^\E\) be its orthogonal. 
    Note that \(P\) is a negative definite \(\E^*\)-lattice given in the last column of \Cref{tab:ASlattices}. 
    Let \(J\subset T\) be an isotropic plane. We seek $J^\perp/J = J^\perp_T/J$, where $J^\perp_T$ denotes the orthogonal in~$T$.

    On the other hand, $J^\perp_\Lambda/J=N$ is a \emph{unimodular} negative definite lattice of rank $24$, that is one of the $24$ Niemeier lattice (see e.g. \cite[Ch. 18]{con.slo:99}) or the Leech lattice. Let $R$ denote the root sublattice of $N$; it is well known that $N$ is uniquely determined by $R$. One has $P\subset R$ (in particular, $R\ne 0$ so $N$ is not the Leech lattice) and $J^\perp_T/J = P^\perp_N$. 

    It follows that $N$ and $R$ must be $\E$-lattices. The only such lattices $N$ with $P\subset R$ in our four cases are $N(E_8^3)$ and $N(E_6^4)$. It remains to find the embeddings $P\subset R$ and to compute $P^\perp_N$, which is the saturation of $P^\perp_R$ in $N$. Moreover, computing $P^\perp_R$ is very easy since \((A_2)^\perp_{E_8} = E_6\), \((E_6)^\perp_{E_8} = A_2\), \((A_2)^\perp_{E_6} = A_2^2\), \((A_2^2)^\perp_{E_6} = A_2\) and $(A_2^2)^\perp_{E_8}=A_2^2$.

    The lattice $P(0,2)=E_8$ has a unique embedding into $E_8^3$.
    $P(0,1)=E_6\oplus A_2$ has two embeddings into $E_8^3$ and one into $E_6^4$.
    $P(1,1)=E_6\oplus A_2^2$ has three embeddings into $E_8^3$ and two into $E_6^4$. $P(2,1)=E_6\oplus A_2^3$ has four embeddings into $E_8^3$ and four into $E_6^4$.
    For each of these embeddings we compute $P^\perp_R$ and obtain the root systems in the statement.
    
    For $R=E_8^3$ one has $N=R$, so $P^\perp_N=P^\perp_R$. 
    For $R=E_6^4$ one has $A_R=R^*/R=\Z_3^4$ and $N$ is an index $9$ sublattice of $R^*$ corresponding to a ``glue'', an isotropic space $V\simeq\Z_3^2\subset A_R$ listed in \cite[Ch. 18]{con.slo:99}). 
    Each nonzero glue vector $v\in V$ has exactly one zero coordinate in $\Z_3^4$. On the other hand, in the non-starred cases the image of $P^\perp_{R^*}$ in $\Z_3^4$ has at least two zero coordinates, so $P^\perp_R$ is saturated in $N$. In the three starred cases one has $V\cap (P^\perp_{R^*}/R)=\Z_3\subset A_R$, so the saturation is a $\Z_3$-extension of $P^\perp_R$. (Here, we use $(A_2)^\perp_{E_6^*}/E_6=\Z_3$ and $(A_2^2)^\perp_{E_6^*}/E_6=0$.)   
    \iflongversion
    
    Here is how one computes $P^\perp_{R^*}$. Using Bourbaki's notation for the roots of $E_6$: Let $A_2 = \langle \alpha_1,\alpha_3\rangle \subset E_6$. Then $A_2^\perp = \langle \alpha_2^*, \alpha_4^*, \alpha_5^*, \alpha_6^*\rangle$. Here $\alpha_i^*$ are the fundamental coweights, giving the dual basis to the root basis. Similarly, for $A_2^2 = \langle \alpha_1,\alpha_3, \alpha_5,\alpha_6\rangle \subset E_6$ one has $A_2^\perp = \langle \alpha_2^*, \alpha_4^*\rangle$. In $A(E_6)=\Z_3$ one has $\alpha_2^*=\alpha_4^*=0$ and $\alpha_i^*\ne0$ for $i=1,3,5,6$. Now, $A(E_6^4)=\Z_3^4$, and the above computation shows which coordinates in $P^\perp_{R^*}$ are zero.
    Tables \ref{tab:expl01}, \ref{tab:expl11}, \ref{tab:expl21-1}, \ref{tab:expl21-2} give the details of this computation.
    \begin{table}[h]
    \centering
    \begin{tabular}[h]{|lll|lll|llll|}
    $P$ &$R$ &$P_R^\perp$ &$P$ &$R$ &$P_R^\perp$ &$P$ &$R$ &$P_R^\perp$ &$P^\perp_{R^*}/R$ \\
    \midrule
    $E_6A_2$ & $E_8$ & $0$ 
    & $E_6$ & $E_8$ & $A_2$ 
    & $E_6$ & $E_6$ & $0$ & $0$ \\
     & $E_8^2$ & $E_8^2$  
    & $A_2$ & $E_8$ & $E_6$
    & $A_2$ & $E_6$ & $A_2^2$ & $\Z_3$\\
    & & & & $E_8$ & $E_8$ 
    & & $E_6^2$ 
    & $E_6^2$ & $\Z_3^2$\\
    \end{tabular}
    \medskip
    \caption{Case $(n,k)=(0,1)$}
    \label{tab:expl01}
    \end{table}

    \begin{table}[h]
    \centering
    \begin{tabular}[h]{|lll|lll|lll|llll|llll|}
    $P$ & $R$  &$P_R^\perp$ 
    & $P$ &$R$ & $P_R^\perp$ 
    & $P$ &$R$ & $P_R^\perp$ 
    & $P$ &$R$ & $P_R^\perp$ & $P^\perp_{R^*}/R$ 
    & $P$ &$R$ & $P_R^\perp$ & $P^\perp_{R^*}/R$ 
    \\
    \midrule
$E_6A_2$ & $E_8$ & $0$
& $E_6$ & $E_8$ & $A_2$
& $E_6$ & $E_8$ & $A_2$ 
& $E_6$ & $E_6$ & $0$ & $0$
& $E_6$ & $E_6$ & $0$ & $0$
\\
$A_2$ & $E_8$ & $E_6$
& $A_2^2$ & $E_8$ & $A_2^2$
& $A_2$ & $E_8$ & $E_6$
& $A_2^2$ & $E_6$ & $A_2$ & $0$
& $A_2$ & $E_6$ & $A_2^2$ & $\Z_3$
\\
& $E_8$ & $E_8$
& & $E_8$ & $E_8$
& $A_2$ & $E_8$ & $E_6$
& & $E_6^2$ & $E_6^2$ & $\Z_3^2$
& $A_2$ & $E_6$ & $A_2^2$ & $\Z_3$
\\
&& &&& &&& &&&& & $E_6$ 
& & $E_6$ & $\Z_3$
    \end{tabular}
    \medskip
    \caption{Case $(n,k)=(1,1)$}
    \label{tab:expl11}
    \end{table}  

    \begin{table}[h]
    \centering
    \begin{tabular}[h]{|lll|lll|lll|lll|}
    $P$ & $R$ & $P_R^\perp$ 
    & $P$ & $R$ & $P_R^\perp$
    & $P$ & $R$ & $P_R^\perp$
    & $P$ & $R$ & $P_R^\perp$
    \\
    \midrule
$E_6A_2$ & $E_8$ & $0$
& $E_6A_2$ & $E_8$ & $0$
& $E_6$ & $E_8$ & $A_2$
& $E_6$ & $E_8$ & $A_2$
\\
$A_2^2$ & $E_8$ & $A_2^2$
& $A_2$ & $E_8$ & $E_6$
& $A_2^3$ & $E_8$ & $A_2$
& $A_2^2$ & $E_8$ & $A_2^2$
\\
& $E_8$ & $E_8$
& $A_2$ & $E_8$ & $E_6$
& & $E_8$ & $E_8$
& $A_2$ & $E_8$ & $E_6$
\\
    \end{tabular}
    \medskip
    \caption{Case $(n,k)=(2,1)$, embeddings into $N(E_8^3)$}
    \label{tab:expl21-1}
    \end{table}

    \begin{table}[h]
    \centering
    \begin{tabular}[h]{|llll|llll|llll|llll|}
$P$ &$R$ & $P_R^\perp$ & $P^\perp_{R^*}/R$
& $P$ &$R$ & $P_R^\perp$ & $P^\perp_{R^*}/R$
& $P$ &$R$ & $P_R^\perp$ & $P^\perp_{R^*}/R$
& $P$ &$R$ & $P_R^\perp$ & $P^\perp_{R^*}/R$
    \\
    \midrule
$E_6$ & $E_6$ & $0$ & $0$
& $E_6$ & $E_6$ & $0$ & $0$
& $E_6$ & $E_6$ & $0$ & $0$
& $E_6$ & $E_6$ & $0$ & $0$
\\
$A_2^3$ & $E_6$ & $0$ & $0$
& $A_2^2$ & $E_6$ & $A_2$ & $0$
& $A_2^2$ & $E_6$ & $A_2$ & $0$
& $A_2$ & $E_6$ & $A_2^2$ & $\Z_3$
\\
& $E_6^2$ & $E_6^2$ & $\Z_3^2$
& $A_2$ & $E_6$ & $A_2^2$ & $\Z_3$
& $A_2$ & $E_6$ & $A_2^2$ & $\Z_3$
& $A_2$ & $E_6$ & $A_2^2$ & $\Z_3$
\\
&&&
& & $E_6$ & $E_6$ & $\Z_3$
& & $E_6$ & $E_6$ & $\Z_3$
& $A_2$ & $E_6$ & $A_2^2$ & $\Z_3$
    \end{tabular}
    \medskip
    \caption{Case $(n,k)=(2,1)$, embeddings into $N(E_6^4)$}
    \label{tab:expl21-2}
    \end{table}    
\else
 The version on arXiv has details of this computation.
\fi
\end{proof}

\iflongversion
\begin{remark}
    In several cases, it happens that the same cusp of $T$ mod $O(T)$ corresponds to two different cusps of $\Lambda$ mod $O(\Lambda)$, to both $N(E_8^3)$ and $N(E_6^4)$. For $(n,k)=(1,1)$ this happens for $E_6^2\oplus A_2$, and for $(n,k)=(2,1)$ for $E_6^2$ and $E_6\oplus A_2^3$. This is possible because not every isometry of $T$ extends to an isometry of $N$. Indeed, in these cases the homomorphism $\phi_T\colon O(T)\to O(q_T)$ is surjective but $\phi_P\colon O(P)\to O(q_P)=O(q_T)$ is not. An isometry $g\in O(T)$ extends to an isometry of $\Lambda$ precisely when $\phi_T(g) \in \im \phi_P$.
    
    An isometry $g\in O(E_6\oplus A_2^k)$ sends the $E_6$ block to itself and permutes the $A_2$ blocks, because $g$ sends a root system to a root system and the direct decomposition into irreducible root systems is unique. But the group $O(q_{E_6\oplus A_2^2})\simeq S_3\ltimes \mathbb Z_2^3$ has order $48\ne 2^3\cdot 2$, so it contains an element that does not preserve the block decomposition.
\end{remark}
\fi

\begin{remark}
    By Theorem~\ref{thm:Estar-1cusps} one has $T\simeq H\oplus J^\perp/J$. Here, $H=U^2$ iff the $\Z_3$-ranks of $T$ and $J^\perp/J$ coincide. In this case $S$ is saturated in $J^\perp/J$. The opposite case is $a(T)=a(J^\perp/J)+2$, then $H=U\oplus U(3)$ and the torsion subgoup of $(J^\perp/J)/S$ is $\Z_3$.
\end{remark}

\section{K3 surfaces with an automorphism of order 3} \label{sec:Kulikov_trigonal}
\subsection{The triple Tschirnhausen construction}\label{sec:triple-tschirnhausen}
Let \(P\) be a reduced connected projective curve, possibly singular, and let \(p_1, \dots, p_n \in P\) be distinct points in the smooth locus of \(P\).
Let \(\phi \colon C \to P\) be a Gorenstein triple cover, that is, a finite flat morphism of degree 3 with Gorenstein fibers.
Assume that \(\phi\) is \'etale over the generic points of the components of \(P\) and also over \(p_1,\dots, p_n\).
Let \(E = \left(\phi_{*}\O_C/\O_P\right)^{\vee}\) be the Tschirnhausen bundle of \(\phi\) and \(\br \phi \subset P\) the branch divisor of \(\phi\).
Then \(\O_P(\br \phi) = \det E^{\otimes 2}\).
Assume that the line bundle \(\det E^{\vee} \otimes \O_P(2p_1 + \dots + 2p_n)\) is divisible by 3 in \(\Pic P\).
In fact, let \(\eta \in \Pic P\) be a cube root of this line bundle.
We associate to \((\phi \colon C \to P, p_1,\dots, p_n, \eta)\) a surface \(X\) together with an automorphism \(\sigma\) of order 3.
In our applications, \(\Pic P\) will be torsion-free, so there will be a unique choice of \(\eta\).
In that case, we drop \(\eta\) from the notation.
\Cref{fig:triple-tschirnhausen} shows a sketch of the construction.

By \cite[Theorem 1.3]{cas.eke:96}, we have a canonical embedding \(C \subset \P E\) whose image is a Cartier divisor of class \(\O_{\P E}(3) \otimes \pi^{*}\det E^{\vee}\).
Let \(\pi \colon \P E \to P\) be the projection and let \(F_1, \dots, F_n\) be the fibers of \(\pi\) over \(p_1,\dots,p_n\).
Let \(\widehat \P E \to \P E\) be the blow-up at the \(3n\) points \(\bigsqcup_i C \cap F_{i}\).
Let \(\widehat C\) and \(\widehat F_i\) be the proper transforms in \(\widehat \P E\) of \(C\) and \(F_i\).
Note that these curves are pairwise disjoint.
Set \(\widehat F = \bigsqcup_i \widehat F_i\).
Let \(\epsilon \subset \widehat \P E\) be the exceptional divisor of the blow-up.
Then \(\epsilon\) is the disjoint union of \(3n\) smooth rational curves of self-intersection \(-1\).

Consider the cyclic triple cover \(\widehat \P E\) branched over \(\widehat C + 2\widehat F\).
This is constructed as follows.
Let \(H \in \Pic \widehat \P E\) be the class of the pull-back of \(\O_{\P E}(1)\).
Observe that in \(\Pic \widehat \P E\), we have
\[ \widehat C + 2\widehat F = 3(H + \eta - \epsilon).\]
Let \(s\) be a global section of \(\O_{\P E}(3(H + \eta - \epsilon))\) whose zero locus is \(\widehat C + 2\widehat F\).
We have the \(\O_{\P E}\)-algebra
\[\O_{\widehat \P E} \oplus \O_{\widehat \P E}(-H-\eta+\epsilon) \oplus \O_{\widehat \P E}(-2H-2\eta+2\epsilon),\]
where the multiplication map sends 
\[ \O_{\widehat \P E}(-H-\eta+\epsilon) \otimes \O_{\widehat \P E}(-2H-2\eta+2\epsilon) \to \O_{\widehat \P E}\]
by the map induced by \(s\).
The cyclic triple cover is the relative spectrum of this algebra.
Let \(q \in \widehat F\) be a point.
Choose local coordinates \(x,y\) on \(\widehat \P E\) at \(q\) so that \(x = 0\) cuts out \(\widehat F\).
Then, in coordinates, the triple cover is given by
\[ \spec \C[x,y,t]/(t^3-x^2)  \to \spec \C[x,y].\]
Let \(\widehat X\) be the normalisation of the cyclic triple cover constructed above along the pre-image of \(\widehat F\).
Then \(\widehat X\) is non-singular along the pre-image of \(\widehat F\), and for each \(i\), the pre-image of \(\widehat F_i\) is a \((-1)\)-curve on \(\widehat X\).
Let \(\widehat X \to X\) be the blow-down of these \((-1)\)-curves.
Let \(\widehat \sigma\) be the automorphism of \(\widehat X\) arising from the cyclic triple cover and let \(\sigma\) be the induced automorphism of \(X\).
The construction is now complete.

We say that \((X, \sigma)\) is obtained from \((\phi \colon C \to P, p_1, \dots, p_n, \eta)\) by the \emph{triple Tschirnhausen construction pinched over \(p_1, \dots, p_n\)}.

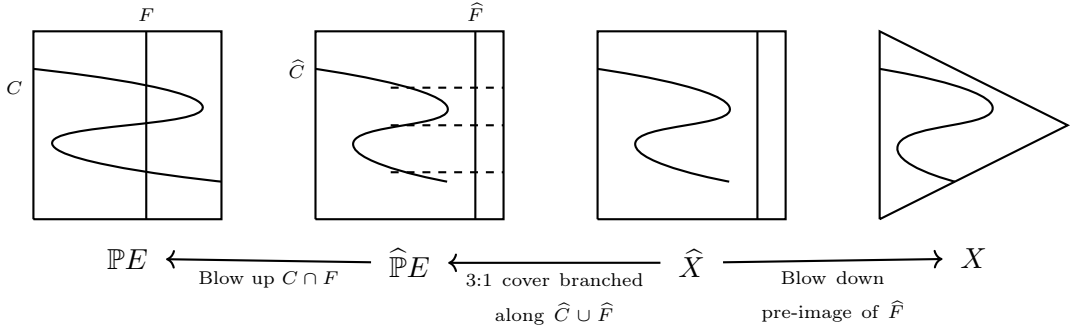
\begin{figure}
  \centering
      \begin{tikzpicture}[thick, scale=2.5]
      \draw (0,0) -- (0,1) -- (1,1) -- (1,0) -- (0,0);
      \draw plot [smooth, tension=1] coordinates { (0,.8)  (.9,.6)  (.1,.4) (1,.2)};
      \draw (0,.7) node[left] {\tiny \(C\)};
      \draw (.6,0)  -- (.6,1) node[above] {\tiny \(F\)};
      \draw (0.5,-0.1) node[below] (PE) {\(\P E\)} ;

      \begin{scope}[xshift = 1.5cm]
        \draw (0,0) -- (0,1) -- (1,1) -- (1,0) -- (0,0);
        \draw plot [smooth, tension=1] coordinates { (0,.8)  (.7,.6)  (.2,.4) (.7,.2)};
        \draw (0,.8) node[left] {\tiny \(\widehat C\)};
        \draw[dashed] (0.4,0.7) -- (1,0.7);
        \draw[dashed] (0.4,0.5) -- (1,0.5);
        \draw[dashed] (0.4,0.25) -- (1,0.25);
        \draw (.85,0) -- (.85,1) node[above] {\tiny \(\widehat F\)};;
        \draw (0.5,-0.1) node[below] (PEh) {\(\widehat \P E\)};
      \end{scope}

      \begin{scope}[xshift = 3.0cm]
                \draw (0,0) -- (0,1) -- (1,1) -- (1,0) -- (0,0);
        \draw plot [smooth, tension=1] coordinates { (0,.8)  (.7,.6)  (.2,.4) (.7,.2)};
        \draw (.85,0) -- (.85,1);
        \draw (0.5,-0.1) node[below] (Xh) {\(\widehat X\)};
      \end{scope}

       \begin{scope}[xshift = 4.5cm]
        \draw (0,0) -- (1,0.5) -- (0,1) -- (0,0);
        \draw plot [smooth, tension=1] coordinates {(0,.8)  (.6,.6)  (.1,.4) (.4,.2)};
        \draw (0.5,-0.1) node[below] (X) {\(X\)};
      \end{scope}
      \draw[->,shorten <=.1cm, shorten >=.1cm]
      (PEh) edge node[below]{\tiny Blow up \(C \cap F\)} (PE)
      (Xh) edge node[below, text width=2.5cm, align=center]{\tiny 3:1 cover branched along \(\widehat C \cup \widehat F\)} (PEh)
      (Xh) edge node[below, text width=2.5cm, align=center]{\tiny Blow down pre-image of \(\widehat F\)} (X);
    \end{tikzpicture}
    \caption{The pinched triple Tschirnhausen construction}
    \label{fig:triple-tschirnhausen}
\end{figure}

\begin{remark}
  We can describe the cover \(\widehat X \to \widehat \P E\) as a standard cover in the sense of \cite[Theorem 2.1]{par:91}.
  Let \(\chi \colon \mu_3 \to \C^{*}\) be the tautological character.
  In the notation of \cite{par:91}, the cover \(\widehat X \to \widehat \P E\) corresponds to the building data
  \begin{align*}
    L_{\chi} = \O_{\widehat \P E}(H+\eta-\epsilon), &\qquad
    L_{\chi^2}= L_{\chi}^{\otimes 2} \otimes \O_{\widehat \P E}\left(-\widehat F\right), \\
    D_{\mu_3, \zeta_3} = \widehat C, &\text{ and }
    D_{\mu_3, \zeta_3^2} = \widehat F.
  \end{align*}
  As an \(\O_{\widehat \P E}\)-module, we have
  \[ \O_{\widehat X} = \O_{\widehat \P E} \oplus L_{\chi}^{\vee} \oplus L_{\chi^2}^{\vee}.\]
  We have an action of \(\mu_3\) on \(\O_{\widehat X}\).
  It preserves the direct sum decomposition and acts by the characters \(\chi^0, \chi^1\), \(\chi^2\) on the summands.
\end{remark}

Let \(\widetilde C\) be the image in \(X\) of the pre-image of \(\widehat C \subset \widehat \P E\) in \(\widehat X\).
Note that \(\widetilde C\) is an isomorphic copy of \(C\).
Let \(x_i\) be the image in \(X\) of the pre-image of \(\widehat F_i \subset \widehat \P E\) in \(\widehat X\).
\begin{proposition}\label{prop:fixedlocus}
  The fixed locus of \(\sigma\) on \(X\) is the disjoint union of \(\widetilde C\) and the points \(x_1, \dots, x_n\).
\end{proposition}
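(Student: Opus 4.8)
The plan is to compute the fixed locus first on the cyclic cover \(\widehat X\), where the \(\widehat\sigma\)-action is transparent, and then to transport the answer through the blow-down \(b \colon \widehat X \to X\) that contracts the curves lying over \(\widehat F_1, \dots, \widehat F_n\).

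First I would determine \({\rm Fix}(\widehat\sigma)\). The morphism \(\widehat X \to \widehat \P E\) is a cyclic triple cover (normalized over the preimage of \(\widehat F\)), with deck group generated by \(\widehat\sigma\) and branch divisor \(\widehat C + 2\widehat F\). Reading off the local models: over a point of \(\widehat C\) the cover is \(t^3 = (\text{equation of }\widehat C)\), hence totally ramified there; over a point of \(\widehat F\) the cover is the normalization of the cuspidal model \(t^3 = x^2\), namely \((w,y) \mapsto (w^3, y)\), again totally ramified; and over every other point — in particular over the exceptional divisor \(\epsilon\), which is not in the branch locus — the cover is étale. Hence \(\widehat\sigma\) acts freely off the preimages of \(\widehat C\) and \(\widehat F\) and fixes those preimages pointwise. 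Since \(\widehat C\) and \(\widehat F\) are disjoint (they were separated by the blow-up of \(C \cap F\)), this gives \({\rm Fix}(\widehat\sigma) = \widehat C' \sqcup \widehat F_1' \sqcup \cdots \sqcup \widehat F_n'\), where \(\widehat C'\) is the reduced preimage of \(\widehat C\) and \(\widehat F_i'\) that of \(\widehat F_i\); recall from the construction that \(\widehat F_i'\) is a \((-1)\)-curve and that \(b\) maps \(\widehat C'\) isomorphically onto \(\widetilde C \simeq C\).

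Next I would push this forward. The contraction \(b\) is \(\widehat\sigma\)-equivariant, collapses each \(\widehat F_i'\) to the point \(x_i\), and is an isomorphism over \(X \setminus \{x_1, \dots, x_n\}\). Each \(\widehat F_i'\) lies in \({\rm Fix}(\widehat\sigma)\), hence is \(\widehat\sigma\)-invariant, so \(x_i \in {\rm Fix}(\sigma)\), and the \(x_i\) are distinct because \(b\) sends the disjoint curves \(\widehat F_i'\) to separate points. Over \(X \setminus \{x_1, \dots, x_n\}\), equivariance of the isomorphism \(b\) identifies \({\rm Fix}(\sigma) \cap (X \setminus \{x_i\})\) with \(b\big({\rm Fix}(\widehat\sigma) \setminus \bigcup_i \widehat F_i'\big) = b(\widehat C') = \widetilde C\). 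Finally \(\widetilde C\) is disjoint from \(\{x_1,\dots,x_n\}\) because \(\widehat C'\) is disjoint from the \(\widehat F_i'\) and \(b\) is an isomorphism near \(\widehat C'\). Combining the two pieces yields \({\rm Fix}(\sigma) = \widetilde C \sqcup \{x_1, \dots, x_n\}\).

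The only input here that is not pure bookkeeping is the local behaviour over \(\widehat F\): one needs that the non-normal triple cover \(t^3 = x^2\) normalizes to the smooth, totally ramified cover \(w \mapsto w^3\), so that the preimage of \(\widehat F\) in \(\widehat X\) is a smooth curve fixed pointwise by \(\widehat\sigma\) — and a \((-1)\)-curve, which is in any case already needed to set up the construction. Granting that, the rest is forced by the mutual disjointness of \(\widehat C\), \(\widehat F\), \(\epsilon\) and by the deck-group structure of \(\widehat X \to \widehat \P E\).
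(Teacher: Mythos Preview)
Your proof is correct and follows the same approach as the paper's (much terser) two-sentence proof: determine \({\rm Fix}(\widehat\sigma)\) on \(\widehat X\) via total ramification of the cyclic cover over \(\widehat C \sqcup \widehat F\), then push forward through the equivariant blow-down. One minor slip in your closing summary: \(\epsilon\) is not disjoint from \(\widehat C\) and \(\widehat F\) (each exceptional curve meets both proper transforms transversally at one point), though this plays no role in the argument and what you actually use---that \(\epsilon\) is not a component of the branch locus and that \(\widehat C \cap \widehat F = \emptyset\)---is correct.
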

\begin{proof}
  Since \(\widehat X \to \widehat \P E\) is totally ramified over \(\widehat C \sqcup \widehat F\), the fixes locus of \(\sigma\) on \(\widehat X\) is the disjoint union of the pre-images of \(\widehat C\) and \(\widehat F_i\).
  The fixed locus on \(X\) is simply the image.
\end{proof}

\begin{proposition}\label{prop:canonical}
  Assume that \(P\) and \(C\) are Gorenstein.
  Set \(p = \sum p_i\) and let \(\pi \colon X \to P\) be the natural map.
  In \(\Pic X \otimes \Q\), we have
  \[ K_X = \frac{1}{3}\left(\det E + \pi^{*}(p) \right) + K_P\]
\end{proposition}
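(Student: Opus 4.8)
The plan is to compute $K_X$ by pulling back through the chain of maps $X \xleftarrow{} \widehat X \xrightarrow{} \widehat{\P E} \xrightarrow{} \P E \xrightarrow{\pi} P$ in the triple Tschirnhausen construction and keeping careful track of the canonical classes at each stage. All identities will be taken in $\Pic \otimes \Q$ (or even $\Pic$, away from the blow-up exceptional curves), which is legitimate since the blow-downs $X \leftarrow \widehat X$ contract $(-1)$-curves and everything in sight is $\Q$-Cartier; working rationally also lets me freely divide by $3$.

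\emph{Step 1: the canonical class of $\P E$.} For the $\P^1$-bundle $\pi \colon \P E \to P$ with $E$ a rank-$2$ bundle, the relative canonical formula gives $K_{\P E} = -2H + \pi^{*}(K_P + \det E)$, where $H = \O_{\P E}(1)$ in the normalization where $\pi_{*}\O_{\P E}(1) = E^{\vee}$ (so that $C \subset \P E$ has class $3H + \pi^{*}\det E^{\vee}$ as recorded in the text via \cite{cas.eke:96}). I will fix this normalization convention explicitly at the start so the signs are unambiguous.

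\emph{Step 2: blow-up and cyclic cover.} Passing to $\widehat{\P E} \to \P E$, the blow-up at the $3n$ points of $\bigsqcup_i C \cap F_i$ introduces the exceptional divisor $\epsilon$ and $K_{\widehat{\P E}} = \beta^{*}K_{\P E} + \epsilon$. Next, for the cyclic triple cover $\nu \colon \widehat X' \to \widehat{\P E}$ branched along $\widehat C + 2\widehat F$ (before normalization), the Hurwitz-type formula for a $\mu_3$-cover with building data $L_\chi = \O(H + \eta - \epsilon)$ gives $K_{\widehat X'} = \nu^{*}\bigl(K_{\widehat{\P E}} + 2L_\chi\bigr)$ — equivalently $\nu_{*}\O(K_{\widehat X'})$ decomposes with twists by $L_\chi$ and $L_{\chi^2}$, and the canonical bundle of the (normalized) cover is $\nu^{*}(K_{\widehat{\P E}} \otimes L_\chi^{\otimes 2})$; I will use the standard-cover formula $K = \nu^{*}(K_{\widehat{\P E}} + L_\chi + L_{\chi^2} + \text{(branch corrections)})$ from \cite{par:91}, being careful that along $\widehat F$ the cover is the one cut out by $t^3 = x^2$ and its normalization $\widehat X$ is smooth there, so $\widehat X \to \widehat X'$ only affects things along $\widehat F$ and contributes the expected correction making $\widehat F_i$'s preimages into $(-1)$-curves. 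Finally $\widehat X \to X$ blows down these $(-1)$-curves, so $K_{\widehat X} = \gamma^{*}K_X + (\text{sum of those }(-1)\text{-curves})$.

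\emph{Step 3: assemble.} Composing, the exceptional contributions from $\epsilon$ (upstairs) and from the preimages of $\widehat F$ and $\widehat C$ must cancel against the $\gamma^{*}$ correction and the $+\epsilon$ from the blow-up, leaving a multiple of $\pi^{*}$ of classes on $P$ together with the $\pi^{*}(p)$ term — this is exactly where the points $p_1,\dots,p_n$ and the cube root $\eta$ enter, since $3\eta = \det E^{\vee} + 2p$ in $\Pic P$. Pulling everything back to $X$ and dividing by $3$, the $H$-terms cancel (because $C \in |3H + \pi^{*}\det E^{\vee}|$ and the cover has degree $3$), and what survives is $\frac13(\det E + \pi^{*}(p)) + K_P$, as claimed. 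The main obstacle I anticipate is the careful bookkeeping of the $\epsilon$ and $\widehat F$ exceptional divisors through the normalization of the non-normal cover $t^3 = x^2$ and the subsequent blow-down — getting the rational coefficients right there (the $\frac13$'s and the role of $\eta$ versus $\det E$) is the delicate point, whereas the $\P^1$-bundle and standard-cover formulas are routine. It may be cleanest to first verify the formula after pulling back to $\widehat X$ (where all maps are morphisms) and only descend to $X$ at the very end.
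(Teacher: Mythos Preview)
Your approach is the same as the paper's (which only says ``a straight-forward computation using the Riemann--Hurwitz formula''), and your Step~1 and the overall bookkeeping plan are fine.  The place where your write-up goes off the rails is Step~2: the three candidate formulas you offer for $K_{\widehat X}$ are mutually inconsistent and none of them is quite right for the \emph{normalized} cover.  The clean statement is simply Riemann--Hurwitz: $\nu\colon\widehat X\to\widehat{\P E}$ is a degree-$3$ cyclic cover totally ramified over both $\widehat C$ and $\widehat F$ (the local model $t^3=x^2$ normalizes to $x=u^3$), so
\[
  K_{\widehat X}=\nu^*K_{\widehat{\P E}}+2\widetilde C+2\widetilde F
  =\nu^*\Bigl(K_{\widehat{\P E}}+\tfrac{2}{3}\widehat C+\tfrac{2}{3}\widehat F\Bigr),
\]
which differs from your $\nu^*(K_{\widehat{\P E}}+2L_\chi)$ by $\tfrac{2}{3}\nu^*\widehat F=2\sum\widetilde F_i$.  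After that, the blow-down $\gamma\colon\widehat X\to X$ gives $K_{\widehat X}=\gamma^*K_X+\sum\widetilde F_i$, so
\[
  \gamma^*K_X=\nu^*\Bigl(K_{\widehat{\P E}}+\tfrac{2}{3}\widehat C+\tfrac{1}{3}\widehat F\Bigr),
\]
and now the $\epsilon$-terms cancel exactly (coefficient $1-\tfrac{2}{3}-\tfrac{1}{3}=0$) and the $H$-terms cancel via $[C]=3H-\pi^*\det E$, leaving $\pi^*(K_P+\tfrac{1}{3}\det E+\tfrac{1}{3}p)$ as desired.  So: drop the Pardini detour and the non-normal $\widehat X'$, and just use Riemann--Hurwitz directly on the normal cover; then the computation is three lines.
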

\begin{proof}
  This is a straight-forward computation using the Riemann--Hurwitz formula.
\end{proof}

\begin{proposition}\label{prop:k3}
  Suppose \(P = \P^1\), \(C\) is smooth, and \(\deg \br \phi + 2n = 12\).
  Then \(X\) is a smooth K3 surface and \(\sigma\) is a non-symplectic automorphism of \(X\) of order 3.
  The fixed curve of \(\sigma\) is isomorphic to \(C\), and in addition, \(\sigma\) has \(n\) isolated fixed points.
\end{proposition}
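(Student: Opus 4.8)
The plan is to prove the three assertions of \Cref{prop:k3} in turn.

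\emph{Smoothness of $X$.} I would trace through the construction step by step. The ambient surfaces $\P E$ (a $\P^1$-bundle over $\P^1$) and its blow-up $\widehat{\P E}$ are smooth. The cyclic triple cover $\widehat X \to \widehat{\P E}$ is étale away from $\widehat C \cup \widehat F$, is a standard cyclic cover along the smooth divisor $\widehat C$ (hence smooth there), and near $\widehat F$ has the local model $\spec\C[x,y,t]/(t^3-x^2) \to \spec\C[x,y]$, whose normalization is the regular ring $\C[s,y]$ with $x = s^3$, $t = s^2$; so $\widehat X$ is smooth everywhere, as already observed in the construction. Finally $\widehat X \to X$ contracts the $n$ pairwise disjoint $(-1)$-curves over $\widehat F$, and contracting disjoint $(-1)$-curves on a smooth projective surface again gives a smooth projective surface. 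The surface $X$ is connected because the covering datum $L_\chi = \O_{\widehat{\P E}}(H+\eta-\epsilon)$ is nontrivial.

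\emph{$X$ is a K3 surface.} By \Cref{prop:canonical} (applicable since $\P^1$ and $C$ are Gorenstein), $K_X = \tfrac13(\det E + \pi^*(p)) + K_P$ in $\Pic X \otimes \Q$, so $3K_X$ is the pullback of the line bundle $\det E \otimes \O_{\P^1}(p) \otimes K_{\P^1}^{\otimes 3}$ on $\P^1$, of degree $\tfrac12\deg\br\phi + n - 6 = \tfrac12(\deg\br\phi + 2n) - 6 = 0$ by hypothesis. A degree-$0$ line bundle on $\P^1$ is trivial, so $3K_X = \O_X$; in particular $K_X$ is numerically trivial, hence $X$ carries no $(-1)$-curve and is minimal of Kodaira dimension $0$. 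It remains to show $e(X) = 24$, since the K3 surfaces are exactly the minimal surfaces of Kodaira dimension $0$ with that Euler number. The cover $\widehat X \to \widehat{\P E}$ has degree $3$ and is a set-theoretic bijection over $\widehat C \sqcup \widehat F$ (being totally ramified there), so $e(\widehat X) = 3e(\widehat{\P E}) - 2e(\widehat C \sqcup \widehat F) = 3(4+3n) - 2(e(C)+2n)$ and $e(X) = e(\widehat X) - n$. From $\phi_*\O_C = \O_{\P^1}\oplus E^\vee$ and $\O_{\P^1}(\br\phi) = \det E^{\otimes 2}$ one gets $\chi(\O_C) = 1 + \chi(E^\vee)$, that is $\deg\br\phi = 6 - e(C)$; combined with $\deg\br\phi + 2n = 12$ this gives $e(C) = 2n-6$ and therefore $e(X) = 24$. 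Hence $X$ is K3.

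\emph{The automorphism and the fixed locus.} The generator $\widehat\sigma$ of the Galois $\mu_3$-action descends to an automorphism $\sigma$ of $X$; it acts nontrivially on $\C(X) = \C(\widehat X)$, so $\sigma$ has order exactly $3$. Its action on $H^{2,0}(X) = \C\,\omega_X$ is through a character of $\Z/3\Z$; since by \Cref{prop:fixedlocus} the fixed locus of $\sigma$ contains the positive-dimensional curve $\widetilde C \cong C$, and a symplectic automorphism of a K3 surface has only isolated fixed points, $\sigma$ is not symplectic, so $\sigma^*\omega_X = \zeta_3^{\pm 1}\omega_X$; thus $\sigma$ is a non-symplectic automorphism of order $3$. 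The remaining statement — that $\mathrm{Fix}(\sigma)$ is the union of a curve isomorphic to $C$ and $n$ isolated points $x_1,\dots,x_n$ — is exactly \Cref{prop:fixedlocus} together with the identification $\widetilde C \cong C$ made just before it.

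\emph{Main obstacle.} The only genuinely computational part is the Euler-number bookkeeping: keeping track of the $3n$ blow-ups defining $\widehat{\P E}$, the effect of the normalization near $\widehat F$ on the topology of the triple cover, and the final $n$ contractions. (An alternative is to compute $\chi(\O_X) = \chi(\O_{\widehat X}) = 1 + \chi(L_\chi^\vee) + \chi(L_{\chi^2}^\vee) = 2$ via Riemann--Roch using the decomposition $\phi_*\O_{\widehat X} = \O \oplus L_\chi^\vee \oplus L_{\chi^2}^\vee$, but this needs more of the intersection theory on $\widehat{\P E}$.)
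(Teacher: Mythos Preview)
Your proof is correct and follows essentially the same approach as the paper: use \Cref{prop:canonical} to get $K_X$ numerically trivial, compute $\chi_{\rm top}(X)=24$ by bookkeeping through the triple cover and blow-downs, and invoke \Cref{prop:fixedlocus} for the fixed locus and the non-symplecticity. You supply a few extra details the paper leaves implicit (the explicit smoothness check, connectedness of $X$, and the appeal to the classification of minimal surfaces with $\kappa=0$), but the skeleton is the same.
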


\begin{proof}
  Since \(C\) is smooth, so is \(X\).
  Recall that \(2\deg E = \deg \br\phi\), so we have \(\deg E + n = 6\).
  \Cref{prop:canonical} gives \(K_X = 0\) in \(\Pic X \otimes \Q\).
  To see that \(X\) is a K3 surface, it suffices to show that its topological Euler characteristic is \(24\).
  Let \(m = \deg \br\phi\).
  Then
  \[\chi_{\rm top}(C) = 6-m \text{ and } \chi_{\rm top} (\widehat \P E) = 4 + 3n.\]
  Since \(\widehat X \to \widehat \P E\) is a triple cover totally ramified over \(\widehat C \sqcup \widehat F\), we have
  \begin{align*}
    \chi_{\rm top}(\widehat X) &= 3(4+3n - \chi_{\rm top}(C) - \chi_{\rm top}(F)) + \chi_{\rm top}(C) + \chi_{\rm top}(F) \\
                               &= 2m+5n.
  \end{align*}
  Since \(\widehat X \to X\) blows down \(n\) \((-1)\)-curves and \(m + 2n = 2(\deg E + n) = 12\), we have
  \[\chi_{\rm top}(X) = \chi_{\rm top}(\widehat X) - n = 2m+4n = 24,\]
  as desired.
  Since \(\sigma\) contains a curve in its fixed locus, it is non-symplectic.
  Since it arises from a cyclic triple cover, it is of order 3.
  The fixed locus \(\widehat \sigma\) on \(\widehat X\) is the disjoint union of \(\widehat C \simeq C\) and \(\widehat F\).
  The curve \(\widehat C \subset \widehat X\) maps isomorphically to \(\widetilde C \subset X\).
  The curve \(\widehat F\) contracts to \(n\) isolated fixed points in \(X\).
\end{proof}
\begin{remark}\label{rmk:rho_inv_sublattice}
  Let \(k\) be the number of connected components of \(C\).
  The K3 surface \(X\) in \Cref{prop:k3} corresponds to the row \((n,k)\) of \cite[Table~2]{art.sar:08}.
  In particular, the lattice \(H^2(X, \Z)^{\sigma}\) is the lattice \(N=T^\perp\) described there.
  The four maximal families are in \Cref{tab:ASlattices}.
\end{remark}

The triple Tschirnhausen construction works in families.
In particular, applying it to a family of marked triple covers of \(\P^1\) as in \Cref{prop:k3}, we get a family of K3 surfaces with a non-symplectic automorphism of order 3.

\subsubsection{Kulikov models from degenerations of triple covers}\label{sec:kulikov}
Let \(\Delta\) be a smooth curve, possibly non-proper.
Let \(0 \in \Delta\) be a point.
Let \(P \to \Delta\) be a proper map whose fiber over any point of \(\Delta^* = \Delta \setminus 0\) is a smooth curve of genus \(0\) and special fiber \(P_0\) isomorphic to a nodal union \(\P^1 \cup \P^1\) of genus \(0\).
Let \(\phi \colon C \to P\) be a triple cover and let \(p_1, \dots, p_n \colon \Delta \to P\) be disjoint sections lying in the smooth locus of \(P \to \Delta\).
Assume that
\begin{enumerate}
\item\label{etale} \(C \to P\) is \'etale over the node of \(P_0\) and over the sections \(p_1,\dots,p_n\),
\item\label{smooth} \(C \to \Delta\) is smooth, except at points in the pre-image of the node of \(P_0\),
\item\label{balanced} the divisor \(\br \phi + 2 \sum p_i\) has total degree 12 and has degree 6 on each component of \(P_0\).
\end{enumerate}

\begin{proposition}\label{prop:kulikov}
  Suppose \((\phi \colon C \to P, p_1, \dots, p_n)\) satisfies the conditions above.
  Let \(X \to \Delta\) be the family of surfaces obtained by applying the triple Tschirnhausen construction pinched at \(p_1, \dots, p_n\) to the family of marked triple covers \((\phi \colon C \to P,p_1,\dots,p_n)\) over \(\Delta\).
  Then \(X \to \Delta\) is a type II Kulikov degeneration of K3 surfaces.
\end{proposition}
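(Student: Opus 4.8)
The plan is to run the (relative) triple Tschirnhausen construction étale-locally over $P$ and fibrewise over $\Delta$. Over a general point $t\in\Delta^{*}$ the fibre $P_t$ is a smooth rational curve, condition~\eqref{balanced} gives $\deg\br\phi_t+2n=12$, and $C_t$ is smooth by~\eqref{smooth}; so \Cref{prop:k3} applies and $X_t$ is a smooth K3 surface with a non-symplectic automorphism of order $3$. Hence $X\to\Delta$ is a proper degeneration of K3 surfaces, and the remaining content is a statement about the total space and the central fibre near $0\in\Delta$: namely that $X$ is smooth, that $X_0$ is reduced with normal crossings, that $K_X\sim_\Delta 0$, and that $X_0$ is of Type~II.

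First I would prove smoothness of $X$ and the normal-crossing property of $X_0$, which is the heart of the matter. Because $P\to\Delta$ acquires the nodal fibre $P_0=R_1\cup_{q_0}R_2$, the total space $P$ is a smooth surface, so $\P E\to P$ is a smooth threefold; by~\eqref{etale} the loci $C\cap F_i$ over the sections $p_i$ are $3n$ disjoint smooth curves, so $\widehat \P E$ is again smooth, the proper transform $\widehat C$ is smooth, and $\widehat C\cap\widehat F=\emptyset$. The cyclic triple cover of $\widehat \P E$ is totally ramified along the smooth divisor $\widehat C$ and ramified to order $2$ along $\widehat F$, and normalizing along the preimage of $\widehat F$ replaces the local model $\spec\C[x,y,t]/(t^{3}-x^{2})$ by its smooth normalization, so $\widehat X$ is a smooth threefold; contracting the disjoint $\P^{1}$-bundles over the $\widehat F_i$ (fibrewise $(-1)$-curves) then yields a smooth $X$. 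For the central fibre: near $q_0$ the cover $C\to P$ is étale by~\eqref{etale}, so near the fibre $\P^{1}_{q_0}$ the branch locus is three disjoint smooth sheets, no blow-up or pinching happens there, and the triple cover has exactly two smooth components, one over each $R_i$, meeting transversally along the preimage $E$ of $\P^{1}_{q_0}$; restricted to $\P^{1}_{q_0}$ the cover is a cyclic triple cover of $\P^{1}$ totally branched over the three points $C\cap\P^{1}_{q_0}$, so $E$ is a smooth genus-$1$ curve. Away from $q_0$ and the $p_i$ the central fibre is smooth, and each component is irreducible, being a connected normal surface (the line bundle twisting the cyclic cover restricts non-trivially to the part of $\widehat \P E$ over $R_i$). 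Thus $X_0=V_0\cup_E V_1$ is reduced with normal crossings. The \emph{crux} is exactly this local analysis in the relative setting: the pinching over the $p_i$ (the $t^{3}=x^{2}$ normalization followed by the blow-down) and the gluing over $q_0$ (transversality of the two components and the appearance of the elliptic double curve).

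It then remains to check $K_X\sim_\Delta0$ and that $X_0$ is of Type~II. The relative form of \Cref{prop:canonical} gives $K_{X/\Delta}=\pi^{*}\bigl(p-\eta+K_{P/\Delta}\bigr)$ in $\Pic X\otimes\Q$; by~\eqref{balanced} the class $p-\eta+K_{P/\Delta}$ on $P$ has degree $0$ on every component of every fibre (on $P_t\simeq\P^{1}$ and on each $R_i$), hence it is pulled back from $\Delta$, since $\Pic P$ of the fibred surface $P/\Delta$ is generated by pullbacks and fibral divisors; therefore $K_X\sim_\Delta0$, and $X\to\Delta$ is a Kulikov model. Its central fibre $X_0=V_0\cup_E V_1$ has dual complex a single edge, so $X_0$ is not of Type~I (it is reducible) nor of Type~III (it has no triple points, equivalently the monodromy invariant satisfies $\lambda^{2}=\#\{\text{triple points}\}=0$); hence it is of Type~II. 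Finally, the structure recalled in \Cref{subsec:Kulikov_models} is visible directly: the $V_i$ are rational, $E=\C/J$ is the double curve, adjunction together with $K_X|_{V_i}=0$ and $V_i|_{V_i}=-E$ gives $K_{V_i}+E\sim0$, and $d$-semistability yields $D_0^{2}+D_1^{2}=0$.
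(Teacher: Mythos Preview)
Your proof is correct and follows essentially the same approach as the paper's own proof: both verify smoothness of $X$ and the simple-normal-crossing property of $X_0$ from conditions~\eqref{etale} and~\eqref{smooth}, invoke \Cref{prop:k3} for the generic fibre, use condition~\eqref{balanced} together with \Cref{prop:canonical} to get $K_X\sim_\Delta 0$, and then read off Type~II from the two-component central fibre. The paper's version is terser (e.g.\ ``it is easy to see that $X$ is non-singular\ldots''), whereas you supply the local models explicitly; your relative Picard argument for $K_{X/\Delta}$ is a mild rephrasing of the paper's ``numerically trivial on the central fibre plus trivial on the generic fibre implies trivial'' step, but the content is the same.
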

\begin{proof}
  Using conditions \eqref{etale} and \eqref{smooth}, it is easy to see that \(X\) is a non-singular threefold and the central fiber of \(X \to \Delta\) is a simple normal crossings divisor.
  From \Cref{prop:k3}, we know that the generic fiber of \(X \to \Delta\) is a K3 surface.
  Using \eqref{balanced} and \Cref{prop:canonical}, we see that \(K_{X}\) is numerically trivial on the central fiber.
  Since \(K_X\) is trivial on the generic fiber, it follows that \(K_X\) must in fact be trivial.
  Therefore, \(X \to \Delta\) is a Kulikov degeneration.
  The central fiber is the union of two surfaces, so its dual graph is a chain with 2 vertices.
  This is a type II degeneration.
\end{proof}

\begin{remark}\label{rem:ramified-admissible-cover}
  In \Cref{prop:kulikov}, the central fiber of \(C \to P\) is an admissible cover \'etale over the node of the base.
  A version of \Cref{prop:kulikov} also holds for an admissible cover ramified over the node.
  We do not require such degenerations, so we omit the details.
\end{remark}

\begin{remark}\label{rem:automorphism}
  The total space \(X\) of the Kulikov degeneration comes equipped with an automorphism \(\sigma\) of order 3, acting on the fibers of \(X \to \Delta\).
\end{remark}

\subsection{Kulikov surfaces and their lattices of numerically Cartier divisors}\label{sec:lattices}
Recall that \Cref{prop:kulikov} gives Kulikov degenerations of K3 surfaces associated with a degenerating family of marked triple covers.
Let \((\phi \colon C \to P, p_1, \dots, p_n)\) be the central fiber of such a family of marked triple covers.
Let \(X\) be the Kulikov surface obtained from \((\phi \colon C \to P, p_1, \dots, p_n)\) the triple Tschirnhausen construction pinched over \(p_1,\dots,p_n\).
Let \(X = X_0 \cup X_1\) and let \(D = X_0 \cap X_1\) be the double curve.
Recall the lattice of numerically Cartier divisors
\[\widetilde \Lambda = \ker (H^2(X_0, \Z) \oplus H^2(X_1, \Z) \to H^2(D,\Z)),\]
and its quotient
\[ \Lambda = \widetilde \Lambda / \langle(D,-D)\rangle.\]
Since \(X\) admits an automorphism \(\sigma\) of order 3, so does \(\Lambda\).
We have the saturated sublattice \(\Lambda^{\rho} \subset \Lambda\) of \(\rho\)-fixed vectors and its orthogonal complement \(\Lambda^{\rm prim} \subset \Lambda\).
It is easy to check that \(\Lambda^{\rm prim}\) is a negative definite even lattice.

We use the notation introduced for the triple Tschirnhausen construction in \Cref{sec:triple-tschirnhausen}.
Set \(Y = \widehat \P E\) and let \(Y_{0}\) and \(Y_1\) be the components of \(Y\).
For \(i = 0,1\), let \(S_i \subset H^2(Y_i,\Z)\) be the sub-lattice of classes orthogonal to the curves \(\widehat F_1, \dots, \widehat F_n\).
Then the pull-back map \(H^2(Y_i,\Z) \to H^2(\widehat X_i, \Z)\) sends \(S_i\) to \(H^2(X_i,\Z)\).
The lattice \(H^2(X_i,\Z)^{\rho}\) is the saturation of \(S_i\).
Let \(\Lambda_i = H^2(X_i,\Z)^{\rm prim}\) be the orthogonal complement.
Then \(\Lambda_i\) is a negative definite even lattice.
Observe that the class \(D \in H^2(X_i,\Z)\) is the pull-back of the class of a fiber of \(Y_i \to P_i\), and in particular, it lies in the image of \(S_i\).
Therefore, for all \(\alpha \in \Lambda_i\), we have \(\alpha \cdot D = 0\).
As a result, we have a map \(\Lambda_i \to \Lambda^{\rm prim}\).
It is easy to see that the map is injective, and the images of \(\Lambda_0\) and \(\Lambda_1\) are mutually orthogonal.
As a result, we get a map
\[ \Lambda_0 \oplus \Lambda_1 \to \Lambda^{\rm prim}.\]

The following proposition says that the root sublattices of \(\Lambda^{\rm prim}\) and \(\Lambda_0 \oplus \Lambda_1\) agree.
\begin{proposition}\label{prop:root-sublattice-splits}
  The image of \(\Lambda_0 \oplus \Lambda_1\) is a finite index sublattice of \(\Lambda^{\rm prim}\).
  Furthermore, if \(\alpha \in \Lambda^{\rm prim}\) is a root, then \(\alpha\) lies in the image of \(\Lambda_0\) or \(\Lambda_1\).
\end{proposition}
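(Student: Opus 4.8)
The plan is to deduce both assertions from a single \(\Z/3\)-valued invariant on \(\Lambda^{\rm prim}\), together with the geometric fact that the double curve has self-intersection zero on each component. First I would record that \(D^2=0\) in both \(X_0\) and \(X_1\): in the triple Tschirnhausen construction the double curve is the preimage under \(\widehat X_i\to Y_i\) of the fibre of \(Y_i\to P_i\) over the node of \(P_0\), and since the blown-up points lie on other fibres this fibre has self-intersection \(0\) and is disjoint from the branch locus, so \(D\) is its pullback and \(D^2=3\cdot 0=0\); contracting the \((-1)\)-curves to pass to \(X_i\) changes nothing, as they are disjoint from \(D\). I would also note that \(H^2(X_i,\Z)\) is unimodular, so \(\Lambda_i=H^2(X_i,\Z)^{\rm prim}\), being the coinvariant lattice of the order-\(3\) isometry \(\rho\), is \(3\)-elementary: the orthogonal projection onto the span of \(\Lambda_i\) carries the unimodular lattice \(H^2(X_i,\Z)\) onto \(\Lambda_i^{*}\), and it equals \(\tfrac13(2-\rho-\rho^2)\), whence \(3\Lambda_i^{*}=(2-\rho-\rho^2)H^2(X_i,\Z)\subseteq\Lambda_i\). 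In particular every vector of \(\Lambda_i^{*}\) has norm in \(\tfrac23\Z\).

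Now take \(\alpha\in\Lambda^{\rm prim}\) with an integral lift \((a_0,a_1)\in\widetilde\Lambda\). Then \((1+\rho+\rho^2)(a_0,a_1)\) maps to \(0\) in \(\Lambda\), hence equals \(k\cdot(D,-D)\) for a unique \(k\in\Z\); replacing the lift by \((a_0,a_1)+(D,-D)\) changes \(k\) to \(k+3\), so \(\alpha\mapsto k\bmod 3\) is a well-defined homomorphism \(\Lambda^{\rm prim}\to\Z/3\). The \(\rho\)-anti-invariant projection of the lift has components \(a_0'=a_0-\tfrac k3 D\) and \(a_1'=a_1+\tfrac k3 D\), which lie in \(\Lambda_i\otimes\Q\); since \(a_i\) pairs integrally with \(\Lambda_i\subseteq H^2(X_i,\Z)\) and \(D\perp\Lambda_i\), in fact \(a_i'\in\Lambda_i^{*}\). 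If \(3\mid k\) then \((a_0',a_1')=(a_0,a_1)-\tfrac k3(D,-D)\) is an integral \(\rho\)-anti-invariant lift of \(\alpha\) whose components lie in \(H^2(X_i,\Z)^{\rm prim}=\Lambda_i\), so \(\alpha\) is in the image of \(\Lambda_0\oplus\Lambda_1\); the converse is clear. Hence that image equals \(\ker(k\bmod 3)\), so \([\Lambda^{\rm prim}:\im(\Lambda_0\oplus\Lambda_1)]\) divides \(3\), and in particular is finite; this is the first assertion.

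For the second, suppose \(\alpha\) is a root. Because the \(\rho\)-invariant class \(D\) is orthogonal to \(\Lambda_i\otimes\Q\), the \(\rho\)-invariant part of \(a_i\) is a rational multiple of \(D\), so \(a_i\cdot D=0\); combined with \(D^2=0\) this gives \((a_i')^2=a_i^2\in\Z\). But \(a_i'\in\Lambda_i^{*}\) forces \((a_i')^2\in\tfrac23\Z\), and \(\Z\cap\tfrac23\Z=2\Z\), so each \((a_i')^2\) is a nonpositive even integer. From \(\alpha^2=(a_0')^2+(a_1')^2=-2\) we conclude \(\{(a_0')^2,(a_1')^2\}=\{0,-2\}\). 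For the index \(i\) with \((a_i')^2=0\), negative definiteness of \(\Lambda_i\) gives \(a_i'=0\), hence \(a_i\) is a rational multiple of \(D\) that is integral, and primitivity of \(D=-K_{X_i}\) forces \(3\mid k\). Therefore \(\alpha\) lies in the image of \(\Lambda_0\oplus\Lambda_1\) with one summand contributing \(0\); that is, \(\alpha\) is the image of a root of \(\Lambda_0\) or of \(\Lambda_1\).

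The part I expect to demand the most care is exactly what makes the root argument go through. A priori a ``crossing'' root — one with nonzero projection to both \(\Lambda_0\) and \(\Lambda_1\) — would contradict the statement, and it cannot be excluded by minimum-norm estimates on \(\Lambda_i^{*}\) alone (vectors of norms \(-\tfrac23\) and \(-\tfrac43\) already sum to \(-2\)). What saves the day is the geometric identity \(D^2=0\), which promotes the projected norms from \(\tfrac23\Z\) to \(\Z\), hence to \(2\Z\). I would therefore invest the most effort in verifying \(D^2=0\) from the Kulikov model, in checking that \(D=-K_{X_i}\) is primitive in \(H^2(X_i,\Z)\) (true for the blow-ups of del Pezzo surfaces that arise here), and in confirming the \(3\)-elementarity of \(\Lambda_i\); the remaining manipulations are routine arithmetic over \(\Z\) and \(\Z/3\).
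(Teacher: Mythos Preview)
Your argument is correct, and it takes a genuinely different route from the paper's in both halves.

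For the finite-index statement, the paper does not build a \(\Z/3\)-valued obstruction. Instead it picks a \(\rho\)-invariant class \(y\) with \(y\cdot D\neq 0\) and checks that \(m\alpha\) lies in \(\Lambda_0\oplus\Lambda_1\) for \(m=-y\cdot D\), by showing that \(ma_i+nD\) is orthogonal to \(H^2(X_i,\Z)^\rho\). Your trace map \((1+\rho+\rho^2)\) is more structural and gives the sharper conclusion that the index divides \(3\); the paper only needs finiteness and is content with the cruder multiplier.

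For the root statement, the paper's argument is shorter. It observes that the quotient \(V_i = D^\perp_{H^2(X_i,\Z)}/\langle D\rangle\) is \emph{even} (because \(K_{X_i}=-D\), so \(v\cdot D=0\) forces \(v^2\equiv v\cdot K_{X_i}\equiv 0\pmod 2\) by adjunction) and negative definite; since \(a_i\cdot D=0\) already, each \(a_i\) defines a class in \(V_i\), and \(a_0^2+a_1^2=-2\) with both even nonpositive forces one to be a \(\Z\)-multiple of \(D\), after which one simply subtracts a multiple of \((D,-D)\). Your route via \(3\)-elementarity of \(\Lambda_i\) (giving \((a_i')^2\in\tfrac23\Z\)) and then \(D^2=0\) (promoting to \(\Z\cap\tfrac23\Z=2\Z\)) reaches the same parity conclusion but through the Eisenstein structure, and it costs you the extra verification that \(D=-K_{X_i}\) is primitive. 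The paper's quotient-lattice trick sidesteps both the \(3\)-elementarity computation and the primitivity check. On the other hand, your argument makes the role of the \(\rho\)-action explicit throughout and unifies the two parts via the single invariant \(k\bmod 3\), which is conceptually pleasant.
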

\begin{proof}
  Take \(\alpha \in \Lambda^{\rm prim}\) and write \(\alpha\) as the image of \((a_0,a_1)\) where \(a_i \in H^2(X_i,\Z)\).
  Then \((a_0,a_1)\) is well-defined up to adding multiples of \((D,-D)\).
 
  Let \(y \in H^2(X, \Z)^{\rho}\) be such that \(y \cdot D \neq 0\) (for example, an ample class), and let \(y_i \in H^2(X_i,\Z)\) be its restriction.
  The image of \(y\) in \(\Lambda\) lies in \(\Lambda^{\rho}\), so \((a_0,a_1) y = a_0 \cdot y_0 + a_1 \cdot y_1 = 0\).
  Let \((m,n) = (-y \cdot D, a_0 \cdot y_0)\).
  Then \((ma_0+nD) \cdot y_0 = 0\).
  For any \(x \in H^2(X_0, \Z)^{\rho}\) with \(x \cdot D = 0\), the element \((x,0) \in \Lambda\) is \(\rho\)-fixed, and hence \((a_0,a_1) \cdot (x,0) = a_0 \cdot x = 0\).
  As a result, \((ma_0+nD) \cdot x = 0\).
  It follows that \(ma_0 + nD\) is orthogonal to \(H^2(X_0,\Z)^{\rho}\).
  Likewise, \((ma_1-nD)\) is orthogonal to \(H^2(X_1,\Z)^{\rho}\).
  So \((ma_0,ma_1) + n(D,-D) = m \alpha \in \Lambda\) lies in \(\Lambda_0 \oplus \Lambda_1\). Therefore, \(\Lambda_0 \oplus \Lambda_1 \subset \Lambda^{\rm prim}\) is of finite index.

    Since \(D^2 = 0\) and \(K_{X_i} = -D\), the lattice \(V_i = D^{\perp}_{H^2(X_i,\Z)} / \langle D\rangle\) is even and negative definite.
  Suppose \(\alpha \in \Lambda^{\rm prim}\) is a root, that is, \(\alpha^2 = -2\).
  Write \(\alpha = (a_0,a_1)\) with \(a_i \in H^2(X_i,\Z)\).
  Then, we have seen that \(a_i \cdot D = 0\), so \(a_i\) represents a class in \(V_i\).
  Since \(V_i\) is even and negative definite, and \(a_0^2 + a_1^2 = -2\), we must have \(a_i = 0\) in \(V_i\) for some \(i\); that is, \(a_i \in H^2(X_i, \Z)\) is a multiple of \(D\).
  By changing \((a_0,a_1)\) by adding a multiple of \((D,-D)\), we may assume that \(a_i = 0\).
  Then it follows that \(\alpha\) lies in the image of \(\Lambda_{j}\) for \(j \neq i\).
\end{proof}

We now compute the root sublattice of \(\Lambda\).
By \Cref{prop:root-sublattice-splits}, the root sublattice of \(\Lambda\) is the direct sum of the root sublattices of \(\Lambda_0\) and \(\Lambda_1\).
The lattice \(\Lambda_i\) depends only on the component \(X_i\), which is obtained by the triple Tschirnhausen construction from \(\phi \colon C_i \to P_i\) along with the marked points \(p_j\) that lie on \(P_i\).
So we may focus on the two components \(\phi \colon C_i \to P_i\) individually.

Let \((\phi \colon C \to \P^1,p_1, \dots, p_m)\) be a marked triple cover such that \(\deg \br \phi + 2m = 6\).
In addition to the number \(m\), we have some additional numbers, namely the genera and degrees over \(\P^1\) of the connected comonents of \(C\).
The space of marked triple covers with fixed numerical invariants is irreducible.

\begin{proposition}\label{prop:lattices}
  Let \((\phi \colon C \to \P^1, p_1, \dots, p_m)\) be a general marked triple cover with the numerical invariants specified in the first two columns of the table below.
  Let \(X\) be the surface obtained from it by the triple Tschirnhausen construction pinched at \(p_1,\dots, p_m\) and let \(\rho\) be the order 3 automorphism on \(X\) induced by the triple cover.
  Then \(X\) is a rational surface admitting an elliptic fibration, and the lattice \(H^2(X, \Z)^{\rm prim}\) is given by the last column.

  \begin{center}
    \begin{tabular}{cll}\toprule
      \(m\) & \((\text{Genus}, \text{degree})\) of components of \(C\) & \(H^2(X,\Z)^{\rm prim}\) \\
      \midrule
      \(0\) & \((1,3)\) & \(E_6 \oplus A_2\)\\
      \(0\) & \((0,1)\) and \((2,2)\) & \(E_8\) \\
      \(1\) & \((0,3)\)  & \(A_2^{\oplus 3}\)\\
      \(1\) & \((0,1)\) and \((1,2)\) & \(E_6\) \\
      \(2\) & \((0,1)\) and \((0,2)\) & \(A_2^{\oplus 2}\)\\
      \(3\) & \((0,1)\) and \((0,1)\) and \((0,1)\) & \(A_2\) \\
      \bottomrule
    \end{tabular}
  \end{center}
\end{proposition}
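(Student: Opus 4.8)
\emph{Plan.} Write $Y=\widehat{\P E}$ and let $\pi\from X\to\P^1$ be the genus-one fibration induced by the ruling $Y\to\P^1$: a general fibre is the cyclic triple cover of a ruling fibre $\cong\P^1$ totally ramified over the three points of $C\cap F$, hence of arithmetic genus $1$ by Riemann--Hurwitz. The proof divides into (i) showing $X$ is a rational elliptic surface, so that $H^2(X,\Z)\cong\langle1\rangle\oplus\langle-1\rangle^{\oplus9}$ and $-K_X=F_X$ is the fibre class; (ii) computing $\rk H^2(X,\Z)^{\rm prim}$; and (iii) identifying the lattice in each of the six cases.

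For (i), I would analyse the singular fibres of $\pi$, which lie over the branch points of $\phi$ and over the $p_i$. A local computation shows that over a simple branch point of $\phi$ the fibre is a cuspidal rational curve (Kodaira type $\mathrm{II}$), while over each $p_i$ the preimage of $\widehat F_i+\sum_j e_{i,j}$ contains a single $(-1)$-curve over $\widehat F_i$ (the cover is totally ramified there), whose contraction produces $X$ and leaves a type $\mathrm{IV}$ fibre whose three $(-2)$-curve components are the images of the connected triple covers $\widetilde e_{i,j}$ of the $e_{i,j}$, hence $\rho$-invariant. Using $\deg\br\phi+2m=6$ this gives $e(X)=2(6-2m)+4m=12$; Noether's formula with $K_X^2=0$ gives $\chi(\O_X)=1$, the canonical bundle formula (no multiple fibres) gives $K_X=-F_X$ and so $p_g(X)=0$, whence $q(X)=0$ and $X$ is rational by Castelnuovo. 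A relatively minimal genus-one fibration over $\P^1$ with $\chi(\O_X)=1$ is a rational elliptic surface.

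For (ii), $\rho$ preserves the fibration, so $\rho^*F_X=F_X$ and $\rho$ acts on $F_X^{\perp}/F_X\cong E_8$. Over $\Q$ one has $H^2(\widehat X,\Q)^{\rho}=\pi^*H^2(Y,\Q)$, of dimension $2+3m$; the $m$ contracted $(-1)$-curves are $\rho$-invariant, so $H^2(X,\Q)^{\rm prim}=H^2(\widehat X,\Q)^{\rm prim}$ has dimension $(10+m)-(2+3m)=8-2m$, matching the ranks in the last column. Moreover there is an injection $H^2(X,\Z)^{\rm prim}\hookrightarrow(F_X^{\perp}/F_X)^{\rm prim}$ of negative-definite even lattices of rank $8-2m$.

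\textbf{Main obstacle.} Step (iii) is the crux: one must pin down the lattice exactly, not merely up to finite index or up to its root sublattice. I would, in each case, realise $X$ concretely as the blow-up --- along $\Z/3$-orbits of points in general position (here the genericity of $\phi$ enters) and possibly some $\rho$-fixed points --- of a del Pezzo surface $\overline X$ with a $\Z/3$-action: the degree-$3$ del Pezzo contributing $K_{\overline X}^{\perp}=E_6$, the degree-$1$ del Pezzo contributing $E_8$, and $\P^2$ contributing $0$, with each blown-up size-three orbit contributing an orthogonal $A_2$ summand spanned by differences of its (cyclically permuted) exceptional classes, and $\rho$-fixed blown-up points contributing only to $H^2(X,\Z)^{\rho}$. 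Summing the del Pezzo and exceptional contributions gives the entries $E_6\oplus A_2$, $E_8$, $A_2^{\oplus3}$, $E_6$, $A_2^{\oplus2}$, $A_2$. The delicate point is \emph{saturation}: that $H^2(X,\Z)^{\rm prim}$ equals, rather than is a proper finite-index subgroup of, the stated lattice; this is verified by computing the index of $H^2(X,\Z)^{\rm prim}\hookrightarrow(F_X^{\perp}/F_X)^{\rm prim}$ (which is proper, of index $3$, precisely in the genus-one trigonal case), or equivalently by exhibiting explicit sections and fibre components realising a Gram matrix of the correct determinant.
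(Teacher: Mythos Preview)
Your approach is essentially the same as the paper's: in each case one contracts an explicit collection of $\rho$-invariant $(-1)$-curves and $\rho$-orbits of three $(-1)$-curves to reach a del Pezzo $\overline X$ with $\rk H^2(\overline X,\Z)^\rho=1$, then reads off $H^2(\overline X,\Z)^{\rm prim}=K_{\overline X}^\perp$ and adds an $A_2$ for each contracted orbit. Your flagged ``main obstacle'' (saturation) is a non-issue: blowing up gives an \emph{orthogonal direct sum} decomposition $H^2(V,\Z)=\Z\langle\epsilon_1,\epsilon_2,\epsilon_3\rangle\oplus H^2(\overline V,\Z)$, and since $\rho$ respects the summands, taking $(\,\cdot\,)^{\rm prim}$ commutes with the decomposition, yielding $H^2(V,\Z)^{\rm prim}=A_2\oplus H^2(\overline V,\Z)^{\rm prim}$ on the nose (similarly a $\rho$-fixed $(-1)$-curve contributes only to the invariant part). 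So no index computation or explicit Gram-matrix check is needed; the paper simply records these two blow-down rules and the rank-one endpoint as three displayed equations and applies them mechanically in the six cases.
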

We devote the rest of \Cref{sec:lattices} to the proof of \Cref{prop:lattices}.
The proof is by a case-by-case analysis, which also illuminates the geometry of each \(X\).
Within the proof, we also record whether each component of \(\widetilde C \subset X\) is nef or not; this will be useful for computing stable pairs.
We first make some general remarks and then treat the cases one-by-one.

Let \(V\) be any surface with an order 3 automorphism \(\rho\).
Suppose \(\epsilon \subset V\) is a \(\rho\)-invariant \((-1)\)-curve and let \(V \to \overline V\) be the blow-down of \(\epsilon\).
Then
\[ H^2(V, \Z) = \Z \langle \epsilon \rangle \oplus H^2(\overline V, \Z).\]
Therefore,
\begin{equation}\label{eqn:1bd}
  H^2(V, \Z)^{\rho} = \Z \oplus H^2(\overline V, \Z)^{\rho} \text{ and } H^2(V, \Z)^{\rm prim} = H^2(\overline V, \Z)^{\rm prim}.
\end{equation}
Suppose \(\epsilon_1,\epsilon_2,\epsilon_3 \subset V\) is a set of three mutually disjoint \((-1)\)-curves that form an orbit under \(\langle \rho \rangle\) and let \(V \to \overline V\) be the blow-down of \(\epsilon_1 \cup \epsilon_2 \cup \epsilon_3\).
Then
\[ H^2(V, \Z) = \Z\langle \epsilon_1,\epsilon_2,\epsilon_3\rangle \oplus H^2(\overline V, \Z).\]
Therefore, 
\begin{equation}\label{eqn:3bd}
  H^2(V,\Z)^{\rho} = \Z \oplus H^2(\overline V, \Z)^{\rho} \text{ and } H^2(V,\Z)^{\rm prim} = A_2 \oplus H^2(\overline V, \Z)^{\rm prim},
\end{equation}
where the \(A_2\) is spanned by \(\epsilon_1-\epsilon_2\) and \(\epsilon_2-\epsilon_3\).
Finally, if \(\rk H^2(V, \Z)^{\rho} = 1\) and \(K_V \neq 0\), then
\begin{equation}\label{eqn:kperp}
  H^2(V,\Z)^{\rm prim} = \langle K_V \rangle ^{\perp}.
\end{equation}
We use \eqref{eqn:1bd}, \eqref{eqn:3bd}, and \eqref{eqn:kperp} repeatedly.

Before we begin, we recall our notation.
We denote by \(\F_n\) the \(n\)th Hirzebruch surface with \(\sigma\) the section of self-intersection \(-n\) and \(f\) a fiber.
We denote by \(C \subset \P E\) the Tschirnhausen embedding and by \(F_i \subset \P E\) the fiber of \(\P E\) over the marked point \(p_i\).
We let \(\widehat \P E\) be the blow-up of \(\P E\) at \(\bigsqcup_i C \cap F_i\).
For a curve \(\alpha \in \P E\), we let \(\widehat \alpha \subset \widehat \P E\) be the proper transform.
We have the triple cover \(\widehat X \to \widehat \P E\) branched over \(\widehat C\) and \(\bigsqcup_i \widehat F_i\) and the blow-down \(\widehat X \to X\) along the pre-images of \(\widehat F_i\).
We denote by \(\widetilde \alpha\) the image in \(X\) of the pre-image of \(\widehat \alpha\) in \(\widehat X\).
Note that \(\widetilde \alpha\) may be disconnected.

Let \(D \subset X\) be the pre-image of a general fiber of \(\widehat \P E \to \P^1\).
In the central fiber described in \Cref{prop:kulikov}, the double curve is of this form.
From the triviality of the canonical bundle of the central fiber in \Cref{prop:kulikov}, it follows that \(D \subset X\) is an anti-canonical divisor.
Since \(D\) is the pre-image of a fiber, we also have \(D^2 = 0\).

Recall that \(H^2(X, \Z)^{\rm prim}\) is isomorphic to the orthogonal complement of \(\langle \widehat F_1, \dots, \widehat F_k \rangle\) in \(H^2(\widehat \P E, \Z)\).
It follows that \(\rk H^2(X, \Z)^{\rm prim} = 2 + 2k\).

\subsubsection{Case \(m = 0\) and \(\Gamma = \{(1,3)\}\)}\label{sec:k0-13}
In this case, \(\rk H^2(X, \Z)^{\rho} = 2\).
We have \(\P E = \F_1\) and \(C \subset \F_1\) is of class \(3\sigma+3f\).
The curve \(\widetilde \sigma \subset X\) is the disjoint union of three \((-1)\)-curves, which are in an orbit under \(\langle \rho \rangle\).
Let \(X \to \overline X\) be the blow-down of \(\widetilde \sigma\).
Let \(\overline D \subset \overline X\) be the image of \(D\).
Then \(\overline D\) is an anti-canonical divisor with \(\overline D^2 = 3\).
So \(\overline X\) is a \(\delpezzo_3\).
Using \(\rk H^2(X, \Z)^{\rho} = 2\) and \eqref{eqn:3bd}, we get
\[ \rk H^2(\overline X, \Z)^{\rho} = 1 \text{ and } H^2(X, \Z)^{\rm prim} = A_2 \oplus H^2(\overline X, \Z)^{\rm prim}.\]
Finally, using \eqref{eqn:kperp}, we get
\( H^2(\overline X, \Z)^{\rm prim} = \langle K_{\overline X} \rangle ^{\perp}\),
which is \(E_6\) since \(\overline X\) is a \(\delpezzo_3\).

In this case, \(\widetilde C \subset X\) is nef.
It is zero only on the three \(-1\) curves contracted by \(X \to \overline X\).
See \Cref{fig:k0-13} for a sketch of \(X\) (left), \(\overline X\) (right), \(D\) (blue), and \(C\) (red).
\begin{figure}
  \begin{tikzpicture}[xscale=2,yscale=2]
    \draw[thick] (0,0) -- (0,1) -- (1,1) -- (1,0) -- (0,0);
    \draw[thick,blue]  (0.5,0) -- (0.5,1);
    \draw[dashed]  (0,0.7) -- (1,0.7) (0,0.8) -- (1,0.8) (0,0.9) -- (1,0.9);
    \draw[thick, red] plot[smooth] coordinates {(1,0.6)  (0.35,0.5) (0.65,0.3) (0,0.2)};
    \draw (1.2, 0.5) node (X) {};
    \begin{scope}[xshift=2cm]
      \draw[thick] (0,0) -- (1,0) -- (0.5,1) -- (0,0);
      \draw[thick,blue]  (0.5,0) -- (0.5,1);
      \draw[thick, red] plot[smooth] coordinates {(0.7,0.6)  (0.35,0.5) (0.65,0.3) (0,0)};
      \draw (-0.2, 0.5) node (Xb) {};
      \draw (0.5, 0.1) node[right, blue] {\tiny 3};
      \draw (0.65, 0.3) node[right, red] {\tiny 3};
    \end{scope}
    \draw[->] (X) edge (Xb);
  \end{tikzpicture}
  \caption{In the case \(m = 0\) and \(C=(1,3)\), the surface \(X\) (left) is the blow-up of a \(\delpezzo_3\) \(\overline X\) (right).
  The double curve is blue, the ramification curve is red, and the numbers are self-intersections.
  }\label{fig:k0-13}
\end{figure}
\subsubsection{Case \(m = 0\) and \(\Gamma = \{(0,1), (2,2)\}\)}\label{sec:k0-01-22}
In this case, \(\rk H^2(X, \Z)^{\rho} = 2\).
We have \(\P E = \F_3\) and \(C \subset \F_3\) is the disjoint union of \(\sigma\) and a curve \(Q\) of class \(2 \sigma + 6 f\).
The curve \(\widetilde \sigma \subset X\) is a \(\rho\)-fixed \((-1)\)-curve.
Let \(X \to \overline X\) be its blow-down.
Let \(\overline D \subset \overline X\) be the image of \(D\).
Then \(\overline D\) is an anti-canonical divisor with \(\overline D^2 = 1\).
So \(\overline{X}\) is a \(\delpezzo_1\).
Using \(\rk H^2(X, \Z)^{\rho} = 2\) and \eqref{eqn:1bd}, we get
\[ \rk H^2(\overline{X},\Z)^{\rho} = 1 \text{ and } H^2(X, \Z)^{\rm prim} = H^2(\overline{X}, \Z)^{\rm prim}.\]
Finally, using \eqref{eqn:kperp}, we get
\( H^2(\overline{X},\Z)^{\rm prim} = \left\langle K_{\overline{X}} \right\rangle^{\perp}\),
which is \(E_8\) since \(\overline{X}\) is a \(\delpezzo_1\).

In this case, \(\widetilde \sigma\) is not nef.
But \(\widetilde Q\) is nef and it is zero only on \(\widetilde \sigma\).
See \Cref{fig:k0-01-22} for a sketch of \(X\) (left), \(\overline X\) (right), \(D\) (blue), and \(C\) (red).
\begin{figure}
  \begin{tikzpicture}[xscale=2,yscale=2]
    \draw[thick] (0,0) -- (0,1) -- (1,1) -- (1,0) -- (0,0);
    \draw[thick,blue]  (0.5,0) -- (0.5,1);
   \draw[thick, red, dashed]  (0,0.7) -- (1,0.7);
    \draw[thick, red] plot[smooth, tension=2] coordinates {(0,0.5)  (0.65,0.4) (0,0.3)};
    \draw (1.2, 0.5) node (X) {};
    \begin{scope}[xshift=2cm]
      \draw[thick] (0,0) -- (1,0) -- (0.5,1) -- (0,0);
      \draw[thick,blue]  (0.5,0) -- (0.5,1);
      \draw[thick, red] plot[smooth] coordinates {(0.3,0.6)  (0.65,0.3) (0,0)};
      \draw (-0.2, 0.5) node (Xb) {};
      \draw (0.5, 0.1) node[right, blue] {\tiny 1};
      \draw (0.65, 0.3) node[right, red] {\tiny 4};
    \end{scope}
    \draw[->] (X) edge (Xb);
  \end{tikzpicture}
  \caption{In the case \(m = 0\) and \(C = (0,1) \sqcup (2,2)\), the surface \(X\) (left) is the blow-up of a \(\delpezzo_1\) \(\overline X\) (right).
    The double curve is blue, the ramification curve is red, and the numbers indicate self-intersections.
  }\label{fig:k0-01-22}
\end{figure}

\subsubsection{Case \(m = 1\) and \(\Gamma = \{(0,3)\}\)}\label{sec:k1-03}
In this case, \(\rk H^2(X,\Z)^{\rho} = 4\).
We have \(\P E = \P^1 \times \P^1\) and \(C \subset \P E\) is a curve of class \(3\sigma+f\).
Let \(c_1,c_2,c_3\) be the three points of \(C \cap F_1\).
For \(i = 1,2,3\), let \(L_i \subset \P E\) be the curve of class \(\sigma\) through \(p_i\).
Then \(\widetilde L_i \subset X\) is a disjoint union of three \((-1)\)-curves that form an orbit under \(\langle \rho \rangle\).
Let \(X \to \overline X\) be the blow-down of all \(\widetilde L_i\) for \(i=1,2,3\).
Let \(\overline D \subset \overline X\) be the image of \(D\).
Then \(\overline D\) is an anti-canonical divisor with \(\overline D^2 = 9\).
It follows that \(\overline X \simeq \P^2\).
Using \eqref{eqn:3bd}, we get
\[ H^2(X, \Z)^{\rm prim} = A_2^{\oplus 3} \oplus H_2(\overline X, \Z)^{\rm prim}.\]
But since \(X \simeq \P^2\), we have \(H^2(X, \Z)^{\rm prim} = \langle K_{\overline X} \rangle^{\perp} = 0\).
So \(H^2(X, \Z)^{\rm prim} = A_2^{\oplus 3}\).

Note that \(\widetilde C\) is disjoint from \(\widetilde L_i\) for all \(i = 1,2,3\) and \(\widetilde C^2 = 1\); therefore, the image of \(\widetilde C\) in \(\overline X \simeq \P^2\) is a line.
This in turn implies that \(\widetilde C\) is nef.
See \Cref{fig:k1-03} for a sketch of \(X\) (left), \(\overline X\) (right), \(D\) (blue), and \(C\) (red).
\begin{figure}
    \begin{tikzpicture}[xscale=2,yscale=2]
    \draw[thick] (0,0) -- (0,1) -- (1,1) -- (1,0) -- (0,0);
    \draw[thick,blue]  (0.5,0) -- (0.5,1);
    \draw[dashed]
    (0,0.7) -- (1,0.7) (0,0.72) -- (1,0.72) (0,0.74) -- (1,0.74)
    (0,0.8) -- (1,0.8) (0,0.82) -- (1,0.82)     (0,0.84) -- (1,0.84)
    (0,0.9) -- (1,0.9) (0,0.92) -- (1,0.92)     (0,0.94) -- (1,0.94);
    \draw[thick, red] plot[smooth] coordinates {(1,0.5)  (0.35,0.4) (0.65,0.2) (0,0.1)};
    \draw (1.2, 0.5) node (X) {};
    \begin{scope}[xshift=2cm]
      \draw[thick] (0,0) -- (1,0) -- (0.5,1) -- (0,0);
      \draw[thick,blue]  (0.5,0) -- (0.5,1);
      \draw[thick, red] plot[smooth] coordinates {(0.7,0.6)  (0.35,0.5) (0.65,0.3) (0,0)};
      \draw (-0.2, 0.5) node (Xb) {};
      \draw (0.5, 0.1) node[right, blue] {\tiny 9};
      \draw (0.65, 0.3) node[right, red] {\tiny 1};
    \end{scope}
    \draw[->] (X) edge (Xb);
  \end{tikzpicture}
  \caption{In the case \(m = 1\) and \(C = (0,3)\), the surface \(X\) (left) is the blow-up of \(\P^2\) (right).
   The double curve is blue, the ramification curve is red, and  the numbers indicate self-intersections.
  }\label{fig:k1-03}
\end{figure}

\subsubsection{Case \(m = 1\) and \(\Gamma = \{(0,1), (1,2)\}\)}\label{sec:k1-01-12}
In this case, \(\rk H^2(X,\Z)^{\rho} = 4\).
We have \(\P E = \F_2\) and \(C \subset \P E\) is the disjoint union of \(\sigma\) and a curve \(Q\) of class \(2 \sigma + 4f\).
Let \(c_1 = F \cap \sigma\) and let \(c_2\) be one of the two points of \(Q \cap F\).
For \(i = 1,2\), let \(\epsilon_i \subset \widehat \P E\) be the exceptional divisor over \(c_i\).

On \(X\), the curves \(\widetilde \sigma\), \(\widetilde{\epsilon}_1\), and \(\widetilde{\epsilon}_2\) form a chain of \(\rho\)-invariant smooth rational curves of self-intersections \(-1, -2, -2\).
Let \(X \to \overline X\) be the blow-down of this chain.
Let \(\overline D \subset \overline X\) be the image of \(D\).
Then \(\overline D\) is an anti-canonical divisor with \(\overline D^2 = 3\).
So \(\overline X\) is a \(\delpezzo_3\).
Using that \(\rk H^2(X, \Z)^{\rho} = 4\) and \eqref{eqn:1bd} three times, we get
\[ \rk H^2(\overline X, \Z)^{\rho} = 1 \text{ and } H^2(X, \Z)^{\rm prim} = H^2(\overline{X}, \Z)^{\rm prim}.\]
Using \eqref{eqn:kperp}, we get \(H^2(\overline{X},\Z)^{\rm prim} = \langle K_{\overline{X}}\rangle ^{\perp}\), which is \(E_6\) since \(\overline{X}\) is a \(\delpezzo_3\).

Note that \(\widetilde \sigma\) is not nef.
But \(\widetilde Q\) is nef, and it is zero only on \(\widetilde \sigma\) and the middle curve in the chain.
See \Cref{fig:k1-01-12} for a sketch of \(X\) (left), \(\overline X\) (right), \(D\) (blue), and \(C\) (red).
\begin{figure}
  \begin{tikzpicture}[xscale=2,yscale=2]
        \draw[thick] (0,0) -- (0,1) -- (1,1) -- (1,0) -- (0,0);
    \draw[thick,blue]  (0.5,0) -- (0.5,1);
   \draw[thick, red, dashed]  (0,0.7) -- (1,0.7);
   \draw[thick, red] plot[smooth, tension=2] coordinates {(0,0.3)  (0.65,0.2) (0,0.1)};
   \draw[thick, dashed] (0.3,0.8) -- (0.15, 0.4) (0.15, 0.6) -- (0.3,0.2);
    \draw (1.2, 0.5) node (X) {};
    \begin{scope}[xshift=2cm]
      \draw[thick] (0,0) -- (1,0) -- (0.5,1) -- (0,0);
      \draw[thick,blue]  (0.5,0) -- (0.5,1);
      \draw[thick, red] plot[smooth] coordinates {(0.7,0.6)  (0.35,0.5) (0.65,0.3) (0,0)};
      \draw (-0.2, 0.5) node (Xb) {};
      \draw (0.5, 0.1) node[right, blue] {\tiny 3};
      \draw (0.65, 0.3) node[right, red] {\tiny 3};
    \end{scope}
    \draw[->] (X) edge (Xb);
  \end{tikzpicture}
  \caption{In the case \(m = 1\) and \(C = (0,1) \sqcup (1,2)\), the surface \(X\) (left) is the blow-up of a \(\delpezzo_3\) \(\overline X\) (right).
   The double curve is blue, the ramification curve is red, and  the numbers indicate self-intersections.
  }\label{fig:k1-01-12}
\end{figure}

\subsubsection{Case \(m = 2\) and \(\Gamma = \{(0,1), (0,2)\}\)}\label{sec:k2-01-02}
In this case, \(\rk H^2(X, \Z)^{\rho} = 6\).
We have \(\P E = \F_1\) and \(C \subset \P E\) is the disjoint union of \(\sigma\) and a curve \(Q\) of class \(2 \sigma + 2f\).
For \(i = 1,2\), let \(F_i \cap Q = \{q_{i,1}, q_{i,2}\}\).
For \(j = 1,2\), let \(L_j\) the unique curve of class \(\sigma+f\) through \(q_{1,2}\) and \(q_{2,j}\).
Let \(\epsilon_1 \subset \widehat \P E\) be the exceptional divisor over \(\sigma \cap F_1\) and \(\epsilon_2 \subset \widehat \P E\) the exceptional divisor over \(q_{1,1}\).
The curves introduced so far have the following dual graph in \(\widehat \P E\)
\[
  \begin{tikzpicture}[curve/.style={draw,circle,inner sep=0.2em}]
    \draw
    (0,0) node[curve] (s) {} node[above=0.5em] {\(\widehat \sigma\)}
    (1,0) node[curve] (e1) {} node[above=0.5em] {\(\epsilon_1\)}
    (2,0) node[curve] (f1) {} node[above=0.5em] {\(\widehat F_1\)}
    (3,0) node[curve] (e2) {} node[above=0.5em] {\(\epsilon_2\)}
    (4,0) node[curve] (q) {} node[above=0.5em] {\(\widehat Q\)}
    (0,-1) node[curve] (f2) {} node[below=0.5em] {\(\widehat F_2\)}
    (1,-1) node[curve] (l1) {} node[below=0.5em] {\(\widehat L_1\)}
    (2,-1) node[curve] (l2) {} node[below=0.5em] {\(\widehat L_2\)};
    \draw
    (s) edge (e1)
    (q) edge (e2)
    (f1) edge (e1)
    (f1) edge (e2);
 \end{tikzpicture}
\]
On \(X\), the curves \(\widetilde \sigma\), \(\widetilde \epsilon_1\), \(\widetilde \epsilon_2\) form a chain of \(\rho\)-invariant smooth rational curves of self-intersections \(-1,-2,-2\).
For \(j= 1,2\), the curve \(\widetilde L_j\) is the disjoint union of three \((-1)\)-curves that form an orbit under \(\langle \rho \rangle\).
The curves \(\widetilde L_1\) and \(\widetilde L_2\) are disjoint from each other and also from the chain \(\widetilde \sigma\), \(\widetilde \epsilon_{1}\), \(\widetilde \epsilon_2\).
Let \(X \to \overline X\) be the blow-down of \(\widetilde L_1\), \(\widetilde L_2\), \(\widetilde \sigma\), \(\widetilde \epsilon_{1}\), and \(\widetilde \epsilon_2\).
Let \(\overline D \subset \overline X\) be the image of \(D \subset X\).
Then \(\overline D\) is an anti-canonical divisor with \(\overline D^2 = 9\).
So \(\overline X \simeq \P^2\).
Using \eqref{eqn:1bd} and \eqref{eqn:3bd}, we get
\[ H^2(X, \Z)^{\rm prim} = A_2^{\oplus 2} \oplus H^2(\overline X, \Z)^{\rm prim}.\]
Since \(\overline X \simeq \P^2\), we have \(H^2(\overline X, \Z)^{\rm prim} = \langle K_{\overline X} \rangle^{\perp} = 0\).

Note that \(\widetilde \sigma\) is not nef.
But \(\widetilde Q\) is nef and it is zero on \(\widetilde Q\) itself, \(\widetilde \sigma\), the middle curve in the chain contracted in \(\overline X\), and the \(-1\) curves contracted in \(\overline X\).
As \(\widetilde Q^2 = 0\), \(\widetilde Q\) is not a big divisor.
See \Cref{fig:k1-01-02} for a sketch of \(X\) (left), \(\overline X\) (right), \(D\) (blue), and \(C\) (red).
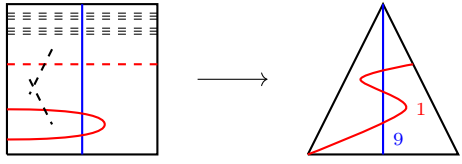
\begin{figure}
  \begin{tikzpicture}[xscale=2,yscale=2]
        \draw[thick] (0,0) -- (0,1) -- (1,1) -- (1,0) -- (0,0);
    \draw[thick,blue]  (0.5,0) -- (0.5,1);
   \draw[thick, red, dashed]  (0,0.6) -- (1,0.6);
   \draw[thick, red] plot[smooth, tension=2] coordinates {(0,0.3)  (0.65,0.2) (0,0.1)};
   \draw[thick, dashed] (0.3,0.7) -- (0.15, 0.4) (0.15, 0.5) -- (0.3,0.2);
   \draw[dashed]
    (0,0.8) -- (1,0.8) (0,0.82) -- (1,0.82)     (0,0.84) -- (1,0.84)
    (0,0.9) -- (1,0.9) (0,0.92) -- (1,0.92)     (0,0.94) -- (1,0.94);
    \draw (1.2, 0.5) node (X) {};
    \begin{scope}[xshift=2cm]
      \draw[thick] (0,0) -- (1,0) -- (0.5,1) -- (0,0);
      \draw[thick,blue]  (0.5,0) -- (0.5,1);
      \draw[thick, red] plot[smooth] coordinates {(0.7,0.6)  (0.35,0.5) (0.65,0.3) (0,0)};
      \draw (-0.2, 0.5) node (Xb) {};
      \draw (0.5, 0.1) node[right, blue] {\tiny 9};
      \draw (0.65, 0.3) node[right, red] {\tiny 1};
    \end{scope}
    \draw[->] (X) edge (Xb);
  \end{tikzpicture}
  \caption{The surface \(X\) (left) as the blow-up of \(\P^{2}\) (right), together with the double curve (blue) and the ramification curve (red).
  The numbers indicate self-intersection.
  }\label{fig:k1-01-02}
\end{figure}

\subsubsection{Case \(m = 3\) and \(\Gamma = \{(0,1), (0,1), (0,1)\}\)}
In this case, \(\rk H^2(X, \Z)^{\rho} = 8\).
We have \(\P E = \P^1 \times \P^1\) and \(C \subset \P E\) is the disjoint union of three curves of class \(\sigma\), say \(C = C_1 \sqcup C_2 \sqcup C_3\).
For \(i = 1,2,3\) and \(j = 1,2,3\), set \(q_{i,j} = C_i \cap F_j\) and let \(\epsilon_{i,j} \subset \widehat \P E\) be the exceptional divisor over \(q_{i,j}\).
Let \(Q \subset \P E\) be the unique curve of class \(\sigma + f\) through \(q_{1,2}, q_{2,3}\), and \(q_{3,1}\).
For \(i = 1,2,3\), the curves \(\widehat C_i\) and \(\widehat{\epsilon}_{i,i}\) form a chain of \(\rho\)-invariant smooth rational curves of self-intersections \(-1\) and \(-2\).
These three chains are mutually disjoint.
Furthermore, \(\widehat Q\) is the disjoint union of three \((-1)\)-curves that form an orbit of \(\langle \rho \rangle\), disjoint from \(\widehat C_i\) and \(\widehat{\epsilon}_{i,i}\).
Let \(X \to \overline X\) be the blow-down of the chains \(\widehat C_i\) and \(\widehat{\epsilon}_{i,i}\) for \(i = 1,2,3\) and of the three \((-1)\)-curves \(\widehat Q\).
Let \(\overline D \subset \overline X\) be the image of \(D \subset X\).
Then \(\overline D\) is an anti-canonical divisor with \(\overline D^2 = 0\).
So \(\overline X \simeq \P^2\).
Using \eqref{eqn:1bd} and \eqref{eqn:3bd}, we get
\[ H^2(X, \Z)^{\rm prim} = A_2 \oplus H^2(\overline X, \Z)^{\rm prim}.\]
Since \(\overline X \simeq \P^2\), we have \(H^2(\overline X, \Z)^{\rm prim} = \langle K_{\overline X} \rangle^{\perp} = 0\).

In this case, none of the components of \(C\) is nef in \(X\) as their self-intersections are all equal to \(-1\).

\subsection{Periods of surfaces with an anti-canonical cycle}\label{sec:dominance}
Fix a row in the table in \Cref{prop:lattices}.
Let \(m\), \(\Gamma\) and \(\Lambda\) be the entries in the first, second, and third columns of this row.

Let \(\mathcal H\) be the coarse moduli space of
\[ (\phi \colon C \to P, p_1, \dots, p_m, q)\]
where
\begin{itemize}
\item \(P \simeq \P^1\);
\item \(C\) is a smooth curve, not necessarily connected, and \(\phi \colon C \to P\) is a finite map of degree 3; the genera of the connected components of \(C\) and their degrees over \(P\) are given by \(\Gamma\);
\item \(p_1, \dots, p_m\) and \(q\) are pairwise distinct marked points on \(P\) over which \(\phi\) is \'etale.
\end{itemize}
Let \(b\) be the degree of the branch divisor of \(\phi\), and observe from \Cref{prop:lattices} that \(b = 6-2m\).
Note that \(\mathcal H\) is an example of a Hurwitz space.
By standard results on Hurwitz spaces \cite[\S~4]{rom.wew:06}, it follows that \(\mathcal H\) is an irreducible quasi-projective variety of dimension \(b + m + 1 - 3 = 4-m\).

Let \(E\) be the unique elliptic curve of \(j\)-invariant \(0\), so that \(\Aut(E,0) \simeq \Z/6\Z\).
The automorphism of \(E\) that acts by multiplication by \(\zeta_3\) on the universal cover makes \(E\) a \(\Z[\zeta_3]\)-module.

Let \(X\) be the surface obtained from \((\phi \colon C \to P, p_1, \dots, p_m)\) by the triple Tschirnhausen construction pinched over \(p_1, \dots, p_m\).
Let \(D \subset X\) be the fiber of \(X \to P\) over \(q\).
Then we have an isomorphism \(\Jac(D) \simeq E\).
Let \(\sigma\) be the automorphism on \(X\) induced by the triple cover construction normalised so that \(\sigma\) corresponds to the multiplication by \(\zeta_3\) on the universal cover of \(\Jac(D)\).
Via the map induced by \(\sigma\), the lattice \(H^2(X, \Z)^{\rm prim}\) becomes a \(\Z[\zeta_3]\)-module.

In our case, the surface \(X\) is a rational surface.
So we identify \(H^2(X, \Z) = \Pic(X)\) and think of \(H^2(X, \Z)^{\rm prim}\) as a subset of \(\Pic(X)\).
We have a map of \(\Z[\zeta_3]\)-modules
\[ \psi \colon H^2(X, \Z)^{\rm prim} \to \Jac(D)\]
given by
\[ \delta \mapsto \delta|_D.\]
After choosing identifications of \(\Z[\zeta_3]\)-modules \(H^2(X, \Z)^{\rm prim} \simeq \Lambda\) and \(\Jac(D) \simeq E\), which are unique up to the action of the finite group \(\Aut(\Lambda) \times \Aut(E,0)\), we can think of \(\psi\) as an element of \(\Hom_{\Z[\zeta_3]}(\Lambda, E)\).
The association \((\phi, p_1, \dots, p_m,q) \mapsto \psi\) yields a regular map
\[ \Psi \colon \mathcal H \to \Hom_{\Z[\zeta_3]}(\Lambda, E) / \Aut(\Lambda) \times \Aut(E,0).\]

\begin{proposition} \label{prop:boundary-period-dominant}
  The map \(\Psi\) defined above is dominant.
\end{proposition}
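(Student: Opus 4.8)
The plan is to compare dimensions on both sides and then show that $\Psi$ is generically finite, which forces dominance.

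\emph{Reduction to the étaleness of a period map.} The Hurwitz space $\mathcal H$ has dimension $4-m$, as noted above, and the target has the same dimension: each lattice $\Lambda$ occurring in \Cref{prop:lattices} is free of rank $4-m$ over $\Z[\zeta_3]$ (for instance $A_2$, $E_6$, $E_8$ have $\Z[\zeta_3]$-rank $1$, $3$, $4$), so $\Hom_{\Z[\zeta_3]}(\Lambda, E)$ is isogenous to $E^{4-m}$, and dividing by the finite group $\Aut(\Lambda)\times\Aut(E,0)$ does not change the dimension. Since both spaces are irreducible of the same dimension, $\Psi$ is dominant if and only if its differential is injective — hence an isomorphism — at some point of $\mathcal H$. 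Now observe that the triple Tschirnhausen construction of \Cref{sec:triple-tschirnhausen} is reversible: from a triple $(X, D, \sigma)$ consisting of a rational surface $X$, an order-$3$ automorphism $\sigma$ of the given Hodge type, and a smooth $\sigma$-invariant anticanonical elliptic curve $D$ that is a fibre of the induced map $X/\sigma\to P$, one recovers $\widehat{\P E}\to P$ together with its branch configuration $\widehat C + 2\widehat F$ by resolving the quotient $X/\sigma$ and undoing the explicit blow-ups and blow-downs, and hence recovers $(\phi\colon C\to P, p_1,\dots,p_m, q)$, where $q$ is the image of $D$ in $P$ (and $\eta$ is forced, since $\Pic\P^1$ is torsion free). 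Thus $\mathcal H$ is identified with an open substack of the moduli of such triples together with a marking $H^2(X,\Z)^{\rm prim}\simeq\Lambda$, $\Jac D\simeq E$, and under this identification $\Psi$ becomes the equivariant anticanonical period map $(X,D,\sigma)\mapsto(\delta\mapsto\delta|_D)$. So it suffices to prove that this period map is étale at one point.

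\emph{Étaleness of the equivariant anticanonical period map.} This is the $\Z/3$-equivariant version of the classical statement that the period map of a rational surface carrying a smooth anticanonical elliptic curve is a local isomorphism onto the space of admissible restriction homomorphisms (for del Pezzo surfaces this is classical; see also Looijenga and Friedman). In each of the six cases the proof of \Cref{prop:lattices} presents $X$ as a blow-up of $\P^2$ or of a del Pezzo surface $\overline X$ of degree $1$ or $3$, with a compatible $\sigma$-action, along $\sigma$-orbits of points lying on the strict transform of the anticanonical curve $D$; by \eqref{eqn:1bd} and \eqref{eqn:3bd} the lattice $H^2(X,\Z)^{\rm prim}$ splits orthogonally into an $A_2^{\oplus a}$-summand, spanned by differences of exceptional curves in a common $\sigma$-orbit, and the primitive lattice of $\overline X$. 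On the $A_2^{\oplus a}$-summand the period records the differences along $D$ of the centres of the blow-up, which, being $\sigma$-orbits, move freely on $D$; on the $\overline X$-summand it reduces to the $\rho$-equivariant anticanonical period map of $\overline X$, which vanishes for $\P^2$ and is the $\rho$-invariant part of the classical anticanonical Torelli statement for $\delpezzo_1$ and $\delpezzo_3$ — the latter following from the non-equivariant statement because the period map is equivariant for $\Aut(\overline X,\overline D)$ and therefore restricts to an isomorphism between the fixed loci. Together with the dimension count of the previous paragraph, this yields that $d\Psi$ is an isomorphism.

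\emph{Main obstacle.} The essential content is the injectivity of $d\Psi$. A first-order $\rho$-equivariant deformation of $(X,D,\sigma)$ along which $\delta|_D$ is unchanged for every $\delta\in H^2(X,\Z)^{\rm prim}$ keeps fixed, to first order, the isomorphism class of the marked curve $(D; p_1,\dots,p_r)$ formed by $D$ together with the centres of the successive blow-ups realizing $X$, as well as the restriction to $D$ of the Picard group of the base $\P^2$ or $\overline X$; these data determine the pair $(X,D)$ up to isomorphism, and since $H^0(X, T_X(-\log D))=0$ (the pair $(X,D)$, and already its del Pezzo or $\P^2$ base with its anticanonical curve, has no infinitesimal automorphisms) such a deformation is trivial. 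Making this precise — via unobstructedness of deformations of $(X,D)$, so that the relevant deformation space has dimension $h^1(X, T_X(-\log D))^{\rho}=4-m$, and the standard description of the derivative of the restriction map — is where the real work lies; the rest is bookkeeping with the lattices of \Cref{prop:lattices}.
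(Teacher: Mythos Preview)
Your approach and the paper's agree on the opening dimension count and on the ``reversibility'' of the triple Tschirnhausen construction, but diverge on how to conclude.  The paper argues \emph{global} finiteness of fibers: it proves a concrete Torelli theorem for anticanonical del Pezzo pairs (\Cref{prop:dptor}), which reconstructs $(\overline X,\overline D)$ explicitly from the period by reading off the blown-up points on $D$, and then uses the finiteness of embeddings $K_{\overline X}^\perp\hookrightarrow\Lambda$ and $A_2\hookrightarrow\Lambda$ to bound the number of possible $(X',D')$ with a given period.  You instead aim for the \emph{infinitesimal} statement: \'etaleness of $\Psi$ at some point, via the $\rho$-equivariant infinitesimal Torelli for the anticanonical pair.

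Your route is correct in principle, and the lattice decomposition into a del Pezzo summand and an $A_2^{\oplus a}$ ``blow-up'' summand is exactly the one the paper exploits.  But your own final paragraph concedes that the heart of the matter --- proving $h^1(X,T_X(-\log D))^{\rho}=4-m$ and the injectivity of the derivative --- is not actually carried out, only located.  The argument that the equivariant statement follows from the non-equivariant one ``because the period map is equivariant and therefore restricts to an isomorphism between fixed loci'' is the right idea but needs the non-equivariant map to be a local isomorphism (not merely injective), and needs the fixed loci to have the expected dimension; neither is verified.  The paper's global approach, by contrast, is entirely self-contained: \Cref{prop:dptor} is proved in three lines from the explicit reconstruction recipe, and the finiteness of lattice embeddings is immediate.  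So what you gain in conceptual cleanliness you lose in completeness; as written, this is an outline rather than a proof.
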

The proof of \Cref{prop:boundary-period-dominant} follows essentially from the Torelli theorem for rational surfaces with an anti-canonical cycle \cite[\S~5]{loo:81}.
We include an exposition for the lack of a good reference.

Fix \(d \in \{1, \dots, 6\}\).
Consider the lattice \({\rm I}_{1,9-d}\) with orthogonal basis vectors \(\ell, e_1, \dots, e_{9-d}\) of self-intersections \(1, -1, \dots, -1\).
Let \(\Lambda_{d} \subset {\rm I}_{1,9-d}\) be the orthogonal complement of \(3\ell - \sum e_i\).
For \(9-d = 3,4,5,6,7,8\), the lattice \(\Lambda_d\) is the root lattice \(A_2\oplus A_1,A_4, D_5, E_6, E_7, E_8\), respectively.

Let \(X\) be a smooth del Pezzo surface of degree \(d\).
Suppose \(X\) is obtained as the blow-up of \(\P^2\) at \(p_1, \dots, p_{9-d}\).
Let \(E_i\) be the exceptional divisor over \(p_i\) and \(H\) the pull-back of the hyperplane class from \(\P^2\).
Then we get the isomorphism
\[ {\rm I}_{1,9-d} \xrightarrow{\sim} H^2(X, \Z) \quad \ell \mapsto H \quad e_i \mapsto [E_i]\]
and the induced isomorphism
\[ \Lambda_d \xrightarrow{\sim} K_X^{\perp}.\]
Via this isomorphism, we identify \(\Lambda_d\) and \(K_X^{\perp}\).

Let \(D \subset X\) be a smooth anti-canonical divisor.
We call \((X,D)\) an \emph{anti-canonical pair}.
We have a map
\[ \psi \colon K_X^{\perp} \to \Jac(D) \qquad \delta \mapsto \delta|_D,\]
which we call the \emph{period} of the pair.
\begin{theorem}\label{prop:dptor}
  Let \((X_1, D_1)\) and \((X_2,D_2)\) be anti-canonical pairs with periods \(\psi_1\) and \(\psi_2\).
  Suppose we are given isomorphisms \(D_1 \to D_2\) and \(K_{X_2}^{\perp} \to K_{X_1}^{\perp}\) such that the following diagram commutes
  \[
    \begin{tikzcd}
      K_{X_1}^{\perp} \ar[<-]{r}\ar{d}{\psi_1} &K_{X_2}^{\perp}\ar{d}{\psi_2}\\
      \Jac(D_1) \ar{r} &\Jac(D_2).
    \end{tikzcd}
  \]
  Then there is a unique isomorphism \((X_1,D_1) \to (X_2,D_2)\) inducing the given isomorphisms.
\end{theorem}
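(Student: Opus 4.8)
The plan is to prove uniqueness by a rigidity argument and existence by reducing to a Torelli theorem for \emph{marked} anti-canonical pairs.

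For uniqueness, suppose $F$ and $F'$ both induce the given isomorphisms. Then $g=(F')^{-1}\circ F$ is an automorphism of $X_1$ restricting to the identity on $D_1$ (because both $F$ and $F'$ induce the same map $D_1\to D_2$) and acting trivially on $K_{X_1}^\perp$ (because both induce the same map on $K^\perp$). Since every automorphism fixes $K_{X_1}$ and $K_{X_1}^\perp\oplus\Z K_{X_1}$ has finite index in the torsion-free lattice $H^2(X_1,\Z)=\Pic X_1$, the map $g$ acts trivially on all of $\Pic X_1$; in particular it preserves every $(-1)$-curve and hence descends along any blow-down $X_1\to\P^2$ to an element $\bar g\in\PGL_3$. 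As that blow-down carries $D_1$ isomorphically onto a smooth plane cubic and $g|_{D_1}=\mathrm{id}$, the map $\bar g$ fixes this cubic pointwise; a smooth plane cubic contains four points in general position, so $\bar g=\mathrm{id}$, whence $g$ is the identity on the complement of the exceptional locus and therefore $g=\mathrm{id}$.

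For existence I would first fix a realization of $X_1$ as a blow-up of $\P^2$, giving a geometric marking $\mu_1\colon {\rm I}_{1,9-d}\xrightarrow{\sim}H^2(X_1,\Z)$ with $\mu_1(\ell)$ nef, which restricts to $\mu_1^\Lambda\colon\Lambda_d\xrightarrow{\sim}K_{X_1}^\perp$. Next I would extend $\phi\colon K_{X_2}^\perp\to K_{X_1}^\perp$ to the (unique) isometry $\bar\phi\colon H^2(X_2,\Z)\xrightarrow{\sim}H^2(X_1,\Z)$ with $\bar\phi(-K_{X_2})=-K_{X_1}$ — this uses that isometries of $\Lambda_d$ are realized inside $O({\rm I}_{1,9-d})$ (by Cremona transformations, together with the Bertini and Geiser involutions for $d\le 2$) — and set $\mu_2:=\bar\phi^{-1}\circ\mu_1$. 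I would then check $\mu_2$ is again geometric: $\bar\phi$ matches the finite set of $(-1)$-classes of $X_2$ with that of $X_1$, and since on a del Pezzo surface every $(-1)$-class is effective and $\mu_1(\ell)$ is nef, the class $\mu_2(\ell)=\bar\phi^{-1}(\mu_1(\ell))$ is nef; as $d\le 6$, it then defines a birational morphism $X_2\to\P^2$. Finally, from the period compatibility $\psi_2=\theta_*\circ\psi_1\circ\phi$ and the identity $\mu_2^\Lambda=\phi^{-1}\circ\mu_1^\Lambda$ I obtain $\psi_2\circ\mu_2^\Lambda=\theta_*\circ(\psi_1\circ\mu_1^\Lambda)$; that is, $\theta$ identifies the periods (valued in $\Jac(D_i)$) of the marked anti-canonical pairs $(X_1,D_1,\mu_1)$ and $(X_2,D_2,\mu_2)$. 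Since $X_1$ and $X_2$ are del Pezzo, these periods avoid all root hyperplanes, so the Torelli theorem for marked anti-canonical pairs furnishes an isomorphism $F\colon X_1\to X_2$ with $F|_{D_1}=\theta$ and $F^*\circ\mu_2=\mu_1$; as $F^*=\mu_1\circ\mu_2^{-1}=\bar\phi$, this $F$ also induces $\phi$, and it is unique by the first paragraph.

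The main obstacle is the Torelli statement invoked in the last step: the cited reference \cite[\S 5]{loo:81} proves Torelli for rational surfaces with an anti-canonical \emph{cycle}, so I would need to invoke, or reprove by the same period-domain argument, its analogue for a smooth elliptic anti-canonical curve, and to run it on the del Pezzo locus (where the period is generic and the surface is reconstructed from its marked period). The remaining ingredients — the rigidity in the uniqueness step, the existence and uniqueness of $\bar\phi$, the effectivity of $(-1)$-classes, and the nefness criterion for a marking to be geometric — are standard facts about del Pezzo surfaces; the hypothesis $d\le 6$ enters only to ensure that the linear system $|\mu_2(\ell)|$ genuinely contracts $X_2$ onto $\P^2$.
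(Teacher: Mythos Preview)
Your uniqueness argument is fine. The existence argument, however, has a real gap at the extension step: you assert that every isometry $\phi\colon K_{X_2}^\perp\to K_{X_1}^\perp$ extends to an isometry $\bar\phi\colon H^2(X_2,\Z)\to H^2(X_1,\Z)$ with $\bar\phi(-K_{X_2})=-K_{X_1}$. But an isometry of $\Lambda_d$ extends to ${\rm I}_{1,9-d}$ fixing $K$ precisely when it acts trivially on the discriminant group $\Lambda_d^*/\Lambda_d\cong\Z/d\Z$, and for $d\ge 3$ this cuts out the proper subgroup $W(\Lambda_d)\subsetneq O(\Lambda_d)$. Concretely, for $d=3$ take $X_1=X_2=X$ a generic smooth cubic surface, $D_1=D_2=D$ a hyperplane section, $\phi=-1$ on $K_X^\perp\cong E_6$, and $\theta\colon D\to D$ any involution with $\theta_*=-1$ on $\Jac(D)$: the square commutes, yet $-1\notin W(E_6)$ so no $\bar\phi$ exists, and since $\Aut(X)=1$ for generic $X$ no isomorphism $F$ inducing $\phi$ exists either. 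The Bertini and Geiser involutions you cite act as $-1=w_0\in W(E_8)$ and $W(E_7)$ respectively, so they give nothing beyond the Weyl group.

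This shows the obstruction lies already in the statement, not only in your route. The paper takes a different and more direct path---it writes down McMullen's explicit reconstruction of a del Pezzo from its period (choose $p_1\in D$, set $p_i=p_1+\psi(e_i-e_1)$ and $h=\psi(\ell-e_1-e_2-e_3)+p_1+p_2+p_3$, then blow up) and applies it to both pairs, observing that the commuting square forces the two point configurations to coincide under $\theta$---but this argument tacitly assumes that the marking on $K_{X_2}^\perp$ transported via $\phi$ is geometric, which is equivalent to your extension hypothesis. The honest fix is to restrict $\phi$ to isometries in the Weyl-group coset (equivalently, acting trivially on the discriminant); this restricted version is what every application in the paper actually needs, since the theorem is only ever invoked ``up to a finite choice''.
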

\begin{proof}
  Let \((X,D) = (X_1,D_1)\) and fix an isomorphism \(K_X^{\perp} \cong \Lambda_d\).
  We recall how to reconstruct \((X,D)\) from \(\psi \colon \Lambda_d \to \Jac(D)\) (see \cite[\S~6]{mcm:07}).
  We choose an arbitrary \(p_1 \in D\) and for \(i = 1, \cdots, 9-d\), define \(p_i \in D\) by
  \[p_i = p_1 + \psi(e_i-e_1).\]
  We take the divisor class \(h\) on \(D\) of degree \(3\) given by
  \[ h = \psi(\ell-e_1-e_2-e_3) + p_1+p_2+p_3,\]
  and embed \(D \subset \P^2\) by the complete linear series \(|h|\).
  Then we have a unique isomorphism of \(X\) with the blow-up of \(\P^2\) at the points \(p_1, \dots, p_{9-d}\), compatible with \(K_X^{\perp} \cong \Lambda_d\).

  Considering \((X,D) = (X_2,D_2)\), we likewise obtain an isomorphism of \(X\) with \(\P^2\) blown up at the same \(9-d\) points, and hence an isomorphism \((X_1,D_1) \to (X_{2},D_2)\) as required.
\end{proof}

We are now ready to prove \Cref{prop:boundary-period-dominant}.
\begin{proof}[Proof of \Cref{prop:boundary-period-dominant}]
  Note that \(\dim \mathcal H = 4-m\).
  Observe that \(\Lambda\) is a free \(\Z\)-module of rank \(8-2m\), and hence a free \(\Z[\zeta_3]\)-module of rank \(4-m\).
  Therefore, 
  \[ \Hom_{\Z[\zeta_3]}(\Lambda, E) \cong E^{4-m}.\]
  So the source and target of \(\Psi\) have the same dimension \(4-m\).
  Therefore, to prove that \(\Psi\) is dominant, it suffices to prove that generic fibers of \(\Psi\) are finite.

  Let \((\phi \colon C \to P, p_1, \dots, p_m, q) \in \mathcal H\) be generic.
  Let \((X,D)\) be the anti-canonical pair associated to it by the triple Tschirnhausen construction.
  Fix isomorphisms \(H^2(X, \Z)^{\rm prim} \cong \Lambda\) and \(\Jac(D) \cong E\) and let \(\psi \colon \Lambda \to E\) be the period of \((X,D)\).

  Let \((\phi' \colon C' \to P', p'_1, \dots, p'_m, q') \in \mathcal H\) be another point, and let \((X',D')\) be the anti-canonical pair associated to it.
  Suppose \((X',D')\) also has the same period \(\psi\).

  We first show that a \((\Z/3\Z)\)-equivariant isomorphism \((X,D) \cong (X',D')\) yields an isomorphism  \((\phi, p_1, \dots, p_m, q) \cong (\phi', p'_1,\dots, p'_m, q')\), up to reordering \(p_1, \dots, p_m\).
  Let \(\iota \colon (X,D) \to (X',D')\) be a \((\Z/3\Z)\)-equivariant isomorphism.
  Since \(\iota\) takes \(D\) to \(D'\), which define elliptic fibrations, \(\iota\) induces an isomorphism \(P \cong P'\) such that the diagram
  \[
    \begin{tikzcd}
      X \ar{r}{\iota}\ar{d} & X' \ar{d}\\
      P \ar{r} & P'
    \end{tikzcd}
  \]
  commutes, and such that \(P \to P'\) maps \(q\) to \(q'\).
  Restricting \(\iota\) to the divisorial components of the fixed loci yields an isomorphism \(C \to C'\) compatible with \(P \to P'\).
  By restricting \(\iota\) to the isolated fixed points shows that the isomorphism \(P \to P'\) takes the marked points \(p_1,\dots,p_m\) to a reordering of \(p'_{1}, \dots, p'_m\).

  To  finish the proof of \Cref{prop:boundary-period-dominant}, it remains to show that there are finitely many \((X',D')\) up to a \((\Z/3\Z)\)-equivariant isomorphism that have the same period \(\psi\).
  
  The proof of \Cref{prop:lattices} yields a \((\Z/3\Z)\)-equivariant birational morphism
  \[ (X', D) \to (\overline X', \overline D')\]
  where
  \begin{enumerate}
  \item \(\overline X'\) is a smooth del-Pezzo surface and \(\overline D' \subset \overline X'\) is a smooth anti-canonical divisor,
  \item \(X' \to \overline X'\) is a sequence of blow-ups of points in \(\overline D'\) or its proper transforms; the center of the blow-up is either a \(\Z/3\Z\) fixed point or a triple of points forming an orbit under \(\Z/3\Z\),
  \item the pull-back map gives an embedding \(K_{\overline X'}^{\perp} \to H^2(X', \Z)^{\rm prim}\) as a direct summand.
  \end{enumerate}

  Let \(d\) be the degree of \(\overline X'\).
  Note that there are only finitely many embeddings of \(K_{\overline X'}^{\perp} \to \Lambda\).
  So, having fixed \(\psi \colon \Lambda \to E\), there are only finitely many possible periods \(K_{\overline X'}^{\perp} \to E\).
  For \(d \leq 6\), \Cref{prop:dptor} implies that there are finitely many \((\overline X', \overline D')\) up to a \((Z/3\Z)\)-equivariant isomorphism whose period is compatible with \(\psi\).
  The only remaining possibility is \(d = 9\), for which the finiteness is automatic (indeed, there are only finitely many \((\Z/3\Z)\)-equivariant embeddings of \(E \subset \P^2\)).

  Since there are finitely many \(\Z/3\Z\) fixed points on \(\overline D' \cong E\), there are only finitely many blow-ups of \(\overline X'\) in \(\Z/3\Z\)-fixed points of \(\overline D'\).
  The same argument applies to any further blow-ups.

  Let \(\{p_1,p_2,p_3\} \subset \overline D'\) be an orbit under \(\Z/3\Z\).
  Let \(E_i\) be the exceptional divisor over \(p_i\) in of the blow-up of this orbit.
  Then we obtain an \(A_2\)-summand in the primitive Picard lattice of the blow-up spanned by \([E_i]-[E_j]\).
  The period map sends \([E_i]-[E_j]\) to \(p_i-p_j\).
  Now, there are only finitely many embeddings \(A_2 \to \Lambda\).
  So, having fixed \(\psi \colon \Lambda \to E\), there are only finitely many possibilities for \(\psi([E_i]-[E_j])\).
  As a result, there are only finitely many orbits \(\{p_1,p_2,p_3\}\) whose blow-up has a period map compatible with \(\psi\).
  Henceforth, there are finitely many \((X', D')\) up to a \((\Z/3\Z)\)-equivariant isomorphism whose period is equal to \(\psi\).
\end{proof}

\subsection{Dominance of the triple Tschirnhausen map} \label{subsec:ell_fib_trigonal}
Fix a pair \((n,k)\) of non-negative integers from \Cref{tab:ASlattices}.
By \cite[Theorem 3.3]{art.sar:08}, there is a (unique upto conjugation) \(\rho \in O(L)\) with \(S_{\rho} = S(n,k)\) and \(T_{\rho} = T(n,k)\).
The moduli space \(F_{\rho}\) of \(\rho\)-markable K3 surfaces \((X,\sigma)\) is irreducible of dimension \(9-n\).

Set \(g = g(n,k)\) from \Cref{tab:ASlattices}.
Let \(\mathcal H\) be the moduli space of marked simply branched triple covers \((C \to P, p_1, \dots, p_n)\) where
\begin{itemize}
\item \(P \cong \P^1\) and \(C\) is a smooth curve of genus \(g\) (except in the case \((n,k) = (0,2)\), where we take \(C\) to be the disjoint union of a genus 2 curve and a copy of \(P\)),
\item \(p_1, \dots, p_{n} \in P\) are marked points over which \(\phi\) is \'etale.
\end{itemize}
The space \(\mathcal H\) is again an example of a Hurwitz space, and it is easy to see that it is an irreducible quasi-projective variety of of dimension \(9-n\).
The triple Tschirnhausen construction gives a rational map
\( \Psi \colon \mathcal H \dashrightarrow F^{\rm sep}_{\rho}\).
\begin{proposition}\label{prop:triple-tschirnhausen-dominant}
  In the setup above, the map \(\Psi\) is dominant.
  That is, for \((n,k)\) in \Cref{tab:ASlattices}, a generic K3 with an automorphism of degree \(3\) with \(n\) fixed points and \(k\) fixed curves arises from the triple Tschirnhausen construction.
\end{proposition}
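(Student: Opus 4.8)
The plan is to \emph{not} attempt to invert the construction geometrically, but rather to compose $\Psi$ with the period map and run a dimension argument, using the already–established boundary dominance statement \Cref{prop:boundary-period-dominant} together with the Kulikov degenerations produced in \Cref{prop:kulikov}. By \Cref{thm:moduli_rho_markable_K3} the period map identifies $F^{\rm sep}_\rho$ with a dense open subset of $\D_\rho/\Gamma_\rho$, and both $\mathcal H$ and $F^{\rm sep}_\rho$ are irreducible of dimension $9-n$. Let $Z\subseteq\overline{\D_\rho/\Gamma_\rho}^{\rm tor}$ be the closure of the image of the composite $\mathcal H\dashrightarrow F^{\rm sep}_\rho\hookrightarrow\overline{\D_\rho/\Gamma_\rho}^{\rm tor}$. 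A generic point of $\mathcal H$ produces a \emph{smooth} K3 surface by \Cref{prop:k3}, so $Z$ meets the interior $\D_\rho/\Gamma_\rho$ and in particular $Z$ is not contained in the boundary. Since a boundary divisor $\mathcal A_J$ over a cusp $J$ has dimension $8-n=(9-n)-1$, it therefore suffices to prove that $Z$ contains some such $\mathcal A_J$: then $Z$ is an irreducible closed subvariety strictly containing the irreducible $(8-n)$–dimensional variety $\overline{\mathcal A_J}$, whence $\dim Z\ge 9-n$, forcing $Z=\overline{\D_\rho/\Gamma_\rho}^{\rm tor}$ and $\Psi$ dominant.

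To exhibit such a boundary divisor, I would degenerate the base $\P^1$. By \Cref{thm:t11-t21cusps} combined with \Cref{prop:lattices}, for each of the four families there is at least one cusp $J$ of $\overline{\D_\rho/\Gamma_\rho}^{\rm BB}$ whose root sublattice of $J^{\perp}_{T_\rho}/J$ is the direct sum $\Lambda_0\oplus\Lambda_1$ of the primitive cohomology lattices of two of the six marked triple covers $\phi_i\colon C_i\to\P^1$ appearing in \Cref{prop:lattices}, with the two marked–point counts summing to $n$ (e.g. for $(0,1)$ two copies of the $(1,3)$ cover, and for $(0,2)$ two copies of the $(0,1)\sqcup(2,2)$ cover; in fact every cusp arises this way, but one instance is all we need). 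Fix such $J$ and such a pair. I would then choose a family $\phi\colon C\to P$ over a disk $\Delta$ with $P_0\cong\P^1\cup\P^1$, with $\phi$ an admissible triple cover \'etale over the node, with sections $p_1,\dots,p_n$ specializing to the marked points of $\phi_0,\phi_1$, and with central fiber $\phi_0\sqcup\phi_1$; the general fiber can be arranged to be a simply branched triple cover of the right genus, hence a point of $\mathcal H$. By \Cref{prop:kulikov} and \Cref{rem:automorphism}, the family version of the triple Tschirnhausen construction then yields a Type II Kulikov degeneration $\mathcal X\to\Delta$ of $\rho$–markable K3 surfaces whose central fiber $X_0\cup X_1$ has components the surfaces built from $\phi_0$ and $\phi_1$, glued along the common double curve $E$ (the $\zeta_3$–elliptic curve of $j$–invariant $0$).

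It remains to see that, as the two halves vary, the period $\psi$ of $X_0\cup X_1$ sweeps out a dense subset of $\mathcal A_J$. Under the identification $\Lambda^{\rm prim}\cong J^{\perp}_{T_\rho}/J$ of \eqref{eqn:JperpJrho}, the finite–index inclusion $\Lambda_0\oplus\Lambda_1\hookrightarrow J^{\perp}_{T_\rho}/J$ induces a surjective isogeny $(\Lambda_0\oplus\Lambda_1)\otimes_{\Z[\zeta_3]}E\to\mathcal A_J$, and by the formula for the period in \Cref{subsec:limiting_periods} the restriction of $\psi$ to each $\Lambda_i$ is (up to sign) the period of the $i$–th component in the sense of \Cref{prop:boundary-period-dominant}, the double curve playing the role of the extra marked fiber there. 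The gluing $E\cong E$ is pinned down up to finitely many choices by the requirement that it match the three $\zeta_3$–fixed points on the two sides, so the two component periods may be prescribed independently; by \Cref{prop:boundary-period-dominant} each of them is dense in $\Hom_{\Z[\zeta_3]}(\Lambda_i,E)$, hence $\psi$ is dense in $\mathcal A_J$. Since by \Cref{subsec:limiting_periods} the extended period map sends such a Kulikov degeneration to the image of $\psi$ in $\mathcal A_J$ while the general fiber of $\mathcal X\to\Delta$ has period in the interior, we conclude $Z\supseteq\overline{\mathcal A_J}$, and the proposition follows from the first paragraph.

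The step I expect to be the real work is the second paragraph: checking, family by family, that each chosen pair of halves from \Cref{prop:lattices} glues to a Kulikov surface whose lattice $\Lambda^{\rm prim}$ is precisely the overlattice $J^{\perp}_{T_\rho}/J$ recorded in \Cref{thm:t11-t21cusps} (and not merely one with the correct root sublattice), and that the admissible–cover degeneration with the stated distribution of marked points genuinely exists and has smooth generic fiber of the required numerical type. Once one such degeneration is in hand in each case, the dimension count and \Cref{prop:boundary-period-dominant} finish the proof with no further input.
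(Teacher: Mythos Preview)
Your argument is correct and is precisely the degeneration proof the paper records: the paper packages the boundary step as \Cref{prop:dominanceonboundary} (whose proof is the isogeny-plus-\Cref{prop:boundary-period-dominant} argument you wrote out) and then observes in \Cref{rem:dominance} that dominance on the boundary divisor forces dominance on the interior by exactly your dimension count. Your worry about $\Lambda^{\rm prim}$ matching $J^\perp_{T_\rho}/J$ on the nose is unfounded---this is automatic from \eqref{eqn:JperpJrho}; only the \emph{identification of the cusp} goes through the root sublattice, and \Cref{thm:t11-t21cusps} says that suffices. The existence of the admissible-cover degeneration with smooth generic fiber is standard: it is a boundary divisor of the admissible-cover compactification $\overline{\mathcal H}$, and the paper simply invokes these divisors $\Delta_i$ without further comment.

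The paper also sketches a \emph{direct} proof you did not find: for very general $(X,\sigma)$, \cite[Lemma~3.3]{tak:11} produces an elliptic fibration $X\to\P^1$ with exactly $n$ reducible fibers of type $\widetilde A_2$; one checks $\sigma$ acts fiberwise, and the quotient $X/\sigma$ together with the divisorial fixed locus recovers the Tschirnhausen-embedded triple cover, the $\widetilde A_2$ fibers giving the pinched points. This avoids any appeal to degenerations or to \Cref{prop:boundary-period-dominant}, at the cost of importing an external structural result about elliptic fibrations on these K3s. Your route is more self-contained within the paper's framework and, as the paper itself notes, is the one that generalizes cleanly.
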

\begin{proof}
  We give two proofs, one direct (sketched here) and one by degeneration (\Cref{rem:dominance}).
  Let \((X,\sigma) \in F^{\rm sep}_{\rho}\) be very general.
  Then \cite[Lemma~3.3]{tak:11} yields an elliptic fibration \(\pi \colon X \to \P^1\) with \(n\) reducible fibers whose dual graph is the affine Dynkin diagram \(\widetilde A_2\).
  One checks that \(\sigma\) acts along the fibers of \(\pi\).
  The quotient by \(\sigma\) together with the fixed locus recovers the triple cover in its Tschirnhausen embedding.
  The marked points correspond to the location of the singular fibers.
\end{proof}

Let \(\overline{\mathcal H}\) be the compactification of \(\mathcal H\) by (marked) admissible covers, following \cite{har.mum:82}.
The boundary points of \(\overline{\mathcal H}\) are marked triple covers \((\phi \colon C \to P, p_1, \dots, p_{n})\) where \((P, \br \phi + p_1 + \dots + p_n)\) is a stable pointed rational curve and \(\phi\) is a triple cover with admissible ramification over the nodes of \(P\).
General points of the boundary divisors of \(\overline{\mathcal H}\) correspond to \(P \cong \P^1 \cup \P^1\).
The map \(\Psi\) yields rational maps from \(\overline {\mathcal H}\) to \(\overline F_{\rho}^{\rm KSBA}\) and \(\overline{\D_{\rho}/\Gamma_{\rho}}^{\rm tor}\).

Let \(\Delta \subset \overline{\mathcal H}\) be a boundary divisor whose generic point \(t = (\phi \colon C \to P, p_1, \dots, p_n)\) satisfies
\begin{itemize}
\item \(\phi\) is \'etale over the node of \(P = \P^{1} \cup \P^1\),
\item \(\br \phi + 2 \sum p_i\) has degree 6 on each component of \(P\).
\end{itemize}
Let \(X\) be the type II Kulikov surface associated to \(t\) by the triple Tschirnhausen construction (see \Cref{prop:kulikov}).
Let \(J \subset T_{\rho}\) be the corresponding cusp.
The properties of the extended period map \eqref{eqn:extendedperiod} allows us to explicitly describe
\begin{equation}\label{eqn:admtorboundary}
  \Psi \colon \Delta \dashrightarrow \overline {\D_{\rho}/\Gamma_{\rho}}^{\rm tor}.
\end{equation}
Let \(E\) be the elliptic curve with an order 3 automorphism, \(\Lambda\) the reduced lattice of numerically Cartier divisors on \(X\) (isometric to \(J^{\perp}_{T_{\rho}}/J\) by \eqref{eqn:JperpJrho}), and \(\psi \in \Lambda^{\rm prim} \otimes_{\Z[\zeta_3]} E \simeq \mathcal{A}_J\) the period of \(X\).
Let \(\delta\) be the boundary component of \(\overline{\D_\rho/\Gamma_\rho}^{\rm tor}\) corresponding to the cusp \(J\); recall from \Cref{subsec:semitoroidal} that \(\delta\) is \(\mathcal{A}_J/\widehat{\Gamma}_J\) for some finite group \(\widehat{\Gamma}_J\).
Then the map \(\Psi\) sends \(t\) to the \(\widehat{\Gamma}_J\)-orbit of \(\psi\).
\begin{proposition}\label{prop:dominanceonboundary}
  In the setup above, \(\Psi\) maps \(\Delta\) dominantly onto \(\delta \subset \overline{\D_\rho/\Gamma_\rho}^{\rm tor}\).
\end{proposition}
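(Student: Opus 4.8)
The plan is to reduce the statement to \Cref{prop:boundary-period-dominant}, applied separately to the two components of the degenerate triple cover. By hypothesis the generic point $t=(\phi\colon C\to P,p_1,\dots,p_n)$ of $\Delta$ has $P=P_0\cup P_1$, a nodal union of two copies of $\P^1$, with $\phi$ \'etale over the node. Write $\phi_i=\phi|_{P_i}\colon C_i\to P_i$ and let $\vec p_i$ be the marked points lying on $P_i$, so $m_0+m_1=n$. The hypothesis that $\br\phi+2\sum p_i$ has degree $6$ on each component gives $\deg\br\phi_i+2m_i=6$, so the numerical type $(m_i,\Gamma_i)$ of $(\phi_i,\vec p_i)$ (where $\Gamma_i$ records the genera and $P_i$-degrees of the components of $C_i$) is one of the six rows of \Cref{prop:lattices}. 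Taking the node of $P$ to play the role of the auxiliary marked point ``$q$'', the triple $(\phi_i\colon C_i\to P_i,\vec p_i,\mathrm{node})$ is a point $t_i$ of the Hurwitz space $\mathcal H^{(i)}$ of \S\ref{sec:dominance}; conversely $t$ is recovered from $(t_0,t_1)$ by gluing $P_0$ and $P_1$ at their $q$-points and choosing an identification of the (reduced, since \'etale) fibres $\phi_0^{-1}(q_0)$ and $\phi_1^{-1}(q_1)$. As there are only finitely many such identifications, the forgetful map $\Delta\to\mathcal H^{(0)}\times\mathcal H^{(1)}$ is dominant with finite general fibres; note also that $\dim\Delta=8-n=(4-m_0)+(4-m_1)$.

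Next I would identify the period. Let $X=X_0\cup_D X_1$ be the Type II Kulikov surface produced by \Cref{prop:kulikov}, where $X_i$ is the triple Tschirnhausen surface of $(\phi_i,\vec p_i)$ and the double curve $D=X_0\cap X_1$ is the fibre over the node; it carries a $\Z/3$-action, hence $D\cong E$. By \Cref{prop:root-sublattice-splits} the natural map $\Lambda_0\oplus\Lambda_1\to\Lambda^{\rm prim}$, with $\Lambda_i=H^2(X_i,\Z)^{\rm prim}$ as computed in \Cref{prop:lattices}, is a finite-index inclusion, so via \eqref{eqn:JperpJrho} it induces an isogeny $(\Lambda_0\oplus\Lambda_1)\otimes_{\Z[\zeta_3]}E\to\Lambda^{\rm prim}\otimes_{\Z[\zeta_3]}E\cong\mathcal A_J$. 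Using the description of the period of a two-component Kulikov surface from \S\ref{subsec:limiting_periods}, namely $\psi(\alpha)=\alpha_0|_D-\alpha_1|_D$, its restriction along $\Lambda_0\oplus\Lambda_1$ satisfies $\psi|_{\Lambda_i}=\pm(\delta\mapsto\delta|_D)$, which is exactly the anti-canonical pair period of $(X_i,D_i)$ studied in \S\ref{sec:dominance} (the signs and the identification $D\cong E$ depend only on the choice of gluing, a finite ambiguity absorbed in the passage to $\mathcal A_J/\widehat\Gamma_J$). Therefore, up to this isogeny and finite quotients, the map $\Psi\colon\Delta\dashrightarrow\delta$ factors as
\[
  \Delta \dashrightarrow \mathcal H^{(0)}\times\mathcal H^{(1)}
  \xrightarrow{\;\Psi^{(0)}\times\Psi^{(1)}\;}
  \bigl(\Hom_{\Z[\zeta_3]}(\Lambda_0,E)\times\Hom_{\Z[\zeta_3]}(\Lambda_1,E)\bigr)\big/\text{finite}
  \;\sim\; \delta ,
\]
where $\Psi^{(i)}$ is the map of \Cref{prop:boundary-period-dominant} for the $i$-th component.

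By \Cref{prop:boundary-period-dominant} each $\Psi^{(i)}$ is dominant, hence so is $\Psi^{(0)}\times\Psi^{(1)}$; composing with the dominant map $\Delta\dashrightarrow\mathcal H^{(0)}\times\mathcal H^{(1)}$ and the isogeny and finite quotient identifying the target with $\delta$, we conclude that $\Psi\colon\Delta\dashrightarrow\delta$ is dominant. The one genuinely delicate point is the bookkeeping in the factorization: matching the Kulikov period $\psi$, restricted to $\Lambda_0\oplus\Lambda_1$, with the pair of anti-canonical pair periods supplied by \Cref{prop:boundary-period-dominant}, and verifying that all the intervening discrepancies --- the choice of gluing of the covers over the node, the choice of $\Z/3$-equivariant identification $D_0\cong D_1$, and the passage between $\Lambda^{\rm prim}$, its dual, and the finite-index sublattice $\Lambda_0\oplus\Lambda_1$ --- are finite and therefore irrelevant to dominance. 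Once this is in place, the conclusion is immediate.
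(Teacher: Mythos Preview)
Your proof is correct and follows essentially the same approach as the paper's own argument: decompose the degenerate cover into its two components $(\phi_i,\vec p_i,q)$ landing in the Hurwitz spaces of \S\ref{sec:dominance}, use \Cref{prop:root-sublattice-splits} to relate the Kulikov period to the pair of anti-canonical periods via an isogeny, and then invoke \Cref{prop:boundary-period-dominant} on each factor. Your exposition is in fact slightly more detailed than the paper's, spelling out the dimension count $\dim\Delta=(4-m_0)+(4-m_1)$ and the reason the map $\Delta\dashrightarrow\mathcal H^{(0)}\times\mathcal H^{(1)}$ has finite fibres (the finitely many gluings of the \'etale fibres over the node).
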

\begin{proof}
  Let \(P_0,P_1\) be irreducible componets of \(P\), and let \(q\) be the node of \(P\).
  Call \(X_i\) the result of triple Tschirnhausen construction on \(t_i \colonequals (\phi_i \from \phi^{-1}(P_i) \to P_i, \{p_1,\dotsc,p_n\} \cap P_i,q)\).
  Then \(X\) has two irreducible components \(X_0\) and \(X_1\).
  Let \(\mathcal{H}_i\) be the coarse moduli space from \Cref{sec:dominance} that contains \(t_i\).
  We have a generically finite rational map \(\Delta \dashrightarrow \mathcal{H}_0 \times \mathcal{H}_1\) that sends \(t\) to \((t_0,t_1)\).
  
  By \Cref{prop:root-sublattice-splits}, we have the finite index sublattice \[H^2(X_0, \Z)^{\rm prim} \oplus H^2(X_1, \Z)^{\rm prim} \subset \Lambda^{\rm prim},\]
  which induces the following isogeny
  \[ \mathcal{A}_J \simeq \Hom_{\Z[\zeta_3]}((\Lambda^{\rm prim})^*,E) \twoheadrightarrow \Hom_{\Z[\zeta_3]}(H^2(X_0, \Z)^{\rm prim} \oplus H^2(X_1, \Z)^{\rm prim},E) \]
  sending the period of \(X\) to the pair of periods of anticanonical pairs \((X_0,D_{01})\) and \((X_1,D_{10})\).
  So it suffices (by the isogeny above) to prove the dominance of
  \[\mathcal{H}_0 \times \mathcal{H}_1 \to \prod_{i=0}^1 \Hom_{\Z[\zeta_3]}(H^2(X_i, \Z)^{\rm prim}, E) / \Aut(H^2(X_i, \Z)^{\rm prim}) \times \Aut(E,0),\]
  which follows from \Cref{prop:boundary-period-dominant}.
\end{proof}
\begin{remark}\label{rem:dominance}
  The dominance of \(\Psi\) on the boundary (\Cref{prop:dominanceonboundary}) implies dominance on the interior (\Cref{prop:triple-tschirnhausen-dominant}).
\end{remark}

\section{Moduli space 1: \texorpdfstring{\(g=5\)}{g=5} and \texorpdfstring{\( (n,k) = (0,2) \)}{(n,k) = (0,2)}} \label{sec:n0k2}

In this section, we identify the KSBA compactification of the moduli space of K3 surfaces with a non-symplectic automorphism of order 3 with \(0\) isolated fixed points and \(2\) fixed curve (so \(n = 0\) and \(k = 2\)).

Let \((X, \sigma)\) be a generic K3 surface with an automorphism of order \(3\) with \(n = 0\) and \(k = 2\).
From \Cref{prop:triple-tschirnhausen-dominant}, we know that \((X,\sigma)\) arises from the triple Tschirnhausen construction applied to \((\phi \colon C \to \P^1)\), where \(C = \P^1 \sqcup Q\) with \(Q\) a smooth curve of genus \(5\) with \(\deg \phi|_Q = 2\).
Observe that, for a generic \(\phi\), the Tschirnhausen embedding identifies the \(\P^1\) component of \(C\) as the \((-6)\)-section \(\alpha\) of \(\F_6\) and \(Q\) as a divisor on \(\F_6\) of class \(2\alpha + 6f\) disjoint from \(\alpha\), where \(f\) is the class of a fiber of \(\F_6\).  The K3 surface \(X\) is simply the cyclic triple cover of \(\F_6\) branched over curves of classes \(\alpha\) and \(2\alpha + 6f\).
Artebani--Sarti give the same description using explicit equations \cite[Proposition~4.7]{art.sar:08}.


Fix an isometry of \(H^2(X, \Z)\) with the K3 lattice \(L\), and let \(\rho\) be the automorphism of \(L\) induced by \(\sigma\).
We recall the Hodge type of \(\rho\) from \Cref{tab:ASlattices}.
Recall that \(S_{\rho} \subset L\) is the sub-lattice of vectors fixed by \(\rho\) and \(T_{\rho} = S_{\rho}^{\perp}\).
Choose a basis \(\alpha+3f,f\) of \(H^2(\F_6, \Z) \simeq U\).
Denoting \(\widetilde \alpha\) to be the reduced preimage in \(X\) of \(\alpha \in H^2(\F_6, \Z)\), notice that \(3 \widetilde \alpha = \alpha\).
As \(\rho\) is identified with \(\sigma^*\), the pull-back of the cyclic triple cover \(X \to \F_6\) gives an identification
\[ S_{\rho} = H^2(\F_6, \Z) (3)^{\rm sat} = U \]
via the generators \(\widetilde \alpha, f \in H^2(X,\Z)\).
The lattice \(T_{\rho}\) turns out to be 
\[ T_{\rho} \simeq U \oplus U \oplus E_8 \oplus E_8.\]
The automorphism \(\rho\) acts on the summands \(U \oplus U\) and \(E_8\) and \(E_8\).
For \(U \oplus U\), the action is described in \Cref{subsec:eis_cusp} and for \(E_8\) it is described in \cite[Ch~II~\S~2.6]{con.slo:99}.

\subsection{Baily--Borel cusps} \label{subsec:BB_02}
Let \(F_\rho^{\rm sep} = (\D_{\rho} \setminus \Delta_\rho)/\Gamma_{\rho}\) be the period domain for \(\rho\)-markable K3 surfaces as described in \Cref{subsec:period_map}.
Recall from \Cref{thm:t11-t21cusps} that there is exactly one cusp of \(\overline{\D_\rho/\Gamma_\rho}^{\rm BB}\); choose a representative \(J \subset T_\rho\), which is a \(\rho\)-invariant isotropic plane. 
As a result, the boundaries of \(\overline{\D_\rho/\Gamma_\rho}^{\rm tor}\) and \(\overline{\D_\rho/\Gamma_\rho}^{\mathfrak F} = \overline{F}_\rho^{\rm KSBA}\) are irreducible.
Note also that \(J^\perp_{T_\rho}/J = E_8^2\) by \Cref{thm:t11-t21cusps}, and the \(\rho\)-action on \(J^\perp_{T_\rho}/J\) respects the direct sum decomposition.
On each summand, up to the action of the Weyl group, it is the unique automoprhism of order \(3\) that has no non-zero fixed vectors.

\subsection{Kulikov models} \label{subsec:Kulikov_02}
We use the notation in \Cref{subsec:ell_fib_trigonal}.
In particular, we let \(\mathcal H\) be the moduli space of triple covers \(\phi \colon Q \sqcup \P^1 \to \P^1\) and \(\overline {\mathcal H}\) its compactification by admissible covers.
We consider the boundary divisor \(\Delta \subset \overline {\mathcal H}\) whose generic point parametrises a degenerate triple cover with the dual graph shown in \Cref{fig:dualgraph02}.
In this graph, the vertices at the top represent irreducible components of \(C\), labelled by their genus, which we omit if it is zero.
The vertices at the bottom represent irreducible components of \(P = \P^1 \cup \P^1\).
Edges represent nodes, and the map \(C \to P\) corresponds to vertical downward projection.
Recall that \(C \to P\) is unramified over the node, so there are exactly 3 nodes of \(C\) and they map to the unique node of \(P\).
The degree of each component of \(C\) over the corresponding component of \(P\) is simply the number of edges incident to the corresponding vertex.

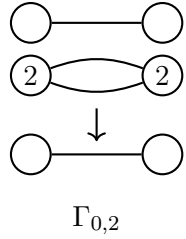
\begin{figure}
  \centering
  \begin{tikzpicture}[thick, scale=1.75, every node/.style={inner sep=0.2em}]
      \draw (0,0) node[circle, draw] (P1) {\phantom{0}} (1,0) node[circle, draw](P2) {\phantom{0}} (P1) edge (P2);
      \draw
      (0,0.6) node[circle, draw] (C1) {2}
      (0,1.0) node[circle, draw] (C2) {\phantom{0}}
      (1,0.6) node[circle, draw](E1) {2}
      (1,1.0) node[circle, draw](E2) {\phantom{0}}
      (C2) edge (E2) (C1) edge[bend left=20] (E1) (C1) edge[bend right=20] (E1);
      \draw (.5,-.5) node {\(\Gamma_{0,2}\)};
      \draw[->]        (0.5,0.35)   -- (0.5,0.1);
  \end{tikzpicture}
  \caption{For the case \(n = 0\) and \(k = 2\), we consider degenerations of triple covers whose central fibers have the dual graph shown above.}
  \label{fig:dualgraph02}
\end{figure}
The extended period map from \(\overline{\mathcal H}\) to the Baily--Borel compactification maps \(\Delta\) to the unique cusp.
The map to the toroidal compactification maps \(\Delta\) dominantly onto the boundary divisor over the cusp (\Cref{prop:dominanceonboundary}).
\subsection{Stable models and the KSBA semifan} \label{subsec:stable_model_02}
We now identify the KSBA semifan \(\mathfrak F\) that gives the KSBA compactification \(\overline F_{\rho}^{\rm KSBA}\).
For the cusp of \(\overline{\D_{\rho}/\Gamma_\rho}^{\rm BB}\) corresponding to an isotropic rank 2 sublattice \(J \subset T_{\rho}\), we must specify a sublattice \(\mathfrak F_J \subset J^{\perp}_{T_{\rho}}/J\).
\begin{theorem}\label{thm:ksba02}
  The space \(\overline{F}_{\rho}^{\rm KSBA}\) is isomorphic to the semi-toroidal compactification for the following semifan \(\mathfrak F = \{0\}\).
  In other words, \(\overline{F}_{\rho}^{\rm KSBA} \simeq \overline{\D_{\rho}/\Gamma_\rho}^{\rm tor}\).
\end{theorem}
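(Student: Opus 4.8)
The plan is to show that the KSBA semifan at the unique cusp $J$ is the trivial one, $\mathfrak F_J = 0 \subset J^\perp_{T_\rho}/J = E_8^{\oplus 2}$, which by the general framework of \Cref{subsec:semitoroidal} immediately gives $\overline F_\rho^{\rm KSBA} \simeq \overline{\D_\rho/\Gamma_\rho}^{\rm tor}$. Recall from \eqref{eq:mor_tor->KSBA} and \Cref{subsec:KSBA_compactification} that there is a birational morphism $\overline{\D_\rho/\Gamma_\rho}^{\rm tor} \to \overline F_\rho^{\rm KSBA}$ which, over the cusp $J$, is (up to finite quotients) the isogeny-quotient $\mathcal A_J \to \mathcal A_J^{\mathfrak F_J}$ contracting the translates of $\mathfrak F_J \otimes_{\Z[\zeta_3]} E$. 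So it suffices to prove that this morphism is finite over the boundary, equivalently that no positive-dimensional abelian subvariety of $\mathcal A_J$ is contracted; this is exactly the statement $\mathfrak F_J = 0$.

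The key step is to use \Cref{prop:dominanceonboundary}: the map $\Psi \colon \Delta \dashrightarrow \delta \subset \overline{\D_\rho/\Gamma_\rho}^{\rm tor}$ is dominant, where $\Delta \subset \overline{\mathcal H}$ is the boundary divisor of admissible covers with the dual graph of \Cref{fig:dualgraph02}. By the construction of \Cref{prop:kulikov}, a generic point of $\Delta$ produces a Type II Kulikov surface $X = X_0 \cup X_1$, and \Cref{prop:lattices} tells us that for this $(n,k)=(0,2)$ case, the two components arise from the triple covers with invariants $\Gamma = \{(0,1),(2,2)\}$ on each side, so $H^2(X_i,\Z)^{\rm prim} = E_8$ for $i = 0,1$, consistent with $\Lambda^{\rm prim} = E_8^{\oplus 2}$ and with \Cref{thm:t11-t21cusps}. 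The crucial geometric input is that in this case the whole ramification curve is \emph{already nef} up to the contraction $X_i \to \overline X_i$ to a del Pezzo: from \S\ref{sec:k0-01-22}, the only non-nef curve is $\widetilde\sigma$, but $\widetilde Q$ (the genus-$g$ piece, which is the relevant $\overline C_g$) is nef and vanishes only on $\widetilde\sigma$. Hence the divisor model requires no further $\mathrm M0$/$\mathrm M1$ modifications on $X$ beyond the intrinsic contraction, and two distinct generic Kulikov models over $\Delta$ with distinct periods $\psi \ne \psi' \in \mathcal A_J$ give non-isomorphic stable pairs.

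Concretely, I would argue: since $\Psi$ is dominant onto $\delta$ and the composite $\Delta \dashrightarrow \delta \to (\text{boundary of } \overline F_\rho^{\rm KSBA})$ factors the period map, it is enough to exhibit that the stable limit $(\overline X_0, \epsilon \overline R_0)$ determines the period $\psi \in \mathcal A_J$ (generically) — i.e. the map from periods to stable pairs is generically injective on $\delta$. But the stable limit is obtained from the Kulikov surface by running the MMP, and because $\widetilde Q$ is nef (needing no M-modification), the stable model retains all the data of the anticanonical pairs $(X_i, D_{i,1-i})$; by the Torelli-type statement \Cref{prop:dptor} (del Pezzo pairs of degree $1$, applied to each $\overline X_i$), the pair $(X_i,D_i)$ — and hence its period — is recovered from the stable pair. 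Combining both components via the isogeny $\mathcal A_J \twoheadrightarrow \prod_i \operatorname{Hom}_{\Z[\zeta_3]}(H^2(X_i,\Z)^{\rm prim},E)$ of \Cref{prop:dominanceonboundary}, distinct generic periods give distinct stable pairs, so $\mathfrak F_J \otimes_{\Z[\zeta_3]} E$ contains no positive-dimensional abelian subvariety, forcing $\mathfrak F_J = 0$. The main obstacle I anticipate is the careful verification that the stable model genuinely performs \emph{no} contraction of any translate of an $E_8$-summand — i.e. that the only curves contracted in passing from the Kulikov model to the stable pair are the $\rho$-orbits of $(-1)$-curves already accounted for in the $\overline X_i$ description (plus possibly the non-nef $\widetilde\sigma$, which contributes nothing to $\Lambda^{\rm prim}$), and not some subtler configuration whose classes would span an $E_8$; this requires tracking the log MMP on the total space $\mathcal X$ carefully and checking that the resulting $\overline{\mathcal X}$ has irreducible general fiber so that the Type II locus maps finitely to its image.
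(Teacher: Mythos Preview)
Your proposal is correct and follows essentially the same approach as the paper's proof: identify the Kulikov components $X_i$ as blow-ups of $\delpezzo_1$'s via \S\ref{sec:k0-01-22}, observe that the genus-$2$ piece $R_i = \widetilde Q_i$ is already big and nef (so no M-modifications are needed), pass to the stable model $\overline X_0 \cup \overline X_1$ by contracting only the $\rho$-fixed $(-1)$-curves $\widetilde\sigma_i$, and conclude via the del Pezzo Torelli theorem (\Cref{prop:dptor}) that the stable pair determines the period up to a finite ambiguity, whence $\mathfrak F_J = 0$. The concern you flag in your final paragraph is exactly what the paper handles by this explicit contraction: the only curves collapsed are the $\widetilde\sigma_i$, which lie in $H^2(X_i,\Z)^\rho$ and contribute nothing to $\Lambda^{\rm prim}$, so no $E_8$-direction is lost.
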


We first explain the strategy to prove \Cref{thm:ksba02} and analogous theorems that follow.
Let \(\delta \subset \overline{\D_{\rho}/\Gamma_\rho}^{\rm tor}\) be the divisor lying over the unique cusp corresponding to \(J \subset T_\rho\).
Denoting by \(E\) the elliptic curve with an order 3 automorphism, we know that \(\delta\) is a quotient of \(\mathcal{A}_J = J^{\perp}_{T_{\rho}}/J \otimes_{\Z[\zeta_3]}E\) (see \Cref{subsec:semitoroidal} and \Cref{subsec:limiting_periods}).
The sublattice \(\mathfrak F_J\) is characterised by the property that the translates of \(\mathfrak F_J \otimes_{\Z[\zeta_3]} E \subset \mathcal{A}_J\) are contracted by \eqref{eq:torksba}.
The divisor \(\Delta \subset \overline {\mathcal H}\) maps dominantly onto \(\delta\).

\medskip

\paragraph{Strategy for finding the KSBA semifan}
\begin{enumerate}[start=1, label={(\text{S}\arabic*)}]
\item Take a generic point of \(\Delta\).
  Let \(\psi \in \mathcal{A}_J\) be the period point of \(X\).
\item\label{step:stabilisation}
    Find the KSBA stable limit of a one-parameter family of pairs degenerating to \((X, R)\). The stable limit gives the image of \(\psi\) in \(\overline F_{\rho}^{\rm KSBA}\).
\item Identify when two period points \(\psi\) lead to the same stable limit, and thus identify the sub-lattice \(\mathfrak F_J \subset J^\perp_{T_{\rho}}/J\). 
\end{enumerate}

The key step in the strategy is \ref{step:stabilisation}.
We observe that \((X, \epsilon R)\) is not KSBA stable when \(R\) is not ample on \(X\).
In this case, we modify \((X,R)\) by a sequence of
\begin{enumerate}
\item M1 modifications so that \(R\) becomes nef, and then
\item curve contractions so that \(R\) becomes ample.
\end{enumerate}

\begin{proof}[Proof of \Cref{thm:ksba02}]
  We execute the strategy outlined above.
Let \(\phi \colon C = P \sqcup Q \to P\) be a generic point of \(\Delta\).
Write \(P = P_0 \cup P_1\) and \(Q = Q_0 \cup Q_1\) such that \(P_j \sqcup Q_j\) maps to \(P_j\).
Let \(X\) be the associated type II surface and \(R \subset X\) the genus 5 component of the fixed divisor of \(\sigma\).
Note that \(R\) is a copy of \(Q\).
Write \(X = X_0 \cup X_1\) and \(R = R_0 \cup R_1\) with \(R_j \subset X_j\).

Observe that for every \(i = 0,1\), the curve \(R_i = Q_i\) is smooth of genus \(2\) and \(P_i \simeq \P^1\).
From \Cref{sec:k0-01-22}, we know that \(X_i\) is the blow up of \(\overline X_i\), which is a \(\delpezzo_1\), in a \((\Z/3\Z)\)-fixed point; the exceptional divisor of this blowup is \(P_i \subset X_i\). 
The curve \(R_i = Q_i\) is big and nef on \(X_i\), and is disjoint from the exceptional divisor \(P_i\) of the blowup.
On \(\overline X_i\), the image of \(R_i\) is anti-canonical, and hence ample.
Therefore, the KSBA stable surface associated to \(X\) is \(\overline X = \overline X_0 \cup \overline X_1\) (see \Cref{fig:kulstab02}).
Observe that (up to a finite choice), the \((\Z/3\Z)\)-isomorphism class of \(X\) and \(\overline X\) is determined by the \((\Z/3\Z)\)-isomorphism classes of the anti-canonical pairs \((\overline X_0,D)\) and \((\overline X_1,D)\).
It follows that stabilization map has finite fibers, so \(\mathfrak F_J = 0\).
\begin{figure}[ht]
  \centering
  \begin{tikzpicture}
    \draw[thick]  (0,0) -- (0,2) -- (1.5,1.5) -- (1.5,-0.5) -- (0,0) -- (0,2) -- (-1.5,1.5) -- (-1.5,-0.5) -- (0,0);
    \draw[color=blue, thick, dashed] (0,1.5) -- (0.5,1.34);
    \draw[color=blue, thick, dashed] (0,1.5) -- (-0.5,1.34);
    \draw[color=red, thick] (0,0.8) -- (0.5,0.64) (0,0.9) -- (0.5,0.74);
    \draw[color=red, thick] (0,0.8) -- (-0.5,0.64) (0,0.9) -- (-0.5,0.74);
    \draw (1.7,0.75) node (K) {};
    \begin{scope}[xshift=5cm, yshift=-0.25cm]
      \draw[thick]  (0,0) -- (1.5,1) -- (0,2) -- (0,0) -- (-1.5,1) -- (0,2) -- (0,0);
      \draw[color=red, thick] (0,1.1) -- (0.5,0.94) (0,1.0) -- (0.5,0.84);
      \draw[color=red, thick] (0,1.1) -- (-0.5,0.94) (0,1.0) -- (-0.5,0.84);
      \draw (-1.7, 1) node (S) {};      
      \filldraw [blue] (0,1.5) circle (1.5pt);
    \end{scope}
    \draw[thick, ->] (K) edge (S);
  \end{tikzpicture}
  \caption{The stable model is obtained by contracting \((\Z/3\Z)\)-fixed \((-1)\)-curves on each component (dashed blue) disjoint from the divisor \(R\) (red). The image of contracted curves is a single \(\Z/3\Z\)-fixed point (blue dot) of the double curve of \(\overline X\), which is the unique isolated \(\Z/3\Z\)-fixed point of \(\overline X\).}\label{fig:kulstab02}
\end{figure}
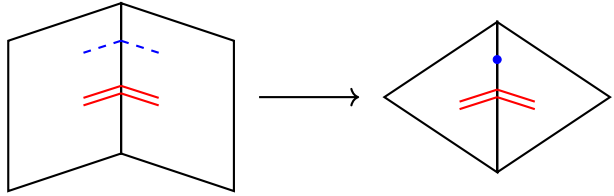
\end{proof}

\section{Moduli space 2: \texorpdfstring{\(g=4 \)}{g=4} and \texorpdfstring{\((n,k) = (0,1)\)}{(n,k) = (0,1)}}\label{sec:n0k1}
In this section, we identify the KSBA compactification of the moduli space of K3 surfaces with a non-symplectic automorphism of order 3 with \(0\) isolated fixed points and \(1\) fixed curve (so \(n = 0\) and \(k = 1\)).

Let \((X, \sigma)\) be a generic K3 surface with an automorphism of order \(3\) with \(n = 0\) and \(k = 1\).
From \Cref{prop:triple-tschirnhausen-dominant}, we know that \((X,\sigma)\) arises from the triple Tschirnhausen construction applied to \((\phi \colon C \to \P^1)\), where \(C\) is a smooth curve of genus \(4\).
Observe that, for a generic \(\phi\), the Tschirnhausen embedding identifies \(C\) as a divisor on \(\P^1 \times \P^1\) of class \((3,3)\).
Therefore, \(X\) is simply the triple cover of \(\P^1 \times \P^1\) branched over a curve of class \((3,3)\).
Artebani--Sarti give the same description using explicit equations \cite[Proposition~4.7]{art.sar:08}.

Fix an isometry of \(H^2(X, \Z)\) with the K3 lattice \(L\), and let \(\rho\) be the automorphism of \(L\) induced by \(\sigma\).
We recall the Hodge type of \(\rho\) from \Cref{tab:ASlattices} (also see \cite[\S~2]{kon:02}).
Recall that \(S_{\rho} \subset L\) is the sub-lattice of vectors fixed by \(\rho\) and \(T_{\rho} = S_{\rho}^{\perp}\).
As \(\rho\) is identified with \(\sigma^*\), the pull-back of the cyclic triple cover \(X \to \P^1 \times \P^1\) gives an identification
\[ S_{\rho} = H^2(\P^1 \times \P^1, \Z) (3) = U(3).\]
The lattice \(T_{\rho}\) turns out to be 
\[ T_{\rho} \simeq U \oplus U(3) \oplus E_8 \oplus E_8.\]
The automorphism \(\rho\) acts on the summands \(U \oplus U(3)\) and \(E_8\) and \(E_8\).
For \(U \oplus U(3)\), the action is described in \Cref{subsec:eis_cusp} and for \(E_8\) it is described in \cite[Ch~II~\S~2.6]{con.slo:99}.

\subsection{Baily--Borel cusps} \label{subsec:BB_01}
Let \(F_\rho^{\rm sep} = (\D_{\rho} \setminus \Delta_\rho)/\Gamma_{\rho}\) be the period domain for \(\rho\)-markable K3 surfaces as described in \Cref{subsec:period_map}.
From \cite[Theorem~5.9]{cas.jen.laz:12}, 
see also \Cref{thm:t11-t21cusps},
the cusps of \(\overline{\D_{\rho}/\Gamma_{\rho}}^{\rm BB}\) are classified by the root sublattices of \(J^\perp_{T_\rho}/J\), which is one of
\begin{enumerate}
  \item\label{cusp:e6e6a2a2}   \(E_6^{\oplus 2} \oplus A_2^{\oplus 2}\),
  \item\label{cusp:e8e6a2} \(E_8 \oplus E_6 \oplus A_2\), or
  \item\label{cusp:e8e8} \(E_8^{\oplus 2}\).
\end{enumerate}

The \(\rho\)-action on \(J^{\perp}_{T_\rho}/J\)  respects the direct sum decomposition  of the root sublattice \cite[Lemma~5.5]{cas.jen.laz:12}.
On each summand, up to the action of the Weyl group, it is the unique automoprhism of order 3 that has no non-zero fixed vectors.

\subsection{Kulikov models} \label{subsec:Kulikov_01}
We use the notation in \Cref{subsec:ell_fib_trigonal}.
In particular, we let \(\mathcal H\) be the moduli space of triple covers \(\phi \colon C \to \P^1\) where \(C\) is a smooth genus 4 curve and \(\overline {\mathcal H}\) its compactification by admissible covers.
For \(i = 1,2,3\), we consider the boundary divisors \(\Delta_i \subset \overline {\mathcal H}\) whose generic point parametrise a degenerate triple cover with the dual graphs \(\Gamma_i\) shown in \Cref{fig:divisors}.

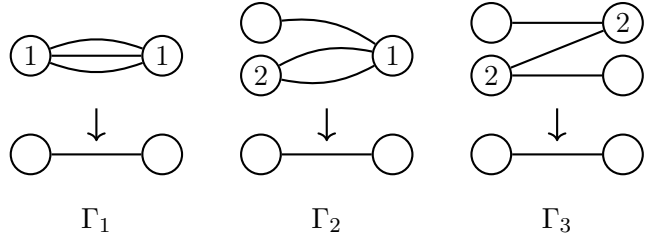
\begin{figure}
  \centering
  \begin{tikzpicture}[thick, scale=1.75, every node/.style={inner sep=0.2em}]
    \draw (0,0) node[circle, draw] (P1) {\phantom{0}} (1,0) node[circle, draw](P2) {\phantom{0}} (P1) edge (P2);
    \draw (0,0.75) node[circle, draw] (E1) {1} (1,0.75) node[circle, draw](E2) {1}
    (E1) edge[bend left=20] (E2) (E1) edge (E2) (E1) edge[bend right=20] (E2);
    \draw[->]        (0.5,0.35)   -- (0.5,0.1);
    \draw (.5,-.5) node {\(\Gamma_1\)};

    \begin{scope}[xshift=1.75cm]
      \draw (0,0) node[circle, draw] (P1) {\phantom{0}} (1,0) node[circle, draw](P2) {\phantom{0}} (P1) edge (P2);
      \draw
      (0,0.6) node[circle, draw] (C1) {2}
      (0,1.0) node[circle, draw] (C2) {\phantom{0}}
      (1,0.75) node[circle, draw](E2) {1}
      (C2) edge[bend left=20] (E2) (C1) edge[bend left=20] (E2) (C1) edge[bend right=20] (E2);
      \draw[->]        (0.5,0.35)   -- (0.5,0.1);
      \draw (.5,-.5) node {\(\Gamma_2\)};
    \end{scope}
    \begin{scope}[xshift=3.5cm]
      \draw (0,0) node[circle, draw] (P1) {\phantom{0}} (1,0) node[circle, draw](P2) {\phantom{0}} (P1) edge (P2);
      \draw
      (0,0.6) node[circle, draw] (C1) {2}
      (0,1.0) node[circle, draw] (C2) {\phantom{0}}
      (1,0.6) node[circle, draw](E1) {\phantom{0}}
      (1,1.0) node[circle, draw](E2) {2}
      (C2) edge (E2) (C1) edge (E2) (C1) edge (E1);
      \draw[->]        (0.5,0.35)   -- (0.5,0.1);
      \draw (.5,-.5) node {\(\Gamma_3\)};
    \end{scope}
  \end{tikzpicture}
  \caption{The dual graphs of the admissible covers that give Kulikov degenerations in the case \(n = 0\) and \(k = 1\).}
  \label{fig:divisors}
\end{figure}

\begin{proposition} \label{prop:period_01}
  The extended period map \(\overline {\mathcal H} \dashrightarrow \overline{\D_{\rho}/\Gamma_{\rho}}^{\rm BB}\) maps \(\Delta_i\) to the \(i\)-th cusp in \Cref{subsec:BB_01}.
\end{proposition}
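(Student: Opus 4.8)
The plan is to compute the monodromy invariant $\lambda \in \delta^\perp/\delta$ (equivalently, the isotropic plane $J = (\Z\delta\oplus\Z\lambda)^{\rm sat} \subset T_\rho$) attached to each boundary divisor $\Delta_i$ directly from the geometry of the Kulikov surface $X$ produced by the triple Tschirnhausen construction, and to read off its root lattice $J^\perp_{T_\rho}/J$ via the identification $\Lambda^{\rm prim} \simeq J^\perp_{T_\rho}/J$ from \eqref{eqn:JperpJrho}. Since \Cref{thm:t11-t21cusps} tells us that the three cusps are distinguished by the root sublattice of $J^\perp/J$, it suffices to show that the central fiber $X = X_0 \cup X_1$ associated to a generic point of $\Delta_i$ has $\Lambda^{\rm prim}$ with the claimed root sublattice. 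By \Cref{prop:root-sublattice-splits}, the root sublattice of $\Lambda^{\rm prim}$ is the direct sum of the root sublattices of $\Lambda_0 = H^2(X_0,\Z)^{\rm prim}$ and $\Lambda_1 = H^2(X_1,\Z)^{\rm prim}$, and each $\Lambda_j$ is computed in \Cref{prop:lattices} from the numerical data of the triple cover $\phi_j \colon \phi^{-1}(P_j)\to P_j$.

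So the first step is, for each $\Gamma_i$, to split off the two components: look at \Cref{fig:divisors} and determine, for each side of the node, the genera and degrees over $P_j$ of the connected components of $C_j = \phi^{-1}(P_j)$, together with the number $m_j$ of pinch points $p_\ell$ on $P_j$. Since in the case $n=0$ there are no marked points $p_\ell$, we have $m_0 = m_1 = 0$, and condition \eqref{balanced} of \Cref{sec:kulikov} forces $\deg\br\phi_j = 6$ on each side. For $\Gamma_1$ the cover is $(1,3)$ on each side; by \Cref{prop:lattices} each component contributes $E_6\oplus A_2$, giving root sublattice $E_6^{\oplus2}\oplus A_2^{\oplus2}$, which is cusp \ref{cusp:e6e6a2a2}. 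For $\Gamma_2$ one side is $(1,3)$ and the other is $\{(0,1),(2,2)\}$, contributing $E_6\oplus A_2$ and $E_8$, i.e. $E_8\oplus E_6\oplus A_2$, which is cusp \ref{cusp:e8e6a2}. For $\Gamma_3$ both sides are $\{(0,1),(2,2)\}$, each contributing $E_8$, giving $E_8^{\oplus2}$, cusp \ref{cusp:e8e8}. One must double-check that the genus-$4$ curve $C$ degenerates correctly: across $\Gamma_1$ the node has $C$ smooth of genus $4$ specializing to two genus-$1$ trigonal curves glued at $3$ points ($g = 1+1+3-1 = 4$); across $\Gamma_2$, to a genus-$2$ curve and a $(0,1)\sqcup(0,2)$ configuration glued appropriately ($2 + 0 + 1 + 3 - 2 = 4$); across $\Gamma_3$, symmetric. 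This genus bookkeeping, together with checking each $\Delta_i$ is indeed a boundary divisor in $\overline{\mathcal H}$ of the expected dimension and that the associated $X$ really is a Type II Kulikov surface (via \Cref{prop:kulikov}, whose hypotheses \eqref{etale}, \eqref{smooth}, \eqref{balanced} must be verified for the generic point of $\Delta_i$), completes the identification.

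Finally, since the extended period map $\overline{\mathcal H} \dashrightarrow \overline{\D_\rho/\Gamma_\rho}^{\rm BB}$ sends a boundary point whose Kulikov fiber corresponds to the isotropic plane $J \subset T_\rho$ to the cusp of $J$ (this is the statement following \eqref{eqn:extendedperiod} in \Cref{subsec:limiting_periods}: the cusp $f(0)$ corresponds to $\phi_t(J)\subset T_\rho$), the computation of $\Lambda^{\rm prim} \simeq J^\perp_{T_\rho}/J$ pins down which of the three cusps is the image of $\Delta_i$. The main obstacle I anticipate is not any single hard computation but the combinatorial care needed in the degeneration bookkeeping: matching the admissible-cover dual graphs $\Gamma_i$ to the component data feeding \Cref{prop:lattices}, making sure the étale-over-the-node hypothesis holds (so that we are in the situation of \Cref{prop:kulikov} and not the ramified variant of \Cref{rem:ramified-admissible-cover}), and confirming that the generic point of each $\Delta_i$ really gives the \emph{general} member in the relevant irreducible Hurwitz stratum so that \Cref{prop:lattices} applies verbatim. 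Given the uniqueness in \Cref{thm:t11-t21cusps}, once the root sublattice on each side is correctly tallied there is nothing left to prove.
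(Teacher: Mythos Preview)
Your proposal is correct and follows essentially the same approach as the paper: use \eqref{eqn:JperpJrho} to identify $\Lambda^{\rm prim}\simeq J^\perp_{T_\rho}/J$, invoke \Cref{prop:root-sublattice-splits} to split the root sublattice as $\Lambda_0\oplus\Lambda_1$, read off each $\Lambda_j$ from \Cref{prop:lattices}, and match against the list in \Cref{thm:t11-t21cusps}. The paper's proof is exactly this, stated more tersely and without the auxiliary genus and Kulikov-hypothesis checks you include (which are harmless but not needed for the argument).
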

\begin{proof}
  The generic point of \(\Delta_i\) corresponds to a Kulikov surface \(X = X_0 \cup X_1\) and a 2-plane \(J \subset T_{\rho}\).
  Let \(\Lambda\) be the reduced lattice of numerically Cartier divisors on \(X\).
  By \eqref{eqn:JperpJrho}, we have an isometry
  \begin{equation}\label{eqn:primJ}
    \Lambda^{\rm prim} \simeq J^\perp_{T_\rho} / J.
  \end{equation}
  \Cref{prop:root-sublattice-splits} implies that the root sub-lattice of \(\Lambda^{\rm prim}\) is the direct sum of the root sublattices of the lattices \(\Lambda_0\) and \(\Lambda_i\) associated to \(X_0\) and \(X_1\).
  \Cref{prop:lattices} identifies these lattices.
  As a result, we see that the root sublattice of \(J^\perp_{T_\rho} / J\) is:
  \begin{enumerate}
  \item for \(i = 1\), the direct sum of two copies of \(E_6 \oplus A_2\),
  \item for \(i = 2\), the direct sum of \(E_8\) and \(E_6 \oplus A_2\), and
  \item for \(i = 3\), the direct sum of two copies of \(E_8\).
  \end{enumerate}
  The statement follows.
\end{proof}

\subsection{Stable models and the KSBA semifan} \label{subsec:stable_model_01}
We now identify the KSBA semifan \(\mathfrak F\) that gives the KSBA compactification \(\overline F_{\rho}^{\rm KSBA}\).
For each cusp of \(\overline{\D_{\rho}/\Gamma_\rho}^{\rm BB}\) corresponding to an isotropic rank 2 lattice \(J \subset T_{\rho}\), we must specify a sublattice \(\mathfrak F_J \subset J^{\perp}_{T_{\rho}}/J\).
We use the superscript \(\rm{root}\) to denote the root sublattice of a lattice and the superscript \(\rm{sat}\) to denote the saturation of a sub-lattice in an ambient lattice.
\begin{theorem}\label{thm:ksba01}
  The space \(\overline{F}_{\rho}^{\rm KSBA}\) is isomorphic to the semi-toroidal compactification for the following semisfan \(\mathfrak F_J\):
  \[
    \mathfrak F_J = 
    \begin{cases}
      \langle  A_2^{\oplus 2} \rangle^{\rm sat} & \text{ for the cusp with } (J^{\perp}_{T_{\rho}}/J)^{\rm root} = E_6^{\oplus 2} \oplus A_2^{\oplus 2}\\
      \langle  A_2 \rangle^{\rm sat} & \text{ for the cusp with } (J^{\perp}_{T_{\rho}}/J)^{\rm root} = E_8 \oplus E_{6} \oplus A_2 \\
      0 & \text{ for the cusp with } (J^{\perp}_{T_{\rho}}/J)^{\rm root} = E_8^{\oplus 2}.
    \end{cases}
  \]
\end{theorem}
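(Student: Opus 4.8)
The plan is to run the three-step strategy (S1)--(S3) laid out before the theorem, treating each of the three cusps separately. For each cusp, fix a representative $J \subset T_\rho$ and, via \eqref{eqn:primJ}, identify $J^\perp_{T_\rho}/J$ with the reduced primitive lattice $\Lambda^{\rm prim}$ of a generic type II Kulikov surface $X = X_0 \cup X_1$ arising from the boundary divisor $\Delta_i$ of \Cref{prop:period_01}. By \Cref{prop:root-sublattice-splits}, the root sublattice of $\Lambda^{\rm prim}$ splits as $\Lambda_0^{\rm root} \oplus \Lambda_1^{\rm root}$, with $\Lambda_j = H^2(X_j,\Z)^{\rm prim}$ computed in \Cref{prop:lattices}. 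The surfaces $X_j$ are blow-ups of del Pezzo surfaces (in cases involving $E_6$-factors, a $\delpezzo_3$; in cases involving $E_8$-factors, a $\delpezzo_1$), and the blow-down centers are precisely the $\Z/3\Z$-orbits of points (responsible for the $A_2$-factors) and the $\Z/3\Z$-fixed $(-1)$-curves or chains (responsible for $E_8$ vs.\ $E_6$).

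First I would carry out step (S2), the stabilization, on each component $X_j$ individually. The key point, exactly as in the proof of \Cref{thm:ksba02}: the relevant divisor is $R_j = \widetilde C_j$, a component of the fixed curve; from the case analysis in \Cref{sec:lattices} (recorded there together with each lattice computation) one reads off whether $\widetilde C_j$ is nef, and if not, which $(-1)$-curves or chains must be M1-modified and then contracted to make it ample. Concretely: in the $(0,3)$ and $(1,2)$ cases $\widetilde C_j$ is already nef and big, so the MMP contracts the $\Z/3\Z$-orbits of $(-1)$-curves $\widetilde L$ on which $\widetilde C_j$ vanishes; in the $(0,1)\sqcup(2,2)$ and $(0,1)\sqcup(1,2)$ cases $\widetilde\sigma$ is not nef and an M1 modification followed by contraction of the $\Z/3\Z$-fixed chain is needed. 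Either way the contracted loci are exactly (the translates of) the $A_2$-summands coming from blown-up orbits, while the $E_6$ and $E_8$ summands survive to the stable surface because the corresponding del Pezzo itself is not modified. This identifies $\mathfrak F_J$ with the (saturation of the) span of the $A_2$-factors: $\langle A_2^{\oplus 2}\rangle^{\rm sat}$ for the $E_6^{\oplus 2}\oplus A_2^{\oplus 2}$ cusp, $\langle A_2\rangle^{\rm sat}$ for the $E_8\oplus E_6\oplus A_2$ cusp, and $0$ for the $E_8^{\oplus 2}$ cusp.

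For step (S3) — showing that this $\mathfrak F_J$ is exactly the KSBA semifan, not just contained in it — I would argue on both sides. That the translates of $\mathfrak F_J \otimes_{\Z[\zeta_3]} E$ are contracted by \eqref{eq:torksba} follows because moving the blown-up orbit of points along the double curve $D \cong E$ changes the period $\psi$ only in the $A_2$-summand (the period map sends $[E_i]-[E_j] \mapsto p_i - p_j$) but does not change the stable model, which undoes these blow-ups; conversely any $\psi'$ differing from $\psi$ only on the $A_2$-part is realized by such a repositioned orbit, via the dominance established in \Cref{prop:boundary-period-dominant} and \Cref{prop:dominanceonboundary}. For the reverse inclusion $\mathfrak F_J \subset \langle A_2\text{'s}\rangle^{\rm sat}$, I would use that by \Cref{prop:dptor} (Torelli for anti-canonical del Pezzo pairs) the $\Z/3\Z$-isomorphism class of $(\overline X_0, D) \cup (\overline X_1, D)$ — and hence the stable limit — is determined up to finitely many choices by the periods restricted to $K_{\overline X_j}^\perp = E_6$ or $E_8$; thus two periods with the same stable limit can differ only on the orthogonal complement of these factors, i.e.\ on the $A_2$-part, so the fibers of \eqref{eq:torksba} over a generic boundary point have dimension at most $\operatorname{rk}_{\Z[\zeta_3]}\langle A_2\text{'s}\rangle$, forcing equality. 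A dimension count (the source $\mathcal H$ and the boundary divisor $\delta$ both have the expected dimension, cf.\ \Cref{subsec:ell_fib_trigonal}) pins down the rank of $\mathfrak F_J$ exactly, as in the last lines of the proof of \Cref{thm:ksba4}.

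The main obstacle I anticipate is step (S2) in the case $(0,1)\sqcup(2,2)$ (the $E_8$-factor cusp, $i = 3$) and the mixed case $i = 2$: there $\widetilde\sigma$ is genuinely non-nef, so one must first perform the correct M1 modifications on the Kulikov model — verifying that after these the flat limit $R_0$ of the ramification divisor becomes nef and that subsequent curve contractions produce a surface with ample $\overline R$ and trivial dualizing sheaf — and then check $d$-semistability is preserved so that the result is still a legitimate point of the KSBA boundary. This requires care with the self-intersection bookkeeping on each Hirzebruch/del Pezzo component and with how the two components are reglued along $D$; I would organize this by drawing the relevant central-fiber pictures (as in Figures~\ref{fig:k0-01-22} and \ref{fig:k1-01-12}) and tracking $\widetilde C_j \cdot (\text{each component})$ through the modifications. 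The saturation issue ("$\langle A_2^{\oplus 2}\rangle^{\rm sat}$" rather than $A_2^{\oplus 2}$) must also be handled: it arises precisely in the starred cusps of \Cref{tab:AScusps}, where $J^\perp_{T_\rho}/J$ is an index-$3$ overlattice of the root lattice, and one checks that the extra glue vector lies in the span of the $A_2$-part so that the KSBA contraction respects it.
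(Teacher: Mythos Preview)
Your overall strategy is exactly the paper's, and the use of \Cref{prop:root-sublattice-splits}, \Cref{prop:lattices}, \Cref{prop:dominanceonboundary}, and the del Pezzo Torelli statement \Cref{prop:dptor} is right on target.

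There is, however, a concrete mislabeling that would derail the execution. For $(n,k)=(0,1)$ there are \emph{no} marked points, so on each side of the degenerate cover the number $m$ of marked points is $0$. Hence the only rows of the table in \Cref{prop:lattices} that apply are $m=0$, $\Gamma=\{(1,3)\}$ (giving $E_6\oplus A_2$) and $m=0$, $\Gamma=\{(0,1),(2,2)\}$ (giving $E_8$). The cases you invoke --- $(0,3)$, $(0,1)\sqcup(1,2)$, $(1,2)$ --- all have $m\ge 1$ and belong to the $(n,k)=(1,1)$ or $(2,1)$ analyses, not to this one. Concretely: for cusp $i=1$ both components are of type $(1,3)$; for $i=2$ one side is $(0,1)\sqcup(2,2)$ and the other $(1,3)$; for $i=3$ both are $(0,1)\sqcup(2,2)$. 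Once you use these correct inputs, the nef/non-nef bookkeeping you outline goes through exactly as you describe (and matches the paper): in the $(1,3)$ pieces $R_j=\widetilde C_j$ is already big and nef and one contracts a single $\Z/3\Z$-orbit of $(-1)$-curves; in the $(0,1)\sqcup(2,2)$ pieces $R_j=P_j+Q_j$ is negative on $P_j$, one performs an M1 along $P_j$, and after that $R'$ is nef (in case $i=3$ in fact ample, so no further contraction is needed).

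A minor point: your worry about ``checking $d$-semistability is preserved'' is misplaced. M1 modifications are birational modifications of the total space of the Kulikov model and automatically produce another Kulikov model; and the KSBA limit is $\operatorname{Proj}$ of the section ring of $R$ on any divisor model, so no $d$-semistability check is needed on the stable model itself.
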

We devote the rest of \Cref{subsec:stable_model_01} to the proof of \Cref{thm:ksba01}.
For \(i = 1, 2, 3\), let \(\delta_i \subset \overline{\D_{\rho}/\Gamma_\rho}^{\rm tor}\) is the divisor lying over the \(i\)-th cusp.
Let \(J = J_i \subset T_{\rho}\) be the rank 2 isotropic sublattice corresponding to this cusp.
For each \(i\), we follow the strategy outlined in \Cref{subsec:stable_model_02}.
We let \(\phi \colon C_0 \cup C_1 \to P_0 \cup P_{1}\) be the degenerate triple cover corresponding to a generic point of \(\Delta_i\) and \(X = X_0 \cup X_1\) the associated Kulikov surface.
In this case, we have \(R = C \subset X\); write \(R = R_0 \cup R_{1}\) with \(R_i \subset X_i\).

\subsubsection{The case \(i = 1\)}\label{sec:01i1}
In this case, both \(C_{0}\) and \(C_1\) are smooth curves of genus \(1\).
From \Cref{sec:k0-13}, we know that \(X_j\) is the blow up of \(\overline X_j\), which is a \(\delpezzo_3\), in three points that form a \((\Z/3\Z)\)-orbit.
The curve \(R_j\) is big and nef and disjoint from the exceptional divisors of the blow-up.
On \(\overline X_j\), the image of \(R_j\) is anti-canonical, and hence ample.
Therefore, the KSBA stable surface associated to \(X\) is \(\overline X = \overline X_0 \cup \overline X_1\) (see \Cref{fig:kulstab011}).
Observe that (up to a finite choice), the \((\Z/3\Z)\)-isomorphism class of \(\overline X\) is determined by the \((\Z/3\Z)\)-isomorphism classes of the anti-canonical pairs \((\overline X_0,D)\) and \((\overline X_1,D)\).
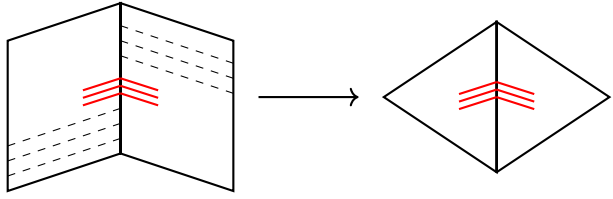
\begin{figure}[ht]
  \centering
  \begin{tikzpicture}
    \draw[thick]  (0,0) -- (0,2) -- (1.5,1.5) -- (1.5,-0.5) -- (0,0) -- (0,2) -- (-1.5,1.5) -- (-1.5,-0.5) -- (0,0);
    \draw[dashed] (0,1.7) -- (1.5,1.2) (0,1.5) -- (1.5,1.0) (0,1.3) -- (1.5,0.8);
    \draw[dashed] (0,0.2) -- (-1.5,-0.3) (0,0.4) -- (-1.5,-0.1) (0,0.6) -- (-1.5,0.1) ;
    \draw[color=red, thick] (0,0.8) -- (0.5,0.64) (0,0.9) -- (0.5,0.74) (0,1.0) -- (0.5,0.84);
    \draw[color=red, thick] (0,0.8) -- (-0.5,0.64) (0,0.9) -- (-0.5,0.74) (0,1.0) -- (-0.5,0.84);
    \draw (1.7,0.75) node (K) {};
    \begin{scope}[xshift=5cm, yshift=-0.25cm]
      \draw[thick]  (0,0) -- (1.5,1) -- (0,2) -- (0,0) -- (-1.5,1) -- (0,2) -- (0,0);
      \draw[color=red, thick] (0,1.2) -- (0.5,1.04) (0,1.1) -- (0.5,0.94) (0,1.0) -- (0.5,0.84);
      \draw[color=red, thick] (0,1.2) -- (-0.5,1.04) (0,1.1) -- (-0.5,0.94) (0,1.0) -- (-0.5,0.84);
      \draw (-1.7, 1) node (S) {};      
    \end{scope}
    \draw[thick, ->] (K) edge (S);
  \end{tikzpicture}
  \caption{The stable model in the case \(i = 1\) is obtained by contracting a \((\Z/3\Z)\)-orbit of \((-1)\)-curves on each component (dashed) disjoint from the divisor (red).}\label{fig:kulstab011}
\end{figure}

We identify \(A_2 \oplus E_6\) with \(H^2(X_j, \Z)^{\rm prim}\) following \Cref{prop:lattices}.
We then get an identification
\[ A_2^{\oplus 2} \oplus E_6^{\oplus 2} = H^2(X_0, \Z)^{\rm prim} \oplus H^2(X_1, \Z)^{\rm prim}.\]
For \(j = 0,1\), let \(\psi_j\) be the restriction of the period point \(\psi\) of \(X\) to the first and the second \(E_6\)-summand.
Then \(\psi_j\) is simply the period point of the anti-canonical pair \((\overline{X}_j,D)\).
By the Torelli theorem for anti-canonical del Pezzo pairs (\Cref{prop:dptor}), we conclude that two Kulikov surface \(X\) and \(X'\) whose periods \(\psi\) and \(\psi'\) agree on the \(E_6\) summands have isomorphic stable models (possibly up to a finite choice).
Therefore, the map \(\Delta_1 \dashrightarrow \overline F^{\rm KSBA}_{\rho}\) contracts precisely the traslates of \(A_2^{\oplus 2} \otimes_{\Z[\zeta_3]} E \subset \mathcal{A}_J\).
We conclude that \(\mathfrak F_J \subset J^{\perp}_{T_{\rho}}/J\) is the saturation of the \(A_2^{\oplus 2}\) summand.

\subsubsection{The case \(i = 2\)}\label{sec:01i2}
In this case, \(C_0 = P_0 \sqcup Q_0\), where \(Q_0\) is smooth of genus \(2\), and \(C_1\) is irreducible of genus \(1\).
From \Cref{sec:k0-01-22}, we know that \(X_0\) is the blow up of \(\overline X_0\), which is a \(\delpezzo_1\), in a \((\Z/3\Z)\)-fixed point.
From \Cref{sec:k0-13}, we know that \(X_1\) is the blow-up of \(\overline X_1\), which is a \(\delpezzo_3\), in three points that form a \((\Z/3\Z)\)-orbit.

Note that \(R_0 = P_0 + Q_0\) is negative on \(P_0 \subset X_0\).
Let \(D = X_0 \cap X_1\) be the double curve of \(X\).
We perform an M1 modification on \(X\) by contracting \(P_0 \subset X_0\) and blowing-up \(P_0 \cap D\) on \(X_1\).
Call the resulting surface \(X' = X'_0 \cup X'_1\) and let \(R' \subset X'\) be the proper transform of \(R\).
Then \(R' \subset X'\) is nef and trivial only on the exceptional divisors of \(X_{1} \to \overline X_1\) (which are unaffected by the M1 modification).
Let \(X'_1 \to \overline X'_1\) be the contraction of these three \(-1\) curves.
Let \(\overline X'_0 = X'_0\).
Then \(\overline X' = \overline X'_0 \cup \overline X'_1\) is the stable model (see \Cref{fig:kulstab012}).
Note that (up to a finite choice) the \((\Z/3\Z)\)-isomorphism class of \(\overline X'\) is determined by the \((\Z/3\Z)\)-isomorphism classes of the anti-canonical pairs \((\overline X_0', D)\) and \((\overline X_1, D)\).
\begin{figure}[ht]
  \centering
  \begin{tikzpicture}
    \draw[thick]  (0,0) -- (0,2) -- (1.5,1.5) -- (1.5,-0.5) -- (0,0) -- (0,2) -- (-1.5,1.5) -- (-1.5,-0.5) -- (0,0);
    \draw[dashed] (0,1.7) -- (1.5,1.2) (0,1.5) -- (1.5,1.0) (0,1.3) -- (1.5,0.8);
    \draw[color=red, thick] (0,0.6) -- (0.5,0.44) (0,0.9) -- (0.5,0.74)  (0,1.0) -- (0.5,0.84);
    \draw[color=red, thick]  (0,0.9) -- (-0.5,0.74) (0,1.0) -- (-0.5,0.84);
    \draw[color=red, thick, dashed] (0,0.6) -- (-0.5,0.44);
    \draw (1.7,0.75) node (K) {};
    \begin{scope}[xshift=5cm, yshift=-0.25cm]
      \draw[thick]  (0,0) -- (1.5,0) -- (1.5,2) -- (0,2) -- (0,0) -- (-1.5,1) -- (0,2) -- (0,0);
      \draw[color=red, thick] (0,1.2) -- (0.5,1.04) (0,1.1) -- (0.5,0.94); 
      \draw[color=red, thick] (0,1.2) -- (-0.5,1.04) (0,1.1) -- (-0.5,0.94);
      \draw[color=red, thick]  plot[smooth, tension=1] coordinates {(0.1,0.6) (0.2,0.8) (0.5,0.8)};
      \draw[color=blue, dashed, thick] (0,0.8) -- (0.5,0.64);
      \draw (-1.7, 1) node (S) {};      
    \end{scope}
    \draw[thick, dashed, ->] (K) edge (S);
  \end{tikzpicture}
  \caption{The stable model in the case \(i = 2\) is obtained by an M1 modification along a \((-1)\)-curve on the left (dashed red) followed by contracting a \((\Z/3\Z)\)-orbit of \((-1)\)-curves on the right (dashed).  The divisor is shown in red (dashed and solid).}\label{fig:kulstab012}
\end{figure}

We identify \(E_8\) with \(H^2(X_0, \Z)^{\rm prim}\) and \(E_6 \oplus A_2\) with \(H^2(X_1, \Z)^{\rm prim}\) following \Cref{prop:lattices}.
We then get an identification
\[ E_8 \oplus E_{6} \oplus A_2 = H^2(X_0, \Z)^{\rm prim} \oplus H^2(X_1, \Z)^{\rm prim}.\]
Let \(\psi\) be the period point of \(X\).
Then \(\psi\) induces a homomorphism
\[ \psi \colon E_8 \oplus E_6 \oplus A_2 \to E\]
The restriction \(E_8 \to E\) is the period of the pair \((\overline X_0', D)\) and the restriction \(E_6 \to E\) is the period of the pair \((\overline X_1, D)\).
By the Torelli theorem for anti-canonical del Pezzo pairs (\Cref{prop:dptor}), we conclude that two Kulikov surface \(X\) and \(X'\) whose periods \(\psi\) and \(\psi'\) agree on the \(E_8\) and \(E_6\) summands have isomorphic stable models (possibly up to a finite choice).
Therefore, the map \(\Delta_2 \dashrightarrow \overline F^{\rm KSBA}_{\rho}\) contracts precisely the traslates of \(A_2 \otimes_{\Z[\zeta_3]} E \subset \mathcal{A}_J\).
We conclude that \(\mathfrak F_J \subset J^{\perp}_{T_{\rho}}/J\) is the saturation of the \(A_2\) summand.

\subsubsection{The case \(i = 3\)}\label{sec:01i3}
In this case, \(C_j = P_j \cup Q_j\), where \(Q_j\) is smooth of genus \(2\).
From \Cref{sec:k0-01-22}, we know that \(X_j\) is the blow up of \(\overline X_j\), which is a \(\delpezzo_1\), in a \((\Z/3\Z)\)-fixed point.

Note that \(R_j = P_j + Q_j\) is not nef on \(X_j\).
Let \(D = X_0 \cap X_1\) be the double curve of \(X\).
We perform two M1 modifications: contract \(P_0 \subset X_0\) and blow-up \(P_0 \cap D\) on \(X_1\) and likewise conract \(P_1 \subset X_1\) and blow-up \(P_1 \cap D\) on \(X_{0}\).
Let \(X' = X_0' \cup X_1'\) be the resulting surface and \(R' \subset X'\) the proper transform of \(R\).
Then \(R'\) is ample on \(X'\) and \(X'\) is the stable model (see \Cref{fig:kulstab013}).
Up to a finite choice, the \((\Z/3\Z)\)-isomorphism class of \(X'\) is determined by the \((\Z/3\Z)\)-isomorphism classes of the pairs \((\overline X_0, D)\) and \((\overline X_1, D)\).
It follows that no translates in \(E_8^{\oplus 2} \otimes_{\Z[\zeta_3]} E = \mathcal{A}_J\) are contracted by \(\Delta_3 \dashrightarrow \overline F_{\rho}^{\rm KSBA}\).
We conclude that \(\mathfrak F_J = 0\).
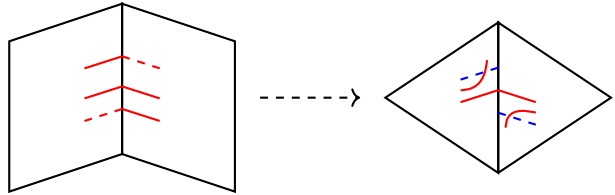
\begin{figure}[ht]
  \centering
  \begin{tikzpicture}
    \draw[thick]  (0,0) -- (0,2) -- (1.5,1.5) -- (1.5,-0.5) -- (0,0) -- (0,2) -- (-1.5,1.5) -- (-1.5,-0.5) -- (0,0);
    \draw[color=red, thick] (0,0.6) -- (0.5,0.44) (0,0.9) -- (0.5,0.74);
    \draw[color=red, thick]  (0,0.9) -- (-0.5,0.74) (0,1.3) -- (-0.5,1.14);
    \draw[color=red, thick, dashed] (0,0.6) -- (-0.5,0.44)   (0,1.3) -- (0.5,1.14);
    \draw (1.7,0.75) node (K) {};
    \begin{scope}[xshift=5cm, yshift=-0.25cm]
      \draw[thick]  (0,0) -- (1.5,1) -- (0,2)-- (0,0) -- (-1.5,1) -- (0,2) -- (0,0);
      \draw[color=red, thick] (0,1.1) -- (0.5,0.94); 
      \draw[color=red, thick] (0,1.1) -- (-0.5,0.94);
      \draw[color=red, thick]  plot[smooth, tension=1] coordinates {(0.1,0.6) (0.2,0.8) (0.5,0.8)};
      \draw[color=red, thick]  plot[smooth, tension=1] coordinates {(-0.15,1.5) (-0.25,1.2) (-0.5,1.1)};
      \draw[color=blue, dashed, thick] (0,0.8) -- (0.5,0.64) (0,1.4) -- (-0.5,1.24) ;
      \draw (-1.7, 1) node (S) {};      
    \end{scope}
    \draw[thick, dashed, ->] (K) edge (S);
  \end{tikzpicture}
  \caption{The stable model in the case \(i = 3\) is obtained by M1 modifications along the \((-1)\)-curves on the left and the right (dashed red).  The divisor is shown in red (dashed and solid).}\label{fig:kulstab013}
\end{figure}

\begin{proof}[Conclusion of the proof of \Cref{thm:ksba01}]
  Having settled the cases of the three cusps in \Cref{sec:01i1}, \Cref{sec:01i2}, and \Cref{sec:01i3}, the proof of \Cref{thm:ksba01} is complete.
\end{proof}
\subsection{Connection with the moduli space of stable log quadrics} \label{subsec:compare_DH21}
Fix a small positive number \(\epsilon\).
Let \(\Y_{\rho}\) be the KSBA compactification of weighted pairs \((Y, (\frac{2}{3}+\epsilon)B)\) where \(Y \simeq \P^1 \times \P^1\), and \(B \subset Y\) is a curve of bi-degree \((3,3)\).
We call \(\Y_{\rho}\) the space of stable log quadrics.
By \Cref{thm:stable_quotient_pair}, the map \(X \mapsto X/\sigma\) induces an isomorphism
\( \overline{F}_{\rho}^{\rm KSBA} \to \Y_{\rho}\).
The geometry of \(\Y_{\rho}\) is described in detail in \cite{deo.han:21}.
In particular, we know that \(\Y_{\rho}\) is the coarse space of a smooth stack, and hence already normal.

A Zariski open subset of \(\Y_{\rho}\) parametrises \((Y, B)\) whose triple cover \(X\) is either a smooth K3 surface or a type I degeneration.
There are three disjoint irreducible Zariski closed subsets  of \(\Y_{\rho}\) where the triple covers come from type II degenerations.
These map to the three cusps of \(\overline{\D/\Gamma_{\rho}}^{\rm BB}\).
We recall their description.
\begin{proposition}
  The locus of points \((Y, B)\) of \(\Y_{\rho}\) with reducible \(Y\) is the union of three irreducible components \(Z_1, Z_2, Z_3\) of dimension \(6\), \(7\), and \(8\), respectively.
  Their generic points correspond to the following \((Y, B)\).
\begin{description}
\item [\(Z_1\)] \(Y = \P^2 \cup \P^2\) glued along a line and \(B = E_0 \cup E_1\) where \(E_i \subset \P^2\) are cubic curves.
\item [\(Z_2\)] \(Y = T \cup \P(3,1,1)\), where \(T\) is the \(\Q\)-Gorenstein smoothing of the \(A_1\)-singularity of \(\P(3,1,2)\). 
In this case, \(B = E \cup C_1\) where \(E \subset T\) is an elliptic curve, \(C_1 \subset \P(3,1,1)\) is a genus \(2\) curve with \(\mathcal{O}_{\P(3,1,1)}(C_1)=\mathcal{O}_{\P(3,1,1)}(2)\), and \(E \cap C_1\) consists of two distinct points. 
\item [\(Z_3\)] \(Y\) is the coarse space of \(\P(O(4/3,5/3) \oplus O(5/3,4/3))\) over an orbinodal curve \(P\) with two irreducible components glued along a node of stabilizer group \(\mu_3\).
The restriction of \(B\) to the two components of \(Y\) are smooth curves of genus \(2\) which intersect the double curve transversally at a single non-Weierstrass point.
\end{description}
\end{proposition}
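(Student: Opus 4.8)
\emph{Proof strategy.}
The plan is to transport the question through the isomorphism $\overline F_\rho^{\rm KSBA}\xrightarrow{\sim}\Y_\rho$ of \Cref{thm:stable_quotient_pair}. Under it, a pair $(Y,B)$ with reducible $Y$ corresponds to a stable pair $(\overline X,\epsilon\overline C_4)$ with $\overline X$ of Type II, and conversely: in the $\rho$-markable case there are no Type III limits, so $\overline X$ is either irreducible (Type I, with ADE singularities) or has two components, and $Y=\overline X/\overline\sigma$ is reducible precisely in the latter case (in our construction $\overline\sigma$ preserves each component). All such $\overline X$ have extended period lying over one of the three cusps of $\overline{\D_\rho/\Gamma_\rho}^{\rm BB}$ listed in \Cref{subsec:BB_01}. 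Hence the locus of reducible $Y$ is exactly the union of the three boundary strata of $\overline F_\rho^{\rm KSBA}=\overline{\D_\rho/\Gamma_\rho}^{\mathfrak F}$ lying over these cusps. By the description of semitoroidal compactifications in \Cref{subsec:semitoroidal}, the stratum over the cusp $J_i$ is the finite quotient $\widehat{\mathcal A}_{J_i}^{\mathfrak F_{J_i}}/\Gamma_{J_i}$ of an abelian variety, hence closed and irreducible; these are the $Z_i$, and they are pairwise disjoint since distinct cusps have disjoint boundary strata.

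\emph{Dimensions.}
The stratum over $J_i$ has dimension $\tfrac12\rk\!\big((J^\perp_{T_\rho}/J_i)/\mathfrak F_{J_i}\big)$, because $\mathcal A_{J_i}^{\mathfrak F_{J_i}}=\big((J^\perp_{T_\rho}/J_i)/\mathfrak F_{J_i}\big)\otimes_{\Z[\zeta_3]}E$ and $\dim E=1$. Since $\rk(J^\perp_{T_\rho}/J_i)=\rk T_\rho-4=16$ and $\mathfrak F_{J_i}$ has rank $4$, $2$, $0$ respectively by \Cref{thm:ksba01}, we obtain $\dim Z_i=6,7,8$ (so $Z_i$ has codimension $3,2,1$ in the $9$-dimensional $\Y_\rho$), matching the statement.

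\emph{Generic members.}
To identify the generic $(Y,B)\in Z_i$ I would take the generic stable K3 surface $\overline X$ over the boundary divisor $\delta_i$, whose structure was computed in \Cref{sec:01i1}, \Cref{sec:01i2}, \Cref{sec:01i3}, and form $Y=\overline X/\overline\sigma$ with $B$ its branch divisor. For $i=1$ one has $\overline X=\overline X_0\cup\overline X_1$ with each $\overline X_j$ a $\delpezzo_3$ carrying the order-$3$ automorphism fixing a genus-$1$ anticanonical curve; such a $\delpezzo_3$ is the cyclic triple cover of $\P^2$ branched along a smooth cubic, so $\overline X_j/\overline\sigma\cong\P^2$ with $B_j$ a plane cubic, while the double curve is an elliptic triple cover of a line, giving $Y=\P^2\cup_{\text{line}}\P^2$ — this is $Z_1$. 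For $i=2,3$ one tracks the $\Z/3\Z$-action through the M1 modifications and curve contractions of \Cref{sec:01i2} and \Cref{sec:01i3}, identifies the resulting quotient singularities (the cone $\P(3,1,1)$, the surface $\P(3,1,2)$ and its $\Q$-Gorenstein smoothing $T$, and the weighted $\P^1$-bundle over the orbinodal curve with a $\mu_3$-node), and computes the images of the branch curves and of the double curve; the detailed study of $\Y_\rho$ in \cite{deo.han:21} yields the same descriptions and may simply be quoted.

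\emph{Main obstacle.}
The hard part is the last step for $Z_2$ and $Z_3$: correctly identifying the singular quotient surfaces after the weighted blow-downs and M1 modifications, and verifying the finer incidence data — the genera of the components of $B$, that each component meets the double curve transversally in a single point, and that this point is non-Weierstrass. This is careful bookkeeping with the $\Z/3\Z$-action and with quotients of weighted blow-ups rather than anything conceptually new, but it is where all the attention is required; once those identifications are in hand the Proposition follows.
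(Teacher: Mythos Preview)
Your approach is correct but differs from the paper's. The paper's proof is a single citation: it simply quotes \cite[Table~1]{deo.han:21}, where the space $\Y_\rho$ of stable log quadrics was analyzed directly by classifying KSBA-stable pairs $(Y,(\tfrac{2}{3}+\epsilon)B)$, with no reference to the semitoroidal picture. So the paper treats this proposition as an input established elsewhere.

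You instead run the argument in the reverse direction: you invoke \Cref{thm:ksba01} (already proved in \S7.3) together with the isomorphism of \Cref{thm:stable_quotient_pair} to read off the number of components, their irreducibility, and their dimensions from the semitoroidal boundary structure. This is legitimate and gives a clean conceptual explanation of the numbers $6,7,8$ as $\tfrac12(16-\rk\mathfrak F_{J_i})$. Your identification of the generic member of $Z_1$ via the quotient of the $\delpezzo_3\cup\delpezzo_3$ stable model is also correct and independent of \cite{deo.han:21}. For $Z_2$ and $Z_3$, however, you end up citing \cite{deo.han:21} anyway to pin down the quotient surfaces and the incidence data, so your proof is not fully self-contained there either. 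One small point worth making explicit: your claim that $Y$ is reducible exactly over the Type~II locus relies on $\overline\sigma$ preserving each component of every Type~II stable limit; this holds because the dominance results of \S6 show that \emph{all} boundary points arise from the triple Tschirnhausen construction, where $\sigma$ visibly acts fiberwise on each piece.
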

\begin{proof}
  Follows from the description of the points of \(\Y_{\rho}\) given in \cite[Table~1]{deo.han:21}.
\end{proof}

We now determine which \(Z_{i}\) map to which cusp.
\begin{theorem}\label{thm:cuspmap}
  Enumerate the cusps as in \Cref{subsec:BB_01}.
  Then the map \(\Y_{\rho} \to \overline{\D_{\rho}/\Gamma_{\rho}}^{\rm BB}\) sends \(Z_i\) to the \(i\)-th cusp.
\end{theorem}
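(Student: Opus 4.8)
The plan is to deduce the statement formally from \Cref{prop:period_01} (which identifies the cusp attached to each boundary divisor $\Delta_i\subset\overline{\mathcal H}$) and from the explicit stable models and semifans computed in the proof of \Cref{thm:ksba01}, using only irreducibility and a dimension count. Write $\nu\colon \Y_\rho=\overline F_\rho^{\rm KSBA}\to\overline{\D_\rho/\Gamma_\rho}^{\rm BB}$ for the contraction of the semitoroidal compactification coming from \eqref{eq:mor_tor->KSBA} (recall $\overline F_\rho^{\rm KSBA}=\overline{\D_\rho/\Gamma_\rho}^{\mathfrak F}$). The first step is to observe that the triple Tschirnhausen construction is compatible with $\nu$: the two rational maps $\Psi\colon\overline{\mathcal H}\dashrightarrow\overline F_\rho^{\rm KSBA}$ and $\overline{\mathcal H}\dashrightarrow\overline{\D_\rho/\Gamma_\rho}^{\rm tor}$ are compatible with the morphism $\overline{\D_\rho/\Gamma_\rho}^{\rm tor}\to\overline F_\rho^{\rm KSBA}$ by the construction of periods in \Cref{subsec:limiting_periods}, so $\nu\circ\Psi$ agrees with the extended period map $\overline{\mathcal H}\dashrightarrow\overline{\D_\rho/\Gamma_\rho}^{\rm BB}$. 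Hence, by \Cref{prop:period_01}, $\nu(\Psi(\Delta_i))$ is the $i$-th cusp.

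Next I would record two facts about the image $\Psi(\Delta_i)$. First, in the proof of \Cref{thm:ksba01} the stable model of the Kulikov surface over a generic point of $\Delta_i$ was exhibited with two components $\overline X_0\cup\overline X_1$; since an order-$3$ automorphism of a chain of two surfaces fixes each component, the $\overline\sigma$-quotient has reducible $Y$. Therefore $\Psi(\Delta_i)$ lies in the reducible-$Y$ locus $Z_1\cup Z_2\cup Z_3$, and being the image of the irreducible $\Delta_i$ it is contained in a single $Z_{j(i)}$. Second, by \Cref{thm:ksba01} the fibers of $\Psi|_{\Delta_i}$ over $\overline F_\rho^{\rm KSBA}$ are translates of $\mathfrak F_{J_i}\otimes_{\Z[\zeta_3]}E$, of dimension $r_i:=\operatorname{rk}_{\Z[\zeta_3]}\mathfrak F_{J_i}$, which equals $2,1,0$ for $i=1,2,3$; since $\dim\Delta_i=\dim\mathcal H-1=8$ and $\Psi|_{\Delta_i}$ dominates the boundary of $\overline F_\rho^{\rm KSBA}$ over the cusp by \Cref{prop:dominanceonboundary}, we get $\dim\Psi(\Delta_i)=8-r_i=6,7,8$ for $i=1,2,3$.

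Now the bookkeeping. Each $Z_j$ is irreducible and maps into the (finite) boundary of $\overline{\D_\rho/\Gamma_\rho}^{\rm BB}$ — a pair with reducible $Y$ has reducible $\overline X$, hence a Type II period — so $\nu(Z_j)$ is a single cusp $\pi(j)$. Since cusp $i=\nu(\Psi(\Delta_i))\subseteq\bigcup_j\nu(Z_j)$ for every $i$, the map $\pi\colon\{1,2,3\}\to\{1,2,3\}$ is surjective, hence a bijection. Combining $\Psi(\Delta_i)\subseteq Z_{j(i)}$ with $\nu(\Psi(\Delta_i))=$ cusp $i$ forces $\pi(j(i))=i$, i.e. $j(i)=\pi^{-1}(i)$, and therefore $\dim Z_{\pi^{-1}(i)}\ge\dim\Psi(\Delta_i)=8-r_i$. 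Reading off $(\dim Z_1,\dim Z_2,\dim Z_3)=(6,7,8)$: the case $i=3$ forces $\pi^{-1}(3)=3$ (only $Z_3$ has dimension $\ge 8$); then $i=2$ forces $\pi^{-1}(2)=2$; then $i=1$ forces $\pi^{-1}(1)=1$. Hence $\pi=\mathrm{id}$, which is the assertion.

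The step I expect to require the most care is the first one — verifying that $\nu\circ\Psi$ is literally the extended period map, i.e. that the triple Tschirnhausen construction is compatible with $\overline{\D_\rho/\Gamma_\rho}^{\rm tor}\to\overline F_\rho^{\rm KSBA}$; this should be immediate from the discussion of Kulikov periods in \Cref{subsec:limiting_periods} together with \eqref{eq:mor_tor->KSBA}, but it must be stated explicitly. Everything after that is formal: irreducibility of the $\Delta_i$ and of the $Z_j$, the two-component shape of the stable limits already established in \Cref{thm:ksba01}, the semifan ranks $r_i$, and the dimension arithmetic. If one prefers a hands-on alternative to the dimension argument, one can instead match the $\overline\sigma$-quotient of the two-component stable limit over $\Delta_i$ with the explicit model of $Z_i$ from \cite{deo.han:21} directly: for $i=1$ the two $\delpezzo_3$ components become $\P^2$ glued along a line with branch two cubics, exactly $Z_1$; for $i=2,3$ the contractions of the $\delpezzo_1$-type components produce the weighted projective pieces $\P(3,1,1)$ and the weighted $\P^1$-bundle over the orbinodal base, but this requires a short weighted-blow-up computation that the dimension argument above sidesteps.
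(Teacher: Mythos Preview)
Your argument is correct, but the paper's proof is considerably more direct. The paper switches to the weighted admissible covers compactification of \(\mathcal H\) (with branch weights \(1/6+\epsilon\)) from \cite{deo:14}, for which \cite[Theorem~3]{deo.han:21} gives a \emph{regular} surjection \(\overline{\mathcal H}\to\Y_\rho\); then \cite[Table~1]{deo.han:21} already identifies \(Z_i\) as the image of the boundary divisor with dual graph \(\Gamma_i\). With that identification in hand, \Cref{prop:period_01} finishes the proof immediately. So the paper's route is ``use the existing classification in \cite{deo.han:21} to say \(Z_i=\Psi(\Delta_i)\), then apply \Cref{prop:period_01}'', with no dimension-counting or pigeonhole step needed.

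Your approach replaces the appeal to \cite[Table~1]{deo.han:21} by an internal argument: you only use that each \(\Psi(\Delta_i)\) lands in the reducible-\(Y\) locus (from the two-component stable limits in \Cref{subsec:stable_model_01}) and then match \(Z_j\) to cusp \(j\) via the semifan ranks \(r_i\) of \Cref{thm:ksba01} and the dimensions \(\dim Z_j=6,7,8\). This is longer but more self-contained---it never requires knowing the explicit stable quotient geometry of the \(Z_j\), only their dimensions and irreducibility. The trade-off is that you rely on \Cref{thm:ksba01}, which in the paper logically precedes \Cref{thm:cuspmap}, so the dependence is fine; the paper's proof instead avoids \Cref{thm:ksba01} altogether and is independent of the semifan computation. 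Your final paragraph's ``hands-on alternative'' (matching the \(\overline\sigma\)-quotients directly to the descriptions of the \(Z_i\)) is essentially what the paper does, except that the paper outsources that matching to \cite{deo.han:21}.
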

\begin{proof}
  We use the same Hurwitz space \(\mathcal H\), but its compactification \(\overline{\mathcal H}\) as the moduli space of weighted admissible covers, where we assign weight \(1/6+\epsilon\) to every branch point.
  This space was constructed in \cite{deo:14} and used extensively in \cite{deo.han:21}.
  By \cite[Theorem~3]{deo.han:21}, we have a regular surjective morphism \[\overline{\mathcal H} \to \Y_{\rho}.\]
  From \cite[Table~1]{deo.han:21}, we see that \(Z_i\) is the image of the boundary divisor of \(\overline{\mathcal H}\) corresponding to the dual graph \(\Gamma_i\).
  The statement now follows from \Cref{prop:period_01}.
\end{proof}

\section{Moduli space 3: \texorpdfstring{\(g=3\)}{g=3} and \texorpdfstring{\((n,k) = (1,1)\)}{(n,k) = (1,1)}}\label{sec:n1k1}
In this section, we identify the KSBA compactification ofthe moduli space of K3 surfaces with a non-symplectic automorphism of order 3 with \(1\) isolated fixed points and \(1\) fixed curve (so \(n = 1\) and \(k = 1\)).

Let \((X,\sigma)\) be generic K3 surface with an automorphism of order \(3\) with \(n = 1\) and \(k = 1\).
From \Cref{prop:triple-tschirnhausen-dominant}, we know that \((X,\sigma)\) arises from the pinched triple Tschirnhausen construction applied to \((\phi \colon C \to \P^1, p)\), where \(C\) is a smooth curve of genus \(3\).
For a generic \(\phi\), the Tschirnhausen embedding identifies \(C\) as a divisor in \(\F_1\) of class \(3 \sigma + 4 f\).
In the literature, there are two other explicit constructions of \(X\): \cite[Proposition~4.9]{art.sar:08} and \cite[Example~4.10]{ma.oha.tak:15}, both of which are equivalent to our construction.




Fix an isometry of \(H^2(X, \Z)\) with the K3 lattice \(L\) and let \(\rho\) be the automorphism of \(L\) induced by \(\sigma\).
We recall the Hodge type of \(\rho\) from \Cref{tab:ASlattices}.
The lattice \(S_{\rho} \subset H^2(X,\Z)\) of \(\rho\)-fixed vectors is given by
\[ S_{\rho} = U(3) \oplus A_2.\]
Its orthogonal complement \(T_{\rho}\) is
\[ T_{\rho} = U \oplus U(3) \oplus E_6 \oplus E_8.\]
The automorphism \(\rho\) acts on the \(U \oplus U(3)\) summand as described in \Cref{subsec:eis_cusp}, and on the \(E_6\) and \(E_8\) summands by the unique order 3 automorphism without non-zero fixed vectors \cite[Ch~II~\S~2.6]{con.slo:99}.

\begin{remark}\label{rem:srhobasis11}
  We describe a basis for \(S_{\rho} = U(3) \oplus A_2\).
  Let \(\widehat \F_1\) be the blow-up of \(\F_1\) at the three points of \(C\) over \(p\).
  Recall that \(X\) is obtained by taking a normalised triple cover of \(\F_1\) and then blowing down the pre-image of the proper transform of the fiber over \(p\).
  Let \(\sigma \subset \widehat \F_1\) be the directrix of \(\F_1\) and let \(E_1,E_2,E_3 \subset \widehat \F_1\) be the exceptional divisors of the blow-up.
  For a curve \(\alpha \subset \widehat \F_1\), use \(\widetilde{\alpha}\) to denote the image in \(X\) of the pre-image of \(\alpha\) in the triple cover.
  Then \[S_{\rho} = \langle \widetilde{\sigma}, \widetilde{E_1}, \widetilde{E_2}, \widetilde{E_3}\rangle.\]
  Set \(e = \widetilde{E_1}+ \widetilde{E_2}+ \widetilde{E_3}\) and \(f = \widetilde{\sigma} + \widetilde{E_1} + \widetilde{E_2}\).
  Then \(e^2 = f^2 = 0\) and \(e \cdot f = 3\).
  Changing the basis of \(S_{\rho}\) to \((e,f,\widetilde{E}_1, \widetilde{E}_2)\) gives an isomorphism \(S_{\rho} \cong U(3) \oplus A_2\).
\end{remark}
\subsection{Baily--Borel cusps} \label{subsec:BB_11}
Let \(F_{\rho}^{\rm sep} = \left(\D_{\rho} \setminus \Delta_{\rho}\right) / \Gamma_{\rho}\) be the period domain for \(\rho\)-markable K3 surfaces as described in \Cref{subsec:period_map}. 
Recall from \Cref{thm:t11-t21cusps}, the classification of cusps for the Baily--Borel compactification \(\overline{\D_{\rho}/\Gamma_{\rho}}^{\rm BB}\) by the root sublattice of \(J^{\perp}_{T_{\rho}}/J\), which is one of:
\begin{enumerate}
  \item \(E_6 \oplus A_2^{\oplus 4}\),
  \item \(E_8 \oplus A_2^{\oplus 3}\),
  \item \(E_6^{\oplus 2} \oplus A_2\), 
  \item \(E_8\oplus E_6\).
\end{enumerate}

\subsection{Kulikov models}
We use the notation in \Cref{subsec:ell_fib_trigonal}.
In particular, we let \(\mathcal H\) be the moduli space of marked triple covers \((\phi \colon C \to \P^1, p)\) where \(C\) is a smooth genus 3 curve and \(\phi\) is \'etale over \(p\).
Let \(\overline {\mathcal H}\) be its compactification by marked admissible covers.
For \(i = 1,\dots,4\), we consider the boundary divisor \(\Delta_i \subset \overline {\mathcal H}\) whose generic point parametrizes a degenerate triple cover with the dual graph \(\Gamma_i\) shown in \Cref{fig:dualgraphs11}  (we indicate the marked point \(p\) by a half edge).
\begin{figure}
  \centering
  \begin{tikzpicture}[thick, scale=1.75, every node/.style={inner sep=0.2em}]
    \draw (0,0) node[circle, draw] (P1) {\phantom{0}} (1,0) node[circle, draw](P2) {\phantom{0}} (P1) edge (P2);
    \draw node[below=.25cm of P2] (P2p) {} (P2) edge (P2p);    
    \draw (0,0.75) node[circle, draw] (E1) {1} (1,0.75) node[circle, draw](E2) {\phantom{0}}
    (E1) edge[bend left=20] (E2) (E1) edge (E2) (E1) edge[bend right=20] (E2);
    \draw[->]        (0.5,0.35)   -- (0.5,0.1);
    \draw (.5,-.5) node {\(\Gamma_{1}\)};

    \begin{scope}[xshift=1.75cm]
      \draw (0,0) node[circle, draw] (P1) {\phantom{0}} (1,0) node[circle, draw](P2) {\phantom{0}} (P1) edge (P2);
      \draw node[below=.25cm of P2] (P2p) {} (P2) edge (P2p);    
      \draw
      (0,0.6) node[circle, draw] (C1) {2}
      (0,1.0) node[circle, draw] (C2) {\phantom{0}}
      (1,0.75) node[circle, draw](E2) {\phantom{0}}
      (C2) edge[bend left=20] (E2) (C1) edge[bend left=20] (E2) (C1) edge[bend right=20] (E2);
      \draw[->]        (0.5,0.35)   -- (0.5,0.1);
      \draw (.5,-.5) node {\(\Gamma_{2}\)};
    \end{scope}
    \begin{scope}[xshift=3.5cm]
      \draw (0,0) node[circle, draw] (P1) {\phantom{0}} (1,0) node[circle, draw](P2) {\phantom{0}} (P1) edge (P2);
      \draw node[below=.25cm of P2] (P2p) {} (P2) edge (P2p);
      \draw
      (1,0.6) node[circle, draw] (C2) {\phantom{0}}
      (1,1.0) node[circle, draw] (C1) {1}
      (0,0.75) node[circle, draw](E1) {1}
      (C2) edge[bend left=20] (E1) (C1) edge[bend left=20] (E1) (C1) edge[bend right=20] (E1);
      \draw[->]        (0.5,0.35)   -- (0.5,0.1);
      \draw (.5,-.5) node {\(\Gamma_{3}\)};
    \end{scope}
    \begin{scope}[xshift=5.25cm]
      \draw (0,0) node[circle, draw] (P1) {\phantom{0}} (1,0) node[circle, draw](P2) {\phantom{0}} (P1) edge (P2);
      \draw node[below=.25cm of P2] (P2p) {} (P2) edge (P2p);
      \draw
      (1,0.6) node[circle, draw] (C2) {\phantom{0}}
      (1,1.0) node[circle, draw] (C1) {1}
      (0,0.6) node[circle, draw] (E1) {2}
      (0,1.0) node[circle, draw] (E2) {\phantom{0}}
      (C2) edge (E1) (C1) edge (E1) (C1) edge (E2);
      \draw[->]        (0.5,0.35)   -- (0.5,0.1);
      \draw (.5,-.5) node {\(\Gamma_{4}\)};
    \end{scope}
  \end{tikzpicture}
  \caption{The dual graphs of the admissible covers that give Kulikov degenerations in the case \(n = 1\) and \(k = 1\).}
  \label{fig:dualgraphs11}
\end{figure}
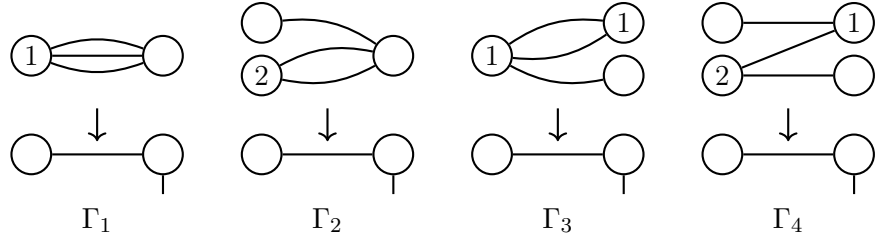

\begin{proposition}\label{prop:period_11}
  The extended period map \(\overline {\mathcal H} \dashrightarrow \overline{\D_{\rho}/\Gamma_{\rho}}^{\rm BB}\) maps \(\Delta_i\) to the \(i\)-th cusp in \Cref{subsec:BB_11}.
\end{proposition}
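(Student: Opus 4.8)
The proof will run parallel to that of \Cref{prop:period_01}. The plan is as follows. A generic point of $\Delta_i$ parametrizes a marked admissible triple cover $(\phi\colon C\to P, p)$ with $P = P_0\cup P_1$, with $\phi$ étale over the node, and with $\br\phi + 2p$ of degree $6$ on each component of $P$; by \Cref{prop:kulikov} and \Cref{rem:automorphism} this yields a Type II Kulikov surface $X = X_0\cup X_1$ equipped with the order-$3$ automorphism, hence a cusp $J = J_i\subset T_\rho$. Writing $\Lambda$ for the reduced lattice of numerically Cartier divisors on $X$, the isometry \eqref{eqn:JperpJrho} gives $\Lambda^{\rm prim}\simeq J^\perp_{T_\rho}/J$, so it suffices to compute the root sublattice of $\Lambda^{\rm prim}$ and match it with the list in \Cref{subsec:BB_11}.

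First I would apply \Cref{prop:root-sublattice-splits}: the root sublattice of $\Lambda^{\rm prim}$ is the direct sum of the root sublattices of $\Lambda_0 = H^2(X_0,\Z)^{\rm prim}$ and $\Lambda_1 = H^2(X_1,\Z)^{\rm prim}$, and each $\Lambda_j$ depends only on the restricted marked triple cover $(\phi|_{P_j}\colon C_j\to P_j,\ \{p\}\cap P_j)$. Then I would read off, from the dual graph $\Gamma_i$ in \Cref{fig:dualgraphs11}, the numerical invariants of each half-cover — the number of marked points lying on $P_j$, together with the genera and degrees over $P_j$ of the connected components of $C_j$ (the latter being the edge-multiplicities at each vertex) — and check that these are exactly rows of the table in \Cref{prop:lattices}. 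Concretely: $\Gamma_1$ gives $(m=0,\{(1,3)\})$ on one side and $(m=1,\{(0,3)\})$ on the other, so the root sublattice of $\Lambda^{\rm prim}$ is $(E_6\oplus A_2)\oplus A_2^{\oplus 3} = E_6\oplus A_2^{\oplus 4}$; $\Gamma_2$ gives $(m=0,\{(0,1),(2,2)\})$ and $(m=1,\{(0,3)\})$, hence $E_8\oplus A_2^{\oplus 3}$; $\Gamma_3$ gives $(m=0,\{(1,3)\})$ and $(m=1,\{(0,1),(1,2)\})$, hence $E_6^{\oplus 2}\oplus A_2$; and $\Gamma_4$ gives $(m=0,\{(0,1),(2,2)\})$ and $(m=1,\{(0,1),(1,2)\})$, hence $E_8\oplus E_6$. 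These are precisely the root sublattices of the four cusps of \Cref{subsec:BB_11}, listed in order.

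To justify invoking \Cref{prop:lattices} I would note that each relevant space of half-covers is an irreducible Hurwitz space (standard, as in \Cref{subsec:ell_fib_trigonal}), so that genericity of the point of $\Delta_i$ forces each half-cover to be generic in its stratum, and hence \Cref{prop:lattices} applies. Finally, the statement that $\Delta_i$ lands in the cusp labelled $i$ — rather than merely in \emph{some} cusp with the same root sublattice — is exactly \Cref{thm:t11-t21cusps}, which says a cusp of $T_\rho$ modulo $O_\rho(T_\rho)$ is uniquely determined by the root sublattice of $J^\perp/J$. I expect the only real obstacle to be the bookkeeping: transcribing the combinatorics of the $\Gamma_i$ correctly (which component carries $p$, and matching edge multiplicities to cover degrees) and confirming the genus count $g(C_0)+g(C_1)=1$ in each case. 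None of this is deep, but it must be carried out with care, since the four cusps are distinguished solely by these discrete data.
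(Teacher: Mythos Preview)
Your proposal is correct and follows exactly the paper's approach: the paper's proof simply says the argument is analogous to that of \Cref{prop:period_01}, using \Cref{prop:lattices}, and you have spelled out precisely that analogy (via \eqref{eqn:JperpJrho}, \Cref{prop:root-sublattice-splits}, and the case-by-case reading of the dual graphs). One small slip: your remark that ``$g(C_0)+g(C_1)=1$ in each case'' is not right when $C_0$ or $C_1$ is disconnected (e.g.\ in $\Gamma_2$ the left side has components of genera $2$ and $0$), but this is only a bookkeeping comment and does not affect the argument.
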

\begin{proof}
  The proof is analogous to the proof of \Cref{prop:period_01}, based on the calculation of the primitive Picard lattices in \Cref{prop:lattices}.
\end{proof}

\subsection{Stable models and the KSBA semifan}\label{subsec:stable_model_11}
We now identify the KSBA semifan \(\mathfrak F\) that gives the KSBA compactification \(\overline{F}^{\rm KSBA}_{\rho}\).
We follow the notation of \Cref{thm:ksba01}.
\begin{theorem}\label{thm:ksba11}
    The space \(\overline{F}_{\rho}^{\rm KSBA}\) is isomorphic to the semi-toroidal compactification for the following semisfan \(\mathfrak F_J\):
  \[
    \mathfrak F_J = 
    \begin{cases}
      \langle  A_2^{\oplus 4} \rangle^{\rm sat} & \text{ for the cusp with } (J^{\perp}_{T_{\rho}}/J)^{\rm root} = E_6 \oplus A_2^{\oplus 4}\\
      \langle  A_2^{\oplus 3} \rangle^{\rm sat} & \text{ for the cusp with } (J^{\perp}_{T_{\rho}}/J)^{\rm root} = E_8 \oplus A_2^{\oplus 3} \\
      \langle A_2\rangle^{\rm sat} & \text{ for the cusp with } (J^{\perp}_{T_{\rho}}/J)^{\rm root} = E_6^{\oplus 2} \oplus A_2 \\
      0 &\text{ for the cusp with } (J^{\perp}_{T_{\rho}}/J)^{\rm root} = E_8 \oplus E_6.
    \end{cases}
  \]  
\end{theorem}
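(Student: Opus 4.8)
The plan is to follow the same strategy laid out for \Cref{thm:ksba02} and \Cref{thm:ksba01}, namely steps (S1)--(S3), applied to each of the four boundary divisors \(\Delta_1,\dots,\Delta_4\subset\overline{\mathcal H}\) in turn. For each \(i\), I take a generic point \((\phi\colon C\to P,p)\) of \(\Delta_i\), producing a Type II Kulikov surface \(X=X_0\cup X_1\) via \Cref{prop:kulikov}; the component \(X_j\) is the triple Tschirnhausen surface built from the admissible cover restricted to \(P_j\) together with the marked points lying on \(P_j\), so each \(X_j\) is one of the rational surfaces classified in \Cref{prop:lattices}. The period \(\psi\in\mathcal A_J=J^\perp_{T_\rho}/J\otimes_{\Z[\zeta_3]}E\) of \(X\) is, via \Cref{prop:root-sublattice-splits} and the isogeny \(\mathcal A_J\twoheadrightarrow\Hom_{\Z[\zeta_3]}(H^2(X_0,\Z)^{\rm prim}\oplus H^2(X_1,\Z)^{\rm prim},E)\), essentially the pair of periods of the two anticanonical pairs \((X_0,D_{01})\) and \((X_1,D_{10})\). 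The core computation, as in \Cref{subsec:stable_model_01}, is to run the MMP on \((X,\epsilon R)\) --- first M1 modifications to make \(R\) nef, then contractions of \(R\)-trivial curves to make it ample --- and to read off which summands of the lattice survive as \(K^\perp\) of a del Pezzo component of the stable model.

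Concretely, I expect the following per-case analysis. For \(\Gamma_1\): both \(C_j\) are genus \(1\) curves, one side carries the marked point \(p\). Using \Cref{sec:k0-13} and \Cref{sec:k1-03} (the latter on the side with \(p\)), \(X_0\) is a blow-up of a \(\delpezzo_3\) along a \(\Z/3\)-orbit and \(X_1\) is a blow-up of \(\P^2\) along three \(\Z/3\)-orbits; the stable model contracts all the exceptional \((-1)\)-orbits (and, on the \(\P^2\) side, undoes the blow-ups entirely since \(\langle K_{\P^2}\rangle^\perp=0\)), so the period depends only on the \(E_6\) summand and \(\mathfrak F_J=\langle A_2^{\oplus4}\rangle^{\rm sat}\). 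For \(\Gamma_2\): \(X_0\) from \(C_0\) genus \(2\) with \(m=0\), type \(\{(0,1),(2,2)\}\) gives \(E_8\) (a \(\delpezzo_1\) blow-up, \Cref{sec:k0-01-22}), and \(X_1\) from \(m=1\), \(\Gamma=\{(0,3)\}\) gives \(A_2^{\oplus3}\) over a \(\P^2\); only the \(E_8\) survives, so \(\mathfrak F_J=\langle A_2^{\oplus3}\rangle^{\rm sat}\). For \(\Gamma_3\): \(X_0\) from \(m=1\), \(\{(0,1),(1,2)\}\) gives \(E_6\) (a \(\delpezzo_3\)), \(X_1\) from \(m=0\), \(\{(1,3)\}\) gives \(E_6\oplus A_2\); only the two \(E_6\)'s survive, so \(\mathfrak F_J=\langle A_2\rangle^{\rm sat}\). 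For \(\Gamma_4\): \(X_0\) from \(m=0\), \(\{(0,1),(2,2)\}\) gives \(E_8\), \(X_1\) from \(m=1\), \(\{(0,1),(1,2)\}\) gives \(E_6\); both survive and nothing is contracted beyond what the M1 modifications already undo, so \(\mathfrak F_J=0\). In each case, dominance of \(\Psi|_{\Delta_i}\) onto \(\delta_i\) (\Cref{prop:dominanceonboundary}) together with the Torelli theorem for anticanonical del Pezzo pairs (\Cref{prop:dptor}) shows that two Kulikov surfaces with periods agreeing on the non-\(A_2\) (i.e.\ the surviving del Pezzo) summands have the same stable model up to finite ambiguity, which is exactly the statement that the translates of \(\mathfrak F_J\otimes_{\Z[\zeta_3]}E\) are the fibers of \eqref{eq:torksba}.

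The main obstacle, as in the \((0,1)\) case, will be the careful bookkeeping in the MMP steps of (S2): verifying that after the prescribed M1 modifications the proper transform of \(R\) (a copy of the genus \(3\) curve \(C\), possibly reducible on the boundary) becomes nef, identifying precisely its zero locus, and checking that contracting that locus yields a surface on which \(R\) is ample with trivial dualizing sheaf --- i.e.\ a genuine KSBA stable limit. One subtlety specific to \((n,k)=(1,1)\) is the isolated \(\Z/3\)-fixed point coming from the pinching at \(p\): on the boundary divisors \(\Gamma_1\) and \(\Gamma_3\) the marked point lies on only one component, so only one of \(X_0,X_1\) carries the corresponding \((-1)\)-curve (or chain), and I must track how it interacts with the double curve \(D\) and with the del Pezzo contraction. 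The dimension count that pins down \(\rk\mathfrak F_J\) exactly (rather than just \(\mathfrak F_J\supseteq\) the \(A_2\)-part) is routine: \(\dim\Delta_i=\dim\delta_i\) and the fiber of the stabilization map is the number of \(A_2\)-summands, matching the rank of the claimed \(\mathfrak F_J\). A cross-check is that the root sublattice of \(\mathfrak F_J\) in each row of \Cref{tab:3} is exactly the parenthesized lattice, and its saturation has the \(\Z_3\)-extension (the starred lattices) precisely in the cases flagged in \Cref{tab:AScusps}; I would confirm this using the computations \((A_2)^\perp_{E_6^*}/E_6=\Z_3\) etc.\ already recorded in the proof of \Cref{thm:t11-t21cusps}.
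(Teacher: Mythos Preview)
Your proposal is correct and follows essentially the same approach as the paper's proof, which likewise treats the four cusps via the boundary divisors \(\Delta_1,\dots,\Delta_4\), runs the M1-modification/contraction procedure on each Kulikov surface, and reads off \(\mathfrak F_J\) from which del Pezzo \(K^\perp\)-lattices survive. Two minor corrections: in case \(i=1\) your claim that both \(C_j\) have genus \(1\) is a slip (the side with the marked point has \(C_1\) of genus \(0\), consistent with your use of \Cref{sec:k1-03}); and in case \(i=2\) the MMP bookkeeping hides a wrinkle you did not anticipate---after the M1 modification the divisor \(R'_1\) on \(X'_1\) is nef but not big (self-intersection \(0\)), so the entire right component contracts to a \(\P^1\) and the stable model is a \emph{non-normal} surface with the double curve folded \(2:1\) onto \(\P^1\), though your conclusion that only the \(E_8\) summand survives is correct.
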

We devote the rest of \Cref{subsec:stable_model_11} to the proof of \Cref{thm:ksba11}.
We use notation analogous to that introduced after \Cref{thm:ksba01}.
\subsubsection{The case \(i = 1\)}\label{sec:11i1}
In this case, \(C_0\) and \(C_1\) are smooth curves of genus \(1\) and \(0\), respectively.
From \Cref{sec:k0-13}, we recall that \(X_0\) is the blow-up of \(\overline X_0\), a \(\delpezzo_3\), in 3 points of a \((\Z/3\Z)\)-orbit.
From \Cref{sec:k1-03}, we recall that \(X_1\) is the blow-up of \(\overline X_1 = \P^2\) in 9 points of three \((\Z/3\Z)\)-orbits.
We also recall that \(R_0 \subset X_0\) is nef and it is trivial precisely on the exceptional curves of \(X_0 \to \overline X_0\).
Similarly, \(R_1 \subset X_1\) is also nef and it is trivial precisely on the exceptional curves of \(X_1 \to \overline X_1\).
Therefore, \(\overline X = \overline X_{0} \cup \overline X_1\) is the stable model (see \Cref{fig:kulstab111}).
By the Torelli theorem for anti-canonical pairs, the moduli of \(\overline X\) is determined (possibly up to a finite choice) by the restriction of \(\overline\psi\) to the \(E_6\)-summand.
It follows that the translates of \(A_2^{\oplus 4} \otimes_{\Z[\zeta_3]}E \subset \mathcal{A}_J\) are contracted in \(\overline F_{\rho}^{\rm KSBA}\).
Therefore, \(\mathfrak F_J \subset J^{\perp}_{T_{\rho}}/J\) is the saturation of the \(A_2^{\oplus 4}\) summand.
\begin{figure}[ht]
  \centering
  \begin{tikzpicture}
    \draw[thick]  (0,0) -- (0,2) -- (1.5,1.5) -- (1.5,-0.5) -- (0,0) -- (0,2) -- (-1.5,1.5) -- (-1.5,-0.5) -- (0,0);
    \draw[dashed] (0,1.75) -- (1.5,1.25) (0,1.70) -- (1.5,1.20) (0,1.65) -- (1.5,1.15)
    (0,1.45) -- (1.5,0.95)     (0,1.40) -- (1.5,0.90) (0,1.35) -- (1.5,0.85)
    (0,1.15) -- (1.5,0.65) (0,1.1) -- (1.5,0.6)     (0,1.05) -- (1.5,0.55);
    \draw[dashed] (0,0.2) -- (-1.5,-0.3) (0,0.3) -- (-1.5,-0.2) (0,0.4) -- (-1.5,-0.1) ;
    \begin{scope}[yshift=-0.5em]
      \draw[color=red, thick] (0,0.8) -- (0.5,0.64) (0,0.9) -- (0.5,0.74) (0,1.0) -- (0.5,0.84);
      \draw[color=red, thick] (0,0.8) -- (-0.5,0.64) (0,0.9) -- (-0.5,0.74) (0,1.0) -- (-0.5,0.84);
    \end{scope}
    \draw (1.7,0.75) node (K) {};
    \begin{scope}[xshift=5cm, yshift=-0.25cm]
      \draw[thick]  (0,0) -- (1.5,1) -- (0,2) -- (0,0) -- (-1.5,1) -- (0,2) -- (0,0);
      \draw[color=red, thick] (0,1.2) -- (0.5,1.04) (0,1.1) -- (0.5,0.94) (0,1.0) -- (0.5,0.84);
      \draw[color=red, thick] (0,1.2) -- (-0.5,1.04) (0,1.1) -- (-0.5,0.94) (0,1.0) -- (-0.5,0.84);
      \draw (-1.7, 1) node (S) {};      
    \end{scope}
    \draw[thick, ->] (K) edge (S);
  \end{tikzpicture}
  \caption{The stable model in the case \(i = 1\) is obtained by contracting a \((\Z/3\Z)\)-orbit of \((-1)\)-curves on the left (dashed) and three such \((\Z/3\Z)\)-orbits on the right (dashed).  All are disjoint from the divisor (red).}\label{fig:kulstab111}
\end{figure}
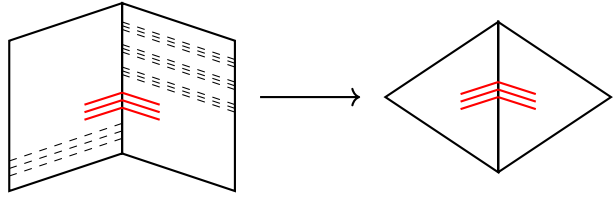
\subsubsection{The case \(i = 2\)}\label{sec:11i2}
In this case, \(C_0 = P_0 \sqcup Q_0\), where \(Q_0\) is a smooth curve of genus 2, and \(C_1\) is a smooth curve of genus \(0\).
From \Cref{sec:k0-01-22}, we know that \(X_0\) is the blow-up \(\overline X_0\), which is a \(\delpezzo_1\), in a \((\Z/3\Z)\)-fixed point.
From \Cref{sec:k1-03}, we know that \(X_1\) is the blow-up of \(\overline X_1 \cong \P^2\) in 9 points which form three \((\Z/3\Z)\)-orbits.
The divisor \(R_0 \subset X_0\) is negative on \(P_0\), which is a \((-1)\)-curve on \(X_0\).
The divisor \(R_1 \subset X_1\) is nef (and has self-intersection \(1\)).

We perform an M1 modification along \(P_0\), obtaining \(X' = X_0' \cup X_1'\).
Let \(R' \subset X'\) be the proper transform of \(R\).
Then \(R'\) is nef, so we obtain the stable model by taking the image of \(X'\) under the map given by (large multiples of) \(R'\).

Finding the stable image is a bit subtle.
The divisor \(R'_0 \subset X'_0 = \overline X_0\) is ample.
But the divisor \(R'_1 \subset X'_1\) is nef but \emph{not} big.
Indeed, its self intersection is \(0\).
It induces a contraction \(X'_1 \to \P^1\), which restricts to a degree 2 morphism on the double curve \(D = X'_0 \cap X'_1\).
Hence, the entire component \(X'_1\) is contracted to a curve in the stable model.
Owing to the contraction of \(X'_1\), the points in the fiber of the degree 2 map \(D \to \P^1\) are identified in the stable image.
The stable image is thus a non-normal surface \(\overline X\), whose normalisation is \(X'_0 = \overline X_0\); the map \(\overline X_0 \to \overline X\) is an isomorphism away from \(D\), but it folds \(D\) to a \(\P^1\) by a degree 2 map (see \Cref{fig:kulstab112}).
The smooth curve \(R'_0\) of genus \(2\) maps to a nodal curve of arithmetic genus \(2\) on \(\overline X\); its two intersection points with \(D\) are identified to form a node.

\begin{figure}[ht]
  \centering
  \begin{tikzpicture}
    \draw[thick]  (0,0) -- (0,2) -- (1.5,1.5) -- (1.5,-0.5) -- (0,0) -- (0,2) -- (-1.5,1.5) -- (-1.5,-0.5) -- (0,0);
    \draw[dashed] (0,1.75) -- (1.5,1.25) (0,1.70) -- (1.5,1.20) (0,1.65) -- (1.5,1.15)
    (0,1.45) -- (1.5,0.95)     (0,1.40) -- (1.5,0.90) (0,1.35) -- (1.5,0.85)
    (0,1.15) -- (1.5,0.65) (0,1.1) -- (1.5,0.6)     (0,1.05) -- (1.5,0.55);
    \begin{scope}[yshift=-0.5em]
      \draw[color=red, thick] (0,0.6) -- (0.5,0.44) (0,0.9) -- (0.5,0.74)  (0,1.0) -- (0.5,0.84);
      \draw[color=red, thick]  (0,0.9) -- (-0.5,0.74) (0,1.0) -- (-0.5,0.84);
      \draw[color=red, thick, dashed] (0,0.6) -- (-0.5,0.44);
    \end{scope}
    \draw (1.7,0.75) node (K) {};
    \begin{scope}[xshift=5cm, yshift=-0.25cm]
      \draw[thick]  (0,0) -- (1.5,0) -- (1.5,2) -- (0,2) -- (0,0) -- (-1.5,1) -- (0,2) -- (0,0);
      \begin{scope}[yshift=-0.5em]
        \draw[color=red, thick] (0,1.2) -- (0.5,1.04) (0,1.1) -- (0.5,0.94); 
        \draw[color=red, thick] (0,1.2) -- (-0.5,1.04) (0,1.1) -- (-0.5,0.94);
        \draw[color=red, thick]  plot[smooth, tension=1] coordinates {(0.1,0.6) (0.2,0.8) (0.5,0.8)};
        \draw[color=blue, dashed, thick] (0,0.8) -- (0.5,0.64);
      \end{scope}
      \draw (-1.7, 1) node (S) {};     
      \draw (1.7,1) node (S2) {};
    \end{scope}
    \begin{scope}[xshift=8.5cm, xscale=0.75]
     \draw[thick] plot[smooth, tension=2] coordinates {(0,0) (0.5,1.5) (1,-.25)};
     \draw[thick] (3,1.5) -- (3,0);
     \draw[thick] (1,-.25) -- (3,0);
     \draw[dashed] (0,0) -- (3,0);
     \draw[thick] (0,0) -- (1,0);
     \draw[thick] (0.5,1.5) -- (3,1.5);
     \draw[thick, red] (2,0.75) -- (3,0.75) (2,0.5) -- (3,0.75);
     \draw (-0.5,0.75) node (T) {};
   \end{scope}
    \draw[thick, dashed, ->] (K) edge (S);
    \draw[thick, ->] (S2) edge (T);
  \end{tikzpicture}
  \caption{The stable model in the case \(i = 2\) is obtained by an M1 modification along a \((-1)\)-curve on the left (dashed red) followed by a morphism that contrats the entire right-hand component, and folds the double curve into a \(\P^1\), resulting in a non-normal surface.}\label{fig:kulstab112}
\end{figure}
By the Torelli theorem for anti-canonical pairs (\Cref{prop:dptor}), the isomorphism class of the stable model is equivalent (up to a finite choice) to the isomorphism class of the anti-canonical pair \((\overline X_0, D)\), whose period is the restriction of \(\psi\) to the \(E_8\)-summand.
It follows that translates of \(A_2^{\oplus 3} \otimes_{\Z[\zeta_3]} E \subset \mathcal{A}_J\) are contracted in \(\overline F_{\rho}^{\rm KSBA}\).
Therefore, \(\mathfrak F_J \subset J^{\perp}_{T_{\rho}}/J\) is the saturation of the \(A_2^{\oplus 3}\) summand.

\subsubsection{The case \(i = 3\)}\label{sec:11i3}
In this case, \(C_0\) is a smooth curve of genus \(1\) and \(C_1 = P_1 \cup Q_1\), where \(Q_1\) is a smooth curve of genus \(1\).
From \Cref{sec:k0-13}, we know that \(X_0\) is the blow up of \(\overline X_0\), which is a \(\delpezzo_3\), in 3 points forming a \((\Z/3\Z)\)-orbit.
From \Cref{sec:k1-01-12}, we know that \(X_1\) is the blow up of \(\overline X_1\), which is a \(\delpezzo_3\).
The exceptional locus of the blow-up is a \((-1,-2,-2)\)-chain of which the first curve is \(P_1\).
Let \(E_1\) and \(E_2\) be the next two curves.
The divisor \(R_1 \subset X_1\) is negative on \(P_1\).
We do an M1 modification of \(X\) along \(P_1\), obtaining \(X' = X_0' \cup X_1'\).
The proper transform \(R' \subset X'\) is nef and trivial on the exceptional divisors of \(X_0 \to \overline X_0\) (which are unaffected by the M1 modification) and on the image \(E_1'\) of \(E_1\), which is now a \((-1)\)-curve.
Contracting these \((-1)\)-curves gives the stable model \(\overline X' = \overline X_0' \cup \overline X_1'\) (see \Cref{fig:kulstab113}).

\begin{figure}[ht]
  \centering
  \begin{tikzpicture}
    \draw[thick]  (0,0) -- (0,2) -- (1.5,1.5) -- (1.5,-0.5) -- (0,0) -- (0,2) -- (-1.5,1.5) -- (-1.5,-0.5) -- (0,0);
    \draw[dashed] (0,1.2) -- (-1.5,0.7) (0,1.3) -- (-1.5,0.8) (0,1.4) -- (-1.5,0.9) ;
    \begin{scope}[yshift=-0.3em]
      \draw[color=red, thick, dashed] (0,0.6) -- (0.5,0.44);
      \draw[dashed, thick] plot[smooth, tension=1] coordinates {(0.2,0.4) (0.5,0.55) (0.9,0.5)};
      \draw[color=red, thick] (0,0.9) -- (0.5,0.74)  (0,1.0) -- (0.5,0.84);
      \draw[color=red, thick]  (0,0.9) -- (-0.5,0.74) (0,1.0) -- (-0.5,0.84);
      \draw[color=red, thick] (0,0.6) -- (-0.5,0.44);
    \end{scope}
    \draw (1.7,0.75) node (K) {};
    \begin{scope}[xshift=5cm, yshift=-0.25cm]
      \draw[thick]  (0,0) -- (1.5,1) -- (0,2) -- (0,0) -- (-1.5,1) -- (0,2) -- (0,0);
        \draw[color=red, thick] (0,1.2) -- (0.5,1.04) (0,1.1) -- (0.5,0.94); 
        \draw[color=red, thick] (0,1.2) -- (-0.5,1.04) (0,1.1) -- (-0.5,0.94);
        \draw[color=red, thick]  plot[smooth, tension=1] coordinates {(-0.1,0.6) (-0.2,0.8) (-0.5,0.8)};
        \draw[color=blue, dashed, thick] (0,0.8) -- (-0.5,0.64);
        \draw (-1.7, 1) node (S) {};      
    \end{scope}
    \draw[thick, dashed, ->] (K) edge (S);
  \end{tikzpicture}
  \caption{The stable model in the case \(i = 3\) is obtained by an M1 modification along a \((-1)\)-curve on the right (dashed red) followed by contracting a \((-1)\)-curve on the right (dashed) and three \((-1)\)-curves on the left (dashed). The divisor is shown in red (dashed and solid).}\label{fig:kulstab113}
\end{figure}
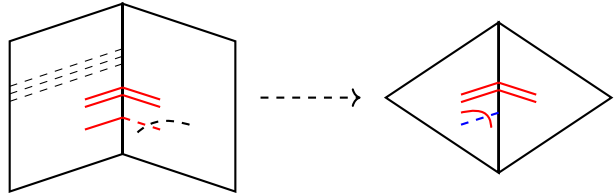
By the Torelli theorem for anti-canonical pairs (\Cref{prop:dptor}), the moduli of the stable model is determined (possibly up to a finite choice) by the restriction of \(\psi\) to the \(E_6\)-summands.
It follows that the translates of \(A_2 \otimes_{\Z[\zeta_3]} E\) are contracted in \(\overline F_{\rho}^{\rm KSBA}\).
Therefore, \(\mathfrak F_J \subset J^{\perp}_{T_{\rho}}/J\) is the saturation of the \(A_2\) summand.

\subsubsection{The case \(i = 4\)}\label{sec:11i4}
In this case, \(C_0 = P_0 \sqcup Q_0\) and \(C_1 = P_1 \sqcup Q_1\), where \(Q_0\) is a smooth curve of genus 2 and \(Q_1\) is a smooth curve of genus \(1\).
From \Cref{sec:k0-01-22}, we know that \(X_0\) is the blow-up of \(\overline X_0\), which is a \(\delpezzo_1\), in a \((\Z/3\Z)\)-fixed point.
From \Cref{sec:k1-01-12}, we know that \(X_1\) is the blow-up of \(\overline X_1\), which is a \(\delpezzo_3\), with an exceptional divisor forming a \((-1,-2,-2)\)-chain of which the first curve is \(P_1\).
The divisor \(R_0\) is negative on \(P_0\) and \(R_1\) is negative on \(P_1\).
We perform M1 modifications along \(P_0\) and \(P_1\), obtaining \(X' = X_0' \cup X_1'\).
Let \(R'\) be the proper transform of \(R\).
Then \(R' \subset X'\) is nef, and zero only on the image of the middle curve in the \((-1,-2,-2)\)-chain, which is now a \((-1)\)-curve.
Contracting this curve gives the stable model \(\overline X' = \overline X_0' \cup \overline X_1'\) (see \Cref{fig:kulstab114}).
\begin{figure}[ht]
  \centering
  \begin{tikzpicture}
    \draw[thick]  (0,0) -- (0,2) -- (1.5,1.5) -- (1.5,-0.5) -- (0,0) -- (0,2) -- (-1.5,1.5) -- (-1.5,-0.5) -- (0,0);
    \draw[color=red, thick, dashed] (0,0.6) -- (0.5,0.44);
    \draw[color=red, thick] (0,0.9) -- (0.5,0.74);
    \draw[color=red, thick]  (0,0.9) -- (-0.5,0.74) ;
    \draw[color=red, thick, dashed] (0,1.3) -- (-0.5,1.14);
    \draw[color=red, thick] (0,0.6) -- (-0.5,0.44)   (0,1.3) -- (0.5,1.14);
    \draw[dashed, thick] plot[smooth, tension=1] coordinates {(0.2,0.4) (0.5,0.55) (0.9,0.5)};
    \draw (1.7,0.75) node (K) {};
    \begin{scope}[xshift=5cm, yshift=-0.25cm]
      \draw[thick]  (0,0) -- (1.5,1) -- (0,2)-- (0,0) -- (-1.5,1) -- (0,2) -- (0,0);
      \draw[color=red, thick] (0,1.1) -- (0.5,0.94); 
      \draw[color=red, thick] (0,1.1) -- (-0.5,0.94);
      \draw[color=red, thick]  plot[smooth, tension=1] coordinates {(0.15,1.5) (0.25,1.2) (0.5,1.1)};
      \draw[color=red, thick]  plot[smooth, tension=1] coordinates {(-0.15,.6) (-0.25,.8) (-0.5,.8)};
      \draw[color=blue, dashed, thick] (0,1.4) -- (0.5,1.24) (0,0.8) -- (-0.5,0.64) ;
      \draw (-1.7, 1) node (S) {};      
    \end{scope}
    \draw[thick, dashed, ->] (K) edge (S);
  \end{tikzpicture}
  \caption{The stable model in the case \(i = 4\) is obtained by M1 modifications along the \((-1)\)-curves on the left and the right (dashed red) followed by the contraction of a \((-1)\)-curve on the right (dashed).  The divisor is shown in red (dashed and solid).}\label{fig:kulstab114}
\end{figure}
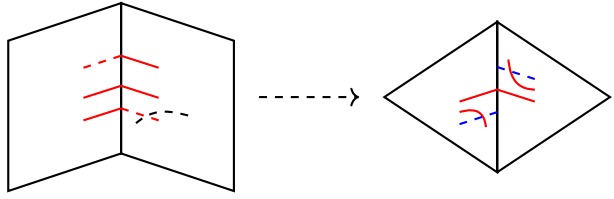
By the Torelli theorem for anti-canonical pairs (\Cref{prop:dptor}), the moduli of the stable model is determined by \(\psi\).
So there is no contraction while mapping to \(\overline F_{\rho}^{\rm KSBA}\) and we get \(\mathfrak F_J = 0\).

\begin{proof}[Conclusion of the proof of \Cref{thm:ksba11}]
  Having settled the cases of the four cusps in \Cref{sec:11i1}, \Cref{sec:11i2}, \Cref{sec:11i3}, and \Cref{sec:11i4}, the proof of \Cref{thm:ksba11} is complete.
\end{proof}

\section{Moduli space 4: \texorpdfstring{\(g=2\)}{g = 2} and \texorpdfstring{\((n,k) = (2,1)\)}{(n,k) = (2,1)}} \label{sec:n2k1}
In this section, we identify the KSBA compactification ofthe moduli space of K3 surfaces with a non-symplectic automorphism of order 3 with \(2\) isolated fixed points and \(1\) fixed curve (so \(n = 2\) and \(k = 1\)).

Let \((X,\sigma)\) be generic K3 surface with an automorphism of order \(3\) with \(n = 2\) and \(k = 1\).
From \Cref{prop:triple-tschirnhausen-dominant}, we know that \((X,\sigma)\) arises from the pinched triple Tschirnhausen construction applied to \((\phi \colon C \to \P^1, p_1,p_2)\), where \(C\) is a smooth curve of genus \(2\).
For a generic \(\phi\), the Tschirnhausen embedding identifies \(C\) as a divisor in \(\P^1 \times \P^1\) of class \((3,2)\).

\begin{remark}\label{rem:as21}
  In \cite[Proposition~4.11]{art.sar:08}, Artebani--Sarti give explicit projective equations for \(X\).
  It is the double cover of \(\P^2\) branched along a smooth plane sextic of the form
  \begin{equation}\label{eqn:artebani-sarti-21}
    F_6(x_0,x_1) + F_3(x_0,x_1) x_2^3 + b x_2^6,
  \end{equation}
  where \(F_i\) is a generic form of degree \(i\).
  We now reconcile this description with ours.
  Consider the \(\P(1,1,3)\) obtained by taking the quotient of \(\P^2\) by the order 3 automorphism
  \[ [x_0:x_1:x_2] \mapsto [x_0:x_1:\zeta_3x_2].\]
  Let \(h\) be the positive generator of the Weil divisor class group of \(\P(1,1,3)\).
  Then the cubic \eqref{eqn:artebani-sarti-21} is the pull-back of a curve of class \(6h\).
  The branch divisor of the triple cover \(\P^2 \to \P(1,1,3)\) is of class \(3h\).
  We have the diagram
  \[X \xrightarrow{2:1} \P^2 \xrightarrow{3:1} \P(1,1,3).\]
  We swap the triple and double covers (see \Cref{fig:as21} for reference).
  Let \(Y\) be the double cover of \(\P(1,1,3)\) branched along a curve of class \(6h\).
  Then \(X\) is the triple cover of \(Y\) branched along a curve that is the pull-back of a curve of class \(3h\); that is, 
  \[ X \xrightarrow{3:1} Y \xrightarrow{2:1} \P(1,1,3).\]
  To understand \(Y\), let \(\F_3 \to \P(1,1,3)\) be the minimal resolution of the \(\frac{1}{3}(1,1)\) singularity.
  Observe that the pull-back of the Cartier divisor \(3h\) to \(\F_3\) is \(\sigma + 3f\).
  Set \(\widetilde Y = \F_3 \times_{\P(1,1,3)} Y\).
  Then \(\widetilde Y \to \F_3\) is a double cover branched along a curve \(B \subset \F_3\) of class \(2\sigma+6f\) (the curve \(B\) is drawn in red in \Cref{fig:as21}).
  The surface \(\widetilde Y\) contains two disjoint \(-3\) curves, say \(\widetilde{\sigma}_1\) and \(\widetilde{\sigma}_2\); these are the pre-images of the directrix \(\sigma \subset \F_3\).
  The map \(\widetilde Y \to Y\) contracts them.
  The curve \(B\) is hyperelliptic of genus 2; its projection \(B \to \P^1\) has 6 branch points.
  Therefore, the composite \(\widetilde Y \to \F_3 \to \P^1\) is generically a \(\P^1\)-bundle with 6 nodal fibers of the form \(\P^1 \cup \P^1\).
  Let \(L \subset \P(1,1,3)\) be a general curve of class \(3h\) .
  Then its pre-image in \(\F_3\) is a general curve of class \(\sigma + 3f\) (drawn in blue in \Cref{fig:as21}).
  It intersects \(B\) in 6 points (distinct from the 6 Weierstrass points of \(B\)).
  The pre-image \(\widetilde C\) of \(L\) in \(\widetilde Y\) (also blue) is a hyperelliptic curve of genus 2, which is disjoint from the nodes of the 6 reducible fibers of \(\widetilde Y \to \P^1\).
  Contract a rational component of each of the 6 reducible fibers of \(\widetilde Y \to \P^1\) so that three of the contracted components meet \(\widetilde{\sigma}_1\) and three meet \(\widetilde{\sigma}_2\) (drawn as dashed lines in \Cref{fig:as21}).
  For \(i = 1,2\), let \(\sigma_i\) be the images of \(\widetilde \sigma_i\).
  Observe that the contracted surface is a \(\P^1\)-bundle over \(\P^1\) with sections \(\sigma_i\) satisfying \(\sigma_i^2 = 0\).
  Therefore, it is \(\P^1 \times \P^1\).
  The image of \(C\) of \(\widetilde C\) in \(\P^1 \times \P^1\) is a curve of class \(3 \sigma_i + 2 f\).
  Let \(C \to \P^1\) be the degree 3 projection and let \(p_{i} \in \P^1\) be the image of \(\sigma_i\).
  Then we see that \(X\) is obtained from \((C \to \P^1, p_1,p_2)\) by the triple Tschirnhausen construction pinched at \(p_1\) and \(p_2\).
\end{remark}
\begin{figure}[ht] 
  \begin{tikzpicture}[yscale=2, xscale=2]
    \draw (0.6,0) node (F3)[below] {\(\F_3\)};
    \draw(0.6,-1) node (P113) {\(\P(1,1,3)\)};
    \draw[thick]
    (0,0) -- (0,1) -- (1.2,1) -- (1.2,0) -- (0,0);
    \draw (0,0.8) -- (1.2,0.8) node[right] {\tiny\(\sigma\)};
    \draw[red] plot[smooth, tension=2] coordinates {(0,.5) (0.1, .4) (0,.3)};
    \draw[red] (0.2,0.4) circle (.1) (0.4,0.4) circle (0.1) (0.6,0.4) circle (0.1) (0.8,0.4) circle (0.1) (1.0,0.4) circle (0.1);
    \draw[red] plot[smooth, tension=2] coordinates {(1.2,.5) (1.1, .4) (1.2,.3)};
    \draw (1.2, 0.4) node[right] {\tiny \(B\)};
    \draw[blue] plot[smooth] coordinates {(0,0.1) (0.2,0.4) (0.4,0.1) (0.6,0.4) (0.8,0.1) (1.0,0.4) (1.2,0.1)};
    \draw (1.2,0.1) node[right] {\tiny \(L\)};
    \begin{scope}[xshift=-2cm]
      \draw(0.6,0) node (Yt) [below] {\(\widetilde Y\)};
      \draw(0.6,-1) node (Y) {\(Y\)};
      \draw[thick]
      (0,0) -- (0,1) -- (1.2,1) -- (1.2,0) -- (0,0);
      \draw (0,0.8) -- (1.2,0.8) node[right] {\tiny\(\widetilde{\sigma}_1\)};
      \draw (0,0.2) -- (1.2,0.2) node[right] {\tiny\(\widetilde{\sigma}_2\)};
      \draw[dashed](0.08,1.0) -- (0.15,.45);
      \draw (0.15,0.55) -- (0.08,0.0);
      \draw[blue] plot[smooth, tension=2] coordinates {(0,0.6) (.05,0.5) (0,0.4)};
      \draw[blue] (0.15, 0.5) circle (0.1);
      \begin{scope}[xshift=0.2cm]
        \draw[dashed](0.08,1.0) -- (0.15,.45);
        \draw (0.15,0.55) -- (0.08,0.0);
        \draw[blue] (0.15, 0.5) circle (0.1);
      \end{scope}
      \begin{scope}[xshift=0.4cm]
        \draw[dashed](0.08,1.0) -- (0.15,.45);
        \draw (0.15,0.55) -- (0.08,0.0);
        \draw[blue] (0.15, 0.5) circle (0.1);
      \end{scope}
      \begin{scope}[xshift=0.6cm]
        \draw(0.08,1.0) -- (0.15,.45);
        \draw[dashed](0.15,0.55) -- (0.08,0.0);
        \draw[blue] (0.15, 0.5) circle (0.1);
      \end{scope}
      \begin{scope}[xshift=0.8cm]
        \draw(0.08,1.0) -- (0.15,.45);
        \draw[dashed](0.15,0.55) -- (0.08,0.0);
           \draw[blue] (0.15, 0.5) circle (0.1);
      \end{scope}
      \begin{scope}[xshift=1.0cm]
        \draw(0.08,1.0) -- (0.15,.45);
        \draw[dashed](0.15,0.55) -- (0.08,0.0);
      \end{scope}
      \draw[blue] plot[smooth, tension=2] coordinates {(1.2,0.6) (1.05,0.5) (1.2,0.4)};
      \draw (1.2,0.5) node[right] {\tiny\(\widetilde C\)};
    \end{scope}
    \begin{scope}[xshift=-4cm]
          \draw[thick]
          (0,0) -- (0,1) -- (1.2,1) -- (1.2,0) -- (0,0);
      \draw[blue] plot[smooth, tension=2] coordinates {(0,0.6) (.05,0.5) (0,0.4)};
      \draw[blue] (0.15, 0.5) circle (0.1);
      \draw[blue] (0.35, 0.5) circle (0.1);
      \draw[blue] (0.55, 0.5) circle (0.1);
      \draw[blue] (0.75, 0.5) circle (0.1);
      \draw[blue] (0.95, 0.5) circle (0.1);                  
      \draw[blue] plot[smooth, tension=2] coordinates {(1.2,0.6) (1.05,0.5) (1.2,0.4)};
      \draw[fill] (0.15,0.6) circle (0.02) edge [->, dashed, bend right=30,shorten <=.1cm, shorten >=.1cm] (-0.5,0.6);
      \draw[fill] (0.35,0.6) circle (0.02) edge [->, dashed, bend right=30,shorten <=.1cm, shorten >=.1cm] (-0.5,0.6);
      \draw[fill] (0.55,0.6) circle (0.02) edge [->, dashed, bend right=30,shorten <=.1cm, shorten >=.1cm] (-0.5,0.6);
      \draw[fill] (0.75,0.4) circle (0.02) edge [->, dashed, bend right=-30,shorten <=.1cm, shorten >=.1cm] (-0.5,0.4);
      \draw[fill] (0.95,0.4) circle (0.02) edge [->, dashed, bend right=-30,shorten <=.1cm, shorten >=.1cm] (-0.5,0.4);
      \draw[fill] (1.15,0.4) circle (0.02) edge [->, dashed, bend right=-30,shorten <=.1cm, shorten >=.1cm] (-0.5,0.4);
      \draw (1.2,0.5) node[right] {\tiny\(C\)};
      \draw[thick] (-0.5,1) -- (-0.5,0) node[below] {\(\P^1\)} ;
      \draw[fill] (-0.5,0.6) circle (0.02) node (p1) [left] {\tiny\(p_1\)};
      \draw[fill] (-0.5,0.4) circle (0.02) node (p2) [left] {\tiny\(p_2\)};
      \draw (0.6, 0.0) node (P1P1) [below] {\(\P^1 \times \P^1\)};
       \draw(0.6,-1.0) node (X) {\(X\)};
     \end{scope}
     \draw(X) edge[->] node[above]{\tiny \(3:1\)} (Y)
     (Y) edge[->] node[above]{\tiny\(2:1\)} (P113);
     \draw(F3) edge[->] (P113);
     \draw(Yt) edge[->] node[below]{\tiny \(2:1\)} (F3);
     \draw(Yt) edge[->] (Y);
     \draw(Yt) edge[->] node[below]{\tiny contract dashed}(P1P1);
     \draw(X) edge[->, dashed] node[left]{\tiny \(3:1\)} (P1P1);
   \end{tikzpicture}
   \caption{The figure above reconciles Artebani--Sarti's description of K3s of type \((2,1)\) with the triple Tschirnhausen construction.  See \Cref{rem:as21}.}\label{fig:as21}
\end{figure}
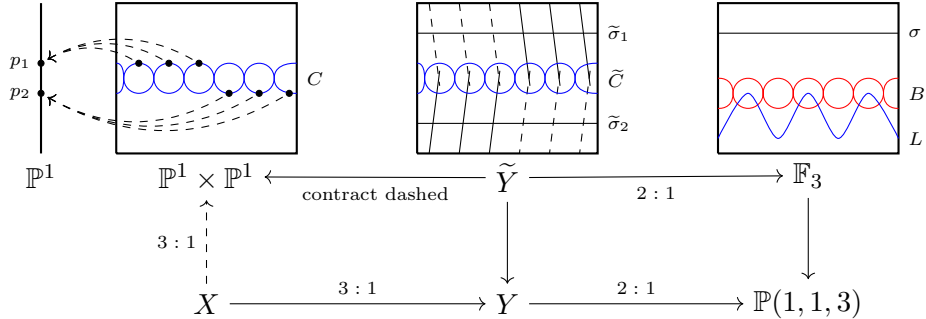
\begin{remark}
  Contracting the six \((-1)\)-curves on \(\widetilde Y\) intersecting \(\widetilde\sigma_2\) yields an \(\F_3\).
  This transformation reconciles our construction with \cite[\S~8.1]{ma.oha.tak:15}.
\end{remark}

Fix an isometry of \(H^2(X, \Z)\) with the K3 lattice \(L\) and let \(\rho\) be the automorphism of \(L\) induced by \(\sigma\).
We recall the Hodge type of \(\rho\) from \Cref{tab:ASlattices}.
The lattice \(S_{\rho} \subset H^2(X, \Z)\) of \(\rho\)-fixed vectors is given by
\[ S_{\rho} = U(3) \oplus A_2^{\oplus 2}. \]
Its orthogonal complement \(T_{\rho}\) is
\[ T_{\rho} = U \oplus U(3) \oplus E_{6}^{\oplus 2}.\]
The automorphism \(\rho\) acts on \(U \oplus U(3)\) as described in \Cref{subsec:eis_cusp} and on the \(E_6\)-summand by the unique order 3 automorphism without non-zero fixed vectors \cite[Ch~II~\S~2.6]{con.slo:99}.

\begin{remark}\label{rem:srhobasis21}
  We describe a basis for \(S_{\rho} = U(3) \oplus A_2^{\oplus 2}\) analogous to \Cref{rem:srhobasis11}.
  Let \(\widehat Y \to \P^1 \times \P^1\) be the blow-up in the six points points of \(C\) that map to \(p_1\) or \(p_2\) (this notation is consistent with \Cref{fig:as21}).
  Let \(E_1,E_2,E_3 \subset \widetilde Y\) be the exceptional curves over \(p_1\) and \(F_1,F_2,F_3 \subset \widetilde Y\) over \(p_2\).
  Recall that \(X\) is obtained from \(\widetilde Y\) by taking a triple cover and blowing down two \(-1\) curves.
  For a curve \(\alpha \in \widetilde Y\), let \(\widetilde \alpha\) be the image in \(X\) of the pre-image of \(\alpha\) in the triple cover of \(\widetilde Y\).
  Let \(x \in \P^1 \times \P^1\) be the image of \(E_1\).
  There are two ruling lines of \(\P^1 \times \P^1\) passing through \(x\): one maps to \(p_1\) and the complementary one.
  Let \(\sigma \subset \widetilde Y\) be the proper transform of the complementary one.
  Then \(S_{\rho}\) is spanned by  \(\widetilde \sigma, \widetilde E_1, \widetilde E_2, \widetilde E_3, \widetilde F_1, \widetilde F_2, \widetilde F_3\) modulo the relation \(\sum \widetilde E_i = \sum \widetilde F_i\).
  Set
  \begin{align*}
    e &= \widetilde E_1 + \widetilde E_2 + \widetilde E_3, \text{ and }\\
    f &= \widetilde \sigma + \widetilde F_1 + \widetilde F_2. 
  \end{align*}
  Then we have \[S_{\rho} = \langle e,f \rangle \oplus \langle \widetilde E_2, \widetilde E_3 \rangle \oplus \langle \widetilde F_2, \widetilde{F}_3 \rangle \cong U(3) \oplus A_2 \oplus A_2.\]
\end{remark}
\subsection{Baily--Borel cusps} \label{subsec:BB_21}
Let \(F_{\rho}^{\rm sep} = \left(\D_{\rho} \setminus \Delta_{\rho}\right) / \Gamma_{\rho}\) be the period domain for \(\rho\)-markable K3 surfaces as described in \Cref{subsec:period_map}. 
Recall from \Cref{thm:t11-t21cusps} the classification of cusps for the Baily--Borel compactification \(\overline{\D_{\rho}/\Gamma_{\rho}}^{\rm BB}\) by the root sublattice of \(J^{\perp}_{T_{\rho}}/J\), which is one of:
\begin{enumerate}
   \item \(A_2^{\oplus 6}\),
   \item \(E_6 \oplus A_2^{\oplus 3}\),
   \item \(E_6^{\oplus 2}\), 
   \item \(E_8 \oplus A_2\).
\end{enumerate}

\subsection{Kulikov models}\label{subsec:kulikov_21}
We use the notation in \Cref{subsec:ell_fib_trigonal}.
In particular, we let \(\mathcal H\) be the moduli space of marked triple covers \((\phi \colon C \to \P^1, p_1,p_2)\), where \(C\) is a smooth curve of genus 2 and \(\phi\) is \'etale over \(p_1\) and \(p_2\).
Let \(\overline{\mathcal H}\) be its compactification by marked admissible covers.
For \(i = 1, \dots, 5\), we consider the boundary divisor \(\Delta_i \subset \overline{\mathcal H}\) whose generic point parametrises a degenerate triple cover with the dual graph \(\Gamma_i\) shown in \Cref{fig:dualgraphs21}.

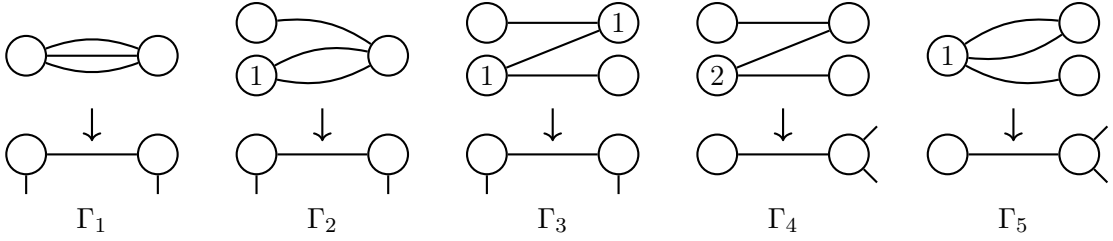
\begin{figure}
  \centering
  \begin{tikzpicture}[thick, scale=1.75, every node/.style={inner sep=0.2em}]
    \draw (0,0) node[circle, draw] (P1) {\phantom{0}} (1,0) node[circle, draw](P2) {\phantom{0}} (P1) edge (P2);
    \draw node[below=.25cm of P2] (P2p) {} (P2) edge (P2p);
    \draw node[below=.25cm of P1] (P1p) {} (P1) edge (P1p);    
    \draw (0,0.75) node[circle, draw] (E1) {\phantom{0}} (1,0.75) node[circle, draw](E2) {\phantom{0}}
    (E1) edge[bend left=20] (E2) (E1) edge (E2) (E1) edge[bend right=20] (E2);
    \draw[->]        (0.5,0.35)   -- (0.5,0.1);
    \draw (.5,-.5) node {\(\Gamma_{1}\)};

    \begin{scope}[xshift=1.75cm]
      \draw (0,0) node[circle, draw] (P1) {\phantom{0}} (1,0) node[circle, draw](P2) {\phantom{0}} (P1) edge (P2);
      \draw node[below=.25cm of P2] (P2p) {} (P2) edge (P2p);
      \draw node[below=.25cm of P1] (P1p) {} (P1) edge (P1p);    
      \draw
      (0,0.6) node[circle, draw] (C1) {1}
      (0,1.0) node[circle, draw] (C2) {\phantom{0}}
      (1,0.75) node[circle, draw](E2) {\phantom{0}}
      (C2) edge[bend left=20] (E2) (C1) edge[bend left=20] (E2) (C1) edge[bend right=20] (E2);
      \draw[->]        (0.5,0.35)   -- (0.5,0.1);
      \draw (.5,-.5) node {\(\Gamma_{2}\)};
    \end{scope}
    \begin{scope}[xshift=3.5cm]
      \draw (0,0) node[circle, draw] (P1) {\phantom{0}} (1,0) node[circle, draw](P2) {\phantom{0}} (P1) edge (P2);
      \draw node[below=.25cm of P2] (P2p) {} (P2) edge (P2p);
      \draw node[below=.25cm of P1] (P1p) {} (P1) edge (P1p);    
      \draw
      (1,0.6) node[circle, draw] (C2) {\phantom{0}}
      (1,1.0) node[circle, draw] (C1) {1}
      (0,0.6) node[circle, draw] (E1) {1}
      (0,1.0) node[circle, draw] (E2) {\phantom{0}}
      (C2) edge (E1) (C1) edge (E1) (C1) edge (E2);
      \draw[->]        (0.5,0.35)   -- (0.5,0.1);
      \draw (.5,-.5) node {\(\Gamma_{3}\)};
    \end{scope}
    \begin{scope}[xshift=5.25cm]
      \draw (0,0) node[circle, draw] (P1) {\phantom{0}} (1,0) node[circle, draw](P2) {\phantom{0}} (P1) edge (P2);
      \draw node[below right=.25cm of P2] (P2p1) {} (P2) edge (P2p1);
      \draw node[above right=.25cm of P2] (P2p2) {} (P2) edge (P2p2);
      \draw
      (1,0.6) node[circle, draw] (C2) {\phantom{0}}
      (1,1.0) node[circle, draw] (C1) {\phantom{0}}
      (0,0.6) node[circle, draw] (E1) {2}
      (0,1.0) node[circle, draw] (E2) {\phantom{0}}
      (C2) edge (E1) (C1) edge (E1) (C1) edge (E2);
      \draw[->]        (0.5,0.35)   -- (0.5,0.1);
      \draw (.5,-.5) node {\(\Gamma_{4}\)};
    \end{scope}
    \begin{scope}[xshift=7.0cm]
      \draw (0,0) node[circle, draw] (P1) {\phantom{0}} (1,0) node[circle, draw](P2) {\phantom{0}} (P1) edge (P2);
      \draw node[below right=.25cm of P2] (P2p1) {} (P2) edge (P2p1);
      \draw node[above right=.25cm of P2] (P2p2) {} (P2) edge (P2p2);
      \draw
      (1,0.6) node[circle, draw] (C2) {\phantom{0}}
      (1,1.0) node[circle, draw] (C1) {\phantom{0}}
      (0,0.75) node[circle, draw](E1) {1}
      (C2) edge[bend left=20] (E1) (C1) edge[bend left=20] (E1) (C1) edge[bend right=20] (E1);
      \draw[->]        (0.5,0.35)   -- (0.5,0.1);
      \draw (.5,-.5) node {\(\Gamma_{5}\)};
    \end{scope}
  \end{tikzpicture}
  \caption{The dual graphs of the admissible covers that give Kulikov degenerations in the case \(n = 2\) and \(k = 1\).}
  \label{fig:dualgraphs21}
\end{figure}

\begin{proposition}\label{prop:period_21}
  The extended period map \(\overline {\mathcal H} \dashrightarrow \overline{\D_{\rho}/\Gamma_{\rho}}^{\rm BB}\) maps \(\Delta_i\) to the \(i\)-th cusp in \Cref{subsec:BB_21} for \(i = 1, \dots, 4\).
  It maps \(\Delta_5\) to the 2nd cusp.
\end{proposition}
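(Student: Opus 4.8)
The plan is to argue exactly as for \Cref{prop:period_01} and \Cref{prop:period_11}. Fix $i \in \{1, \dots, 5\}$ and let $(\phi \colon C_0 \cup C_1 \to P_0 \cup P_1, p_1, p_2)$ be the marked admissible cover given by a generic point of $\Delta_i$, with dual graph $\Gamma_i$ from \Cref{fig:dualgraphs21}. I would first check that $\Gamma_i$ meets the hypotheses of \Cref{prop:kulikov}: the cover is \'etale over the node of $P$ (the node has three distinct preimages in $C$, visible as the three edges through the node in $\Gamma_i$), and $\br\phi + 2(p_1 + p_2)$ has total degree $12$ with degree $6$ on each component of $P$ (a Riemann--Hurwitz count on $C_0 \to P_0$ and $C_1 \to P_1$ using the local degrees recorded by the edge multiplicities, plus $2$ for each pinch point on the component). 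Hence the pinched triple Tschirnhausen construction produces a type II Kulikov surface $X = X_0 \cup X_1$ carrying the order-$3$ automorphism $\sigma$, and the extended period map sends the point of $\Delta_i$ to the Baily--Borel cusp $J \subset T_\rho$ for which $\Lambda^{\rm prim} \cong J^\perp_{T_\rho}/J$ by \eqref{eqn:JperpJrho}, where $\Lambda$ is the reduced lattice of numerically Cartier divisors on $X$.

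Next I would pin down $J$ via its root sublattice. By \Cref{prop:root-sublattice-splits} the root sublattice of $\Lambda^{\rm prim}$, hence of $J^\perp_{T_\rho}/J$, is the direct sum of the root sublattices of $H^2(X_0, \Z)^{\rm prim}$ and $H^2(X_1, \Z)^{\rm prim}$, and \Cref{prop:lattices} computes each of these from the genus-and-degree profile of $C_j \to P_j$ together with the number of pinch points on $P_j$. Reading this data off $\Gamma_i$ gives: for $\Gamma_1$, both components of type $(m, \Gamma) = (1, \{(0,3)\})$, hence $A_2^{\oplus 3} \oplus A_2^{\oplus 3} = A_2^{\oplus 6}$; for $\Gamma_2$, one component of type $(1, \{(0,1),(1,2)\})$ and one of type $(1, \{(0,3)\})$, hence $E_6 \oplus A_2^{\oplus 3}$; for $\Gamma_3$, both of type $(1, \{(0,1),(1,2)\})$, hence $E_6^{\oplus 2}$; for $\Gamma_4$, one of type $(2, \{(0,1),(0,2)\})$ and one of type $(0, \{(0,1),(2,2)\})$, hence $A_2^{\oplus 2} \oplus E_8$; and for $\Gamma_5$, one of type $(2, \{(0,1),(0,2)\})$ and one of type $(0, \{(1,3)\})$, hence $A_2^{\oplus 2} \oplus (E_6 \oplus A_2) = E_6 \oplus A_2^{\oplus 3}$. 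By \Cref{thm:t11-t21cusps} a cusp of $T_\rho$ modulo $O_\rho(T_\rho)$ is determined by the root sublattice of $J^\perp/J$; matching the five outcomes above against the list in \Cref{subsec:BB_21} identifies $\Delta_i$ with the $i$-th cusp for $i = 1, \dots, 4$ and $\Delta_5$ with the second cusp.

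I expect the argument to be essentially routine; the only point demanding care is the bookkeeping in the second paragraph --- correctly recording, for each $\Gamma_i$, which curve-components of $C$ lie over which component of $P$, reading edge multiplicities as local degrees and half-edges as pinch points, and then matching to the right row of \Cref{prop:lattices}. One must also note that the generic such degenerate cover is ``general'' in the sense required by \Cref{prop:lattices}, which holds because the space of marked triple covers with fixed numerical invariants is irreducible. The one genuinely noteworthy feature is that $\Delta_2$ and $\Delta_5$ land on the same (second) cusp: this is not a contradiction, since the extended period map to $\overline{\D_\rho/\Gamma_\rho}^{\rm BB}$ need not be injective on boundary divisors, and $\Delta_2, \Delta_5$ simply record two different type II Kulikov models sitting over that cusp --- a distinction that will matter when we compute the stable models and the KSBA semifan in the sequel.
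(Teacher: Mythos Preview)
Your proposal is correct and follows exactly the approach the paper takes: the paper's proof simply reads ``analogous to the proof of \Cref{prop:period_01}, based on the calculation of the primitive Picard lattices in \Cref{prop:lattices},'' and your write-up is a faithful (and careful) expansion of that one line, with all the bookkeeping from \Cref{fig:dualgraphs21} carried out correctly, including the identification of $\Gamma_5$ with the second cusp.
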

\begin{proof}
  The proof is analogous to the proof of \Cref{prop:period_01}, based on the calculation of the primitive Picard lattices in \Cref{prop:lattices}.
\end{proof}

\subsection{Stable models and the KSBA semifan}\label{subsec:stable_model_21}
We now identify the KSBA semifan \(\mathfrak F\) that gives the KSBA compactification \(\overline{F}^{\rm KSBA}_{\rho}\).
We follow the notation of \Cref{thm:ksba01}.
\begin{theorem}\label{thm:ksba21}
    The space \(\overline{F}_{\rho}^{\rm KSBA}\) is isomorphic to the semi-toroidal compactification for the following semisfan \(\mathfrak F_J\):
  \[
    \mathfrak F_J = 
    \begin{cases}
      \langle  A_2^{\oplus 6} \rangle^{\rm sat} & \text{ for the cusp with } (J^{\perp}_{T_{\rho}}/J)^{\rm root} = A_2^{\oplus 6}\\
      \langle  A_2^{\oplus 3} \rangle^{\rm sat} & \text{ for the cusp with } (J^{\perp}_{T_{\rho}}/J)^{\rm root} = E_6 \oplus A_2^{\oplus 3} \\
      0 & \text{ for the cusp with } (J^{\perp}_{T_{\rho}}/J)^{\rm root} = E_6^{\oplus 2}\\
      \langle A_2 \rangle^{\rm sat} &\text{ for the cusp with } (J^{\perp}_{T_{\rho}}/J)^{\rm root} = E_8 \oplus A_2.
    \end{cases}
  \]  
\end{theorem}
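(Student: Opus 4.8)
The proof runs the three-step strategy of \Cref{subsec:stable_model_02} over the five boundary divisors $\Delta_1,\dots,\Delta_5 \subset \overline{\mathcal H}$ whose generic points carry the dual graphs $\Gamma_1,\dots,\Gamma_5$ of \Cref{fig:dualgraphs21}. By \Cref{prop:period_21}, $\Delta_1,\dots,\Delta_4$ map to the four cusps in order while $\Delta_5$ also maps to the second cusp, so the analyses of $\Delta_2$ and $\Delta_5$ must produce the same semifan there; I will treat all five and verify this compatibility at the end. For each $\Delta_i$, let $\phi\colon C_0\cup C_1 \to P_0\cup P_1$ be a generic point, $X = X_0\cup X_1$ the associated type II Kulikov surface (\Cref{prop:kulikov}), $R = C$ the flat limit of the genus $2$ fixed curve, and $R = R_0\cup R_1$ with $R_j\subset X_j$; since $g=2$, $R$ is the image of all of $\widehat C$ (\Cref{prop:fixedlocus}).

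The heart of the argument is step \ref{step:stabilisation}, carried out for each $\Delta_i$. Each $X_j$ is built by the triple Tschirnhausen construction from the part of $\phi$ over $P_j$, so \Cref{prop:lattices} and the case-by-case descriptions of \Cref{sec:lattices} apply: $X_j$ is a blow-up of $\P^2$ or of a del Pezzo $\overline X_j$ of degree $1$ or $3$, and those descriptions record which components of $R_j$ fail to be nef (always the ``section'' curve $\widetilde\sigma$, alone or as the head of a $(-1,-2,-2)$-chain) and which $(-1)$-curves, or $\Z/3\Z$-orbits of them, $R_j$ is trivial on. The recipe is uniform, as in the proofs of \Cref{thm:ksba01} and \Cref{thm:ksba11}: perform the M1 modifications along those $\widetilde\sigma$'s to reach a Kulikov model $X' = X_0'\cup X_1'$ with $R'$ nef, then contract the $R'$-trivial $(-1)$-curves and orbits. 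On a component where $R_j'$ stays big one lands on an anti-canonical del Pezzo pair (or $(\P^2,D)$ with $D$ a cubic) on which $R_j'$ is ample. I expect the main obstacle to be the opposite case: for a component of the type produced by the $m=2$, $\Gamma=\{(0,1),(0,2)\}$ row of \Cref{prop:lattices} one already has $\widetilde Q^2 = 0$, and more generally an M1 modification coming from the other side may blow up a point of $R$ and drop its self-intersection to $0$; then $R_j'$ is nef but not big, the whole component $X_j'$ collapses onto a $\P^1$, and the stable model is a \emph{non-normal} surface whose double curve has been folded by a degree $2$ map, exactly as in \Cref{sec:11i2}. One must then check that this non-normal stable surface, gluing data included, is pinned down up to finite ambiguity by the anti-canonical pair on its normalization (which is itself only a blow-up of the naive del Pezzo at finitely many $\Z/3\Z$-fixed points of the double curve).

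Granting step \ref{step:stabilisation}, steps (S1) and (S3) are formal. Identifying $H^2(X_j,\Z)^{\rm prim}$ with the lattice of \Cref{prop:lattices} and splitting $J^\perp_{T_\rho}/J$ along the two components by \Cref{prop:root-sublattice-splits}, the period $\psi\in\mathcal{A}_J = J^\perp_{T_\rho}/J \otimes_{\Z[\zeta_3]} E$ restricts on each del Pezzo summand — $E_6$ for $\delpezzo_3$, $E_8$ for $\delpezzo_1$, and the zero lattice for $\P^2$ — to the period of the corresponding anti-canonical pair. By the Torelli theorem for anti-canonical pairs (\Cref{prop:dptor}), together with the finiteness of $\Z/3\Z$-equivariant configurations used in the proof of \Cref{prop:boundary-period-dominant}, two generic points of $\Delta_i$ whose periods agree on all the $E_6$/$E_8$ summands have the same stable model up to finitely many choices, whereas moving $\psi$ within the $A_2$-summands only changes the blown-up $\Z/3\Z$-orbits and leaves the stable model fixed. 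Hence the fibers of \eqref{eq:torksba} over the boundary divisor above the $i$-th cusp are exactly the translates of $\big(\,\text{sum of the }A_2\text{-summands of }J^\perp_{T_\rho}/J\,\big)\otimes_{\Z[\zeta_3]} E$, so $\mathfrak F_{J_i}$ is the saturation of that $A_2$-part, as listed in \Cref{thm:ksba21}; in particular $\mathfrak F_J = 0$ at the $E_6^{\oplus 2}$ cusp, where both components give genuine anti-canonical del Pezzo pairs, and $\mathfrak F_J$ equals the whole of $J^\perp_{T_\rho}/J$ at the $A_2^{\oplus 6}$ cusp, where the relevant del Pezzo is $\P^2$ with vanishing primitive lattice. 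A dimension count fixes the rank of each $\mathfrak F_{J_i}$, and the $\Delta_2$ and $\Delta_5$ computations both produce the same semifan at the second cusp, completing the proof of \Cref{thm:ksba21}.
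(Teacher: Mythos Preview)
Your strategy and final answer match the paper, and your treatment of the cusps with root lattice $A_2^{\oplus 6}$, $E_6 \oplus A_2^{\oplus 3}$, and $E_6^{\oplus 2}$ is correct; the fold-to-$\P^1$ non-normal surface you anticipate is precisely what occurs at the second cusp (\Cref{sec:21i2}), and $\Delta_5$ does indeed give the same stable models as $\Delta_2$ (\Cref{sec:21i5}).

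There is, however, a gap at the fourth cusp $E_8 \oplus A_2$ (\Cref{sec:21i4}). There $X_0$ comes from \Cref{sec:k0-01-22} (the $\delpezzo_1$ case) and $X_1$ from \Cref{sec:k2-01-02} (the $m=2$, $\Gamma=\{(0,1),(0,2)\}$ case). After the M1 modifications along $P_0$ and $P_1$ that your recipe prescribes, the divisor $R'$ is \emph{not} yet nef: the proper transform $Q_1'$ of the degree-$2$ component of $C_1$ has become a $(-1)$-curve with $R'_1 \cdot Q_1' < 0$, so your claim ``perform the M1 modifications along those $\widetilde\sigma$'s to reach a Kulikov model $X'$ with $R'$ nef'' fails here. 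A further M1 along $Q_1'$ is required, after which $R''_1$ is the \emph{zero} divisor on $X''_1$. The whole component $X''_1$ then collapses to a \emph{point} (not to a $\P^1$), and the double curve on $X''_0$ is contracted to an $\widetilde E_8$ elliptic singularity --- a different phenomenon from the fold you describe. Your guiding principle that $\mathfrak F_J$ is the saturation of the $A_2$-summands nevertheless yields the correct $\mathfrak F_J = \langle A_2 \rangle^{\rm sat}$, since the stable model retains exactly the period of the anti-canonical $\delpezzo_1$ pair $(\overline X_0, D)$, i.e.\ the restriction of $\psi$ to the $E_8$-summand.
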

We devote the rest of \Cref{subsec:stable_model_21} to the proof of \Cref{thm:ksba21}.
We use notation analogous to that introduced after \Cref{thm:ksba01}.
Many of the following arguments are analogous to those in \Cref{subsec:stable_model_01} and \Cref{subsec:stable_model_11}, so we will be even more brief.

\subsubsection{The case \(i = 1\)}\label{sec:21i1}
From \Cref{sec:k1-03}, we know that each \(X_j \to \P^2\) is a blow-up in 9 points that form three \(\Z/3Z\)-orbits.
The divisor \(R\) is nef and it contracts the 9 exceptional curves on each \(X_j\), yielding \(\P^2 \cup \P^2\) as the stable model (see \Cref{fig:kulstab211}).
Note that the stable model has no moduli, and hence the entire divisor \(\delta_1\) is contracted to a point.
\begin{figure}[ht]
  \centering
  \begin{tikzpicture}
    \draw[thick]  (0,0) -- (0,2) -- (1.5,1.5) -- (1.5,-0.5) -- (0,0) -- (0,2) -- (-1.5,1.5) -- (-1.5,-0.5) -- (0,0);
    \draw[dashed] (0,1.75) -- (1.5,1.25) (0,1.70) -- (1.5,1.20) (0,1.65) -- (1.5,1.15)
    (1.2,1.0) node {\tiny \(\times 3\)};
    \draw[dashed] (0,0.3) -- (-1.5,-0.2) (0,0.35) -- (-1.5,-0.15) (0,0.4) -- (-1.5,-0.1)
    (-1.2, .3) node {\tiny \(\times 3\)};
    \begin{scope}
      \draw[color=red, thick] (0,0.8) -- (0.5,0.64) (0,0.9) -- (0.5,0.74) (0,1.0) -- (0.5,0.84);
      \draw[color=red, thick] (0,0.8) -- (-0.5,0.64) (0,0.9) -- (-0.5,0.74) (0,1.0) -- (-0.5,0.84);
    \end{scope}
    \draw (1.7,0.75) node (K) {};
    \begin{scope}[xshift=5cm, yshift=-0.25cm]
      \draw[thick]  (0,0) -- (1.5,1) -- (0,2) -- (0,0) -- (-1.5,1) -- (0,2) -- (0,0);
      \draw[color=red, thick] (0,1.2) -- (0.5,1.04) (0,1.1) -- (0.5,0.94) (0,1.0) -- (0.5,0.84);
      \draw[color=red, thick] (0,1.2) -- (-0.5,1.04) (0,1.1) -- (-0.5,0.94) (0,1.0) -- (-0.5,0.84);
      \draw (-1.7, 1) node (S) {};      
    \end{scope}
    \draw[thick, ->] (K) edge (S);
  \end{tikzpicture}
  \caption{The stable model in the case \(i = 1\) is obtained by contracting three \((\Z/3\Z)\)-orbits of \((-1)\)-curves on each side (dashed).  All are disjoint from the divisor (red).}\label{fig:kulstab211}
\end{figure}
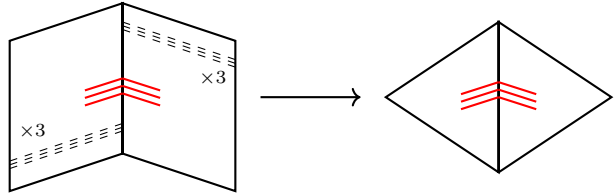
\subsubsection{The case \(i = 2\)}\label{sec:21i2}
Write \(C_0 = P_0 \cup Q_0\), where \(Q_0\) is a smooth curve of genus \(1\).
From \Cref{sec:k1-01-12}, we know that \(X_0\) is the blow-up of \(\overline X_0\), which is a \(\delpezzo_3\), with an exceptional divisor forming a \((-1,-2,-2)\)-chain of which the first curve is \(P_0\).
From \Cref{sec:k1-03}, we know that \(X_1\) is the blow-up of \(\P^2\) at 9 points forming three \(\Z/3\Z\)-orbits.
The divisor \(R_0 \subset X_0\) is negative on \(P_0\).
The divisor \(R_1 \subset X_1\) is nef (and has self-intersection 1).

We perform an M1 modification along M1, obtaining \(X' = X'_0 \cup X'_1\) (see \Cref{fig:kulstab212}).
Let \(R'\) be the proper transform of \(R\).
Then \(R'\) is nef on \(X'\), so we obtain the stable model by taking the image of \(X'\) under the map given by (large multiples of) \(R'\).

Finding the stable image is a bit subtle.
The divisor \(R'_0 \subset X'_0\) is big and nef; it is trivial on the second curve in the \((-1,-2,-2)\)-chain, which is now a \(-1\) curve (drawn as a dashed curve on the left surface in the middle of \Cref{fig:kulstab212}), and positive on all other curves.
This \((-1)\)-curve is contracted in the stable model.
Let \(X'_0 \to \widetilde X_0\) be this contraction.
Note that \(\widetilde X_0\) is the blow-up of the \(\delpezzo_3\) surface \(\overline X_0\) at a \((\Z/3\Z)\)-fixed point on the (anti-canonical) double curve.
The divisor \(R'_1 \subset X'_1\) is nef but \emph{not} big.
Indeed, its self intersection is \(0\).
It induces a contraction \(X'_1 \to \P^1\), which restricts to a degree 2 morphism on the double curve \(D\).
Hence, the entire component \(X'_1\) is contracted to a curve in the stable model.
Owing to the contraction of \(X'_1\), the points in the fiber of the degree 2 map \(D \to \P^1\) are identified in the stable image.
The stable image is thus a non-normal surface \(\overline X\), whose normalization is \(\widetilde X_0\); the map \(\widetilde X_0 \to \overline X\) is an isomorphism away from \(D\), but it folds \(D\) to a \(\P^1\) by a degree 2 map.
The curve \(C_0 = R'_0\) maps to a nodal curve of arithmetic genus \(2\) on \(\overline X\); its two intersection points with \(D\) are identified to form a node.

Up to a finite choice, the isomorphism class of the stable model is equivalent to the isomorphism class of the anti-canonical pair \((\overline X_0, D)\), whose period is the restriction of \(\psi\) to the \(E_6\)-summand.
It follows that translates of \(A_2^{\oplus 3} \otimes_{\Z[\zeta_3]} E\) are contracted in \(\overline F_{\rho}^{\rm KSBA}\).
Therefore, \(\mathfrak F_J \subset J^{\perp}_{T_{\rho}}/J\) is the saturation of the \(A_2^{\oplus 3}\) summand.

\begin{figure}[ht]
  \centering
  \begin{tikzpicture}
    \draw[thick]  (0,0) -- (0,2) -- (1.5,1.5) -- (1.5,-0.5) -- (0,0) -- (0,2) -- (-1.5,1.5) -- (-1.5,-0.5) -- (0,0);
    \draw[dashed] (0,1.75) -- (1.5,1.25) (0,1.70) -- (1.5,1.20) (0,1.65) -- (1.5,1.15)
    (1.2,1.0) node {\tiny \(\times 3\)};
    \begin{scope}
      \draw[color=red, thick] (0,0.6) -- (0.5,0.44) (0,0.7) -- (0.5,0.54) (0,1.0) -- (0.5,0.84);
      \draw[color=red, thick] (0,0.6) -- (-0.5,0.44) (0,0.7) -- (-0.5,0.54);
      \draw[color=red, thick, dashed] (0,1.0) -- (-0.5,0.84);
      \draw[dashed, thick] plot[smooth, tension=1] coordinates {(-0.2,0.8) (-0.5,0.95) (-0.9,0.9)};
    \end{scope}
    \draw (1.7,0.75) node (K) {};
    \begin{scope}[xshift=5cm]
    \draw[thick]  (0,0) -- (0,2) -- (1.5,1.5) -- (1.5,-0.5) -- (0,0) -- (0,2) -- (-1.5,1.5) -- (-1.5,-0.5) -- (0,0);
    \draw[color=red, thick]  plot[smooth, tension=1] coordinates {(0.2,1.1) (0.3,0.8) (0.5,0.7)};
    \draw[dashed] (0,1.75) -- (1.5,1.25) (0,1.70) -- (1.5,1.20) (0,1.65) -- (1.5,1.15)
    (1.2,1.0) node {\tiny \(\times 3\)};
    \begin{scope}
      \draw[color=red, thick] (0,0.6) -- (0.5,0.44) (0,0.7) -- (0.5,0.54);
      \draw[color=blue, thick, dashed](0,1.0) -- (0.5,0.84);
      \draw[color=red, thick] (0,0.6) -- (-0.5,0.44) (0,0.7) -- (-0.5,0.54);
      \draw[color=black, thick, dashed] (0,1.0) -- (-0.5,0.84);
    \end{scope}
    \draw (-1.7,0.75) node (S) {};
    \draw (1.7,0.75) node (S2) {};
  \end{scope}
   \begin{scope}[xshift=8.5cm, xscale=0.75]
     \draw[thick] plot[smooth, tension=2] coordinates {(0,0) (0.5,1.5) (1,-.25)};
     \draw[thick] (3,1.5) -- (3,0);
     \draw[thick] (1,-.25) -- (3,0);
     \draw[dashed] (0,0) -- (3,0);
     \draw[thick] (0,0) -- (1,0);
     \draw[thick] (0.5,1.5) -- (3,1.5);
     \draw[thick, red] (2,0.75) -- (3,0.75) (2,0.5) -- (3,0.75);
     \draw (-0.5,0.75) node (T) {};
   \end{scope}
   \draw[thick, ->, dashed] (K) edge (S);
   \draw[thick, ->] (S2) edge (T);
 \end{tikzpicture}
  \caption{The stable model in the case \(i = 2\) is obtained by an M1 modification along a \((-1)\)-curve on the left (dashed red) followed by a morphism that contracts the entire right-hand component, contracts a \((-1)\)-curve on the left (dashed), and folds the double curve into a \(\P^1\), resulting in a non-normal surface.}\label{fig:kulstab212}
\end{figure}
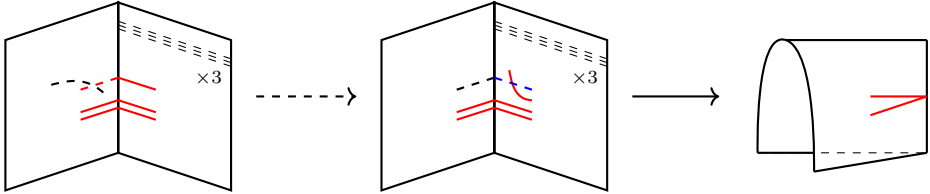
\subsubsection{The case \(i = 3\)}\label{sec:21i3}
Write \(C_0 = P_0 \cup Q_0\) and \(C_1 = P_1 \cup Q_1\), where \(Q_j\) are smooth curves of genus \(1\).
By \Cref{sec:k1-01-12}, we know that each \(X_j\) is a blow-up of \(\overline X_j\), a \(\delpezzo_3\), with an exceptional divisor forming a \((-1,-2,-2)\)-chain of which the first curve is \(P_j\).
The divisor \(R_j\) is negative on \(P_j\).

We make M1 modifications along \(P_0\) and \(P_1\), resulting in \(X'\) with nef \(R'\).
On each side, the divisor \(R'\) is negative only on the middle component of the chain, which is now a \((-1)\)-curve.
Contracting them yields the stable model (see \Cref{fig:kulstab213}).
Passing to the stable model does not lose any information.
Therefore, \(\mathfrak F_J = 0\).
\begin{figure}[ht]
  \centering
  \begin{tikzpicture}
    \draw[thick]  (0,0) -- (0,2) -- (1.5,1.5) -- (1.5,-0.5) -- (0,0) -- (0,2) -- (-1.5,1.5) -- (-1.5,-0.5) -- (0,0);
    \draw[color=red, thick, dashed] (0,0.6) -- (0.5,0.44);
    \draw[color=red, thick] (0,0.9) -- (0.5,0.74);
    \draw[color=red, thick]  (0,0.9) -- (-0.5,0.74) ;
    \draw[color=red, thick, dashed] (0,1.3) -- (-0.5,1.14);
    \draw[color=red, thick] (0,0.6) -- (-0.5,0.44)   (0,1.3) -- (0.5,1.14);
    \draw[dashed, thick] plot[smooth, tension=1] coordinates {(0.2,0.4) (0.5,0.55) (0.9,0.5)};
    \draw[dashed, thick] plot[smooth, tension=1] coordinates {(-0.2,1.35) (-0.5,1.05) (-0.8,1.0)};
    \draw (1.7,0.75) node (K) {};
    \begin{scope}[xshift=5cm, yshift=-0.25cm]
      \draw[thick]  (0,0) -- (1.5,1) -- (0,2)-- (0,0) -- (-1.5,1) -- (0,2) -- (0,0);
      \draw[color=red, thick] (0,1.1) -- (0.5,0.94); 
      \draw[color=red, thick] (0,1.1) -- (-0.5,0.94);
      \draw[color=red, thick]  plot[smooth, tension=1] coordinates {(0.15,1.5) (0.25,1.2) (0.5,1.1)};
      \draw[color=red, thick]  plot[smooth, tension=1] coordinates {(-0.15,.6) (-0.25,.8) (-0.5,.8)};
      \draw[color=blue, dashed, thick] (0,1.4) -- (0.5,1.24) (0,0.8) -- (-0.5,0.64) ;
      \draw (-1.7, 1) node (S) {};      
    \end{scope}
    \draw[thick, dashed, ->] (K) edge (S);
  \end{tikzpicture}
  \caption{The stable model in the case \(i = 3\) is obtained by M1 modifications along the \((-1)\)-curves (dashed red) followed by the contraction of the \((-1)\)-curves (dashed).  The divisor is shown in red (dashed and solid).}\label{fig:kulstab213}
\end{figure}
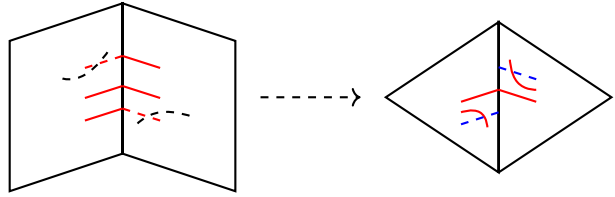
\subsubsection{The case \(i = 4\)}\label{sec:21i4}
Write \(C_0 = P_0 \cup Q_0\), where \(Q_0\) is a smooth curve of genus \(2\), and \(C_1 = P_1 \cup Q_1\), where \(Q_1\) is a smooth curve of genus 0.
From \Cref{sec:k0-01-22}, we know that \(X_0\) is the blow-up of \(\overline X_0\), a \(\delpezzo_1\), whose exceptional curve is \(P_0\).
From \Cref{sec:k2-01-02}, we know that \(X_1\) is the blow-up of \(\P^2\) with exceptional locus consisting of a \((-1,-2,-2)\)-chain (whose first curve is \(P_1\)) and six other \((-1)\)-curves.
The divisor \(R_0\) is negative on \(P_0\) and \(R_1\) is negative on \(P_1\).
We make M1 modifications on \(P_0\) and \(P_1\), resulting in \((X', R')\) (see \Cref{fig:kulstab214}).
Now \(R_0' \subset X'_0\) is ample but \(R_1' \subset X_1'\) is still negative on \(Q_1'\) (which is now a \((-1)\)-curve).
We make another M1 modification along \(Q_1'\), resulting in \((X'',R'')\).
Now \(R_0'' \subset X_0''\) is nef and trivial only on the double curve.
On the other hand \(R_1'' = 0\).
It follows that the stable model \(\overline X\) is obtained by contracting the entire \(X''_1\) component to a point.
In other words, we have a map \(X''_0 \to \overline X\) that contracts the double curve (of self-intersection \(-1\)) to a point (so \(\overline X\) has an $\widetilde E_8$ elliptic singularity).
The stable model only retains the moduli of \((\overline X_0, D)\), captured by the \(E_8\) component of the period \(\psi\) of \(X\).
Therefore, \(\mathfrak F_{J} \subset J^{\perp}_{T_{\rho}}/J\) is the saturation of the complementary \(A_2\) summand.
\begin{figure}[ht]
  \centering
  \begin{tikzpicture}[xscale=0.9]
    \draw[thick]  (0,0) -- (0,2) -- (1.5,1.5) -- (1.5,-0.5) -- (0,0) -- (0,2) -- (-1.5,1.5) -- (-1.5,-0.5) -- (0,0);
    \draw[color=red, thick, dashed] (0,0.6) -- (0.5,0.44);
    \draw[color=red, thick] plot[smooth, tension=2] coordinates {(0,0.9) (0.5,1.1) (0,1.3)};
    \draw[color=red, thick, dashed] (0,1.3) -- (-0.5,1.14);
    \draw[color=red, thick] (0,0.6) -- (-0.5,0.44) (0,0.9) -- (-0.5, 0.74);
    \draw (1.7,0.75) node (K) {};
    \begin{scope}[xshift=4.5cm]
      \draw[thick]  (0,0) -- (0,2) -- (1.5,1.5) -- (1.5,-0.5) -- (0,0) -- (0,2) -- (-1.5,1.5) -- (-1.5,-0.5) -- (0,0);
      \draw[color=red, thick] (0,0.9) -- (-0.5, 0.74);
      \draw[color=red, thick, dashed]  plot[smooth, tension=1] coordinates {(0.15,1.4) (0.5,0.9) (0,0.9)};
      \draw[color=red, thick]  plot[smooth, tension=1] coordinates {(-0.15,.3) (-0.25,.6) (-0.5,.6)};
      \draw[color=blue, dashed, thick] (0,1.3) -- (0.5,1.14) (0,0.6) -- (-0.5,0.44) ;
      \draw (-1.7, 0.75) node (S1) {};
      \draw (1.7, 0.75) node (S2) {};      
    \end{scope}
    \begin{scope}[xshift=9cm]
      \draw[thick]  (0,0) -- (1.5,1) -- (0,2)-- (0,0) -- (0,2) -- (-1.5,1.5) -- (-1.5,-0.5) -- (0,0);
      \draw[color=red, thick]  plot[smooth, tension=1] coordinates {(-0.15,1.2) (-0.25,1.0) (-0.5,0.9)};
      \draw[color=red, thick]  plot[smooth, tension=1] coordinates {(-0.15,.4) (-0.25,.6) (-0.5,.6)};
      \draw[color=blue, dashed, thick] (0,0.6) -- (-0.5,0.44) (0,1.1) -- (-0.5,1.1) ;
      \draw (-1.6, 0.75) node (T1) {};
      \draw (1.7, 0.75) node (T2) {};      
    \end{scope}
    \begin{scope}[xshift=12cm, yshift=-0.25cm]
      \draw[thick]  (0,0) -- (0,2);
      \draw[thick] plot[smooth,tension=1] coordinates {(0,0) (1,0.3) (1.4,0.9) (1.5,1)};
      \draw[thick] plot[smooth,tension=1] coordinates {(0,2) (1,1.7) (1.4,1.1) (1.5,1)};
      \draw[thick, red] (1.5,1) -- (0.5,1.5) (1.5,1) -- (0.5,0.5);
      \draw (-0.2, 1) node (U) {};
    \end{scope}
    \draw[thick, dashed, ->] (K) edge (S1);
    \draw[thick, dashed, ->] (S2) edge (T1);
    \draw[thick, ->] (T2) edge (U);
  \end{tikzpicture}
  \caption{The stable model in the case \(i = 4\) is obtained by two M1 modifications along the \((-1)\)-curves (dashed red) followed by an M1 modification along another \((-1)\)-curve (dashed red in the middle), and finally the contraction of the right component.  It has an elliptic singularity. The divisor is shown in red.}\label{fig:kulstab214}
\end{figure}
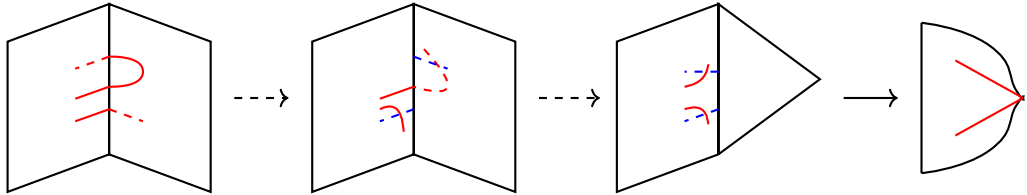
\subsubsection{The case \(i = 5\)}\label{sec:21i5}
Write \(C_1 = P_1 \cup Q_1\), where \(Q_1\) is a curve of genus \(0\).
From \Cref{sec:k0-13}, we know that \(X_0\) is the blow-up of \(\overline X_0\), a \(\delpezzo_3\), in 3 points that form a \((\Z/3\Z)\)-orbit.
From \Cref{sec:k2-01-02}, we know that \(X_1\) is the blow-up of \(\P^2\) with exceptional locus consisting of a \((-1,-2,-2)\)-chain (of which the first curve is \(P_1\)) and 6 points that form two \((\Z/3\Z)\)-orbits.
This \(X\) is related to the Kulikov surface from the case \(i = 2\) by the sequence of following modifications: three successive M1 modifications along the components of the \((-1,-2,-2)\)-chain on \(X_1\) followed by three M1 modifications along the exceptional curves of \(X_0 \to \overline X_0\).
We get the same stable model as the case \(i = 2\).

\begin{proof}[Conclusion of the proof of \Cref{thm:ksba21}]
  Having settled the cases of the four cusps in \Cref{sec:21i1}, \Cref{sec:21i2}, \Cref{sec:21i3}, and \Cref{sec:21i4}, the proof of \Cref{thm:ksba21} is complete.
  The fifth case treated in \Cref{sec:21i5} gives the same information as the second case treated in \Cref{sec:21i2}.
\end{proof}

Observe that for \(i = 1,2,3,4\), the generic stable pair corresponding to the \(i\)-th cusp in \Cref{subsec:stable_model_21} matches with the boundary component numbered \(1,4,2,5\), respectively, of \(\overline{\mathcal{P}}_2\) from \cite[Table 1]{laz:16}.

\bibliographystyle{amsalpha} 
\bibliography{references}

\end{document}